\title{Proof of the Arnold chord conjecture in three dimensions II}
\author{Michael Hutchings and Clifford Henry Taubes}
\date{}
\newcommand{\mc}[1]{{\mathcal #1}}
\numberwithin{equation}{section}
\newtheorem{theorem}{Theorem}[section]
\newtheorem{proposition}[theorem]{Proposition}
\newtheorem{corollary}[theorem]{Corollary}
\newtheorem{lemma}[theorem]{Lemma}
\newtheorem{lemma-definition}[theorem]{Lemma-Definition}
\theoremstyle{definition}
\newtheorem{definition}[theorem]{Definition}
\newtheorem{remark}[theorem]{Remark}
\newtheorem{example}[theorem]{Example}
\newcommand{\energy}{\textsc{E}}
\newcommand{\eqdef}{\;{:=}\;}
\renewcommand{\frak}{\mathfrak}
\newcommand{\C}{{\mathbb C}}
\newcommand{\R}{{\mathbb R}}
\newcommand{\Z}{{\mathbb Z}}
\newcommand{\Sp}{{\mathbb S}}
\newcommand{\A}{{\mathbb A}}
\newcommand{\op}{\operatorname}
\newcommand{\dbar}{\overline{\partial}}
\newcommand{\zbar}{\overline{z}}
\newcommand{\wbar}{\overline{w}}
\newcommand{\Spinc}{\op{Spin}^c}
\newcommand{\Spin}{\op{Spin}}
\newcommand{\End}{\op{End}}
\newcommand{\Ker}{\op{Ker}}
\newcommand{\tensor}{\otimes}
\newcommand{\vu}{\nu}
\newcommand{\neta}{\eta}
\newcommand{\union}{\bigcup}
\newcommand{\bpm}{\begin{pmatrix}}
\newcommand{\epm}{\end{pmatrix}}
\renewcommand{\epsilon}{\varepsilon}
\begin{document}

\setcounter{tocdepth}{2}

\maketitle

\begin{abstract}
In ``Proof of the Arnold chord conjecture in three dimensions I'', we
deduced the Arnold chord conjecture in three dimensions from another
result, which asserts that an exact symplectic cobordism between
contact three-manifolds induces a map on (filtered) embedded contact
homology satisfying certain axioms.  The present paper proves the
latter result, thus completing the proof of the three-dimensional
chord conjecture.  We also prove that filtered embedded contact
homology does not depend on the choice of almost complex structure
used to define it. 
\end{abstract}

\tableofcontents

\section{Introduction}

The main goal of this paper is to prove that an exact symplectic
cobordism between contact 3-manifolds induces a map on (filtered)
embedded contact homology (ECH) satisfying certain axioms.  This
result appears here as Theorem~\ref{thm:cob}, and was previously
stated in \cite[Thm.\ 2.4]{cc}, where it was used to prove the Arnold
chord conjecture in three dimensions.  This result also has additional
applications, for example it gives rise to new obstructions to
symplectic embeddings in four dimensions, see \cite{qech}.  Along the
way to proving Theorem~\ref{thm:cob}, we will also prove that filtered
ECH does not depend on the choice of almost complex structure used to
define it (Theorem~\ref{thm:FECH} below).  Although this paper is a
sequel to \cite{cc}, we will not use anything from the latter paper
except for some basic definitions.  We begin by briefly reviewing
these definitions.  For more about ECH, see \cite{icm,bn} and the
references therein.

\subsection{Embedded contact homology}
\label{sec:ech}

Let $Y$ be a closed oriented $3$-manifold. (For simplicity, all $3$-manifolds in this
paper are assumed connected except where otherwise stated.)  Let
$\lambda$ be a contact form on $Y$, let $R$ denote the associated Reeb
vector field, and let $\xi=\Ker(\lambda)$ denote the associated
contact structure.  Assume that $\lambda$ is nondegenerate, i.e.\ all
Reeb orbits are nondegenerate.

Let $J$ be an almost complex structure on $\R\times Y$ such that $J$
is $\R$-invariant, $J(\partial_s)=R$ where $s$ denotes the $\R$
coordinate, and $J$ sends $\xi$ to itself, rotating $\xi$ positively
with respect to the orientation on $\xi$ given by $d\lambda$.  We call
such an almost complex structure {\em symplectization-admissible\/}.
The reason for the terminology is that the noncompact symplectic
manifold $(\R\times Y,d(e^s\lambda))$ is called the
``symplectization'' of $(Y,\lambda)$.  Note that a
symplectization-admissible almost complex structure is equivalent to
an almost complex structure $J$ on $\xi$ which rotates positively with
respect to $d\lambda$.  In particular, the space of
symplectization-admissible almost complex structures is contractible.

Given a generic symplectization-admissible $J$, and given $\Gamma\in
H_1(Y)$, the {\em embedded contact homology\/}
$ECH_*(Y,\lambda,\Gamma;J)$ is the homology of a chain complex
$ECC_*(Y,\lambda,\Gamma;J)$ defined as follows.  Recall that an {\em
  orbit set\/} is a finite set of pairs $\Theta=\{(\Theta_i,m_i)\}$
where the $\Theta_i$'s are distinct embedded Reeb orbits, and the
$m_i$'s are positive integers.  The homology class of the orbit set
$\Theta$ is defined by
\[
[\Theta] \eqdef \sum_i m_i[\Theta_i] \in H_1(Y).
\]
The orbit set $\Theta=\{(\Theta_i,m_i)\}$ is called {\em admissible\/}
if $m_i=1$ whenever $\Theta_i$ is hyperbolic, i.e.\ the linearized
Reeb flow around $\Theta_i$ has real eigenvalues.  Define
$ECC_*(Y,\lambda,\Gamma;J)$ to be the free $\Z/2$-module generated by
admissible orbit sets $\Theta$ with $[\Theta]=\Gamma$.  Although ECH
can also be defined over $\Z$, see \cite[\S9]{obg2}, in this paper we
always use $\Z/2$ coefficients for simplicity.

To specify the differential $\partial$ on the chain complex, we need
the following:

\begin{definition}
\label{def:Jhol}
Given a symplectization-admissible $J$, and given orbit sets
$\Theta=\{(\Theta_i,m_i)\}$ and $\Theta'=\{(\Theta'_j,m'_j)\}$, define
a ``$J$-holomorphic curve from $\Theta$ to $\Theta'$'' to be a
$J$-holomorphic curve in $\R\times Y$ (whose domain is a possibly
disconnected punctured compact Riemann surface) with positive ends at
covers of $\Theta_i$ with total multiplicity $m_i$, negative ends at
covers of $\Theta'_j$ with total multiplicity $m'_j$, and no other
ends.  Here a {\em positive end\/} of a holomorphic curve at a (not
necessarily embedded) Reeb orbit $\gamma$ is an end which is
asymptotic to the cylinder $\R\times\gamma$ as the $\R$ coordinate
$s\to +\infty$.  A {\em negative end\/} is defined analogously with
$s\to-\infty$.  Let $\mc{M}^J(\Theta,\Theta')$ denote the moduli space
of $J$-holomorphic curves from $\Theta$ to $\Theta'$, where two such
curves are considered equivalent if they represent the same current in
$\R\times Y$, up to translation of the $\R$ coordinate.
\end{definition}

Given admissible orbit sets $\Theta$ and $\Theta'$ with
$[\Theta]=[\Theta']=\Gamma$, the differential coefficient
$\langle\partial\Theta,\Theta'\rangle\in \Z/2$ is defined to be the
mod 2 count of $J$-holomorphic curves in $\mc{M}^J(\Theta,\Theta')$
with ``ECH index'' equal to $1$.  For the definition of the ECH index
see \cite{pfh2,ir}.  If $J$ is generic, then $\partial$ is
well-defined and $\partial^2=0$, as shown in \cite[\S7]{obg1}.  A
symplectization-admissible almost complex structure that is generic in
this sense will be called {\em ECH-generic\/} here.

The ECH index defines a relative $\Z/d(c_1(\xi)+2\op{PD}(\Gamma))$
grading on the chain complex, where $d$ denotes divisibility in
$H^2(Y;\Z)/\op{Torsion}$.  However the grading will not play a major
role in this paper.

It is shown in \cite{e1,e2,e3,e4} that ECH is isomorphic to a version
of Seiberg-Witten Floer cohomology as defined by Kronheimer-Mrowka
\cite{km}.  The precise statement is that there is a canonical
isomorphism of relatively graded $\Z/2$-modules\footnote{Ordinarily
  $\widehat{HM}^*$ is defined over $\Z$, see \cite{km}, and it is
  shown in \cite{e3} that one can lift the isomorphism
  \eqref{eqn:echswfJ} to $\Z$ coefficients.}
\begin{equation}
\label{eqn:echswfJ}
ECH_*(Y,\lambda,\Gamma;J) \simeq \widehat{HM}^{-*}(Y,\frak{s}_{\xi,\Gamma}).
\end{equation}
Here $\widehat{HM}^*$ denotes Seiberg-Witten Floer cohomology with
$\Z/2$ coefficients, and $\frak{s}_{\xi,\Gamma}$ denotes the spin-c
structure $\frak{s}_\xi+\op{PD}(\Gamma)$ on $Y$, where $\frak{s}_\xi$
denotes the spin-c structure determined by oriented $2$-plane field
$\xi$, see Example~\ref{ex:pf}.  

\subsection{Filtered ECH}
\label{sec:filtered}

If $\Theta=\{(\Theta_i,m_i)\}$ is an orbit set, its {\em symplectic
  action\/} or {\em length\/} is defined by
\begin{equation}
\label{eqn:length}
\mc{A}(\Theta) \eqdef \sum_i m_i \int_{\Theta_i}\gamma.
\end{equation}
Since $J$ is symplectization-admissible, it follows that the ECH
differential decreases the action, i.e.\ if
$\langle\partial\Theta,\Theta'\rangle\neq 0$ then
$\mc{A}(\Theta)>\mc{A}(\Theta')$.  Thus for any real number $L$, it
makes sense to define the {\em filtered ECH\/}, denoted by
$ECH_*^{L}(Y,\lambda,\Gamma;J) $, to be the homology of the subcomplex
$ECC_*^L(Y,\lambda,\Gamma;J)$ of the ECH chain complex spanned by ECH
generators with action less than $L$.

There are various natural maps defined on filtered ECH.  First, if
$L<L'$ then there is a map
\begin{equation}
\label{eqn:iLL'J}
\imath^{L,L'}_J:ECH_*^{L}(Y,\lambda,\Gamma;J) \longrightarrow
ECH_*^{L'}(Y,\lambda,\Gamma;J)
\end{equation}
induced by the inclusion of chain complexes.
The usual ECH is recovered as the direct limit
\begin{equation}
\label{eqn:edr}
ECH_*(Y,\lambda,\Gamma;J) = \lim_{L\to\infty}ECH_*^{L}(Y,\lambda,\Gamma;J).
\end{equation}

In addition, if $c$ is a positive constant, then there is a canonical
``scaling'' isomorphism
\begin{equation}
\label{eqn:scalingJ}
s_J:ECH_*^{L}(Y,\lambda,\Gamma;J) \stackrel{\simeq}{\longrightarrow}
 ECH_*^{cL}(Y,c\lambda,\Gamma;J^c),
\end{equation}
where $J^c$ is defined to agree with $J$ when restricted to the
contact planes $\xi$.  This is because the chain complexes on both
sides have the same generators, and the self-diffeomorphism of
$\R\times Y$ sending $(s,y)\mapsto (cs,y)$ induces a bijection between
$J$-holomorphic curves and $J^c$-holomorphic curves.

Note that to define $ECH_*^L(Y,\lambda,\Gamma;J)$, one does not need the
full assumption that $\lambda$ is nondegeneric and $J$ is ECH-generic,
but only the following conditions:

\begin{definition}
\label{def:Lnondeg}
The contact form $\lambda$ is {\em $L$-nondegenerate\/} if all Reeb
orbits of length less than $L$ are nondegenerate, and if there is no
orbit set\footnote{The condition that there is no orbit set of action
  exactly $L$ is not needed to define filtered ECH, but it will be
  convenient to choose $L$ this way when we relate filtered ECH to
  Seiberg-Witten Floer cohomology, starting in Lemma~\ref{lem:diff}.}
of action exactly $L$.  Given an $L$-nondegenerate contact form
$\lambda$, a symplectization-admissible almost complex structure $J$
for $\lambda$ is {\em $ECH^L$-generic\/} if the genericity conditions
from \cite{obg2} hold for orbit sets of action less than $L$ so that
the ECH differential $\partial$ is well-defined on admissible orbit
sets of action less than $L$ and satisfies $\partial^2=0$.
\end{definition}

\subsection{$J$-independence of filtered ECH (statement)}

We now state a theorem asserting that filtered ECH and the various
maps on it do not depend on $J$.  Before stating the result, let us
recall precisely what it means to say that objects or maps between
them are independent of choices.

Let $\{G_i\mid i\in I\}$ be a collection of groups indexed by some
index set $I$.  We say that ``the groups $G_i$ are canonically
isomorphic to each other'', or ``$G_i$ does not depend on $i$'', if
for every pair $i_1,i_2\in I$ there is a canonical isomorphism
$\phi_{i_1,i_2}:G_{i_1}\stackrel{\simeq}{\to} G_{i_2}$, such that
$\phi_{i_2,i_3}\circ \phi_{i_1,i_2}=\phi_{i_1,i_3}$ for every triple
$i_1,i_2,i_3\in I$. In this case all the groups $G_i$ are canonically
isomorphic to a single group $G$. Specifically one can define $G$ to
be the disjoint union of the groups $G_i$, modulo the equivalence
relation that $g\in G_{i_1}$ is equivalent to $\phi_{i_1,i_2}(g)\in
G_{i_2}$, with group operation induced by the operations on the groups
$G_i$.

Now let $\{H_j\mid j\in J\}$ be another such collection of groups
which are canonically isomorphic to a single group $H$ via
isomorphisms $\psi_{j_1,j_2}:H_{j_1}\stackrel{\simeq}{\to} H_{j_2}$ as
above.  Then a collection of maps $\{f_{i,j}:G_i\to H_j\mid i\in I,
j\in J\}$ induces a well-defined map $f:G\to H$ provided that the
diagram
\[
\begin{CD}
G_{i_1} @>{f_{i_1,j_1}}>> H_{j_1} \\
@V{\phi_{i_1,i_2}}V{\simeq}V @V{\psi_{j_1,j_2}}V{\simeq}V \\
G_{i_2} @>{f_{i_2,j_2}}>> H_{j_2}
\end{CD}
\]
commutes for all $i_1,i_2\in I$ and $j_1,j_2\in J$.

With these conventions, we now have:

\begin{theorem}
\label{thm:FECH}
Let $Y$ be a closed oriented connected 3-manifold, and let $\Gamma\in
H_1(Y)$.
\begin{description}
\item{(a)} If $\lambda$ is an $L$-nondegenerate contact form on $Y$,
  then $ECH_*^L(Y,\lambda,\Gamma;J)$ does not depend on the choice of
  $ECH^L$-generic $J$, so we can denote it by
  $ECH_*^L(Y,\lambda,\Gamma)$.
\item{(b)} If $L<L'$ and if $\lambda$ is $L'$-nondegenerate, then the
  maps $i^{L,L'}_J$ in \eqref{eqn:iLL'J} induce a well-defined map
\begin{equation}
\label{eqn:istar}
i^{L,L'}: ECH_*^L(Y,\lambda,\Gamma) \longrightarrow
ECH_*^{L'}(Y,\lambda,\Gamma).
\end{equation}
\item{(c)} If $\lambda$ is a nondegenerate contact form on $Y$, then
  $ECH_*(Y,\lambda,\Gamma;J)$ does not depend on the choice of
  ECH-generic $J$, so we can denote it by $ECH_*(Y,\lambda,\Gamma)$.
\item{(d)}
If $c>0$, then the scaling isomorphisms $s_J$ in \eqref{eqn:scalingJ} induce
a well-defined isomorphism
\begin{equation}
\label{eqn:s}
s:ECH_*^{L}(Y,\lambda,\Gamma) \stackrel{\simeq}{\longrightarrow}
 ECH_*^{cL}(Y,c\lambda,\Gamma).
\end{equation}
\item{(e)}
The isomorphism \eqref{eqn:echswfJ} does not depend on $J$ and so
determines a canonical isomorphism
\begin{equation}
\label{eqn:echswf}
ECH_*(Y,\lambda,\Gamma) \simeq \widehat{HM}^{-*}(Y,\frak{s}_{\xi,\Gamma}).
\end{equation}
\end{description}
\end{theorem}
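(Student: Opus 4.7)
The strategy is to use the Taubes isomorphism \eqref{eqn:echswfJ} together with the manifest $J$-independence of Seiberg-Witten Floer cohomology (which does not involve an almost complex structure at all) as an intermediary. The crucial input is a \emph{filtered} refinement of \eqref{eqn:echswfJ}: for each $ECH^L$-generic $J$ one should construct a canonical isomorphism
\[
\Phi_J\colon ECH_*^L(Y,\lambda,\Gamma;J) \stackrel{\simeq}{\too} \widehat{HM}^{-*}_L(Y,\lambda,\frak{s}_{\xi,\Gamma}),
\]
where the right side is a filtered version of Seiberg-Witten Floer cohomology depending on $\lambda$ and $L$ but not on any $J$.

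To define $\widehat{HM}^{-*}_L$, I would adapt the analysis of \cite{e1,e2,e3,e4}, perturbing the Chern-Simons-Dirac functional by $r\lambda$ with $r$ sufficiently large (depending on $L$). As $r\to\infty$, critical points concentrate near Reeb orbits and organize according to the action of their associated orbit set; those corresponding to action $<L$ span a subcomplex of the Seiberg-Witten cochain complex, and $\widehat{HM}^{-*}_L$ is by definition its cohomology. The construction must be shown to be independent of $r$ (for $r$ large), of abstract perturbations, and of the Riemannian metric, so that the resulting group is canonically associated to $(Y,\lambda,L,\frak{s}_{\xi,\Gamma})$ alone.

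Granted the filtered isomorphism, parts (a)--(e) follow mostly formally. For (a), declare the canonical isomorphism $ECH_*^L(\lambda,\Gamma;J_1)\stackrel{\simeq}{\too}ECH_*^L(\lambda,\Gamma;J_2)$ to be $\Phi_{J_2}^{-1}\circ\Phi_{J_1}$; the cocycle condition in the formalism preceding the theorem is then immediate. For (b), the chain-level inclusion inducing $\imath^{L,L'}_J$ should correspond under $\Phi_J$ to the inclusion of low-energy subcomplexes on the Seiberg-Witten side, and the latter is manifestly $J$-free, so $\imath^{L,L'}_J$ descends to a well-defined map $i^{L,L'}$ and the commutative-diagram condition holds. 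Part (c) follows from (a) and (b) by passing to the direct limit \eqref{eqn:edr}. For (d), the scaling map $s_J$ can be intertwined with $\Phi_J$ via the substitution $r\mapsto r/c$ in the Seiberg-Witten perturbation parameter, which is $J$-independent; hence $s$ is well defined. Part (e) is then the direct limit over $L$ of the $J$-independent filtered isomorphisms.

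The main obstacle is the filtered refinement of Taubes's isomorphism. In \cite{e1,e2,e3,e4}, Seiberg-Witten critical points at large $r$ are matched with ECH generators through delicate spectral-flow and concentration arguments, and instantons are matched with ECH index one holomorphic curves through a long gluing and compactness analysis. Making this correspondence quantitatively compatible with the symplectic action filtration requires tracking uniform energy bounds through the adiabatic limit $r\to\infty$, showing that the subset of Seiberg-Witten critical points with ``energy $<L$'' is exactly $\Phi_J$-matched to orbit sets of action $<L$, and that the instantons between them are matched with the holomorphic curves counted in the filtered ECH differential. Once this filtered correspondence is established at the chain level, the formal part of the argument amounts to checking commutativity of standard diagrams, which should be routine. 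The $L$-nondegeneracy condition (no orbit set of action exactly $L$) is precisely what is needed to cleanly separate low- and high-energy Seiberg-Witten solutions in this filtered setup.
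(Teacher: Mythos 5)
Your proposal follows essentially the same route as the paper: the paper defines the energy-filtered subcomplex $\widehat{CM}^*_L$ of the contact-form-perturbed Seiberg--Witten complex, proves its homology $\widehat{HM}^*_L(Y,\lambda,\frak{s})$ depends only on $(Y,\lambda,L,\frak{s})$ via admissible deformations (Lemma~\ref{lem:deform}, Corollary~\ref{cor:yls}), constructs the canonical filtered isomorphism $\Psi^L$ compatible with inclusion and scaling (Lemma~\ref{lem:FI}, using $L$-flat approximations to get the chain-level identification), and then deduces (a)--(e) formally exactly as you outline. The only technical wrinkle you do not spell out is the intermediate passage through preferred $L$-flat approximations of $(\lambda,J)$, which is how the paper makes the chain-level matching of generators and differentials actually work.
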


The proof of Theorem~\ref{thm:FECH} uses Seiberg-Witten theory,
similarly to parts of the proof of the isomorphism
\eqref{eqn:echswfJ}, and is given in \S\ref{sec:iso}.

\begin{remark}
  Although this is not necessary for the proof of the chord
  conjecture, the proof of Theorem~\ref{thm:FECH} works just as well
  with $\Z$ coefficients, by \cite{e3}.  Parts (a)-(d) of
  Theorem~\ref{thm:FECH} also hold for disconnected three-manifolds,
  by a straightforward modification of the proof.
\end{remark}

At times it is convenient to ignore the homology class $\Gamma$ in the
definition of $ECH$, and simply define
\begin{equation}
\label{eqn:gammadecomp}
ECH_*(Y,\lambda) \eqdef \bigoplus_{\Gamma\in
  H_1(Y)}ECH_*(Y,\lambda,\Gamma).
\end{equation}
This is the homology of a chain complex $ECC_*(Y,\lambda;J)$ generated
by all admissible orbit sets, and by \eqref{eqn:echswf} this homology
is canonically isomorphic (as a relatively graded $\Z/2$-module) to
\[
\widehat{HM}^{-*}(Y)\eqdef
\bigoplus_{\frak{s}\in\Spinc(Y)}\widehat{HM}^{-*}(Y,\frak{s}).
\]
Note that while $ECH_*(Y,\lambda)$ is a topological invariant of $Y$, the
filtered version $ECH_*^L(Y,\lambda)$ depends strongly on $\lambda$
and $L$.

\subsection{Exact symplectic cobordisms}
\label{sec:cobech}

If $Y_+$ and $Y_-$ are closed oriented (connected) 3-manifolds, our
convention is that a ``cobordism from $Y_+$ to $Y_-$'' is a compact
oriented smooth $4$-manifold with $\partial X = Y_+ - Y_-$.  Such a
cobordism induces a map of ungraded $\Z/2$-modules
\begin{equation}
\label{eqn:swcob}
\widehat{HM}^*(X): \widehat{HM}^{*}(Y_+) \longrightarrow \widehat{HM}^{*}(Y_-).
\end{equation}
If $\lambda_\pm$ are nondegenerate contact forms on $Y_\pm$, we define
\begin{equation}
\label{eqn:gencob}
\Phi(X):
ECH_*(Y_+,\lambda_+) \longrightarrow ECH_*(Y_-,\lambda_-)
\end{equation}
to be the map on ECH obtained by composing the map \eqref{eqn:swcob}
on Seiberg-Witten Floer cohomology with the canonical isomorphism
\eqref{eqn:echswf} on both sides.

If $(Y_\pm,\lambda_\pm)$ are as above, an {\em exact symplectic
  cobordism\/} from $(Y_+,\lambda_+)$ to $(Y_-,\lambda_-)$ is a pair
$(X,\omega)$, where $X$ is a cobordism from $Y_+$ to $Y_-$, and
$\omega$ is a symplectic form on $X$, such that there exists a
$1$-form $\lambda$ on $X$ with $d\lambda=\omega$ and
$\lambda|_{Y_\pm}=\lambda_\pm$.  A $1$-form with these properties is
called a {\em Liouville form\/} for $(X,\omega)$. When we wish to
specify a Liouville form, we denote the exact symplectic cobordism by
$(X,\lambda)$, and we continue to write $\omega=d\lambda$.

When $(X,\omega)$ is an exact symplectic cobordism as above, we would
like to relate the map \eqref{eqn:gencob} to holomorphic curves.
To prepare for this, let $\lambda$ be a Liouville form.  This
determines a Liouville vector field $V$ characterized by $\imath_V\omega =
\lambda$.  If $\varepsilon>0$ is sufficiently small, then the flow of $V$
starting on $Y_-$ for times in $[0,\varepsilon]$ defines a
diffeomorphism
\begin{equation}
\label{eqn:N-}
N_-\simeq [0,\varepsilon]\times Y_-
\end{equation}
where $N_-$ is (the closure of) a neighborhood of $Y_-$.  If $s$ denotes the
$[0,\varepsilon]$ coordinate in \eqref{eqn:N-}, then
$\lambda=e^s\lambda_-$ on $N_-$.  Likewise we obtain a neighborhood
\begin{equation}
\label{eqn:N+}
N_+\simeq [-\varepsilon,0]\times Y_+
\end{equation}
of $Y_+$ in which $\lambda=e^s\lambda_+$.  Using the identifications
\eqref{eqn:N-} and \eqref{eqn:N+}, one can then
glue symplectization
ends to $X$ to obtain the ``completion''
\begin{equation}
\label{eqn:completion}
\overline{X} \eqdef ((-\infty,0]\times Y_-) \cup_{Y_-} X \cup_{Y_+}
([0,\infty)\times Y_+),
\end{equation}
which is a noncompact symplectic 4-manifold. 

Note that the completion \eqref{eqn:completion} depends on the
Liouville form in the following sense: If $\lambda'$ is another
Liouville form for $\omega$, then the obvious identification between
the completions \eqref{eqn:completion} for $\lambda$ and $\lambda'$
is a homeomorphism, and will be a diffeomorphism if $\lambda$ and $\lambda'$ agree near $\partial X$.

\begin{definition}
\label{def:cobadm}
An almost complex structure $J$ on $\overline{X}$ is {\em
  cobordism-admissible\/} if it is $\omega$-compatible on $X$, and if
it agrees with symplectization-admissible almost complex structures
$J_+$ for $\lambda_+$ on $[0,\infty)\times Y_+$ and $J_-$ for
$\lambda_-$ on $(-\infty,0]\times Y_-$.
\end{definition}

Given a cobordism-admissible $J$, and given (not necessarily
admissible) orbit sets $\Theta^+=\{(\Theta_i^+,m_i^+)\}$ in $Y_+$ and
$\Theta^-=\{(\Theta_j^-,m_j^-)\}$ in $Y_-$, we define a
``$J$-holomorphic curve in $\overline{X}$ from $\Theta^+$ to
$\Theta^-$'' analogously to Definition~\ref{def:Jhol}, and denote the
moduli space of such curves by $\mc{M}^J(\Theta^+,\Theta^-)$, where
two such curves are considered equivalent if they represent the same
current in $\overline{X}$.  More generally, we make the following
definition:

\begin{definition}
\label{def:broken}
Let $J$, $J_\pm$ be as in Definition~\ref{def:cobadm}.  A {\em broken
  $J$-holomorphic curve from $\Theta^+$ to $\Theta^-$\/} is a
collection of holomorphic curves $\{C_k\}_{1\le k\le N}$ called
``levels'', and (not necessarily admissible) orbit sets $\Theta^{k+}$
and $\Theta^{k-}$ for each $k$, such that there exists
$k_0\in\{1,\ldots,N\}$ such that:
\begin{itemize}
\item
$\Theta^{k+}$ is an orbit set in $(Y_+,\lambda_+)$ for each $k\ge k_0$;
$\Theta^{k-}$ is an orbit set in $(Y_-,\lambda_-)$ for each $k\le k_0$; 
$\Theta^{N+}=\Theta^+$; $\Theta^{1-}=\Theta^-$; and
$\Theta^{k-}=\Theta^{k-1,+}$ for each $k>1$.
\item If $k>k_0$ then $C_k\in\mc{M}^{J_+}(\Theta^{k+},\Theta^{k-})$;
  if $k<k_0$ then $C_k\in\mc{M}^{J_-}(\Theta^{k+},\Theta^{k-})$; and
  $C_{k_0}\in\mc{M}^J(\Theta^{k_0,+},\Theta^{k_0,-})$.
\item
If $k\neq k_0$ then $C_k$ is not $\R$-invariant (as a current).
\end{itemize}
Let $\overline{\mc{M}^J(\Theta^+,\Theta^-)}$ denote the moduli space
of broken $J$-holomorphic curves from $\Theta^+$ to $\Theta^-$ as above.
\end{definition}

Note that $\mc{M}^J(\Theta^+,\Theta^-)$ is a subset of
$\overline{\mc{M}^J(\Theta^+,\Theta^-)}$ corresponding to broken
curves as above in which the number of levels $N=1$.  (It is perhaps a
misnomer to use the term ``broken'' when there is just one level.)

We would now like to relate the map \eqref{eqn:gencob} to broken
$J$-holomorphic curves in $\overline{X}$, where $J$ is cobordism-admissible.

\subsection{Statement of the main theorem}
\label{sec:SMT}

Let $(X,\lambda)$ be an exact symplectic cobordism from
$(Y_+,\lambda_+)$ to $(Y_-,\lambda_-)$, and assume that the contact
forms $\lambda_\pm$ are nondegenerate.  Fix a cobordism-admissible
almost complex structure $J$ on $\overline{X}$ which restricts to
symplectization-admissible almost complex structures $J_+$ on
$[0,\infty)\times Y_+$ and $J_-$ on $(-\infty,0]\times Y_-$, as in
Definition~\ref{def:cobadm}.  We now recall some definitions from \cite{cc}.

\paragraph{Product cylinders.}
If the cobordism $(X,\lambda)$ and the almost complex structure $J$ on
$\overline{X}$ are very special, then $X$ may contain regions that
look like pieces of a symplectization, in the following sense:

\begin{definition}
\label{def:PR}
  A {\em product region\/} in $X$ is the image of an embedding
  $[s_-,s_+]\times Z \to X$, where $s_-<s_+$ and $Z$ is an open
  3-manifold, such that:
\begin{itemize}
\item $\{s_\pm\}\times Z$ maps to $Y_\pm$, and $(s_-,s_+)\times Z$
  maps to the interior of $X$.
\item
The pullback of the Liouville form $\lambda$ on $X$ to $[s_-,s_+]\times Z$
has the form $e^s\lambda_0$, where $s$ denotes the $[s_-,s_+]$
coordinate, and $\lambda_0$ is a contact form on $Z$.
\item The pullback of the almost complex structure $J$ on $X$ to
  $[s_-,s_+]\times Z$ has the following properties: 
\begin{itemize}
\item
The restriction of $J$ to $\Ker(\lambda_0)$ is independent of $s$.
\item 
$J(\partial/\partial_s)=f(s)R_0$, where $f$ is a positive function of
$s$ and $R_0$ denotes the Reeb vector field for $\lambda_0$.
\end{itemize}
\end{itemize}
\end{definition}

Given a product region as above, the embedded Reeb orbits of
$\lambda_\pm$ in $\{s_\pm\}\times Z$ are identified with the embedded
Reeb orbits of $\lambda_0$ in $Z$.  If $\gamma$ is such a Reeb orbit,
then we can form a $J$-holomorphic cylinder in $\overline{X}$ by
taking the union of $[s_-,s_+]\times \gamma$ in $[s_-,s_+]\times Z$
with $(-\infty,0]\times \gamma$ in $(-\infty,0]\times Y_-$ and
$[0,\infty)\times\gamma$ in $[0,\infty)\times Y_+$.

\begin{definition}
\label{def:PC}
We call a $J$-holomorphic cylinder as above a {\em product
  cylinder.\/}
\end{definition}

\paragraph{Composition of cobordisms.}
If $(X^+,\lambda^+)$ is an exact symplectic cobordism from
$(Y_+,\lambda_+)$ to $(Y_0,\lambda_0)$, and if $(X^-,\lambda^-)$ is an
exact symplectic cobordism from $(Y_0,\lambda_0)$ to
$(Y_-,\lambda_-)$, then we can compose them to obtain an exact
symplectic cobordism $(X^-\circ X^+,\lambda)$ from $(Y_+,\lambda_+)$
to $(Y_-,\lambda_-)$.  Here $X^-\circ X^+$ is obtained by gluing $X^-$
and $X^+$ along $Y_0$ analogously to \eqref{eqn:completion}, and
$\lambda|_{X^\pm}=\lambda^\pm$.

\paragraph{Homotopy of cobordisms.}
Two exact symplectic cobordisms $(X,\omega_0)$ and $(X,\omega_1)$ from
$(Y_+,\lambda_+)$ to $(Y_-,\lambda_-)$ with the same underlying
four-manifold $X$ are {\em homotopic\/} if there is a smooth
one-parameter family of symplectic forms $\{\omega_t\mid t\in[0,1]\}$
on $X$ such that $(X,\omega_t)$ is an exact symplectic cobordism from
$(Y_+,\lambda_+)$ to $(Y_-,\lambda_-)$ for each $t\in[0,1]$.

\begin{theorem}
\label{thm:cob}
Let $(Y_+,\lambda_+)$ and $(Y_-,\lambda_-)$ be closed oriented
connected 3-manifolds with nondegenerate contact forms.  Let
$(X,\lambda)$ be an exact symplectic cobordism from $(Y_+,\lambda_+)$
to $(Y_-,\lambda_-)$.  Then there exist maps of ungraded
$\Z/2$-modules
\begin{equation}
\label{eqn:PhiL}
\Phi^L(X,\lambda): ECH_*^{L}(Y_+,\lambda_+) \longrightarrow
ECH_*^{L}(Y_-,\lambda_-)
\end{equation}
for each real number $L$, such that:
\begin{description}
\item{(Homotopy Invariance)} The map $\Phi^L(X,\lambda)$ depends only on $L$
  and the homotopy class of $(X,\omega)$.
\item{(Inclusion)} If $L<L'$ then the following diagram commutes:
\[
\begin{CD}
ECH_*^{L}(Y_+,\lambda_+) @>{\Phi^L(X,\lambda)}>> ECH_*^{L}(Y_-,\lambda_-) \\
@VV{\imath^{L,L'}}V @VV{\imath^{L,L'}}V \\
ECH_*^{L'}(Y_+,\lambda_+) @>{\Phi^{L'}(X,\lambda)}>> ECH_*^{L'}(Y_-,\lambda_-).
\end{CD}
\]
\item{(Direct Limit)}
\[
\lim_{L\to\infty}\Phi^L(X,\lambda) = \Phi(X): ECH_*(Y_+,\lambda_+) \longrightarrow
ECH_*(Y_-,\lambda_-),
\]
where $\Phi(X)$ is as in \eqref{eqn:gencob}.
\item{(Composition)}
If $(X,\lambda)$ is the composition of $(X^-,\lambda^-)$ and
$(X^+,\lambda^+)$ as above
with $\lambda_0$ nondegenerate, then
\[
\Phi^L(X^-\circ X^+,\lambda) = \Phi^L(X^-,\lambda^-) \circ
\Phi^L(X^+,\lambda^+).
\] 
\item{(Scaling)} If $c$ is a positive constant then the following
  diagram commutes:
\[
\begin{CD}
ECH_*^{L}(Y_+,\lambda_+) @>{\Phi^L(X,\lambda)}>> ECH_*^{L}(Y_-,\lambda_-) \\
@V{s}V{\simeq}V @V{s}V{\simeq}V \\
ECH_*^{cL}(Y_+,c\lambda_+) @>{\Phi^{cL}(X,c\lambda)}>> ECH_*^{cL}(Y_-,c\lambda_-).
\end{CD}
\]
\item{(Holomorphic Curves)} Let $J$ be a cobordism-admissible almost
  complex structure on $\overline{X}$ such that $J_+$ and $J_-$ are
  $ECH^L$-generic.  Then there exists a (noncanonical) chain map
\[
\hat{\Phi} : ECC_*^L(Y_+,\lambda_+,J_+) \longrightarrow
ECC_*^L(Y_-,\lambda_-,J_-)
\]
inducing $\Phi^L(X,\lambda)$, such that if $\Theta^+$ and $\Theta^-$
are admissible orbit sets for $(Y_+,\lambda_+)$ and $(Y_-,\lambda_-)$
respectively with action less than $L$, then:
\begin{description}
\item{(i)} If there are no broken $J$-holomorphic curves in
  $\overline{X}$ from $\Theta^+$ to $\Theta^-$, then
  $\langle \hat{\Phi}\Theta^+,\Theta^-\rangle=0$.
\item{(ii)} If the only broken $J$-holomorphic curve in $\overline{X}$
  from $\Theta^+$ to $\Theta^-$ is a union of covers of product
  cylinders, then $\langle \hat{\Phi}\Theta^+,\Theta^-\rangle=1$.
\end{description}
\end{description}
\end{theorem}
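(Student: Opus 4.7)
The plan is to construct $\Phi^L$ via Seiberg-Witten theory on the completed cobordism $\overline{X}$, extending to cobordisms the Taubes-style analysis by which filtered ECH is identified with a filtered version of Seiberg-Witten Floer cohomology in the cylindrical case. On each symplectization end of $\overline{X}$ one deforms the SW equations using a parameter $r$ times the contact form; for $r$ sufficiently large (depending on $L$), the irreducible solutions of bounded energy biject with admissible orbit sets of action less than $L$, and this identification refines \eqref{eqn:echswfJ} to a filtered form whose homology realizes $ECH_*^L(Y,\lambda,\Gamma)$ as furnished by Theorem~\ref{thm:FECH}.

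Extend the deformation across $\overline{X}$ using a Liouville form $\lambda$, so that the deformed equations on the ends coincide with the $r\lambda_\pm$ equations on the symplectizations, and let $\hat\Phi$ be the chain map obtained by counting mod $2$ the index-zero gauge equivalence classes of irreducible solutions on $\overline{X}$. An \emph{a priori} energy estimate, rooted in the exactness of $\omega$, should bound the change of action $\mc{A}(\Theta^+) - \mc{A}(\Theta^-)$ along any such solution by a constant uniform in $r$, so that for $r$ large (depending on $L$) $\hat\Phi$ restricts to the filtered subcomplexes and defines $\Phi^L(X,\lambda)$. Then Inclusion and Direct Limit follow by construction, since the maps for varying $L$ are restrictions of one underlying count. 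Homotopy Invariance and independence from metric, perturbation, and $r\gg 0$ follow from the standard one-parameter moduli-space argument in SW theory. Scaling follows from invariance of the deformed equations under simultaneous rescaling of the contact form and the parameter $r$. Composition requires a neck-stretching argument along $(Y_0,\lambda_0)$ combined with Floer gluing for SW solutions, carried out uniformly in the filtration parameter.

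The Holomorphic Curves axiom is the technical heart. I would adapt Taubes's cylindrical convergence theory to the completion $\overline{X}$: given a sequence $r_n\to\infty$ and index-zero SW solutions contributing to $\langle \hat\Phi\Theta^+,\Theta^-\rangle$, a subsequence converges as currents to a broken $J$-holomorphic curve in $\overline{X}$ from $\Theta^+$ to $\Theta^-$, which establishes (i). For (ii), if the only such broken curve is a union of covers of product cylinders, the gluing construction in the opposite direction should produce, for each large $r$, exactly one (mod $2$) SW solution with the required ends; product cylinders are essentially translation-invariant, so the SW equations there reduce to the cylindrical case and can be solved explicitly to seed the gluing. The main obstacle is pushing this limiting and gluing analysis through the non-product, noncompact geometry of $\overline{X}$: controlling bubbling into the ends, upgrading the cylindrical energy bounds to the cobordism setting, and verifying transversality and the correct mod $2$ count for the product-cylinder gluing. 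This analysis, which must also be made uniform in the parameters needed for Composition and Homotopy Invariance, occupies the bulk of the paper.
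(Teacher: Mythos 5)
Your overall route is the one the paper takes: perturb the Seiberg--Witten equations on $\overline{X}$ by $r$ times the (normalized) symplectic form so that on the ends they reduce to the contact-form-perturbed equations, use the large-$r$ identification of the filtered ECH complex with a filtered SW Floer complex, define the cobordism map by counting index-zero instantons, prove Composition by neck-stretching, and prove the Holomorphic Curves axiom by a Taubes-style limit of instantons to broken curves together with an explicit gluing over product cylinders. Two places in the sketch, however, would not go through as written.

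First, filtration preservation. A bound on $|\mc{A}(\Theta^+)-\mc{A}(\Theta^-)|$ by a constant uniform in $r$ does not imply that $\hat\Phi$ restricts to the action-$<L$ subcomplexes for any fixed $L$; what is needed is that the energy essentially cannot \emph{increase}, i.e.\ $\energy(\frak{c}_-)\le\energy(\frak{c}_+)+\delta$ with $\delta$ arbitrarily small for $r$ large, combined with the standing assumption that no orbit set has action exactly $L$. Moreover this is not a soft Stokes-type consequence of exactness on the Seiberg--Witten side: the quantity that is monotone along instantons is the perturbed Chern--Simons--Dirac functional $\frak{a}$, which can drop by $O(r)$ across the cobordism level, and converting control of $\frak{a}$ and of the index into control of the energy $\energy$ requires the spectral-flow estimates $\frak{a}^f\approx-\tfrac{r}{2}\energy$ from Taubes's work; this is the content of Proposition~\ref{prop:9x} and occupies all of \S\ref{sec:holproof}. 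Second, the chain-level identification of $ECC^L_*$ with the SW complex is only available for $L$-flat pairs, so for a general cobordism-admissible $J$ the Holomorphic Curves axiom requires replacing $(\lambda,J)$ by $L$-flat approximations $(\lambda_n,J_n)$ and then transferring the hypotheses ``no broken curve'' and ``only product cylinders'' from $J$ back to $J_n$; since $J_n\to J$ only in $C^0$ (with uniform $C^1$ bounds), this needs the nonstandard Gromov compactness argument of \S\ref{sec:gc}, which your plan does not anticipate. A smaller point: the Scaling axiom is not an immediate symmetry of the perturbed equations; in the paper it is deduced from the Composition and Homotopy Invariance axioms applied to thin product cobordisms, via Corollary~\ref{cor:product}.
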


Our proof of Theorem~\ref{thm:cob} uses Seiberg-Witten theory. While
it would be natural to try to define the maps $\Phi^L(X,\lambda)$ more
directly by counting (broken) holomorphic curves in $\overline{X}$
with ECH index $0$, there are substantial technical difficulties with this
approach; see the discussion in \cite[\S5.5]{bn}.

\begin{remark}
  The maps $\Phi^L(X,\lambda)$ respect the decomposition
  \eqref{eqn:gammadecomp} in the following sense: The image of
  $ECH_*(Y_+,\lambda_+,\Gamma_+)$ has a nonzero component in
  $ECH_*(Y_-,\lambda_-,\Gamma_-)$ only if $\Gamma_+\in H_1(Y_+)$ and
  $\Gamma_-\in H_1(Y_-)$ map to the same class in $H_1(X)$.  This
  follows from part (i) of the Holomorphic Curves axiom (or more
  simply by keeping track of the spin-c structures in the construction
  of $\Phi^L(X,\lambda)$).
\end{remark}

\begin{remark}
  Part (ii) of the Holomorphic Curves axiom includes the case where
  $\Theta^\pm$ and the product region are empty, in which case there
  is a unique holomorphic curve, namely the empty set.  It then
  follows that $\Phi^L(X,\lambda)$ sends the ECH contact invariant for
  $(Y_+,\lambda_+)$ (the class in ECH represented by the empty set of
  Reeb orbits) to the ECH contact invariant for $(Y_-,\lambda_-)$.
\end{remark}

\begin{remark}
  If we allow $Y_+$ and $Y_-$ to be disconnected, then all of
  Theorem~\ref{thm:cob} except for the Direct Limit axiom still holds,
  by a straightforward modification of the proof. (The statement of
  the Direct Limit axiom does not make sense in this case because the
  relevant Seiberg-Witten Floer cohomology needed to define the map
  $\Phi(X)$ has not been defined for disconnected three-manifolds).
\end{remark}

\begin{remark}
  We expect that Theorem~\ref{thm:cob} also holds with $\Z$ coefficients. Note that the
  cobordism maps on Seiberg-Witten Floer homology defined in \cite{km}
  depend on a choice of ``homology orientation'' of the cobordism.
  However we expect to be able to define cobordism maps on ECH without
  choosing a homology orientation, as this works in those cases where
  ECH cobordism maps can be defined by counting holomorphic curves,
  cf.\ \cite[Lem.\ A.14]{algebraic}. The Direct Limit axiom should then hold for a suitable homology orientation.
\end{remark}

\paragraph{Acknowledgments.}
The first author was partially supported by NSF grant DMS-0806037.
The second author was partially supported by the Clay Mathematics
Insitute, the Mathematical Sciences Research Institute, and the NSF.
Both authors thank MSRI for its hospitality.

\section{Seiberg-Witten Floer cohomology and contact forms}
\label{sec:swfcontact}

We now review how to define Seiberg-Witten Floer cohomology, with the
Seiberg-Witten equations perturbed by a contact form.

\subsection{Seiberg-Witten Floer cohomology}
\label{sec:swfc}

We begin by briefly reviewing the relevant parts of the definition of
Seiberg-Witten Floer cohomology.  We follow the conventions in the
book by Kronheimer-Mrowka \cite{km}, which explains the full details
of this theory.

Let $Y$ be a closed oriented (connected) 3-manifold, and let $g$ be a
Riemannian metric on $Y$.  A {\em spin-c structure\/} on $Y$ consists
of a rank $2$ Hermitian vector bundle $\Sp$ over $Y$, whose sections
are called {\em spinors\/}, together with a bundle map
$cl:TY\to\End(\Sp)$, called {\em Clifford multiplication\/}, such that
\begin{equation}
\label{eqn:clifford}
cl(a)cl(b)+cl(b)cl(a)=-2\langle a,b\rangle
\end{equation}
for $a,b\in T_yY$, and
\[
cl(e_1)cl(e_2)cl(e_3)=1
\]
when $(e_1,e_2,e_3)$ is an oriented orthonormal basis for $T_yY$.  We
denote the spin-c structure by $\frak{s}=(\Sp,cl)$.  Two spin-c
structures $(\Sp,cl)$ and $(\Sp',cl')$ are considered equivalent if
there is a bundle isomorphism $\phi:\Sp\stackrel{\simeq}{\to}\Sp'$
respecting the Clifford multiplications, meaning that
$cl'(v)\phi(\psi)=\phi(cl(v)\psi)$ for $v\in T_yY$ and $\psi\in\Sp_y$.
The set of spin-c structures is then an affine space over $H^2(Y;\Z)$.
The definition of the action is that if $e\in H^2(Y;\Z)$, then
\begin{equation}
\label{eqn:affine}
(\Sp,cl) + e \eqdef (\Sp\tensor L_e,cl\tensor 1),
\end{equation}
where $L_e$ denotes the complex line bundle with
$c_1(L_e)=e$.  If $\frak{s}=(\Sp,cl)$ is a spin-c structure,
we define $c_1(\frak{s})\eqdef c_1(\Sp)\in H^2(Y;\Z)$.  

A spin-c structure is also equivalent to a lift of the frame bundle of
$TY$ from a principal $SO(3)$-bundle to a principal $U(2)$-bundle. The
set of spin-c structures on $Y$ does not depend on the metric $g$.

\begin{example}
\label{ex:pf}
An oriented 2-plane field $\xi$ on $Y$ determines a spin-c structure
$\frak{s}_\xi$ as follows.  The spinor bundle is given by
\[
\Sp = \underline{\C}\oplus \xi,
\]
where $\underline{\C}$ denotes the trivial complex line bundle on $Y$,
and $\xi$ is regarded as a Hermitian line bundle using its orientation
and the metric on $Y$.  Clifford multiplication is defined as follows:
if $(e_1,e_2,e_3)$ is an oriented orthonormal basis for $T_yY$ such
that $(e_2,e_3)$ is an oriented orthonormal basis for $\xi_y$, then in
terms of the basis $(1,e_2)$ for $\Sp$,
\[
cl(e_1) = \begin{pmatrix}i & 0 \\ 0 & -i\end{pmatrix}, \quad\quad
cl(e_2) = \begin{pmatrix}0 & -1 \\ 1 & 0 \end{pmatrix}, \quad\quad
cl(e_3) = \begin{pmatrix}0 & i \\ i & 0\end{pmatrix}.
\]
\end{example}

Now fix a spin-c structure $(\Sp,cl)$.  A {\em spin-c
  connection\/} is a connection $\A_\Sp$ on $\Sp$ which is compatible with
Clifford multiplication in the following sense: If $v$ is a section of
$TY$ and $\psi$ is a spinor, then
\begin{equation}
\label{eqn:spincc}
\nabla_{\A_\Sp}(cl(v)\psi) = cl(\nabla v)\psi + cl(v)\nabla_{\A_\Sp}\psi,
\end{equation}
where $\nabla v$ denotes the covariant derivative of $v$ with respect
to the Levi-Civita connection. A spin-c connection $\A_\Sp$ is
equivalent to a (Hermitian) connection $\A$ on the determinant line
bundle $\det(\Sp)$.  Note that adding an imaginary-valued $1$-form $a$
to $\A$ has the effect of adding $a/2$ to $\A_\Sp$.  A spin-c connection
$\A_\Sp$, identified in this way with a connection $\A$ on $\det(\Sp)$,
determines a {\em Dirac operator\/} $D_\A$, which is defined to be the
composition
\begin{equation}
\label{eqn:dirac}
C^\infty(Y;\Sp) \stackrel{\nabla_{\A_\Sp}}{\longrightarrow}
C^\infty(Y;T^*Y\tensor\Sp) \stackrel{cl}{\longrightarrow}
C^\infty(Y;\Sp).
\end{equation}
Here Clifford multiplication is extended to cotangent vectors by using
the metric on $Y$ to identify $T^*Y$ with $TY$.

Now fix an exact 2-form $\eta$ on $Y$.  The {\em Seiberg-Witten
  equations\/} with perturbation $\eta$ concern a pair $(\A,\Psi)$, where
$\A$ is a connection on $\det(\Sp)$ and $\Psi$ is a spinor.  The
equations are
\begin{equation}
\label{eqn:sw3}
\begin{split}
D_\A\Psi &= 0,\\
*F_\A &= \tau(\Psi) + i{*}\eta.
\end{split}
\end{equation}
Here $*$ denotes the Hodge star, $F_\A$ denotes the
curvature of $\A$, and $\tau:\Sp\to iT^*Y$ is a quadratic
bundle map defined by
\[
\tau(\Psi)(v) = \langle cl(v)\Psi,\Psi\rangle
\]
for $\Psi\in\Sp_y$ and $v\in T_yY$.  A pair $(\A,\Psi)$ solves the
Seiberg-Witten equations \eqref{eqn:sw3} if and only if it is a
critical point of the functional $\frak{a}_{\eta}$ on the set of pairs
$(\A,\Psi)$ defined by
\begin{equation}
\label{eqn:aeta}
\frak{a}_{\eta}(\A,\Psi) \eqdef -\frac{1}{8}\int_Y(\A-\A_0)\wedge
(F_\A+F_{\A_0}-2i\eta) + \frac{1}{2}\int_Y\langle D_\A\Psi,\Psi\rangle.
\end{equation}
Here $\A_0$ is any reference connection on $\det(\mathbb{S})$;
changing this reference connection will add a constant to the
functional \eqref{eqn:aeta}.

The {\em gauge group\/} $\mc{G}\eqdef C^\infty(Y;S^1)$ acts on the set
of pairs $(\A,\Psi)$ by
\begin{equation}
\label{eqn:gauge}
u\cdot(\A,\Psi) \eqdef
(\A-2u^{-1}du, u\Psi),
\end{equation}
and this action preserves the set of solutions to the Seiberg-Witten equations.
Two solutions are considered equivalent if one is obtained from the
other by the action of the gauge group. A solution $(\A,\Psi)$ is
called {\em reducible\/} if $\Psi\equiv 0$, and {\em irreducible\/}
otherwise. If the exact 2-form $\eta$ is suitably generic, then there
are only finitely many irreducible solutions to \eqref{eqn:sw3} (modulo
gauge equivalence), each of which is cut out transversely in an
appropriate sense. Fix such a 2-form $\eta$.

The chain complex for defining Seiberg-Witten Floer cohomology (with
$\Z/2$ coefficients) can be decomposed into submodules (not
subcomplexes)
\[
\widehat{CM}^* =
\widehat{CM}^*_{irr} \oplus \widehat{CM}^*_{red}.
\]
Here $\widehat{CM}^*_{irr}$ is the free $\Z/2$-module generated by the
irreducible solutions, while $\widehat{CM}^*_{red}$ is a more
complicated term arising from the reducibles.  The details of the
reducible part $\widehat{CM}^*_{red}$ do not concern us here, because
soon we will be restricting attention to a certain subcomplex of
$\widehat{CM}^*$, for a particular perturbation $\eta$, which lives
entirely within $\widehat{CM}^*_{irr}$ as explained in the proof of
Lemma~\ref{lem:diff} below.

For the same reason, our primarily interest is in the part of the
chain complex differential that maps $\widehat{CM}^*_{irr}$ to itself.
To describe this, let $(\A_+,\Psi_+)$ and $(\A_-,\Psi_-)$ be two
solutions to the equations \eqref{eqn:sw3}. An {\em instanton\/} from
$(\A_-,\Psi_-)$ to $(\A_+,\Psi_+)$ is a smooth one-parameter family of
pairs $(\A(s),\Psi(s))$ parametrized by $s\in\R$, where $\A(s)$ is a
connection on $\det(\Sp)$ and $\Psi(s)$ is a spinor, satisfying the
equations
\begin{equation} \label{eqn:instanton} \begin{split}
    \frac{\partial}{\partial s}\Psi(s) &= - D_{\A(s)}\Psi(s),\\
    \frac{\partial}{\partial s}\A(s) &= - {*}F_{\A(s)} + \tau(\Psi(s)) +
    i{*}\eta,\\
\lim_{s\to\pm\infty}(\A(s),\Psi(s)) &= (\A_\pm,\Psi_\pm).
\end{split} \end{equation} A solution to these equations is a downward
gradient flow line of the functional \eqref{eqn:aeta} from
$(\A_-,\psi_-)$ to $(\A_+,\psi_+)$.  Here the metric on the space of
pairs $(\A,\Psi)$ is induced by the Hermitian inner product on $\Sp$
together with $1/4$ of the $L^2$ inner product on $\Omega^1(Y;i\R)$.
The gauge group $C^\infty(Y;S^1)$ again acts on the space of such
instantons.  Also $\R$ acts on the space of instantons by translating
the $s$ coordinate.  If $(\A_\pm,\Psi_\pm)$ are irreducible, then the
coefficient of $(\A_-,\Psi_-)$ in the differential of $(\A_+,\Psi_+)$
counts index $1$ instantons from $(\A_-,\Psi_-)$ to $(\A_+,\Psi_+)$,
modulo gauge equivalence and translation of $s$.  Here the ``index''
is the local expected dimension of the moduli space of instantons
modulo gauge equivalence.  The index defines a relative
$\Z/d(c_1(\frak{s}))$-grading on the chain complex, such that the
differential increases the grading by $1$.

All we need to know about the rest of the differential is that if
$(\A_+,\Psi_+)$ is irreducible, and if there is no index one instanton
to $(\A_+,\Psi_+)$ from a reducible solution $(\A_-,\Psi_-)$, then the
differential sends $(\A_+,\Psi_+)$ to an element of
$\widehat{CM}^*_{irr}$.  Here when $(\A_-,\Psi_-)$ is reducible, the index
is defined to be the local expected dimension of the moduli space of
instantons modulo gauge equivalence that have the same asymptotic
decay rate as $s\to-\infty$.

In general, to obtain transversality of the moduli spaces of
instantons as needed to define the differential, some ``abstract''
perturbations of equations \eqref{eqn:sw3} and \eqref{eqn:instanton}
are required.  These are described in \cite[Ch.\ 11]{km}, where a
Banach space $\mc{P}$ of such perturbations is defined.  Below, an
{\em abstract\/} perturbation is one from $\mc{P}$, a {\em small\/}
abstract perturbation is one with small $\mc{P}$-norm, and a {\em
  generic\/} abstract perturbation is one from a residual subset
(depending on context) of $\mc{P}$.  As noted previously, if the exact
$2$-form $\eta$ in \eqref{eqn:sw3} is suitably generic, then there are
only finitely many irreducible solutions to \eqref{eqn:sw3}, and these
are all cut out transversely.  For such a choice of $\eta$, the generic
abstract perturbation needed to define the differential can be chosen
to vanish to any given order on the irreducible solutions to
\eqref{eqn:sw3}, and in particular so that the generators of
$\widehat{CM}^*_{irr}$ are unchanged, i.e.\ every solution to the
perturbed version of \eqref{eqn:sw3} is a solution to the unperturbed
version and vice-versa, see \cite[\S3h, Part 5]{e1}.  When $\eta$ is
generic in this sense, we always assume that the abstract
perturbations needed to define the differential (and also the
cobordism maps reviewed in \S\ref{sec:smoothcob}) are chosen this way.
The abstract perturbations then have little conceptual role in the
arguments below, see Proposition~\ref{prop:Liso}(c) and also
\cite[Thm.\ 4.4]{e1}, so we usually suppress them from the notation.

We denote the homology of this chain complex by
$\widehat{HM}^*(Y,\frak{s};g, \eta)$. The homologies for different
choices of $(g,\eta)$ (and abstract perturbations) are canonically
isomorphic to each other.  The isomorphisms between the homologies for
different choices are a special case of the cobordism maps reviewed in
\S\ref{sec:smoothcob}. Thus the homologies for different choices are
canonically isomorphic to a single $\Z/2$-module, which is denoted by
$\widehat{HM}^*(Y,\frak{s})$.

\subsection{Perturbing the equations using a contact form}
\label{sec:pcf}

Now suppose $\lambda$ is a contact form on $Y$.  Choose an almost
complex structure $J$ on the contact planes $\xi$ as needed to
define a symplectization-admissible almost complex structure on $Y$,
see \S\ref{sec:ech}.  The choice of $\lambda$ and $J$ determine a
metric $g$ on $Y$ such that Reeb vector field $R$ has length $1$ and
is orthogonal to the contact plane field $\xi$, on which the metric is
given by
\begin{equation}
\label{eqn:1/2factor}
g(v,w)=\frac{1}{2}d\lambda(v,Jw).
\end{equation}
In this metric one has
\begin{equation}
\label{eqn:2factor}
|\lambda|=1, \quad\quad d\lambda=2{*}\lambda.
\end{equation}

\begin{remark}
\label{rmk:factor2}
The factor of $1/2$ in \eqref{eqn:1/2factor} and the factor of $2$ in
\eqref{eqn:2factor} could probably be dropped, but we have included
these factors for consistency with the papers \cite{tw1,e1} and their
sequels.
\end{remark}

With these choices made, if $\frak{s}=(\Sp,cl)$ is any spin-c
structure, then there is a canonical decomposition
\begin{equation}
\label{eqn:SEKE}
\Sp = E \oplus K^{-1}E
\end{equation}
into eigenbundles of $cl(\lambda)$, where $E$ is the $+i$ eigenbundle,
and $K^{-1}$ denotes the contact structure $\xi$, regarded as a
Hermitian line bundle via $J$. When $E$ is the trivial line bundle
$\underline{\C}$, one recovers Example~\ref{ex:pf}. In this case it
turns out that there is a distinguished connection $A_{K^{-1}}$ on
$K^{-1}$ such that $D_{A_{K^{-1}}}(1,0)=0$. In the general case, a
connection $\A$ on $\det(\Sp)=K^{-1}E^2$ can be written as
\begin{equation}
\label{eqn:connections}
\A=A_{K^{-1}}+2A
\end{equation}
where $A$ is a connection on $E$. Using \eqref{eqn:connections}, we
henceforth identify a spin-c connection with a Hermitian connection
$A$ on $E$ (instead of with a connection $\A$ on $\det(\Sp)$ as in
\S\ref{sec:swfc}), and denote its corresponding Dirac operator by $D_A$.

As in \cite{e1}, given a spin-c structure $\frak{s}$ as above, we
consider, for a connection $A$ on $E$ and a section $\psi$ of $\Sp$, the following version of the Seiberg-Witten equations:
\begin{equation}
\label{eqn:tsw3}
\begin{split}
  D_A\psi &= 0,\\
*F_A &= r(\tau(\psi)-i\lambda) - \frac{1}{2}{*}F_{A_{K^{-1}}} + i{*}\mu.
\end{split}
\end{equation}
Here $r$ is a positive real number (which below we will take to be
very large), and $\mu$ is an exact $2$-form satisfying certain
conditions described in the next paragraph.  Under the identification
\eqref{eqn:connections}, the equations \eqref{eqn:tsw3} are equivalent
to the Seiberg-Witten equations \eqref{eqn:sw3} with perturbation
\begin{equation}
\label{eqn:perturbation}
\eta=-rd\lambda+ 2\mu,
\end{equation}
if we rescale the spinor by
\begin{equation}
\label{eqn:rescaling}
\Psi=\sqrt{2r}\psi.
\end{equation}

The $2$-form $\mu$ above is a suitably generic exact smooth $2$-form
from a certain Banach space $\Omega$ of such forms defined in
\cite{e1}.  The precise details of $\Omega$ are not relevant here; we
just need to recall the following facts.  First, $\Omega$ is dense in
the space of smooth exact $2$-forms.  Also $\Omega$ is a Banach
subspace of $\mc{P}$, meaning that if $\mu\in\Omega$, then the
equations \eqref{eqn:tsw3} without futher perturbation, together with
the corresponding version of \eqref{eqn:instanton}, namely
\eqref{eqn:Tinstanton} below, consitute one of the ``abstract
perturbations'' from $\mc{P}$.  The $\mc{P}$-norm of an element of
$\Omega$ controls its derivatives to all orders.  We always assume
that the form $\mu$ in \eqref{eqn:perturbation} has $\mc{P}$-norm less
than $1$ and $C^0$ norm less than $1/100$.  Also the space $\Omega$
can be chosen so as to contain $d\lambda$, and this is assumed
below\footnote{The fact that $d\lambda\in\Omega$ will be used in the
  proof of Lemma~\ref{lem:cobinv}.}.  Finally, the spaces $\Omega$ and
$\mc{P}$ depend on the metric, and thus on the pair $(\lambda,J)$.
However $\Omega$ and $\mc{P}$ can be chosen so as to define smooth
Banach space bundles over the space of metrics.  We do not indicate
this dependence of $\Omega$ and $\mc{P}$ on the metric in the notation
below.

The version of the Seiberg-Witten Floer chain complex obtained from
solutions to \eqref{eqn:tsw3} for a given data set $(\lambda,J,r,\mu)$
and abstract perturbation from $\mc{P}$ (if necessary to obtain
suitable transversality) is denoted below by
$\widehat{CM}^*(Y,\frak{s};\lambda,J,r)$.  Here we are suppressing
$\mu$ and the abstract perturbation from the notation.  The
corresponding Seiberg-Witten Floer cohomology is denoted by
$\widehat{HM}^*(Y,\frak{s};\lambda,J,r)$.  The irreducible component
$\widehat{CM}^*_{irr}$ of the chain complex is generated by
irreducible solutions to \eqref{eqn:tsw3}.  If
$(A_+,\psi_+)$ and $(A_-,\psi_-)$ are two such irreducible solutions,
then the componenent of $(A_-,\psi_-)$ in the differential of
$(A_+,\psi_+)$ counts index $1$ solutions to a correspondingly
perturbed version of the equations
\begin{equation} \label{eqn:Tinstanton} \begin{split}
    \frac{\partial}{\partial s}\psi(s) &= - D_{A(s)}\psi(s),\\
    \frac{\partial}{\partial s}A(s) &= - {*}F_{A(s)} +
    r(\tau(\psi(s))-i\lambda) - \frac{1}{2}{*}F_{A_{K^{-1}}} +
    i{*}\mu,\\
\lim_{s\to\pm\infty}(A(s),\psi(s)) &= (A_\pm,\psi_\pm),
\end{split} \end{equation}
modulo gauge equivalence and $s$-translation.

\subsection{The energy filtration}
\label{sec:energyfiltration}

When $r$ above is large, the chain complex $\widehat{CM}^*$ has (up to
some level) a filtration analogous to the symplectic action filtration
on ECH.  This works as follows.  If $(A,\psi)$ is a solution to
\eqref{eqn:tsw3}, define the {\em energy\/}
\begin{equation}
\label{eqn:energy}
\energy(A) \eqdef i\int_Y \lambda\wedge F_A.
\end{equation}

The idea is that given an ECH generator $\Theta$, if $r$ is
sufficiently large then there is a corresponding irreducible solution
$(A,\psi)$ to \eqref{eqn:tsw3} for which the zero set of the $E$ component of $\psi$ (see equation \eqref{eqn:SEKE}) is close to the Reeb
orbits in $\Theta$, the curvature $F_A$ is concentrated in a radius $O(r^{-1/2})$
neighborhood of the Reeb orbits in $\Theta$, and the energy
$\energy(A)$ is approximately $2\pi\mc{A}(\Theta)$.

This motivates defining the
following analogue of the filtered ECH chain complex $ECC_*^L$ from
\S\ref{sec:filtered}: If $L$ is a real number, define
$\widehat{CM}^*_L$ to be the submodule of $\widehat{CM}_{irr}^*$
generated by irreducible solutions $(A,\psi)$ to \eqref{eqn:tsw3} with
$\energy(A)<2\pi L$.

\begin{lemma}
\label{lem:diff}
Fix $Y,\lambda,J$ as above and $L\in\R$.  Suppose that $\lambda$ has
no orbit set of action exactly $L$. Fix $r$ sufficiently large, and a
$2$-form $\mu\in\Omega$ with $\mc{P}$-norm less than $1$ so that all
irreducible solutions to \eqref{eqn:tsw3} are cut out transversely.
Then for every $\frak{s}$ and for every sufficiently small generic
abstract perturbation:
\begin{description}
\item{(a)}
$\widehat{CM}_L^*(Y,\frak{s};\lambda,J,r)$ is a subcomplex of
$\widehat{CM}^*(Y,\frak{s};\lambda,J,r)$.
\item{(b)} If $L'<L$ and if there is no orbit set with action in the
  interval $[L',L]$, then the inclusion
\[
\widehat{CM}_{L'}^*(Y,\frak{s};\lambda,J,r) \longrightarrow
\widehat{CM}_L^*(Y,\frak{s};\lambda,J,r)
\]
is an isomorphism of chain complexes.
\end{description}
\end{lemma}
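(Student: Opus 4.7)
The plan is to exploit two analytic inputs from Taubes's work \cite{e1}: (i) an energy--action correspondence asserting that for $r$ sufficiently large, every irreducible solution $(A,\psi)$ of \eqref{eqn:tsw3} admits an associated orbit set $\Theta$ with $|\energy(A) - 2\pi\mc{A}(\Theta)| = O(1/r)$; and (ii) an approximate monotonicity of $\energy$ along instantons. For (ii), at a solution of \eqref{eqn:tsw3} the Dirac term in \eqref{eqn:aeta} vanishes, and a short computation using \eqref{eqn:connections}, the identity $\int_Y \lambda \wedge F_A = -i\energy(A)$, and integration by parts expresses $\frak{a}_\eta$ at a solution as $-(r/2)\energy(A)$ plus a Chern--Simons-type correction controlled independently of $r$. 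Since $\frak{a}_\eta$ is non-increasing along a downward gradient instanton from $(A_-,\psi_-)$ at $s=-\infty$ to $(A_+,\psi_+)$ at $s=+\infty$, this yields $\energy(A_-) \leq \energy(A_+) + O(1/r)$.

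For part (a), the hypothesis that no orbit set has action exactly $L$, combined with $L$-nondegeneracy (under which the action spectrum is locally finite below $L+1$), produces a constant $\delta_0>0$ such that no orbit set has action in $[L-\delta_0, L+\delta_0]$. By (i), for $r$ large this yields an \emph{energy gap}: no irreducible solution has $\energy(A) \in [2\pi L - \pi\delta_0,\, 2\pi L + \pi\delta_0]$. In particular, any generator $(A_+,\psi_+)$ of $\widehat{CM}_L^*$ satisfies $\energy(A_+) < 2\pi L - \pi\delta_0$. If $(A_-,\psi_-)$ is an irreducible generator with nonzero coefficient in the differential of $(A_+,\psi_+)$, then (ii) with $r$ chosen so that the $O(1/r)$ error is below $\pi\delta_0$ gives $\energy(A_-) < 2\pi L$, and the energy gap forces $\energy(A_-) < 2\pi L - \pi\delta_0$; hence $(A_-,\psi_-) \in \widehat{CM}_L^*$. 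To complete the subcomplex property one must also rule out components of the differential of $(A_+,\psi_+)$ on reducible generators, i.e.\ index-$1$ instantons from reducibles into $(A_+,\psi_+)$; this is handled by the arguments in \cite{e1} showing that for $r$ large, reducible solutions sit at values of $\frak{a}_\eta$ well separated from those of low-energy irreducibles. The small-$\mc{P}$-norm assumption on the abstract perturbation preserves all of these estimates.

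Part (b) is an immediate corollary of (i): the hypothesis that no orbit set has action in $[L',L]$ implies via (i) that for $r$ sufficiently large there is no irreducible solution with $\energy(A) \in [2\pi L',\, 2\pi L]$, so the generators of $\widehat{CM}_{L'}^*$ and $\widehat{CM}_L^*$ coincide and the inclusion is a tautological isomorphism of chain complexes. The main obstacle will be packaging the estimates (i) and (ii) with uniform constants as $r\to\infty$, together with the reducible-separation estimate, so that a single threshold $r_0$ depending only on $(Y,\lambda,J,L)$ suffices for all three conditions; these ingredients are essentially present in \cite{e1}, but some care is needed to verify that the abstract perturbation from $\mc{P}$ can be chosen small enough that neither the location of irreducible generators nor the instanton monotonicity is spoiled.
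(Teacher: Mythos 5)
Your overall strategy---combine the energy/action correspondence for solutions of \eqref{eqn:tsw3} with an approximate monotonicity of $\energy$ along instantons, then exploit a gap $\delta_0$ in the (discrete) action spectrum below $L$---is precisely the substance of the results the paper invokes: the proof in the text consists of the observation that reducible solutions have energy growing linearly in $r$, a citation of the first bullet of Theorem 4.4 of \cite{e1} for part (a) and of Proposition 4.12 of \cite{e1} for part (b), and an appeal to Lemma~\ref{lem:diff2} to make the threshold on $r$ uniform over spin-c structures. So you are reconstructing the cited results rather than taking a different route, and the skeleton of your reconstruction is the right one.

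There are, however, two points to flag, the first of which is a genuine gap in the argument as you present it. Your justification of the monotonicity (ii) is incorrect as stated: at a solution the Dirac term drops and $\frak{a}_\eta$ equals $\frac{1}{2}cs(A)-\frac{r}{2}\energy(A)+\frak{e}_\mu(A)$ up to an additive constant, but the Chern--Simons term is \emph{not} ``controlled independently of $r$'' by a short computation. A priori one only gets $cs(A)=O(r^2)$ from the pointwise bound $|F_A|=O(r)$, and the improvement to $|cs(A)|\le c\,r^{2/3}(\ln r)^{c}$ for solutions of bounded energy is one of the central hard estimates of \cite{tw1,e1}; it is quoted in this paper as \eqref{eqn:invoke1} and in Lemma~\ref{lem:3.5x}. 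The conclusion you need survives, since an $o(r)$ bound on $cs$ still gives $\energy(A_-)\le\energy(A_+)+o(1)$ as $r\to\infty$, which clears the fixed gap $\pi\delta_0$; but the claimed $O(1/r)$ error and its proposed derivation are wrong, and without the Taubes estimate this step fails. (Relatedly, $\frak{a}_\eta$ and $cs$ are not gauge invariant when $c_1(\frak{s})$ is non-torsion, so one must work with the gauge-invariant combination $\frak{a}^f$ of \eqref{eqn:af} and use the index-one hypothesis to control the spectral flow via \eqref{eqn:if}.) Second, the lemma asserts that a single sufficiently large $r$ works \emph{for every} $\frak{s}$, whereas your thresholds could a priori depend on the spin-c structure; the paper disposes of this with Lemma~\ref{lem:diff2}, which shows that only a bounded number of spin-c structures admit solutions of energy less than $2\pi L$, so one may take the maximum of finitely many thresholds.
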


\begin{proof}
First observe that if $r$ is sufficiently large then all solutions
  $(A,\psi)$ to the perturbed Seiberg-Witten equations \eqref{eqn:tsw3}
  with $\energy(A)<2\pi L$ are irreducible, because it follows from
  \eqref{eqn:tsw3} that the energy of a reducible solution $(A,0)$ to
  \eqref{eqn:tsw3} is a linear, increasing function of $r$.

  Now if we fix the spin-c structure $\frak{s}$, then part (a) of the
  lemma follows from the first bullet in \cite[Thm.\ 4.4]{e1}, and
  part (b) of the lemma follows from \cite[Prop.\ 4.12]{e1}.
  Lemma~\ref{lem:diff} (without the spin-c structure fixed) then
  follows from Lemma~\ref{lem:diff2} below.
\end{proof}

\begin{lemma}
\label{lem:diff2}
Given a real number $\mc{E}$, there exists an integer $\kappa$ such that if $r$
is sufficiently large, then at most $\kappa$ spin-c structures admit
solutions $(A,\psi)$ to \eqref{eqn:tsw3} with $\energy(A)<\mc{E}$.
\end{lemma}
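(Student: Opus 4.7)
The strategy is to show that the energy bound $\energy(A) < \mc{E}$ constrains $c_1(\frak{s}) \in H^2(Y;\Z)$ to a subset of cardinality at most some $\kappa = \kappa(\mc{E})$, uniformly in $r$ and in $\frak{s}$. Under the splitting $\Sp = E \oplus K^{-1}E$ of \eqref{eqn:SEKE} we have $c_1(\frak{s}) = -c_1(K) + 2c_1(E)$, and since two spin-c structures with the same $c_1$ differ by a $2$-torsion element of the finitely generated group $H^2(Y;\Z)$, the problem reduces to bounding $c_1(E)$ modulo torsion. This in turn reduces to bounding the integer pairings
\[
c_1(E) \cdot [\Sigma_j] = \frac{i}{2\pi}\int_Y F_A \wedge \eta_j,
\]
uniformly in $r$ and $\frak{s}$, where the $\eta_j$ are finitely many fixed smooth closed $1$-forms Poincar\'e dual to a generating set of $H_2(Y;\Z)/\op{Torsion}$.

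Using the second equation in \eqref{eqn:tsw3} together with $d\lambda = 2{*}\lambda$, one has the pointwise identity $F_A = r\cdot{*}\tau(\psi) - \tfrac{ir}{2}d\lambda + O(1)$. The central observation is that $d\lambda$ is exact, so for each closed $\eta_j$ on the closed manifold $Y$, $\int_Y d\lambda \wedge \eta_j = -\int_Y d(\lambda \wedge \eta_j) = 0$ by Stokes, and hence
\[
\int_Y F_A \wedge \eta_j = r\int_Y {*}\tau(\psi)\wedge \eta_j + O(\|\eta_j\|_{L^\infty}),
\]
with the $O(1)$ depending only on $(Y,\lambda,J,\mu)$. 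Writing $\psi = (\alpha,\beta)$ in the splitting, the Reeb component of $\tau(\psi)$ is $i(|\alpha|^2 - |\beta|^2)$, which equals $i - 2i|\beta|^2 + O(1/r)$ by Taubes's pointwise bound $|\psi|^2 \le 1 + c_0/r$ from \cite{e1}. The same Stokes identity $\int_Y \langle\lambda,\eta_j\rangle\,d\op{vol} = \tfrac{1}{2}\int_Y \eta_j\wedge d\lambda = 0$ cancels the leading $r$-linear divergence from the ``$1$''-term, so the Reeb contribution collapses to $-2ir\int_Y |\beta|^2\langle\eta_j,\lambda\rangle\,d\op{vol} + O(1)$, bounded by $2\|\eta_j\|_{L^\infty}\cdot r\|\beta\|_{L^2}^2$. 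The energy identity $\energy(A) = r\int_Y(1-|\alpha|^2+|\beta|^2)\,d\op{vol} + O(1)$ (obtained by wedging \eqref{eqn:tsw3} with $\lambda$ and using $d\lambda = 2{*}\lambda$) then gives $r\|\beta\|_{L^2}^2 \le C(\mc{E})$.

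The remaining $\xi$-contribution is controlled pointwise by $r\int_Y|\alpha||\beta|\,d\op{vol}$, and the main obstacle is that the naive Cauchy--Schwarz estimate only yields $O(\sqrt{r})$, which is insufficient. A uniform bound $r\int_Y |\alpha||\beta|\,d\op{vol} \le C(\mc{E})$ is needed, and this requires Taubes's refined analysis from \cite{e1}: the component $\beta$ decays exponentially on the $\sqrt{r}$-scale away from the zero locus of $\alpha$, and $\alpha\bar\beta$ satisfies a Cauchy--Riemann-type equation arising from $D_A\psi = 0$ which, via integration by parts against appropriate cutoffs, converts the dangerous integral into quantities controlled by $\mc{E}$ and the geometric data alone. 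Granting this estimate, we obtain $|c_1(E)\cdot[\Sigma_j]|\le C'(\mc{E})$ for each $j$, yielding finitely many possible $c_1(E)$ modulo torsion, and hence the desired $\kappa = \kappa(\mc{E})$.
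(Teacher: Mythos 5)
Your overall route is the same as the paper's: the paper bounds the $L^1$ norm of $F_A$ by $c_0+c_1\energy(A)$ using the curvature equation in \eqref{eqn:tsw3} together with the a priori estimates on $\psi$ from \cite[Lem.\ 2.3]{e4}, and then observes that this bounds the pairings of $c_1(\frak{s})$ with generators of $H_2(Y)$, leaving only finitely many possibilities. Your reduction to the pairings $\frac{i}{2\pi}\int_Y F_A\wedge\eta_j$, the use of exactness of $d\lambda$, and the finiteness discussion for $c_1^{-1}$ of a bounded set are all fine. However, there are two problems in the analytic core.

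First, the pointwise identity ``$i(|\alpha|^2-|\beta|^2)=i-2i|\beta|^2+O(1/r)$'' is false: the bound $|\psi|^2\le 1+c_0/r$ is only an upper bound, and $|\alpha|$ is not close to $1$ near $\alpha^{-1}(0)$. The correct decomposition is $|\alpha|^2-|\beta|^2=1-(1-|\alpha|^2)-|\beta|^2$, which introduces the extra term $-r\int_Y(1-|\alpha|^2)\langle\eta_j,\lambda\rangle$; this is not $O(1)$ but it is bounded by $c\,\|\eta_j\|_\infty(1+\energy(A))$ via the energy identity, so your conclusion survives even though the intermediate statement does not. Second, and more seriously, the crucial estimate $r\int_Y|\alpha||\beta|\le C(\mc{E})$ is not proved — you write ``Granting this estimate'' — and the machinery you propose for it (exponential decay of $\beta$ away from $\alpha^{-1}(0)$, a Cauchy--Riemann equation for $\alpha\bar\beta$ with cutoffs) is both unexecuted and unnecessary. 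What is actually needed is precisely the a priori estimate the paper cites: $|\beta|^2\le\kappa r^{-1}(1-|\alpha|^2)+\kappa^2r^{-2}$ pointwise (the three-dimensional analogue of Lemma~\ref{lem:3.1x}, i.e.\ \cite[Lem.\ 2.2]{tw1}, \cite[Lem.\ 2.3]{e4}). This gives $r|\alpha||\beta|\le c\,r^{1/2}(1-|\alpha|^2)^{1/2}+c$ pointwise, and then Cauchy--Schwarz over $Y$ together with $r\int_Y(1-|\alpha|^2)\le c(1+\energy(A))$ yields $r\int_Y|\alpha||\beta|\le c(1+\energy(A))^{1/2}+c$, uniformly in $r$ and $\frak{s}$. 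Your ``naive Cauchy--Schwarz gives $O(\sqrt r)$'' diagnosis only applies if one uses the $L^2$ bound on $\beta$ alone; the pointwise relation between $|\beta|$ and $1-|\alpha|^2$ is the missing ingredient, and with it the proof closes without any refined analysis.
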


\begin{proof}
  Let $(A,\psi)$ be a solution to \eqref{eqn:tsw3} for some spin-c structure $\frak{s}$. It follows from the curvature equation in \eqref{eqn:tsw3} and the a
priori estimates on $\psi$ in \cite[Lem.\ 2.3]{e4} that if $r$ is
sufficiently large, then the $L^1$ norm of $F_A$ over
$Y$ is bounded by $c_0+c_1\energy(A)$, where $c_0$ and $c_1$ are independent of
$r$ and $\frak{s}$.  This implies the lemma,
because a bound on the $L^1$ norm of $F_A$ gives an upper bound on
the absolute values of the pairings of $c_1(\frak{s})$ with a set of
generators for $H_2(Y)$.
\end{proof}

When Lemma~\ref{lem:diff} is applicable, we denote the homology of the
subcomplex $\widehat{CM}_L^*(Y,\frak{s};\lambda,J,r)$ of
$\widehat{CM}^*(Y,\frak{s};\lambda,J,r)$ by
$\widehat{HM}^*_L(Y,\frak{s};\lambda,J,r)$.  If $r$ is larger than
some $(\lambda,J)$-dependent constant, then this homology does not
depend on the $2$-form $\mu$, the small abstract perturbation, or $r$.
This follows from \cite[Lem.\ 4.6]{e1}, and a generalization is proved
in Lemma~\ref{lem:deform} below.  We always assume that $r$ is
sufficiently large as above so that
$\widehat{HM}^*_L(Y,\frak{s};\lambda,J,r)$ is well-defined and
independent of $r$, although we keep $r$ in the notation.  We will see
in \S\ref{sec:iso} that this homology is isomorphic to
$ECH^L_{-*}(Y,\lambda,\op{PD}(\frak{s}-\frak{s}_\xi);J)$.

\section{SWF cohomology and (filtered) ECH}
\label{sec:iso}

We now explain the relation between filtered ECH and the filtered
version of Seiberg-Witten Floer cohomology defined in
\S\ref{sec:energyfiltration}.  Along the way we review the
construction of the isomorphism \eqref{eqn:echswfJ} between ECH and
$\widehat{HM}^*$ and prove Theorem~\ref{thm:FECH} on the
$J$-independence of filtered ECH.

\subsection{L-flat approximations}

In order to define a chain map from the filtered ECH chain complex
to the Seiberg-Witten Floer chain complex, it is convenient to modify
the pair $(\lambda,J)$ so that it has a certain nice form in a tubular
neighborhood of each Reeb orbit of symplectic action less than $L$.
Specifically, we say that the pair $(\lambda,J)$ is {\em $L$-flat\/}
if near each Reeb orbit of length less than $L$ it satisfies the
conditions in \cite[Eq.\ (4.1)]{e1}.  (We do not need to recall these
conditions in detail here.)  The reasons for introducing this
condition are discussed in \cite[\S5c, Part 2]{e1}.  In particular, we
have the following key fact:

\begin{proposition}
\label{prop:Liso}
Fix $Y,\lambda,J$ and $L\in\R$.  Suppose that $\lambda$ is
$L$-nondegenerate, see Definition~\ref{def:Lnondeg}.  Then for all $r$
sufficiently large, and for all $\Gamma\in H_1(Y)$, the following
hold:
\begin{description}
\item{(a)} There is a canonical map from the set of generators of
  $\widehat{CM}^{*}_L(Y,\frak{s}_{\xi,\Gamma};\lambda,J,r)$
  to the set of orbit sets in the homology class $\Gamma$ of length
  less than $L$.
\item{(b)} If $(\lambda,J)$ is $L$-flat, then the generators of
  $\widehat{CM}^{*}_L(Y,\frak{s}_{\xi,\Gamma};\lambda,J,r)$ are cut
  out transversely, and the map in part (a) is a bijection from the
  set of these generators to the set of {\em admissible\/} orbit sets
  in the homology class $\Gamma$ of length less than $L$.
\item{(c)} Suppose $(\lambda,J)$ is $L$-flat and $J$ is
  $ECH^L$-generic.  Fix a $2$-form $\mu$ from $\Omega$ with
  $\mc{P}$-norm less than $1$, and fix a small generic abstract
  perturbation. Then the bijection in part (b) induces an isomorphism
  of relatively graded chain complexes
\begin{equation}
\label{eqn:Liso}
ECC_*^L(Y,\lambda,\Gamma;J) \stackrel{\simeq}{\longrightarrow}
\widehat{CM}_L^{-*}(Y,\frak{s}_{\xi,\Gamma};\lambda,J,r).
\end{equation}
\end{description}
\end{proposition}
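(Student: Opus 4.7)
The plan is to assemble parts (a)--(c) from the large-$r$ analysis of solutions to the perturbed Seiberg-Witten equations \eqref{eqn:tsw3} carried out by Taubes in \cite{e1,e2,e3,e4}, applied in the energy-filtered setting. The $L$-nondegeneracy hypothesis guarantees that the filtered subcomplex is well-defined (via Lemma~\ref{lem:diff}) and that only Reeb orbits governed by the $L$-flat model enter the picture, while $L$-flatness and $ECH^L$-genericity are activated in parts (b) and (c) respectively.

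For part (a), I would begin with an irreducible solution $(A,\psi)$ to \eqref{eqn:tsw3} with $\energy(A)<2\pi L$ and, following the a priori estimates in \cite[\S2]{e4} and \cite[\S3]{e1}, show that the zero set of the $E$-component $\alpha$ of $\psi$ concentrates in a tubular neighborhood of radius $O(r^{-1/2})$ of a finite union of embedded Reeb orbits $\Theta_i$. The local degree of $\alpha$ along each $\Theta_i$ assigns a positive integer multiplicity $m_i$, producing the orbit set $\Theta=\{(\Theta_i,m_i)\}$. A Chern-Weil computation gives $\energy(A)=2\pi\sum_i m_i\int_{\Theta_i}\lambda+o(1)$ as $r\to\infty$, so the energy bound forces $\mc{A}(\Theta)<L$, and the identification $c_1(E)=\op{PD}(\Gamma)$ extracted from $\frak{s}_{\xi,\Gamma}=\frak{s}_\xi+\op{PD}(\Gamma)$ pins down the homology class.

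For part (b), the $L$-flat hypothesis reduces the Seiberg-Witten equations in a tubular neighborhood of each orbit of length less than $L$ to a fiberwise vortex equation whose solution space is explicitly understood. Given an admissible orbit set $\Theta$ in class $\Gamma$ with $\mc{A}(\Theta)<L$, placing a vortex of the correct degree along each $\Theta_i$ yields an approximate solution that the implicit function theorem perturbs to a unique, transversely cut out actual solution. The admissibility hypothesis is essential: along a hyperbolic Reeb orbit with multiplicity greater than one, a cokernel obstruction prevents the vortex model from being perturbed to a genuine solution, so non-admissible orbit sets do not appear in the image of part (a). This yields the claimed bijection.

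For part (c), the Seiberg-Witten differential counts index-$1$ solutions of \eqref{eqn:Tinstanton} modulo gauge and $\R$-translation, while the ECH differential counts ECH-index-$1$ $J$-holomorphic curves. Under $L$-flatness and $ECH^L$-genericity, the correspondence theorems of \cite{e2,e3,e4} match these two moduli spaces (mod gauge and $\R$-translation) for endpoints of action less than $L$, which yields the chain isomorphism \eqref{eqn:Liso}; the grading reversal reflects the fact that the Seiberg-Witten complex is cohomological while the ECH complex is homological. The main obstacle is precisely this matching of moduli spaces, which rests on a delicate compactness theorem (instantons concentrate on broken holomorphic curves as $r\to\infty$) combined with a gluing theorem (each ECH-index-$1$ broken holomorphic curve admits a unique one-parameter family of desingularizing instantons for large $r$). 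Compatibility with the energy filtration relies on the hypothesis that $L$ lies outside the action spectrum of $\lambda$, so that instantons cannot cross the filtration; this is exactly the content of Lemma~\ref{lem:diff}(b).
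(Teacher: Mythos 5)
Your proposal takes essentially the same route as the paper, whose proof simply cites Taubes: part (a) is \cite[\S6]{tw1}, part (b) is \cite[Thm.\ 4.2]{e1}, and part (c) is the second bullet of \cite[Thm.\ 4.4]{e1}, and your sketch is in effect a description of the content of those results (concentration of $\alpha^{-1}(0)$ near Reeb orbits, the vortex model over $L$-flat neighborhoods, and the instanton--holomorphic curve correspondence). One minor slip: the statement that instantons cannot cross the energy filtration (i.e.\ that $\widehat{CM}^*_L$ is a subcomplex) is Lemma~\ref{lem:diff}(a), not Lemma~\ref{lem:diff}(b), which instead compares the subcomplexes for two filtration levels with no intervening actions.
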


\begin{proof}
  Part (a) follows directly from \cite[\S6]{tw1}.  Part (b) follows
  from \cite[Thm.\ 4.2]{e1}, and part (c) follows from the second
  bullet of \cite[Thm.\ 4.4]{e1}.
\end{proof}

The basic picture for part (a) is that when $r$ is large, generators
$(A,\psi)$ of $\widehat{CM}_L^*$ have $F_A$ concentrated near a
collection of Reeb orbits as described preceding Lemma~\ref{lem:diff},
and this defines the map from generators of $\widehat{CM}_L^*$ to
orbit sets with symplectic action less than $L$.  The idea for part
(c) is then that the instantons that define the differential on the
right hand side of \eqref{eqn:Liso} correspond in a similar manner to
the holomorphic curves that define the differential on the left hand
side of \eqref{eqn:Liso}.

To make use of the above proposition, we need to suitably approximate an arbitrary pair $(\lambda,J)$ by an $L$-flat pair.

\begin{definition}
\label{def:lfa}
Let $\lambda$ be an $L$-nondegenerate contact form, and let $J$ be an
$ECH^L$-generic symplectization-admissible almost complex structure
for $\lambda$.  An {\em $L$-flat approximation\/} to $(\lambda,J)$ is
an $L$-flat pair $(\lambda_1,J_1)$ which is the endpoint of a smooth
homotopy $\{(\lambda_t,J_t)\mid t\in[0,1]\}$ with
$(\lambda_0,J_0)=(\lambda,J)$ such that:
\begin{description}
\item{(i)} For each $t\in[0,1]$, $\lambda_t$ is an $L$-nondegenerate
  contact form, and $J_t$ is an $ECH^L$-generic
  symplectization-admissible almost complex structure for $\lambda_t$.
\item{(ii)} The Reeb orbits of $\lambda_t$ with length less than $L$, and
  their lengths, do not depend on $t$.
\end{description}
\end{definition}

We will see in Lemma~\ref{lem:plfa} below that $L$-flat approximations
always exist.  Note that if $\{(\lambda_t,J_t)\}$ is a homotopy as in
Definition~\ref{def:lfa}, then by condition (i) there is a canonical
isomorphism of chain complexes
\begin{equation}
\label{eqn:clf}
ECC_*^L(Y,\lambda,\Gamma;J) \stackrel{\simeq}{\longrightarrow}
ECC_*^L(Y,\lambda_1,\Gamma;J_1),
\end{equation}
induced by the canonical identification of generators from condition
(ii).  Combining this with the isomorphism \eqref{eqn:Liso} for
$(\lambda_1,J_1)$, we conclude that if $(\lambda_1,J_1)$ is an
$L$-flat approximation to $(\lambda,J)$, and if $r$ is sufficiently
large, then there is a canonical isomorphism of chain complexes
\begin{equation}
\label{eqn:Liso2}
ECC_*^L(Y,\lambda,\Gamma;J) \stackrel{\simeq}{\longrightarrow}
\widehat{CM}_L^{-*}(Y,\frak{s}_{\xi,\Gamma};\lambda_1,J_1,r).
\end{equation}

\subsection{Deforming $\lambda$ and $J$}

We now state and prove a key lemma regarding the behavior of
$\widehat{HM}^*_L$ under certain special deformations of $\lambda$ and
$J$.

\begin{definition}
\label{def:ad}
  An {\em admissible deformation\/} is a smooth $1$-parameter family
  $\rho=\{(\lambda_t,L_t,J_t,r_t)\mid t\in[0,1]\}$ such that for all
  $t\in[0,1]$:
\begin{itemize}
\item
$\lambda_t$ is an $L_t$-nondegenerate contact form on $Y$.
\item
$J_t$ is a symplectization-admissible almost complex structure for
$\lambda_t$.
\item $r_t$ is a positive real number.
\end{itemize}
\end{definition}

The following is a slight generalization of \cite[Lemmas 4.6 and 4.16]{e1}.

\begin{lemma}
\label{lem:deform}
Let $\rho=\{(\lambda_t,L_t,J_t,r_t)\mid t\in[0,1]\}$ be an admissible
deformation.  If the real numbers $\{r_t\}$ are sufficiently large,
then for any $\frak{s}\in\Spinc(Y)$, the admissible deformation $\rho$
induces an isomorphism
\begin{equation}
\label{eqn:Phirho}
\Phi_\rho:\widehat{HM}^*_{L_0}(Y,\frak{s};\lambda_0,J_0,r_0)
\stackrel{\simeq}{\longrightarrow}
\widehat{HM}^*_{L_1}(Y,\frak{s};\lambda_1,J_1,r_1)
\end{equation}
with the following properties:
\begin{description}
\item{(a)}
$\Phi_\rho$ is invariant under homotopy of admissible deformations.
\item{(b)}
If $\rho_1$ and $\rho_2$ are composable admissible deformations, then
$\Phi_{\rho_1\circ\rho_2}=\Phi_{\rho_1}\circ\Phi_{\rho_2}$.
\item{(c)}
The diagram
\begin{equation}
\label{eqn:mwh}
\begin{CD}
\widehat{HM}^*_{L_0}(Y,\frak{s};\lambda_0,J_0,r) @>{\Phi_\rho}>>
\widehat{HM}^*_{L_1}(Y,\frak{s};\lambda_1,J_1,r) \\
@VVV @VVV\\
\widehat{HM}^*(Y,\frak{s};\lambda_0,J_0,r) @>>>
\widehat{HM}^*(Y,\frak{s};\lambda_1,J_1,r) \\
\end{CD}
\end{equation}
commutes, where the vertical arrows are induced by the inclusions of
chain complexes, and the bottom arrow is the canonical isomorphism
on Seiberg-Witten Floer cohomology.
\item{(d)} If for all $t\in[0,1]$, the pair $(\lambda_t,J_t)$ is
  $L_t$-flat and $J_t$ is $ECH^{L_t}$-generic, then under the isomorphism
  \eqref{eqn:Liso2}, the map $\Phi_\rho$ is induced by the isomorphism
  of chain complexes
\[
ECC_{-*}^{L_0}(Y,\lambda_0,PD(\frak{s}-\frak{s}_\xi);J_0) \longrightarrow
ECC_{-*}^{L_1}(Y,\lambda_1,PD(\frak{s}-\frak{s}_\xi);J_1)
\]
determined by the canonical bijection on generators.
\end{description}
\end{lemma}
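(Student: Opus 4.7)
The plan is to construct $\Phi_\rho$ by subdividing $[0,1]$ into short pieces and building, on each piece, a continuation chain map from an $s$-dependent Seiberg-Witten equation, in the spirit of \cite[Lem.~4.6]{e1}. The underlying point is that when $r_t$ is large, the generators of $\widehat{CM}^*_{L_t}$ are in bijection with orbit sets of action less than $L_t$ (Proposition~\ref{prop:Liso}(a)), and this bijection is stable under small deformations because no orbit set has action exactly $L_t$.

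First I would use $L_t$-nondegeneracy and compactness of $[0,1]$ to choose a subdivision $0=t_0<t_1<\cdots<t_N=1$ and a $\delta>0$ with the property that on each subinterval $[t_i,t_{i+1}]$, every embedded Reeb orbit of $\lambda_t$ of action less than $L_t+\delta$ deforms smoothly in $t$, remains nondegenerate, and keeps its action on a definite side of $L_t$. On each subinterval I would construct a continuation chain map by choosing a smooth $s$-dependent interpolation of $(\lambda_t,J_t,r_t,\mu_t)$ that is constant for $|s|$ large and that agrees with the $t_i$ and $t_{i+1}$ data as $s\to\mp\infty$, and by counting index-zero solutions (modulo gauge) of the $s$-dependent analog of \eqref{eqn:Tinstanton}. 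The energy estimate from \cite[Lem.~4.6]{e1} generalizes to show that $\energy(A(s))$ varies along such an instanton by an amount less than $2\pi\delta$, provided $r_t$ is large and the $s$-interpolation is sufficiently slow. Consequently the continuation map preserves the filtration. Composing over all subintervals yields $\Phi_\rho$. That $\Phi_\rho$ is an isomorphism follows by composing with the continuation map of the reversed deformation and building a chain homotopy to the identity via a two-parameter interpolation that contracts to the constant-in-$s$ family.

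For the properties: (a) uses the same chain-homotopy argument applied to a two-parameter family of admissible deformations connecting the two given ones; (b) follows from concatenating subdivisions; (c) holds because each filtered continuation map is the restriction of an unfiltered Seiberg-Witten continuation map, and the latter assemble into the canonical isomorphism of \cite{km} that is precisely the bottom arrow of \eqref{eqn:mwh}; (d) requires that when $(\lambda_t,J_t)$ is $L_t$-flat and $ECH^{L_t}$-generic throughout, the continuation map acts as the identity on generators under the identification furnished by Proposition~\ref{prop:Liso}(b) together with the smooth continuation of short Reeb orbits. This reduces to the local statement that, in an $L_t$-flat tubular neighborhood of a short Reeb orbit, the $s$-dependent interpolated instanton equation is close to its $s$-translation-invariant local model, whose only small-energy solution modulo gauge is the trivial one, so the count reproduces the canonical bijection.

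The main obstacle is the sharp energy estimate: one must show that along the $s$-dependent interpolation the change in $\energy(A(s))$ is strictly less than $2\pi\delta$. This requires combining the $s$-dependent curvature equation with the a priori spinor bounds from \cite{e1,e4}, and exploiting the freedom to take $r_t$ large and the $s$-interpolation slow on each subinterval. Once this estimate is in place, the filtered continuation map is well-defined, and the rest of the argument proceeds by standard Floer-theoretic chain-homotopy techniques.
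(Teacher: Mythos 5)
Your construction of $\Phi_\rho$ and your treatment of (a)--(c) follow essentially the same route as the paper: subdivide $[0,1]$ so that no orbit set has action near $L_t$ on each piece, use the continuation chain maps of \cite[\S3h]{e1} together with the energy estimate of \cite[Lem.\ 4.6]{e1} to see that they preserve the filtration and induce isomorphisms on $\widehat{HM}^*_L$, compose with the inclusion-induced isomorphisms of Lemma~\ref{lem:diff}(b), and handle well-definedness, homotopy invariance, composition, and compatibility with the unfiltered theory by two-parameter versions of the same argument. Up to this point the proposal is fine.

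The genuine gap is in your argument for (d). You claim the statement ``reduces to the local statement that, in an $L_t$-flat tubular neighborhood of a short Reeb orbit, the $s$-dependent interpolated instanton equation is close to its $s$-translation-invariant local model, whose only small-energy solution modulo gauge is the trivial one.'' This is not sufficient, and it is symptomatic that your sketch of (d) never invokes the hypothesis that $J_t$ is $ECH^{L_t}$-generic, which is essential. What must be ruled out is an index-zero instanton of the continuation equation connecting generators associated to \emph{different} admissible orbit sets (or a nontrivial instanton between the same generator); the curvature and spinor zero set of such an instanton are not a priori confined to tubular neighborhoods of the short Reeb orbits, so no local model analysis near the orbits can exclude it. The paper's proof of (d) is a global limiting argument: arguing by contradiction with a doubly indexed sequence of large $r$'s and small generic perturbations (chosen so that there are no negative-index instantons between filtered generators), one extracts from the failing continuation maps a sequence of index-zero, non-$\R$-invariant instantons, and then runs the Taubes-type convergence of \cite[\S8(b)]{e4} to produce a broken $J_{t_*}$-holomorphic curve of total ECH index zero all of whose levels are non-$\R$-invariant. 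Only at this point does $ECH^{L_{t_*}}$-genericity enter, via \cite[Cor.\ 11.5]{t3} or \cite[Prop.\ 3.7]{bn}, to yield the contradiction. Without this instanton-to-holomorphic-curve step (and the attendant uniformity in $t$, since the parameter value $t_*$ arises only in the limit, which is why the constants must be stable in the sense of Remark~\ref{rem:stable}), the claim that the continuation count reproduces the canonical bijection on generators is unjustified; indeed, if $J_{t}$ were not ECH-generic, ECH index zero broken curves with nontrivial levels could exist and the conclusion of (d) could fail.
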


\begin{proof}
  As explained in \cite{km}, the canonical isomorphism on
  Seiberg-Witten Floer cohomology at the bottom of \eqref{eqn:mwh} is
  induced by a chain map which is defined from a suitable 1-parameter
  family of data sets that interpolates between those used to define
  the two chain complexes.  Various relevant aspects of this are
  summarized in \cite[\S3h]{e1}.  In the case at hand, the relevant
  1-parameter family of data sets has the form
  \[
\{D_t=(\lambda_t,J_t,r_t,\mu_t,\frak{p}_t) \mid t\in[0,1]\}.
\]
Here $\{\mu_t\mid t\in[0,1]\}$ is a smooth family of $2$-forms in
$\Omega$ with $\mc{P}$-norm less than $1$; and $\{\frak{p}_t\mid
t\in[0,1]\}$ is a generic smooth family of abstract perturbations with
small $\mc{P}$-norm.  More precisely, recall from \S\ref{sec:pcf} that
$\Omega$ and $\mc{P}$ are smooth Banach space bundles over the space
of metrics on $Y$; the families $\{\mu_t\}$ and $\{\frak{p}_t\}$ are
sections of the restrictions of these bundles to the path of metrics
determined by $\{(\lambda_t,J_t)\}$.  The family $\{\frak{p}_t\}$ can
and should be chosen so that for generic $t\in[0,1]$, the necessary
transversality holds so that the Seiberg-Witten Floer chain complex
$\widehat{CM}^*(Y,\frak{s};\lambda_t,J_t,r_t)$ is defined.

To prove parts (a)--(c), let $N$ be a large positive integer, and
choose numbers $0=t_0<t_1<\cdots<t_N=1$ with $t_i-t_{i-1}<2/N$ for
each $i=1,\ldots,N$, such that the chain complex $\widehat{CM}^*$ is
defined for each data set $D_{t_i}$.  As explained in \cite[\S3h Part
3]{e1}, if $\{\frak{p}_t\mid t\in[0,1]\}$ is generic then for each
$i=1,\ldots,N$, the family of data sets parametrized by
$t\in[t_{i-1},t_i]$ can be used to define a chain map
\begin{equation}
\label{eqn:Ihati}
\widehat{I}_i: \widehat{CM}^*(Y,\frak{s};\lambda_{t_{i-1}},J_{t_{i-1}},r_{t_{i-1}})
\longrightarrow
\widehat{CM}^*(Y,\frak{s};\lambda_{t_i},J_{t_i},r_{t_i}).
\end{equation}
Let $I_i$ denote the map on $\widehat{HM}^*$ induced by $\widehat{I}_i$.
The canonical isomorphism on the bottom of \eqref{eqn:mwh} is then
given by the composition $I_N\circ\cdots\circ I_1$.

Since $L_t$ varies continuously with $t$, it follows from a
compactness argument that there exists $\varepsilon>0$ such that for
each $t\in[0,1]$, the contact form $\lambda_t$ has no orbit set with
action in the interval $[L_t-\varepsilon,L_t+\varepsilon]$.  If $N$ is
sufficiently large, then for each $i$ and for each
$t\in[t_{i-1},t_i]$, we have $|L_t-L_{t_{i-1}}|<\varepsilon$, and in
particular the contact form $\lambda_t$ has no orbit set of action
exactly $L_{t_{i-1}}$.  It then follows from \cite[Lem.\ 4.6]{e1} that if
the numbers $\{r_t\}$ are sufficiently large, then $\widehat{I}_i$
restricts to a chain map $\widehat{CM}^*_{L_{t_{i-1}}}\to
\widehat{CM}^*_{L_{t_{i-1}}}$ which induces an isomorphism
\[
\widehat{HM}^*_{L_{t_{i-1}}}(Y,\frak{s};\lambda_{t_{i-1}},J_{t_{i-1}},r_{t_{i-1}})
\stackrel{\simeq}{\longrightarrow}
\widehat{HM}^*_{L_{t_{i-1}}}(Y,\frak{s};\lambda_{t_i},J_{t_i},r_{t_i}).
\]
Finally, it follows from Lemma~\ref{lem:diff}(b) that, again if the
numbers $\{r_t\}$ are sufficiently large, then there is an isomorphism
\[
\widehat{HM}^*_{L_{t_{i-1}}}(Y,\frak{s};\lambda_{t_i},J_{t_i},r_{t_i})
\stackrel{\simeq}{\longrightarrow}
\widehat{HM}^*_{L_{t_{i}}}(Y,\frak{s};\lambda_{t_i},J_{t_i},r_{t_i}).
\]
induced by the inclusion of one chain complex into the other,
depending on which of $L_{t_{i-1}}$ and $L_{t_i}$ is larger.  We now
define
\[
\Phi_{\rho|_{[t_{i-1},t_i]}}:
\widehat{HM}^*_{L_{t_{i-1}}}(Y,\frak{s};\lambda_{t_{i-1}},J_{t_{i-1}},r_{t_{i-1}})
\stackrel{\simeq}{\longrightarrow}
\widehat{HM}^*_{L_{t_{i}}}(Y,\frak{s};\lambda_{t_i},J_{t_i},r_{t_i})
\]
to be the composition of the above two isomorphisms, and
\[
\Phi_\rho \eqdef \Phi_{\rho|_{[t_{N-1},t_N]}} \circ\cdots\circ
\Phi_{\rho|_{[t_0,t_1]}}.
\]
A two-parameter version of the above subdivision construction, again
using \cite[Lem.\ 4.6]{e1} and assuming that the numbers $\{r_t\}$ are
sufficiently large, shows that the map $\Phi_\rho$ on homology is
independent of the choices made above and satisfies the homotopy
invariance property (a).  Properties (b) and (c) are then immediate
from the construction.

We now show that property (d) holds for a given
$\{(\lambda_t,J_t,L_t)\}$ provided that $\{r_t\}$ is
sufficiently large.  By Lemma~\ref{lem:diff2}, we can fix the spin-c structure $\frak{s}$. Arguing by contradiction, suppose that for each
positive integer $j$ we have a path $\{r_{j,t}\mid t\in[0,1]\}$ for which
property (d) fails, with
$\lim_{j\to\infty}\min_{t\in[0,1]}r_{j,t}=+\infty$.

For each $j$,
for each positive integer $k$, choose a path $\{\frak{p}_{j,k,t}\mid
t\in[0,1]\}$ of abstract perturbations suitable for defining the map
$\Phi_\rho$, such that the following hold for each $j,k,t$:
\begin{description}
\item{(i)}
$\frak{p}_{j,k,t}$ has $\mc{P}$-norm less than $k^{-1}$.
\item{(ii)} There are no negative index $\frak{p}_{j,k,t}$-instantons
  between generators of
  $\widehat{CM}^*_{L_t}(Y,\frak{s};\lambda_t,J_t,r_{j,t})$.  (This can
  be arranged by the Sard-Smale theorem as in \cite[\S7]{tw1}.)
\end{description}

  Now fix $j$ and $k$.  Since property (d) fails for $\{r_{j,t}\}$, it
  follows that if we construct the corresponding map $\Phi_\rho$ using
  $\{\frak{p}_{j,k,t}\}$, then for each $N$ in the construction of
  $\Phi_\rho$, there exists $i\in\{1,\ldots,N\}$ such that the
  corresponding chain map $\widehat{I}_i$ as in \eqref{eqn:Ihati} is
  not the canonical bijection of generators.  Taking $N\to\infty$, a
  compactness argument using (ii) then finds $t_{j,k}\in[0,1]$ and an
  index zero, non-$\R$-invariant $\frak{p}_{j,k,t_{j,k}}$-instanton
  $\frak{d}_{j,k}$ between two generators of
  $\widehat{CM}^*_{L_{t_{j,k}}}(Y,\frak{s};\lambda_{t_{j,k}},J_{t_{j,k}},r_{j,t_{j,k}})$.
  For each $j$, pass to a subsequence of the $k$'s such that the
  sequence $\{t_{j,k}\}$ converges to $t_j\in[0,1]$.  Next pass to a
  subsequence of the $j$'s such that $t_j$ converges to $t_*\in[0,1]$.

  Given the doubly indexed sequence $\{\frak{d}_{j,k}\}$ of
  $\frak{p}_{j,k,t_{j,k}}$-instantons constructed above, the argument
  in \cite[\S8(b)]{e4} can now be repeated almost
  verbatim\footnote{Here one uses the stability condition in
    Remark~\ref{rem:stable} below to deal with the fact that $t_j$
    depends on $j$.} to conclude the following: There exists a broken
  $J_{t_*}$-holomorphic curve in $\R\times Y$ between two generators
  of $ECC_*^{L_{t_*}}(Y,\lambda_{t_*};J_{t_*})$, with each level
  non-$\R$-invariant as in Definition~\ref{def:broken}, and with total
  ECH index zero.  But this contradicts the fact that $J_{t_*}$ is
  $ECH^{L_{t_*}}$-generic, see \cite[Cor.\ 11.5]{t3} or \cite[Prop.\ 3.7]{bn}.
\end{proof}

We can now deduce that $\widehat{HM}^*_L(Y,\frak{s};\lambda,J,r)$
does not depend on $J$ or $r$.

\begin{corollary}
\label{cor:yls}
Suppose $\lambda$ is an $L$-nondegenerate contact form and $\frak{s}$
is a spin-c structure on $Y$.  Then the relatively graded
$\Z/2$-modules $\widehat{HM}^*_L(Y,\frak{s};\lambda,J,r)$ for
different $r$ and $J$ (where $r$ is sufficiently large with respect to
$\lambda,L,J$) are canonically isomorphic to a single relatively
graded $\Z/2$-module $\widehat{HM}^*_L(Y,\lambda,\frak{s})$, with the
following properties:
\begin{description}
\item{(a)}
Inclusion of chain complexes
induces a well-defined map
\[
\widehat{HM}^*_L(Y,\lambda,\frak{s})\longrightarrow
\widehat{HM}^*(Y,\frak{s}).
\]
\item{(b)} If $L<L'$ and if $\lambda$ is also $L'$-nondegenerate, then
  inclusion of chain complexes induces a well-defined map
\[
\widehat{HM}^*_L(Y,\lambda,\frak{s})\longrightarrow
\widehat{HM}^*_{L'}(Y,\lambda,\frak{s}).
\]
\item{(c)}
If $c>0$ then there is a canonical ``scaling'' isomorphism
\begin{equation}
\label{eqn:ylsscaling}
s:\widehat{HM}^*_L(Y,\lambda,\frak{s})\stackrel{\simeq}{\longrightarrow}
\widehat{HM}^*_{cL}(Y,c\lambda,\frak{s}).
\end{equation}
\end{description}
\end{corollary}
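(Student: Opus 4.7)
The entire argument builds on Lemma~\ref{lem:deform}, applied in three complementary ways. To define $\widehat{HM}^*_L(Y,\lambda,\frak{s})$, fix $\lambda$ and $\frak{s}$ and let $(J_0,r_0),(J_1,r_1)$ be two admissible choices. Connect them by a smooth path $\{(J_t,r_t)\}_{t\in[0,1]}$, which exists because the space of symplectization-admissible $J$ is contractible (as noted in \S\ref{sec:ech}) and the allowed $r$ form a half-line. Setting $\lambda_t=\lambda$ and $L_t=L$ constant gives an admissible deformation $\rho$ in the sense of Definition~\ref{def:ad}, so Lemma~\ref{lem:deform} produces an isomorphism $\Phi_\rho$. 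By Lemma~\ref{lem:deform}(a), any two such paths (which are homotopic rel endpoints by contractibility) give the same $\Phi_\rho$, and Lemma~\ref{lem:deform}(b) provides the cocycle condition. This defines $\widehat{HM}^*_L(Y,\lambda,\frak{s})$ as the common value. For part (a), the inclusion $\widehat{CM}^*_L\hookrightarrow \widehat{CM}^*$ induces an inclusion map on homology at each $(J,r)$, and Lemma~\ref{lem:deform}(c) is exactly the statement that these inclusions commute with the canonical identifications of both the filtered and the unfiltered groups, so they descend to the promised well-defined map $\widehat{HM}^*_L(Y,\lambda,\frak{s})\to \widehat{HM}^*(Y,\frak{s})$.

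For part (b), choose $(J,r)$ simultaneously valid for both $L$ and $L'$; since $L<L'$, taking $r$ large enough for both analyses suffices, and $L$-nondegeneracy and $L'$-nondegeneracy are both assumed. The chain-level inclusion $\widehat{CM}^*_L\hookrightarrow \widehat{CM}^*_{L'}$ induces a map on homology. To see $(J,r)$-independence, given two valid choices connect them by a path as above and form two admissible deformations $\rho^L$ and $\rho^{L'}$ using the \emph{same} path $\{(J_t,r_t)\}$ but $L_t\equiv L$ respectively $L_t\equiv L'$. The key observation is that at the chain level the maps $\widehat{I}_i$ used in the proof of Lemma~\ref{lem:deform} coincide in the two cases (they are defined on the full complex and merely restrict to the filtered subcomplexes), so the induced maps $\Phi_{\rho^L}$ and $\Phi_{\rho^{L'}}$ commute with the inclusions; this is exactly what is needed for the inclusion-induced maps at $(J_0,r_0)$ and $(J_1,r_1)$ to agree under the canonical identifications.

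For part (c), introduce the admissible deformation $\rho_c=\{(\lambda_t,L_t,J_t,r_t)\}$ with $\lambda_t=(1+t(c-1))\lambda$ and $L_t=(1+t(c-1))L$; take $J_t$ to be the symplectization-admissible almost complex structure for $\lambda_t$ agreeing with $J$ on $\xi$ (the contact plane field is unchanged by rescaling the contact form), and $r_t$ sufficiently large. Since rescaling $\lambda$ rescales every orbit set's action by the same factor, $\lambda_t$ is $L_t$-nondegenerate throughout, so $\rho_c$ is admissible. The scaling isomorphism $s$ is then $\Phi_{\rho_c}$, with independence from auxiliary choices following again from Lemma~\ref{lem:deform}(a). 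The step I expect to require the most care is the diagram chase in part (b): it is essential that the \emph{same} underlying path $\{(J_t,r_t)\}$ be used at both filtration levels so that the chain maps $\widehat{I}_i$ literally coincide, which means going back into the construction in the proof of Lemma~\ref{lem:deform} rather than treating $\Phi_\rho$ as a black box.
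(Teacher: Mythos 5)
Your proposal is correct and takes essentially the same approach as the paper: contractibility of the space of admissible $J$ plus Lemma~\ref{lem:deform}(a),(b) to identify the groups, Lemma~\ref{lem:deform}(c) for part (a), the restriction of the chain maps in the construction of $\Phi_\rho$ for part (b), and the deformation $\{((1-t+ct)\lambda,(1-t+ct)L,J,r)\}$ for part (c). The only point where the paper is more explicit is the well-definedness of the scaling isomorphism in (c), where one observes that $\rho_2\circ\rho_c$ and $\rho_c'\circ\rho_1$ are homotopic through admissible deformations to a single diagonal deformation and then invokes \emph{both} parts (a) and (b) of Lemma~\ref{lem:deform}, a small diagram chase you gestured at but did not spell out.
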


\begin{proof}
  Since the space of symplectization-admissible almost complex
  structures for $\lambda$ is contractible, it follows that if $r_i$
  is sufficiently large with respect to $J_i$ for $i=0,1$, then
  Lemma~\ref{lem:deform}(a) provides a well-defined isomorphism
\[
\widehat{HM}^*_L(Y,\frak{s};\lambda,J_0,r_0)\stackrel{\simeq}{\longrightarrow}
\widehat{HM}^*_L(Y,\frak{s};\lambda,J_1,r_1),
\]
induced by an admissible deformation of the form
$\rho=\{(\lambda,L,J_t,r_t)\}$.  By Lemma~\ref{lem:deform}(b), these
isomorphisms satisfy the necessary composition property to identify
the modules $\widehat{HM}^*_L(Y,\frak{s};\lambda,J,r)$ for different $J,r$
with a single relatively graded $\Z/2$-module
$\widehat{HM}^*_L(Y,\lambda,\frak{s})$.

Property (a) now follows immediately from Lemma~\ref{lem:deform}(c).
Property (b) follows similarly from the construction of the maps
$\Phi_\rho$.

To prove property (c), fix $J$ and fix $r$ sufficiently large with
respect to $J$.  Consider the admissible deformation
\begin{equation}
\label{eqn:rhoc}
\rho_c\eqdef\{((1-t+ct)\lambda,(1-t+ct)L,J,r)\}.
\end{equation}
Here we are regarding $J$ as an almost complex structure on $\xi$, so
that the same $J$ can be used for any positive multiple of $\lambda$.
By Lemma~\ref{lem:deform}, the admissible deformation \eqref{eqn:rhoc}
induces an isomorphism
\[
\Phi_{\rho_c}:\widehat{HM}^*_L(Y,\frak{s};\lambda,J,r)
\stackrel{\simeq}{\longrightarrow}
\widehat{HM}^*_{cL}(Y,\frak{s};c\lambda,J,r)
\]
We claim that this isomorphism induces a well-defined isomorphism as
in \eqref{eqn:ylsscaling}.  To prove this, we need to check that given
another pair $(J',r')$, if $\rho_c'$ is the primed analogue of
\eqref{eqn:rhoc}, then the diagram
\[
\begin{CD}
\widehat{HM}^*_L(Y,\frak{s};\lambda,J,r) @>{\Phi_{\rho_c}}>>
\widehat{HM}^*_{cL}(Y,\frak{s};c\lambda,J,r)\\
@VV{\Phi_{\rho_1}}V @VV{\Phi_{\rho_2}}V\\
\widehat{HM}^*_L(Y,\frak{s};\lambda,J',r') @>{\Phi_{\rho_c'}}>>
\widehat{HM}^*_{cL}(Y,\frak{s};c\lambda,J',r')
\end{CD}
\]
commutes.  Here $\rho_1=\{(\lambda,L,J_t,r_t)\}$ and
$\rho_2=\{(c\lambda,cL,J_t,r_t)\}$, where $\{(J_t,r_t)\}$ is a homotopy
from $(J,r)$ to $(J',r')$.  We now observe that both
$\rho_c'\circ\rho_1$ and $\rho_2\circ\rho_c$ are homotopic through
admissible deformations to
\[
\{((1-t+ct)\lambda,(1-t+ct)L,J_t,r_t)\},
\]
and so commutativity of the above diagram follows from
Lemma~\ref{lem:deform}(a),(b).
\end{proof}

Below, when we are not concerned with the spin-c structure, we write
\[
\widehat{HM}^*_L(Y,\lambda) \eqdef
\bigoplus_{\frak{s}\in\Spinc(Y)}\widehat{HM}^*_L(Y,\lambda,\frak{s}).
\]

\subsection{The filtered isomorphism}

We now define an isomorphism from filtered embedded contact homology
to filtered Seiberg-Witten Floer cohomology, and describe how it
behaves under scaling and inclusion of chain complexes.  To obtain a
{\em canonical\/} isomorphism, we will need the following lemma:

\begin{lemma}
\label{lem:plfa}
\cite[Prop.\ B.1]{e1} If $\lambda$ is $L$-nondegenerate and if $J$ is
$ECH^L$-generic, then there exist ``preferred'' $L$-flat
approximations to $(\lambda,J)$, and for each preferred $L$-flat
approximation $(\lambda_1,J_1)$ there exist ``preferred'' homotopies
$\{(\lambda_t,J_t)\mid t\in[0,1]\}$ as in Definition~\ref{def:lfa},
such that:
\begin{description}
\item{(a)}
If $(\lambda_1,J_1)$ is a preferred $L$-flat approximation, then any
two preferred homotopies for $(\lambda_1,J_1)$ are homotopic through
admissible deformations.
\item{(b)}
If $(\lambda_1^0,J_1^0)$ and
  $(\lambda_1^1,J_1^1)$ are two preferred $L$-flat approximations,
  then they are connected by a homotopy of $L$-flat pairs
  $\{(\lambda_1^\nu,J_1^\nu)\mid \nu\in[0,1]\}$ with the following properties:
\begin{description}
\item{(i)} The Reeb orbits of $\lambda_1^\nu$ do not depend on $\nu$.
\item{(ii)} $\{(\lambda_1^\nu,J_1^\nu)\mid \nu\in[0,1]\}$ is homotopic
  through admissible deformations to the composition of a preferred
  homotopy for $(\lambda_1^1,J_1^1)$ with the inverse of a preferred
  homotopy for $(\lambda_1^0,J_1^0)$.
\end{description}
\item{(c)}
For every $\varepsilon>0$, there exists a preferred $L$-flat
approximation $(\lambda_1,J_1)$ with a preferred homotopy
$\{(\lambda_t,J_t)\}$ such that each $(\lambda_t,J_t)$ agrees with
$(\lambda,J)$ except within distance
$\varepsilon$ of the Reeb orbits of action less than $L$.
\end{description}
\end{lemma}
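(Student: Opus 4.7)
The plan is to construct preferred $L$-flat approximations by a canonical local modification near the Reeb orbits of length less than $L$, and then to verify properties (a)--(c) by combining contractibility of the relevant spaces of choices with Sard-Smale arguments for $ECH^L$-genericity.

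First, since $\lambda$ is $L$-nondegenerate, there are only finitely many Reeb orbits $\gamma_1,\ldots,\gamma_n$ of length less than $L$, and they admit pairwise disjoint tubular neighborhoods $U_1,\ldots,U_n$. Inside each $U_i$, fix a normal framing compatible with $J$ (the space of such framings is contractible) and a smooth cutoff function $\chi_\varepsilon$ supported in $U_i$ and equal to $1$ on a smaller concentric neighborhood. Define the preferred $L$-flat approximation $(\lambda_1,J_1)$ by using $\chi_\varepsilon$ to interpolate between $(\lambda,J)$ and the $L$-flat model of \cite[Eq.\ (4.1)]{e1} determined by the framings; the preferred homotopy $\{(\lambda_t,J_t)\}$ turns this interpolation on linearly in $t$. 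Because the support of the modification lies in a tubular neighborhood of an existing short orbit and because the $L$-flat model preserves the linearized Reeb dynamics, no new Reeb orbits of length less than $L$ are created and the original short orbits together with their lengths are preserved for every $t$, yielding condition (ii) of Definition~\ref{def:lfa}.

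Condition (i) ($L$-nondegeneracy and $ECH^L$-genericity at each $t$) is then secured by allowing small additional perturbations of $J_t$ supported in the complement of $\bigcup_i U_i$ and applying a Sard-Smale argument to the universal moduli space of non-$\R$-invariant $J_t$-holomorphic curves of ECH index zero with action less than $L$; the set of paths $\{J_t\}$ along which $ECH^L$-genericity holds for every $t$ is residual. Property (c) is then immediate by taking $\varepsilon$ smaller than the given distance. For property (a), the space of interpolation schemes and allowed $J_t$-perturbations connecting $(\lambda,J)$ to a fixed $(\lambda_1,J_1)$ is contractible, so a second Sard-Smale argument applied to 1-parameter families of such homotopies produces a homotopy through admissible deformations. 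Property (b) is similar: two preferred approximations differ only in the choice of $\varepsilon$ and the framings, which together form a connected space of data preserving the $L$-flat form and all short Reeb orbits, so one can join them by a family of $L$-flat pairs satisfying (b)(i); concatenating this family with the two preferred homotopies and straightening via a two-parameter Sard-Smale argument produces the homotopy through admissible deformations required in (b)(ii).

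The main obstacle is maintaining $ECH^L$-genericity uniformly across all of the 1- and 2-parameter families that appear in the proofs of (a) and (b). Since $ECH^L$-genericity fails along codimension-one walls in the space of $J$'s, the 2-parameter families required must be constructed by a second-level Sard-Smale argument on paths of paths, with perturbations chosen coherently so that the various commuting diagrams of chain maps are realized at the level of admissible deformations. This coherent parameter-space bookkeeping, together with the careful choice of the normal forms in the $L$-flat model so that the local Reeb dynamics is preserved exactly, is what the detailed proof in the appendix of \cite{e1} is organized to handle.
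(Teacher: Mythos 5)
First, note that the paper does not reprove this statement at all: it is quoted verbatim from Taubes, \cite[Prop.\ B.1]{e1}, and the construction of the ``preferred'' approximations and homotopies lives in the appendix of that paper (the normal form \eqref{eqn:2-1} and the estimates quoted in Lemma~\ref{lem:gray} here are exactly the ingredients of that construction). So your proposal is an attempt at an independent proof, and it contains a genuine gap at its central step. Condition (i) of Definition~\ref{def:lfa} demands that $J_t$ be $ECH^L$-generic for \emph{every} $t\in[0,1]$, and your mechanism for securing this --- a Sard--Smale argument showing that ``the set of paths $\{J_t\}$ along which $ECH^L$-genericity holds for every $t$ is residual'' --- is false. $ECH^L$-genericity (transversality of the index~$1$ moduli spaces, absence of nontrivial ECH index~$\le 0$ curves, etc.) fails along codimension-one subsets of the space of almost complex structures, and a generic path cannot avoid codimension-one walls; Sard--Smale only gives genericity at generic $t$, with isolated failures unavoidable. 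Your closing paragraph acknowledges the walls but proposes a ``second-level Sard--Smale argument on paths of paths,'' which addresses a different (and in fact easier) issue: the homotopies required in (a) and (b)(ii) are homotopies through \emph{admissible deformations} (Definition~\ref{def:ad}), which do not require any $ECH$-genericity, so no coherent two-parameter transversality bookkeeping is needed there. The real difficulty is in the one-parameter families themselves, and in \cite{e1} it is resolved not by genericity of the path but by the special, localized nature of the deformation: only the terms of $\lambda$ vanishing to second order along the short orbits (the $\mu$-terms and higher in \eqref{eqn:2-1}) and the corresponding part of $J$ are modified, in an arbitrarily small tube, in such a way that the low-action moduli spaces relevant to the $ECH^L$ differential for $(\lambda_t,J_t)$ are identified with those for $(\lambda,J)$, so genericity of the original $J$ is inherited by every $J_t$.

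A second, smaller gap: you assert that the cutoff interpolation creates no new Reeb orbits of action less than $L$ and preserves the old ones with their lengths ``because the $L$-flat model preserves the linearized Reeb dynamics.'' Preserving the linearization along $\gamma$ does not by itself rule out new closed orbits appearing in the annular region where the cutoff is active, nor does it guarantee the interpolated $1$-form is still contact; one needs the quantitative statement that the modification is $C^1$-small (because it only alters terms that are $O(|z|^2)$, so multiplying by $\chi_\varepsilon$ costs only $O(\varepsilon)$ in $C^1$), combined with nondegeneracy of $\gamma$ and of all its iterates of action less than $L$, to exclude new short orbits and to keep the periods exactly fixed. These estimates are precisely what \cite[Eq.\ (2-11), App.\ A]{e1} supplies and what Lemma~\ref{lem:gray} of this paper extracts; without them your verification of condition (ii) of Definition~\ref{def:lfa}, and hence of parts (a)--(c), does not go through.
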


Part (c) of the above lemma will be used in \S\ref{sec:hcg}.

We can now relate filtered ECH to filtered Seiberg-Witten Floer
cohomology:

\begin{lemma}
\label{lem:FI}
Suppose that $\lambda$ is $L$-nondegenerate and $J$ is $ECH^L$-generic.
Then for all $\Gamma\in H_1(Y)$, there is a canonical isomorphism of
relatively graded $\Z/2$-modules
\begin{equation}
\label{eqn:FI}
\Psi^L:
ECH_*^L(Y,\lambda,\Gamma;J) \stackrel{\simeq}{\longrightarrow}
\widehat{HM}^{-*}_L(Y,\lambda,\frak{s}_{\xi,\Gamma})
\end{equation}
with the following properties:
\begin{description}
\item{(a)}
If $L<L'$, if $\lambda$ is $L'$-nondegenerate, and if $J$ is
$ECH^{L'}$-generic, then the diagram
\[
\begin{CD}
ECH_*^L(Y,\lambda,\Gamma;J)@ >{\Psi^L}>>
\widehat{HM}^{-*}_L(Y,\lambda,\frak{s}_{\xi,\Gamma})\\
@V{\imath^{L,L'}_J}VV @VVV \\
ECH_*^{L'}(Y,\lambda,\Gamma;J) @>{\Psi^{L'}}>>
\widehat{HM}^{-*}_{L'}(Y,\lambda,\frak{s}_{\xi,\Gamma})
\end{CD}
\]
commutes, where $\imath^{L,L'}_J$ is the inclusion-induced map
\eqref{eqn:iLL'J}, and the right arrow is the inclusion-induced map in
Corollary~\ref{cor:yls}(b).
\item{(b)}
If $c>0$, then the diagram
\[
\begin{CD}
ECH_*^L(Y,\lambda,\Gamma;J) @>{\Psi^L}>> \widehat{HM}^{-*}_L(Y,\lambda,\frak{s}_{\xi,\Gamma})\\
@V{s_J}VV @V{s}VV \\
ECH_*^{cL}(Y,c\lambda,\Gamma;J) @>{\Psi^{cL}}>>
 \widehat{HM}^{-*}_{cL}(Y,c\lambda,\frak{s}_{\xi,\Gamma})
\end{CD}
\]
commutes, where $s_J$ is the scaling isomorphism \eqref{eqn:s}, and
$s$ is the scaling isomorphism in Corollary~\ref{cor:yls}(c).
\end{description}
\end{lemma}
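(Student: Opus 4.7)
The plan is to construct $\Psi^L$ by passing through an $L$-flat approximation of $(\lambda,J)$, where Proposition~\ref{prop:Liso}(c) gives an explicit chain-level isomorphism, and then to promote it to a canonical map using the flexibility provided by Lemma~\ref{lem:plfa}. The main technical obstacle will be showing that $\Psi^L$ does not depend on the choice of approximation or on the choice of preferred homotopy.

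\medskip

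Here is the construction. By Lemma~\ref{lem:plfa}, fix a preferred $L$-flat approximation $(\lambda_1,J_1)$ to $(\lambda,J)$ together with a preferred homotopy $\rho=\{(\lambda_t,J_t)\mid t\in[0,1]\}$. Condition (ii) of Definition~\ref{def:lfa} yields the canonical chain isomorphism \eqref{eqn:clf} from $ECC_*^L(Y,\lambda,\Gamma;J)$ to $ECC_*^L(Y,\lambda_1,\Gamma;J_1)$, and for $r$ sufficiently large Proposition~\ref{prop:Liso}(c) provides an isomorphism of chain complexes from $ECC_*^L(Y,\lambda_1,\Gamma;J_1)$ to $\widehat{CM}_L^{-*}(Y,\frak{s}_{\xi,\Gamma};\lambda_1,J_1,r)$. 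Viewing $\rho$ as an admissible deformation with constant $L_t\equiv L$, Lemma~\ref{lem:deform} produces an isomorphism $\Phi_\rho$ from $\widehat{HM}^{-*}_L(Y,\lambda,\frak{s}_{\xi,\Gamma})$ to $\widehat{HM}^{-*}_L(Y,\lambda_1,\frak{s}_{\xi,\Gamma})$, where both sides are understood via Corollary~\ref{cor:yls}. I would then define $\Psi^L$ to be $\Phi_\rho^{-1}$ composed with the homology-level map induced by the above chain isomorphisms.

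\medskip

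For canonicity, suppose $(\lambda_1^0,J_1^0)$ and $(\lambda_1^1,J_1^1)$ are two preferred $L$-flat approximations with preferred homotopies $\rho^0$ and $\rho^1$ from $(\lambda,J)$. Lemma~\ref{lem:plfa}(b) provides an interpolating family $\eta$ of $L$-flat pairs from $(\lambda_1^0,J_1^0)$ to $(\lambda_1^1,J_1^1)$ whose Reeb orbits are constant in the parameter (condition (i)) and which is homotopic through admissible deformations to $\rho^1\circ(\rho^0)^{-1}$ (condition (ii)). By Lemma~\ref{lem:deform}(a),(b) this gives $\Phi_\eta=\Phi_{\rho^1}\circ\Phi_{\rho^0}^{-1}$, while Lemma~\ref{lem:deform}(d) applied to the $L$-flat family $\eta$ shows that $\Phi_\eta$ is induced by the canonical identification of ECC generators between $(\lambda_1^0,J_1^0)$ and $(\lambda_1^1,J_1^1)$. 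A small diagram chase comparing $\Phi_{\rho^1}^{-1}\circ(\text{iso}_1)$ with $\Phi_{\rho^0}^{-1}\circ(\text{iso}_0)$ then shows that the two candidate maps $\Psi^L$ coincide. Independence from the preferred homotopy with fixed approximation is similar, using Lemma~\ref{lem:plfa}(a) and Lemma~\ref{lem:deform}(a).

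\medskip

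Finally I would verify the two compatibilities. For (a), observe that a preferred $L'$-flat approximation is automatically $L$-flat, since $L$-flatness is a local condition on Reeb orbits of length less than $L<L'$; using the same approximation and homotopy to build both $\Psi^L$ and $\Psi^{L'}$, both vertical arrows become inclusion-induced, and commutativity follows from Proposition~\ref{prop:Liso}(c) together with the inclusion-induced map in Corollary~\ref{cor:yls}(b). For (b), multiplication by $c>0$ leaves contact planes and $J$ unchanged, rescales actions by $c$, and carries an $L$-flat approximation with preferred homotopy to a $cL$-flat approximation with preferred homotopy for $(c\lambda,J)$. The scaling isomorphism $s_J$ on ECC, the Seiberg--Witten scaling $s$ from Corollary~\ref{cor:yls}(c), and the induced scaling on filtered $\widehat{CM}^{-*}_L$ are all compatible with Proposition~\ref{prop:Liso}(c); the diagram then commutes tautologically. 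The bulk of the work, and the source of the main difficulty, really lies in the third paragraph: Lemma~\ref{lem:plfa}(b)(ii) is precisely what is required to reduce canonicity to Lemma~\ref{lem:deform}(a) and (d).
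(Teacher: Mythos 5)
Your construction and your canonicity argument are essentially the paper's own proof: pass to a preferred $L$-flat approximation, use \eqref{eqn:clf} and Proposition~\ref{prop:Liso}(c) to identify the chain complexes, transport back with the $\Phi$ of Lemma~\ref{lem:deform} applied to the preferred homotopy, and reduce independence of the approximation to Lemma~\ref{lem:plfa}(a),(b) together with Lemma~\ref{lem:deform}(a),(b),(d). Your treatments of (a) and (b) also follow the paper's route (same $L'$-flat approximation serving for both $L$ and $L'$; scaling carrying preferred data to preferred data), although in (b) the commutativity is not quite ``tautological'': the paper must still check that the two composite admissible deformations $\rho_c\circ\rho_1$ and $\rho_1^c\circ\rho_c^1$ are homotopic through admissible deformations before invoking Lemma~\ref{lem:deform}(a),(b).

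There is one genuine omission: independence of $r$. Proposition~\ref{prop:Liso}(c) produces the chain isomorphism \eqref{eqn:Liso} only for a fixed sufficiently large $r$, and the map $\Phi_\rho$ likewise depends a priori on the chosen $r$; since the target $\widehat{HM}^{-*}_L(Y,\lambda,\frak{s}_{\xi,\Gamma})$ is the $r$-independent module of Corollary~\ref{cor:yls}, you must verify that the composites built from two admissible values $r$ and $r'$ agree under the canonical identifications. This is not automatic from what you have written: it requires a separate commutative diagram in which the square comparing the two versions of \eqref{eqn:Liso} commutes by Lemma~\ref{lem:deform}(d) applied to the deformation $\{(\lambda_1,L,J_1,(1-t)r+tr')\}$, and the square comparing the two transport maps commutes by Lemma~\ref{lem:deform}(a),(b) because both ways around are homotopic to $\{(\lambda_{1-t},L,J_{1-t},(1-t)r+tr')\}$. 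Without this step the map $\Psi^L$ is not yet well defined as a map into the canonical module.
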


\begin{proof}
  Let $(\lambda_1,J_1)$ be a preferred $L$-flat approximation to
  $(\lambda,J)$, and let $\{(\lambda_t,J_t)\mid t\in[0,1]\}$ be a
  preferred homotopy from $(\lambda,J)$ to $(\lambda_1,J_1)$.  If $r$
  is sufficiently large, then by \eqref{eqn:Liso2} we have a canonical
  isomorphism
\[
ECH_*^L(Y,\lambda,\Gamma;J) \stackrel{\simeq}{\longrightarrow}
\widehat{HM}^{-*}_L(Y,\frak{s}_{\xi,\Gamma};\lambda_1,J_1,r).
\]
By Lemma~\ref{lem:deform}, the admissible deformation
\begin{equation}
\label{eqn:rho1}
\rho_1=\{(\lambda_{1-t},L,J_{1-t},r)\mid
t\in[0,1]\}
\end{equation}
determines an isomorphism
\begin{equation}
\label{eqn:htud}
\Phi_{\rho_1}:
\widehat{HM}^{-*}_L(Y,\frak{s}_{\xi,\Gamma};\lambda_1,J_1,r)
\stackrel{\simeq}{\longrightarrow}
\widehat{HM}^{-*}_L(Y,\frak{s}_{\xi,\Gamma};\lambda,J,r).
\end{equation}
By Lemmas~\ref{lem:plfa}(a) and \ref{lem:deform}(a), the map
\eqref{eqn:htud} does not depend on the choice of preferred homotopy.
Let
\[
\widetilde{\Psi}^L: ECH_*^L(Y,\lambda,\Gamma;J)
\stackrel{\simeq}{\longrightarrow}
\widehat{HM}^{-*}_L(Y,\frak{s}_{\xi,\Gamma};\lambda,J,r)
\]
denote the composition of the previous two isomorphisms.  We claim that
$\widetilde{\Psi}^L$ induces a well-defined map $\Psi^L$ as in \eqref{eqn:FI}.

We first show that $\widetilde{\Psi}^L$ does not depend on the choice
of preferred $L$-flat approximation.  Given two preferred $L$-flat
approximations $(\lambda_1^0,J_1^0)$ and $(\lambda_1^1,J_1^1)$, let
$\{(\lambda_1^\nu,J_1^\nu)\mid \nu\in[0,1]\}$ be a homotopy of
$L$-flat pairs provided by Lemma~\ref{lem:plfa}(b).  By
Lemma~\ref{lem:deform}(a),(b), the isomorphisms \eqref{eqn:htud} for
the two preferred $L$-flat approximations differ by the isomorphism
\[
\Phi_{\rho_2}:
\widehat{HM}^{-*}_L(Y,\frak{s}_{\xi,\Gamma};\lambda_1^0,J_1^0,r)
\stackrel{\simeq}{\longrightarrow}
\widehat{HM}^{-*}_L(Y,\frak{s}_{\xi,\Gamma};\lambda_1^1,J_1^1,r)
\]
induced by the admissible deformation
\[
\rho_2=\{(\lambda_1^\nu,L,J_1^\nu,r)\mid
\nu\in[0,1]\}.
\]
Applying Lemma~\ref{lem:deform}(d) to the latter path then shows that
the two versions of $\widetilde{\Psi}^L$ defined using the two
preferred $L$-flat approximations agree.

We now show that $\Psi^L$ does not depend on the choice of $r$.
Suppose that $r,r'$ are both sufficiently large to define the
isomorphism $\widetilde{\Psi}_L$.  To prove that the versions of
$\Psi_L$ defined using $r$ and $r'$ agree, it is enough to show that
the following diagram commutes:
\[
\begin{CD}
ECH_*^L(\lambda,J) @>{\simeq}>> \widehat{HM}^{-*}_L(\lambda_1,J_1,r)
@>{\Phi_{\rho_1}}>> \widehat{HM}^{-*}_L(\lambda,J,r) \\
@| @V{\Phi_{\rho_3}}VV @V{\Phi_{\rho_4}}VV \\
ECH_*^L(\lambda,J) @>{\simeq}>> \widehat{HM}^{-*}_L(\lambda_1,J_1,r')
@>{\Phi_{\rho_1'}}>> \widehat{HM}^{-*}_L(\lambda,J,r').
\end{CD}
\]
Here we have dropped $Y$ and $\Gamma$ from the notation; the
horizontal isomorphisms on the left are given by \eqref{eqn:Liso2};
the admissible deformation $\rho_1'$ is defined as in \eqref{eqn:rho1}
but with $r$ replaced by $r'$; and
\[
\begin{split}
\rho_3 &= \{(\lambda_1,L,J_1,(1-t)r+tr')\mid t\in[0,1]\},\\
\rho_4 &= \{(\lambda,L,J,(1-t)r+tr')\mid t\in[0,1]\}.
\end{split}
\]
The left square commutes by Lemma~\ref{lem:deform}(d).  The right
square commutes by Lemma~\ref{lem:deform}(a),(b), because both
$\rho_4\circ\rho_1$ and $\rho_1'\circ\rho_3$ are homotopic through
admissible deformations to
\[
\{(\lambda_{1-t},L,J_{1-t},(1-t)r+tr')\mid t\in[0,1]\}.
\]
This completes the proof that $\Psi^L$ is well-defined.

To prove that $\Psi^L$ satisfies property (a), choose a preferred
$L'$-flat approximation $(\lambda_1,J_1)$ to define $\Psi^{L'}$.  Then
this is also a preferred $L$-flat approximation which can be used to
define $\Psi^L$.  It now suffices to show that the diagram
\[
\begin{CD}
ECH_*^L(\lambda,J)@ >{\simeq}>>
\widehat{HM}^{-*}_L(\lambda_1,J_1,r) @>{\Phi_{\rho_1}}>>
\widehat{HM}^{-*}_L(\lambda,J,r)\\
@V{\imath^{L,L'}_J}VV @VVV @VVV \\
ECH_*^{L'}(\lambda,J) @>{\simeq}>>
\widehat{HM}^{-*}_{L'}(\lambda_1,J_1,r) @>{\Phi_{\rho_1''}}>>
\widehat{HM}^{-*}_{L'}(\lambda,J,r)
\end{CD}
\]
commutes.  Here $\rho_1''$ is defined as in \eqref{eqn:rho1} but with
$L$ replaced by $L'$;  and the vertical arrows in the diagram are
induced by inclusions of chain complexes.  Now the left square
commutes by the definition of the isomorphism \eqref{eqn:Liso}, while
the right square commutes by a straightforward analogue of
Lemma~\ref{lem:deform}(c).

To prove property (b), let us further drop $r$ from the notation and
consider the diagram
\[
\begin{CD}
ECH_*^L(\lambda,J) @>{\simeq}>> ECH_*^L(\lambda_1,J_1) @>{\simeq}>>
\widehat{HM}^{-*}_L(\lambda_1,J_1) @>{\Phi_{\rho_1}}>>
\widehat{HM}^{-*}_L(\lambda,J)\\
@V{s_J}VV @V{s_{J_1}}VV @V{\Phi_{\rho_c^1}}VV @V{\Phi_{\rho_c}}VV\\
ECH_*^{cL}(c\lambda,J) @>{\simeq}>> ECH_*^{cL}(c\lambda_1,J_1) @>{\simeq}>>
\widehat{HM}^{-*}_{cL}(c\lambda_1,J_1) @>{\Phi_{\rho_1^c}}>>
\widehat{HM}^{-*}_{cL}(c\lambda,J).
\end{CD}
\]
Here $\rho_c$ was defined in \eqref{eqn:rhoc}; $\rho_c^1$ denotes the
analogue of \eqref{eqn:rhoc} for $(\lambda_1,J_1)$; and $\rho_1^c$ is
obtained from \eqref{eqn:rho1} by multiplying the contact forms and
$L$ by $c$.  Also the horizontal isomorphisms on the left are induced
by \eqref{eqn:clf}, and the horizontal isomorphisms in the middle are
induced by \eqref{eqn:Liso}.  By definition, the composition of the
horizontal arrows in the top row of the above diagram is $\Psi^L$, and
the composition of the horizontal arrows in the bottom row is
$\Psi^{cL}$.  So to prove property (b) it is enough to show that the above diagram commutes.  The left square commutes at the
chain level because each map in the left square sends each admissible
orbit set to itself.  The middle square commutes by
Lemma~\ref{lem:deform}(d).  The right square commutes by
Lemma~\ref{lem:deform}(a),(b), because both $\rho_c\circ\rho_1$ and
$\rho_1^c\circ\rho_c^1$ are homotopic to
\[
\left\{(1-t+ct)\lambda_{1-t},(1-t+ct)L,J_{1-t},r)\mid t\in[0,1]\right\}
\]
through admissible deformations.
\end{proof}

\subsection{$J$-independence of filtered ECH (proof)}

We now have enough machinery in place to prove Theorem~\ref{thm:FECH},
asserting that $ECH$ and $ECH^L$ do not depend on the choice of almost
complex structure used to define them.

\begin{proof}[Proof of Theorem~\ref{thm:FECH}.]
  We may assume, by slightly decreasing $L$ if necessary, that there
  is no orbit set of action exactly $L$.  Part (a) then follows from
  the canonical isomorphism \eqref{eqn:FI} given by
  Lemma~\ref{lem:FI}.  Part (b) follows from Lemma~\ref{lem:FI}(a).
  Part (c) follows from part (b) by taking direct limits.  Part (d)
  follows from Lemma~\ref{lem:FI}(b).  Part (e) follows from the
  definition of the isomorphism between ECH and $\widehat{HM}^*$
  reviewed in \S\ref{sec:fulliso} below.
\end{proof}

\begin{remark}
  ECH has various additional structures on it which we are not using
  in this paper, for example a degree $-2$ map $U$.  It is shown in
  \cite{e5} that these agree with analogous structures on
  Seiberg-Witten Floer cohomology under the isomorphism determined by
  \eqref{eqn:FI} (see \S\ref{sec:fulliso} below).  Consequently the
  proof of Theorem~\ref{thm:FECH} shows that these additional
  structures are also independent of $J$.
\end{remark} 

\subsection{The full isomorphism}
\label{sec:fulliso}

We are now in a position to write down the full isomorphism from embedded
contact homology to Seiberg-Witten Floer cohomology.

Let $Y$ be a closed oriented connected 3-manifold with a nondegenerate
contact form $\lambda$, and fix $\Gamma\in H_1(Y)$.  By
Lemma~\ref{lem:FI}, if $\lambda$ has no orbit set of action $L$, then
for each $\Gamma\in H_1(Y)$ there is a well-defined isomorphism
\begin{equation}
\label{eqn:FI2}
ECH_*^L(Y,\lambda,\Gamma) \stackrel{\simeq}{\longrightarrow}
\widehat{HM}_L^{-*}(Y,\lambda,\frak{s}_{\xi,\Gamma}).
\end{equation}
By Corollary~\ref{cor:yls}(a), there is a well-defined map
\begin{equation}
\label{eqn:FI3}
\widehat{HM}_L^{-*}(Y,\lambda,\frak{s}_{\xi,\Gamma})
\longrightarrow 
\widehat{HM}^{-*}(Y,\frak{s}_{\xi,\Gamma}).
\end{equation}
We now define
\begin{equation}
\label{eqn:TL}
T^L:ECH_*^L(Y,\lambda,\Gamma) \longrightarrow
\widehat{HM}^{-*}(Y,\frak{s}_{\xi,\Gamma})
\end{equation}
to be the composition of the maps \eqref{eqn:FI2} and \eqref{eqn:FI3} above.

If $L<L'$, then it follows from Lemma~\ref{lem:FI}(a) that
\[
T^L=T^{L'}\circ \imath^{L,L'},
\]
where $\imath^{L,L'}$ is the inclusion-induced map \eqref{eqn:istar}.
This means that it makes sense to define
\begin{equation}
\label{eqn:T}
T:ECH_*(Y,\lambda,\Gamma) \longrightarrow
\widehat{HM}^{-*}(Y,\frak{s}_{\xi,\Gamma})
\end{equation}
to be the direct limit over $L$ of the maps $T^L$ in \eqref{eqn:TL}.
The main theorem of \cite{e1} (after passing to $\Z/2$ coefficients)
can now be stated as follows:

\begin{theorem}
\label{thm:echswf}
\cite{e1}
The map $T$ in \eqref{eqn:T} is an isomorphism of relatively graded
$\Z/2$-modules.
\end{theorem}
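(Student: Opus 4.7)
The plan is to deduce Theorem~\ref{thm:echswf} from the main result of \cite{e1} by combining it with Lemma~\ref{lem:FI} and Corollary~\ref{cor:yls}. First I would unwrap the construction of $T$: by definition it is the direct limit over $L$ of the maps $T^L$ of \eqref{eqn:TL}, each of which factors as
\[
ECH_*^L(Y,\lambda,\Gamma) \stackrel{\Psi^L}{\longrightarrow} \widehat{HM}_L^{-*}(Y,\lambda,\frak{s}_{\xi,\Gamma}) \longrightarrow \widehat{HM}^{-*}(Y,\frak{s}_{\xi,\Gamma}).
\]
The first arrow $\Psi^L$ is already an isomorphism by Lemma~\ref{lem:FI}, and it is compatible with the inclusion-induced maps on both sides by Lemma~\ref{lem:FI}(a). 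Taking direct limits, using \eqref{eqn:edr}, shows that $\lim_L \Psi^L$ is an isomorphism from $ECH_*(Y,\lambda,\Gamma)$ onto $\lim_L \widehat{HM}_L^{-*}(Y,\lambda,\frak{s}_{\xi,\Gamma})$. Thus the theorem reduces to the claim that the natural map
\[
\lim_L \widehat{HM}_L^{-*}(Y,\lambda,\frak{s}_{\xi,\Gamma}) \longrightarrow \widehat{HM}^{-*}(Y,\frak{s}_{\xi,\Gamma})
\]
induced by the inclusion of chain complexes (Corollary~\ref{cor:yls}(a)) is an isomorphism.

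To settle this reduction I would appeal directly to the main theorem of \cite{e1}, which constructs an isomorphism from $ECH$ to Seiberg-Witten Floer cohomology along exactly the chain of maps considered here. The key inputs, supplied across \cite{e1,e2,e3,e4}, are: an a priori bound relating $\energy(A)$ to the $L^1$ norm of $F_A$ and hence limiting the relevant spin-c structures (cf.\ the proof of Lemma~\ref{lem:diff2}); the chain-level identification of $\widehat{CM}^*_L$ with the filtered ECH chain complex via $L$-flat approximations at sufficiently large $r$ (Proposition~\ref{prop:Liso}); and the two surjectivity/injectivity statements that every class in $\widehat{HM}^{-*}(Y,\frak{s}_{\xi,\Gamma})$ has a cycle representative of energy less than $2\pi L$ for $L$ sufficiently large, and that any such cycle bounding in $\widehat{CM}^{-*}$ already bounds in $\widehat{CM}^{-*}_{L'}$ for some $L' \ge L$.

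The principal obstacle is the analytical content of the last two statements, which requires the large-$r$ compactness machinery developed in \cite{e1} and its companion papers to control the energy along instanton trajectories and to rule out the appearance of new cycles as $L \to \infty$ that are not already captured at some finite level. Granting those inputs, the remainder of the argument is formal: combining the filtered isomorphism $\Psi^L$ with the direct-limit exhaustion of $\widehat{HM}^{-*}$ by $\widehat{HM}^{-*}_L$, and invoking the compatibilities already built into Lemma~\ref{lem:FI}(a) and Corollary~\ref{cor:yls}, yields that $T$ is an isomorphism of relatively graded $\Z/2$-modules.
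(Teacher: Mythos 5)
Your proposal is correct and follows essentially the same route as the paper: the paper likewise observes that, given the filtered isomorphism \eqref{eqn:FI2}, the theorem reduces to showing that the inclusion-induced maps \eqref{eqn:FI3} yield an isomorphism in the direct limit over $L$, and it defers that statement (including the subtlety that $r$ must increase with $L$) to \cite[Thm.\ 4.5]{e1}. Your identification of the reduction and of the analytic inputs from \cite{e1,e2,e3,e4} matches the paper's treatment.
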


Knowing that \eqref{eqn:FI2} is an isomorphism, the rest of the proof
of Theorem~\ref{thm:echswf} amounts to showing that the maps
\eqref{eqn:FI3} induce an isomorphism
\[
\lim_{\rightarrow}
\widehat{HM}_L^{-*}(Y,\lambda,\frak{s}_{\xi,\Gamma})
\stackrel{\simeq}{\longrightarrow} 
\widehat{HM}^{-*}(Y,\frak{s}_{\xi,\Gamma}),
\]
see \cite[Thm.\ 4.5]{e1}.  (This is not immediately obvious because
one has to increase $r$ as one increases $L$ in order to define the
left hand side, see \S\ref{sec:energyfiltration}.)

\section{Seiberg-Witten Floer cobordism maps and symplectic forms}
\label{sec:swfcob}

We now review from \cite[Ch.\ 24]{km} the maps on Seiberg-Witten Floer
cohomology induced by a (smooth) cobordism.  We then introduce a
perturbation of the relevant Seiberg-Witten equations on an exact
symplectic cobordism using the symplectic form.

\subsection{Smooth cobordisms}
\label{sec:smoothcob}

Let $Y_+$ and $Y_-$ be closed oriented (connected) three-manifolds.
Let $X$ be a cobordism from $Y_+$ to $Y_-$ as in \S\ref{sec:cobech}.

Given some metric on $X$, a {\em spin-c structure\/} on $X$ is a lift
of the frame bundle of $TX$ from $SO(4)$ to
\[
\Spinc(4) = \Spin(4) \times_{\Z/2} U(1).
\]
This is equivalent to a Hermitian vector bundle
${\Sp}=\Sp_+\oplus\Sp_-$, where $\Sp_+$ and $\Sp_-$ each have rank
$2$, together with a Clifford multiplication $cl:TX\to\End(\Sp)$
satisfying \eqref{eqn:clifford}, such that $cl(v)$ exchanges $\Sp_+$
and $\Sp_-$ for each $v\in TX$, and
\[
cl(e_1)cl(e_2)cl(e_3)cl(e_4) = \begin{pmatrix} -1 & 0 \\ 0 &
  1 \end{pmatrix}
\]
on $\Sp_+\oplus\Sp_-$ whenever $(e_1,e_2,e_3,e_4)$ is an oriented
orthonormal basis for $T_xX$.  The set $\Spinc(X)$ of isomorphism
classes of spin-c structures on $X$ is an affine space over
$H^2(X;\Z)$, with the action as in \eqref{eqn:affine}, which does not
depend on the choice of metric.  Given a spin-c structure on $X$, a
spin-c connection is defined as in \eqref{eqn:spincc}.  A spin-c
connection $\A_\Sp$ is equivalent to a Hermitian connection $\A$ on
$\det(\Sp_+)=\det(\Sp_-)$, and adding an imaginary-valued $1$-form $a$
to $\A$ adds $a/2$ to $\A_\Sp$.  As in \eqref{eqn:dirac}, the connection $\A$
defines a Dirac operator
\[
D_\A: C^\infty(X;\Sp_{\pm}) \longrightarrow C^\infty(X;\Sp_{\mp}).
\]
A spin-c structure $\frak{s}$ on $X$ restricts to a spin-c structure
$\frak{s}|_{Y_\pm}$ on $Y_\pm$ as follows.  Let $v$ denote the outward
pointing unit normal vector to $Y_+$, and the inward pointing unit
normal vector to $Y_-$.  If $\Sp=\Sp_+\oplus\Sp_-$ is the spin bundle
for $\frak{s}$ with Clifford multiplication $cl$, then we define the
spin bundle $\Sp_{Y_\pm}$ for $\frak{s}|_{Y_\pm}$ to be
\begin{equation}
\label{eqn:SY}
\Sp_{Y_\pm}\eqdef (\Sp_+)|_{Y_\pm}
\end{equation}
with the Clifford action
$TY\to\End(\Sp_{Y_\pm})$ given by $cl(v)^{-1}cl(\cdot)$.

If $\frak{s}$ is a spin-c structure on $X$ with
$\frak{s}_\pm\eqdef\frak{s}|_{Y_\pm}$, then there is a cobordism map (of ungraded
$\Z/2$-modules)
\begin{equation}
\label{eqn:cm}
\widehat{HM}^*(X,\frak{s}): \widehat{HM}^*(Y_+,\frak{s}_+)
\longrightarrow \widehat{HM}^*(Y_-,\frak{s}_-).
\end{equation}
We now review the basic formalism of the definition of this map; the
details are explained in \cite{km}.  Choose a metric $g_\pm$, exact
$2$-form $\eta_{\pm}$, and abstract perturbation $\frak{p}_\pm$ as needed
to define the chain complex
$\widehat{CM}^*(Y_\pm,\frak{s}_\pm;g_\pm,\eta_{\pm})$.  One defines a
chain map
\begin{equation}
\label{eqn:ccm}
\widehat{CM}^*(Y_+,\frak{s}_+;g_+,\eta_{+}) \longrightarrow
 \widehat{CM}^*(Y_-,\frak{s}_-;g_-,\eta_{-})
\end{equation}
as follows.
Attach cylindrical ends to $X$ to obtain
\[
\overline{X} \eqdef ((-\infty,0]\times Y_-) \cup_{Y_-} X
\cup_{Y_+} ([0,\infty)\times Y_+).
\]
Choose a metric $g$ on $\overline{X}$ which on the ends agrees with
the product of the standard metric on $(-\infty,0]$ or $[0,\infty)$
with the chosen metric $g_\pm$ on $Y_\pm$.  Choose a self-dual 2-form
$\eta$ on $\overline{X}$ which on each end agrees with the self-dual
part of (the pullback of) $\eta_{\pm}$, namely
$\frac{1}{2}(\eta_{\pm}+*\eta_{\pm})$, where $*$ denotes the Hodge star on
$\overline{X}$.  The spin-c structure on $X$ has a canonical extension
over $\overline{X}$, so that on each end, $\Sp_+$ and $\Sp_-$ are both
identified with the boundary spinor bundle, and if $s$ denotes the
$(-\infty,0]$ or $[0,\infty)$ coordinate, then
$cl(\partial_s):\Sp_+\stackrel{\simeq}{\to}\Sp_-$ preserves the
identifications with the boundary spinor bundle.

We now consider solutions to the Seiberg-Witten equations on
$\overline{X}$.  These equations concern a pair $(\A,\Psi)$, where $\A$
is a connection on $\det(\Sp_+)$ and $\Psi$ is a section of
$\Sp_+$. Without abstract perturbation terms (which we will describe
shortly), the equations are
\begin{equation}
\label{eqn:sw4}
\begin{split}
D_\A\Psi &= 0,\\
F_\A^+ &= \frac{1}{2}\rho(\Psi) + i\eta.
\end{split}
\end{equation}
Here $F_\A^+$ denotes the self-dual part of the curvature $F_\A$, and
$\rho:\Sp_+\to\bigwedge^2_+T^*X$ is a quadratic bundle map defined by
\[
\rho(\Psi)(v,w) = -\frac{1}{2}\langle[cl(v),cl(w)]\Psi,\Psi\rangle
\]
for $\Psi\in(\Sp_+)_x$ and $v,w\in T_xX$.  The gauge group
$C^\infty(X;S^1)$ acts on the set of solutions, again by
\eqref{eqn:gauge}. 

A connection $\A$ on $\det(\Sp_+)$ is in {\em temporal gauge\/} on the ends
if on $(-\infty,0]\times Y_-$ and $[0,\infty)\times Y_+$ one has
\begin{equation}
\label{eqn:tg}
\nabla_\A = \frac{\partial}{\partial s} + \nabla_{\A(s)}
\end{equation}
where $\A(s)$ is a connection on the bundle $\det(\Sp_{Y_\pm})$ over
the $3$-manifold $Y_\pm$, depending on $s$.  Any connection can be
placed into temporal gauge by an appropriate gauge transformation.
After this has been done, the equations \eqref{eqn:sw4} on the ends
are equivalent to the instanton equations \eqref{eqn:instanton}.

To define cobordism maps, we also need to consider abstract
perturbations of the equations \eqref{eqn:sw4}.  Suppose that
$\frak{p}_+$ and $\frak{p}_-$ are abstract perturbations for use in
defining the perturbations of the equations \eqref{eqn:sw3} and
\eqref{eqn:instanton} on $Y_+$ and $Y_-$.  It is explained in
\cite[Ch.\ 11]{km} how these are extended as an abstract perturbation
$\frak{p}$ over all of $\overline{X}$.  The resulting perturbation of
\eqref{eqn:sw4} agrees on $(-\infty,0]\times Y_-$ or $[0,\infty)\times
Y_+$ with the corresponding perturbation of \eqref{eqn:instanton} via
$\frak{p}_-$ or $\frak{p}_+$.  Any such extension must be suitably
generic in order to use the solutions of the perturbed version of
\eqref{eqn:sw4} to define the chain map \eqref{eqn:ccm}.  In particular, a nonzero
extension may be necessary even  when $\frak{p}_-$ and
$\frak{p}_+$ are both zero.

Let $(\A_\pm,\Psi_\pm)$ be solutions to the three-dimensional
Seiberg-Witten equations \eqref{eqn:sw3} for
$(Y_\pm,\frak{s}_\pm;g_\pm,\eta_{\pm})$.  We are interested in solutions
to the abstract perturbation of the four-dimensional Seiberg-Witten
equations \eqref{eqn:sw4} which on the ends are in temporal gauge and
satisfy the convergence conditions
\begin{equation}
\label{eqn:coe}
\begin{split}
\lim_{s\to\infty}(\A(s),\Psi(s)) &= (\A_+,\Psi_+) \quad \mbox{as $s\to
  +\infty$ on $[0,\infty)\times Y_+$},\\
\lim_{s\to-\infty}(\A(s),\Psi(s)) &= (\A_-,\Psi_-) \quad \mbox{as $s\to
  -\infty$ on $(-\infty,0]\times Y_-$}.
\end{split}
\end{equation}
A solution to the perturbed equations \eqref{eqn:sw4} satisfying
\eqref{eqn:coe} will be called an ``instanton from $(\A_-,\Psi_-)$ to
$(\A_+,\Psi_+)$''.  We often denote an instanton as above by
$\frak{d}$ and write $\frak{d}|_s\eqdef(\A(s),\Psi(s))$ and
$\frak{c}_{\pm} \eqdef (\A_\pm,\Psi_\pm)$.  Every instanton has an
{\em index\/}, which is the expected dimension of the corresponding
component of the moduli space of instantons (with the same asymptotic
decay rate as $s\to +\infty$ or $s\to-\infty$ if $(\A_+,\Psi_+)$ or
$(\A_-,\Psi_-)$ respectively is reducible) modulo gauge equivalence.
The component of the chain map \eqref{eqn:ccm} from an irreducible
generator $(\A_+,\Psi_+)$ to an irreducible generator $(\A_-,\Psi_-)$
counts index zero instantons from $(\A_-,\Psi_-)$ to $(\A_+,\Psi_+)$
modulo gauge equivalence.  All we need to know about the remaining
components of \eqref{eqn:ccm} is the following: if $(\A_+,\Psi_+)$ is
irreducible, and if there are no index zero instantons to
$(\A_+,\Psi_+)$ from a reducible $(\A_-,\Psi_-)$, then the chain map
\eqref{eqn:ccm} sends $(\A_+,\Psi_+)$ to an element of
$\widehat{CM}^*_{irr}(Y_-,\cdots)$.

Although the chain map \eqref{eqn:ccm} may depend on the abstract
perturbations, the induced map on homology
\begin{equation}
\label{eqn:imh}
\widehat{HM}^*(X,\frak{s};g,\eta):
\widehat{HM}^*(Y_+,\frak{s}_+;g_+,\eta_{+}) \longrightarrow
\widehat{HM}^*(Y_-,\frak{s}_-;g_-,\eta_{-})
\end{equation}
does not. To show that this map does not depend on the extension
$\frak{p}$ of $\frak{p}_+$ and $\frak{p}_-$, given a homotopy of
extensions $\frak{p}$ one defines a chain homotopy between the
corresponding chain maps by counting index $-1$ instantions.  The
proof that the map \eqref{eqn:imh} does not depend on $\frak{p}_+$ or
$\frak{p}_-$ either is a special case of a more general argument which
we will outline shortly.

In the special case when $X$ is a product cobordism $[0,1]\times Y$,
the maps \eqref{eqn:imh} define the canonical isomorphisms that prove
that $\widehat{HM}^*(Y,\frak{s};g,\eta)$ does not
depend\footnote{\cite[\S3h]{e1} says more about this in the
  case when $g$ and $\eta$ are determined by a contact form as in
  \eqref{eqn:tsw3} and \eqref{eqn:Tinstanton}.}  on $g$ or $\eta$ (or
the abstract perturbations that we are supressing from the notation).
The necessary composition property for these isomorphisms follows from
the following more general composition property.  Let $X^+$ be a
cobordism from $Y_+$ to $Y_0$, let $X^-$ be a cobordism from $Y_0$ to
$Y_-$, and let $X=X^-\cup_{Y_0} X^+$ be the composite cobordism from
$Y_+$ to $Y_-$.  If $\frak{s^\pm}\in\Spinc(X^\pm)$, if
$(g^\pm,\eta^\pm)$ are choices to define the cobordism map on $X^\pm$,
and if $(g,\eta)$ are choices to define the cobordism map on $X$, then
\begin{gather}
\nonumber
\widehat{HM}^*(X^-,\frak{s}^-;g^-,\eta^-) \circ
\widehat{HM}^*(X^+,\frak{s}^+;g^+,\eta^+) = \quad\quad\quad\quad\quad \\
\label{eqn:ccs}
\quad\quad\quad\quad\quad =
\sum_{\left\{\frak{s}\in\Spinc(X) \;\big|\; \frak{s}|_{X^\pm}=\frak{s}^\pm\right\}}
\widehat{HM}^*(X,\frak{s};g,\eta).
\end{gather}
Note here that the sum on the right is well-defined, because by \cite[Prop.\ 24.6.6]{km} the
cobordism map \eqref{eqn:imh} is nonzero for only finitely many spin-c
structures on $X$.  Equation \eqref{eqn:ccs} is proved by ``stretching
the neck'' along $Y_0$ and counting index $-1$ instantons to define a
chain homotopy between the corresponding chain maps.

The special case of \eqref{eqn:ccs} when $X^+$ and $X^-$ are both
product cobordisms gives the composition property needed to show that
$\widehat{HM}^*(Y,\frak{s})$ is well-defined.  The special case
of \eqref{eqn:ccs} when just one of $X^+$ or $X^-$ is a product
cobordism then implies that the map \eqref{eqn:imh} induces a
well-defined map \eqref{eqn:cm}.  With these identifications,
\eqref{eqn:ccs} now translates to
\begin{equation}
\label{eqn:coco}
\widehat{HM}^*(X^-,\frak{s}^-) \circ
\widehat{HM}^*(X^+,\frak{s}^+) =
\sum_{\left\{\frak{s}\in\Spinc(X) \;\big|\;
    \frak{s}|_{X^\pm}=\frak{s}^\pm\right\}}
\widehat{HM}^*(X,\frak{s}).
\end{equation}

One can also combine the cobordism maps \eqref{eqn:cm} into a single
cobordism map
\begin{equation}
\label{eqn:scm}
\widehat{HM}^*(X) \eqdef \sum_{\frak{s}\in\Spin^c(X)}
\widehat{HM}^*(X,\frak{s}) : \widehat{HM}^*(Y_+) \longrightarrow
\widehat{HM}^*(Y_-).
\end{equation}
The composition property \eqref{eqn:coco} implies that
\[
\widehat{HM}^*(X) = \widehat{HM}^*(X_-) \circ \widehat{HM}^*(X_+).
\]

Note that when $X$ is not a product, the cobordism map \eqref{eqn:scm} generally
does not preserve the relative gradings, although there is a weaker
relation between the gradings of the inputs and outputs of this map
explained in \cite[\S3.4]{km}.  We will simply regard \eqref{eqn:scm}
as a map of ungraded $\Z/2$-modules.

\subsection{Perturbing the equations on an exact symplectic cobordism}
\label{sec:pesc}

We now introduce a useful perturbation of the four-dimensional
Seiberg-Witten equations on an exact symplectic cobordism.  This is
closely related to the perturbation of the three-dimensional
Seiberg-Witten equations defined in \S\ref{sec:pcf}.

Let $(Y_+,\lambda_+)$ and $(Y_-,\lambda_-)$ be closed oriented
(connected) 3-manifolds with contact forms.  Let $(X,\lambda)$ be an
exact symplectic cobordism from $(Y_+,\lambda_+)$ to
$(Y_-,\lambda_-)$.  Recall the notion of ``cobordism-admissible almost
complex structure'' from Definition~\ref{def:cobadm}.  Below it will
be convenient to work with a slightly stronger notion.  Note that if
$\varepsilon>0$ is as in \eqref{eqn:N-} and \eqref{eqn:N+}, then the
completion $\overline{X}$ in \eqref{eqn:completion} contains subsets
identified with $(-\infty,\varepsilon]\times Y_-$ and
$[-\varepsilon,\infty)\times Y_+$.

\begin{definition}
\label{def:sca}
An almost complex structure $J$ on $\overline{X}$ is {\em strongly
  cobordism-admissible\/} if it is $\omega$-compatible on $X$, and if
it agrees with symplectization-admissible almost complex structures
$J_+$ for $\lambda_+$ on $[-\varepsilon,\infty)\times Y_+$ and
$J_-$ for $\lambda_-$ on $(-\infty,\varepsilon]\times Y_-$, for some
$\varepsilon>0$ as in \eqref{eqn:N-} and \eqref{eqn:N+}.
\end{definition}

Given $\varepsilon$ and $J$ as above, we define a $1$-form
$\widetilde{\lambda}$ on $\overline{X}$ as follows.  Fix a smooth
increasing function
$\phi_-:(-\infty,\varepsilon]\to(-\infty,\varepsilon]$ with
$\phi_-(s)=2s$ for $s\le \varepsilon/10$ and $\phi_-(s)=s$ for
$s>\varepsilon/2$.  Likewise fix a smooth increasing function
$\phi_+:[-\varepsilon,\infty)\to[-\varepsilon,\infty)$ with
$\phi_+(s)=s$ for $s\le -\varepsilon/2$ and $\phi_+(s)=2s$ for $s\ge
-\varepsilon/10$.  Now define
\begin{equation}
\label{eqn:widetildelambda}
\widetilde{\lambda} \eqdef \left\{\begin{array}{cl} e^{\phi_-}\lambda_- & \mbox{on $(-\infty,\varepsilon]\times Y_-$},\\
\lambda & \mbox{on $X\setminus(([0,\varepsilon]\times Y_-) \cup
      ([-\varepsilon,0]\times Y_+))$},\\
    e^{\phi_+}\lambda_+ & \mbox{on $[-\varepsilon,\infty)\times Y_+$}.\\
\end{array}\right.
\end{equation}
Write $\widetilde{\omega}=d\widetilde{\lambda}$; this is a symplectic form on all
of $\overline{X}$.  Also, $J$ is $\widetilde{\omega}$-compatible on all of
$\overline{X}$.

\begin{remark}
  It would be more usual to define $\widetilde{\lambda}$ by extending
  the $1$-form $\lambda$ on all of $X$ to agree with $e^s\lambda_+$ on
  $[0,\infty)\times Y_+$, and with $e^s\lambda_-$ on
  $(-\infty,0]\times Y_-$.  We are using the more nonstandard $1$-form
  \eqref{eqn:widetildelambda} because of the factors of $2$ discussed
  in Remark~\ref{rmk:factor2}.
\end{remark}

We next define a metric $g$ on $\overline{X}$ as follows.  Let $g_\pm$
denote the metric on $Y_\pm$ determined by $\lambda_\pm$ and $J_\pm$
as in \S\ref{sec:pcf}.  Fix a smooth positive function $\sigma_-$ on
$(-\infty,\varepsilon]$ such that $\sigma_-(s)=2e^{2s}$ for $s\le
\varepsilon/10$ and $\sigma_-(s)=2$ for $s\ge \varepsilon/2$.  Likewise
fix a smooth positive function $\sigma_+$ on $[-\varepsilon,\infty)$
such that $\sigma_+(s)=2$ for $s\le -\varepsilon/2$ and
$\sigma_+(s)=2e^{2s}$ for $s\ge -\varepsilon/10$.  Also require that\footnote{The
  condition \eqref{eqn:sigmabound} will be used in
  Lemma~\ref{lem:3.4x}.}
\begin{equation}
\label{eqn:sigmabound}
\sigma_\pm(s)\in [3/2,5/2] \quad \mbox{for $\pm s\in [0,\varepsilon]$}.
\end{equation}
Define a positive function $\sigma$ on $\overline{X}$
to equal $\sigma_-$ on $(-\infty,\varepsilon]\times Y_-$, to equal
$\sigma_+$ on $[-\varepsilon,\infty)\times Y_+$, and to equal $2$ on
the rest of $\overline{X}$.  Define a metric $g$ on $\overline{X}$ by
\begin{equation}
\label{eqn:varphi}
g(\cdot,\cdot) = \sigma^{-1}\widetilde{\omega}(\cdot,J(\cdot))
\end{equation}
Note that $g$ agrees with the product metric with $g_\pm$ on the ends
$[0,\infty)\times Y_+$ and $(-\infty,0]\times Y_-$.  Also,
$\widetilde{\omega}$ is self-dual with respect to $g$ and has norm
$|\widetilde{\omega}|=\sqrt{2}\sigma$.  Define
$\hat{\omega}=\sqrt{2}\widetilde{\omega}/|\widetilde{\omega}|=
\sigma^{-1}\widetilde{\omega}$.

Let $\frak{s}$ be a spin-c structure on $X$ with spinor bundle
$\Sp=\Sp_+\oplus\Sp_-$.  There is a canonical decomposition
\begin{equation}
\label{eqn:seke4}
\Sp_+ = E \oplus K^{-1}E
\end{equation}
into eigenbundles of $cl(\hat{\omega})$, where $E$ is the $-2i$
eigenbundle, and $K$ denotes the canonical bundle of $(X,J)$.  Note
that on $[0,\infty)\times Y_+$ or $(-\infty,0]\times Y_-$, under the
identification \eqref{eqn:SY}, this splitting agrees with the
splitting determined by Clifford multiplication by $\lambda_+$ or
$\lambda_-$ as in \eqref{eqn:SEKE}.  When $E$ is the trivial line
bundle $\underline{\C}$, there is a distinguished connection
$A_{K^{-1}}$ on $K^{-1}$ such that $D_{A_{K^{-1}}}(1,0)=0$.  As in the
three-dimensional case \eqref{eqn:connections}, this allows us to
identify a spin-c connection for a general spin-c structure with a
Hermitian connection $A$ on the corresponding line bundle $E$.

Now choose exact 2-forms $\mu_\pm$ on $Y_\pm$ as in \S\ref{sec:pcf},
and let $\mu$ be an exact 2-form on $\overline{X}$ which agrees with
$\mu_\pm$ on the ends.  For the arguments later in this paper we need
to choose $\mu$ so that its derivatives up to some sufficiently large
(but constant) order have absolute value less than $1/100$.  Let $\mu_*$
denote the self-dual part of $\mu$.   We now consider,
for a connection $A$ on $E$ and a section $\psi$ of $\Sp_+$, the
following version of the four-dimensional Seiberg-Witten equations on
$\overline{X}$:
\begin{equation}
\label{eqn:tsw4}
\begin{split}
  D_A\psi &= 0,\\
  F_A^+ &= \frac{r}{2}\left(\rho(\psi)-i\hat{\omega}\right)
  -\frac{1}{2}F_{A_{K^{-1}}}^+ + i\mu_*.
\end{split}
\end{equation}
Here $r$ is a positive real number which will be taken to be very
large below.  The equations \eqref{eqn:tsw4} are equivalent to the
equations \eqref{eqn:sw4} with perturbation
\begin{equation}
\label{eqn:perturbation4}
\eta = -r\hat{\omega} + 2\mu_*,
\end{equation}
after rescaling the spinor as in \eqref{eqn:rescaling}.  On
$[\varepsilon,\infty)\times Y_+$ and $(-\infty,-\varepsilon]\times
Y_-$, if $A$ is in temporal gauge, then the equations \eqref{eqn:tsw4}
are equivalent to the perturbed instanton equations
\eqref{eqn:Tinstanton} (with a $\pm$ subscript on $\mu$).  Thus we can
use the equations \eqref{eqn:sw4} (with appropriate small abstract
perturbations) to define a chain map
\begin{equation}
\label{eqn:pcm}
\widehat{CM}^*(X,\frak{s};\lambda,J,r):
\widehat{CM}^*(Y_+,\frak{s}_+;\lambda_+,J_+,r)
\longrightarrow
\widehat{CM}^*(Y_-,\frak{s}_-;\lambda_-,J_-,r).
\end{equation}
Here $\frak{s}_\pm$ denotes the restriction of $\frak{s}\in\Spinc(X)$
to $Y_\pm$.  In general we expect the chain map \eqref{eqn:pcm} to
depend on the choice of $\mu$ (and on the choice of abstract
perturbations), although as explained in \S\ref{sec:smoothcob} the
induced map on homology does not.

\section{ECH cobordism maps}
\label{sec:cobexact}

The goal of this section is to define the maps on (filtered) ECH
induced by an exact symplectic cobordism, and to prove that they
satisfy all of the axioms in Theorem~\ref{thm:cob}, except for the
Holomorphic Curves axiom which will be proved in \S\ref{sec:hca}.

\subsection{Cobordism maps and holomorphic curves (statements)}

We now state some key properties of the map on Seiberg-Witten Floer
cohomology induced by an exact symplectic cobordism with the
Seiberg-Witten equations perturbed as in \S\ref{sec:pesc}.  To
simplify notation we henceforth ignore the decomposition via spin-c
structures, as in \eqref{eqn:scm}, although it is straightforward to
insert spin-c structures into the discussion below.

The following proposition asserts that the instantons that are used to
define the chain map \eqref{eqn:pcm} give rise to broken holomorphic
curves, and in particular respect the symplectic action filtration.
It also proves similar statements for certain chain homotopies, for
which we need the following strengthening of the notion of ``homotopy
of exact symplectic cobordisms'' defined in \S\ref{sec:SMT}:

\begin{definition}
\label{def:strongHomotopy}
Two exact symplectic cobordisms $(X,\lambda_0)$ and $(X,\lambda_1)$
from $(Y_+,\lambda_+)$ to $(Y_-,\lambda_-)$ with the same underlying
four-manifold $X$ are {\em strongly homotopic\/} if there is a smooth
one-parameter family of 1-forms $\{\lambda_t\mid t\in[0,1]\}$ on $X$
such that $(X,\lambda_t)$ is an exact symplectic cobordism from
$(Y_+,\lambda_+)$ to $(Y_-,\lambda_-)$ for each $t\in[0,1]$, and there
exists $\varepsilon>0$ such that the identifications \eqref{eqn:N-}
and \eqref{eqn:N+} for $\lambda_t$ do not depend on $t$.  
\end{definition}

Note that the last condition in the above definition ensures that the
completions \eqref{eqn:completion} of $(X,\lambda_t)$ for different
$t$ are diffeomorphic via the obvious identification.

\begin{proposition}
\label{prop:hol}
Fix $L\in\R$, closed connected contact 3-manifolds $(Y_+,\lambda_+)$
and $(Y_-,\lambda_-)$ such that $\lambda_\pm$ is $L$-nondegenerate,
symplectization-admissible almost complex structures $J_\pm$ for
$\lambda_\pm$, $2$-forms $\mu_\pm$ on $Y_\pm$ with $\mc{P}$-norm less
than $1$, and generic perturbations $\frak{p}_\pm$ on $Y_\pm$ as
needed to define the chain complexes
$\widehat{CM}^*(Y_\pm;\lambda_\pm,J_\pm,r)$.
\begin{description}
\item{(a)} Let $(X,\lambda)$ be an exact symplectic cobordism from
  $(Y_+,\lambda_+)$ to $(Y_-,\lambda_-)$.  Suppose $J$ is a strongly
  cobordism-admissible almost complex structure on $\overline{X}$
  which restricts to $J_+$ on $[0,\infty)\times Y_+$ and to $J_-$ on
  $(-\infty,0]\times Y_-$.  Let $\mu$ be a small exact 2-form on
  $\overline{X}$ extending $\mu_\pm$, and let $\frak{p}$ be a
  generic extension of $\frak{p}_\pm$ over $\overline{X}$.  Assume
  that $r$ is sufficiently large, and that $\frak{p}_\pm$ and
  $\frak{p}$ are sufficiently small for the given $r$. Let $\frak{d}$
  be a solution to the corresponding perturbed version of
  \eqref{eqn:tsw4} with index $0$ and with $\energy(\frak{c}_+)<2\pi L$.
  Then:
\begin{description}
\item{(i)}
 $\energy(\frak{c}_-)<2\pi L$.
\item{(ii)} There exists a broken $J$-holomorphic curve from
  $\Theta^+$ to $\Theta^-$, where $\Theta^\pm$ is the orbit set
  determined by $\frak{c}_\pm$ via Proposition~\ref{prop:Liso}(a).
\end{description}
\item{(b)} Let $\{(X,\lambda_t)\mid t\in[0,1]\}$ be a strong homotopy
  of exact symplectic cobordisms from $(Y_+,\lambda_+)$ to
  $(Y_-,\lambda_-)$.  Let $\{(J_t,\mu_t,\frak{p}_t)\mid t\in[0,1]\}$
  be a one-parameter family of choices as in part (a) with
  $\{\frak{p}_t\}$ generic.  Suppose that $r$ is sufficiently large
  and that $\frak{p}_\pm$ and each $\frak{p}_t$ are sufficiently small
  for the given $r$.  Let $t\in[0,1]$ and let $\frak{d}$ be a solution
  to the corresponding perturbed version of \eqref{eqn:tsw4} with
  index $-1$ and with $\energy(\frak{c}_+)<2\pi L$. Then
  $\energy(\frak{c}_-)<2\pi L$.
\end{description}
\end{proposition}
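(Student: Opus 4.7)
The proof has two parts: the energy inequalities in (a)(i) and (b), and the existence of a broken holomorphic curve in (a)(ii).

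For the energy inequalities, the key ingredient is an integration-by-parts identity combining: (1) the exactness of the cobordism, $d\widetilde{\lambda} = \widetilde{\omega}$; (2) the Bianchi identity $dF_A = 0$; (3) the perturbed curvature equation \eqref{eqn:tsw4}; and (4) the self-duality of $\widetilde{\omega}$ with respect to $g$. Combined with careful bookkeeping at the cylindrical ends (where $\widetilde{\lambda}$ has the specific exponential behavior controlled by $\phi_\pm$ in \eqref{eqn:widetildelambda}), these ingredients express $\energy(\frak{c}_+) - \energy(\frak{c}_-)$ as a four-dimensional integral whose dominant contributions are a large positive $r$-multiple of the $\sigma$-weighted volume (from $-(ir/2)\widetilde{\omega}\wedge\hat{\omega}$) and an opposite-signed spinor contribution (from $(ir/2)\widetilde{\omega}\wedge\rho(\psi)$). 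The Taubes pointwise estimates $|\alpha|^2 \le 1 + O(r^{-1})$ from \cite[Lem.\ 2.3]{e4}, applied to the $E$-component of $\psi$ in \eqref{eqn:seke4}, show that these two dominant contributions approximately cancel, with the remaining error $\delta$ controlled by small contributions from $F_{A_{K^{-1}}}^+$ and $\mu_*$. The conclusion is
$$\energy(\frak{c}_-) \le \energy(\frak{c}_+) + \delta,$$
where $\delta > 0$ can be made as small as desired by taking $r$ large and $\mu$ small in $C^0$.

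Because $\lambda_\pm$ is $L$-nondegenerate, there is a positive gap in the action spectrum around $L$; combined with $\energy(\frak{c}_+) < 2\pi L$ and the fact (from Proposition~\ref{prop:Liso}(a)) that $\energy(\frak{c}_\pm)$ is close to $2\pi\mc{A}(\Theta^\pm)$, choosing $\delta$ smaller than the gap yields $\energy(\frak{c}_-) < 2\pi L$. Part (b) follows by essentially the same argument applied slicewise to $\{(X,\lambda_t)\mid t\in[0,1]\}$: the strong-homotopy hypothesis ensures that the completions of $(X,\lambda_t)$ are canonically diffeomorphic and the three-dimensional chain complexes at the ends are $t$-independent, so the energy identity is uniform in $t$ and $\delta$ can be controlled uniformly.

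For (a)(ii), the plan is to adapt Taubes' correspondence between Seiberg-Witten solutions and pseudoholomorphic currents from the series \cite{e1,e2,e3,e4}, originally developed for symplectizations, to the exact symplectic cobordism $\overline{X}$. Specifically, one considers a sequence $r_n \to \infty$ and a corresponding sequence of index-zero instantons $\frak{d}_n$ (produced by admissible deformations, as in the proof of Lemma~\ref{lem:deform}), with a uniform energy bound given by part (i). The zero sets of the $E$-components of $\psi_n$ in \eqref{eqn:seke4} accumulate as currents on a nonempty closed subset of $\overline{X}$, which the $r \to \infty$ limits of the curvature equation force to be $J$-holomorphic. A standard SFT-style bubble analysis at the cylindrical ends produces the symplectization levels, yielding a broken $J$-holomorphic curve from $\Theta^+$ to $\Theta^-$, whose asymptotic ends are identified with the orbit sets via Proposition~\ref{prop:Liso}(a).

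The main obstacle is (a)(ii): Taubes' three-dimensional analysis must be transplanted to a cobordism. The specific choices of $\widetilde{\lambda}$ and the conformal factor $\sigma$ in \eqref{eqn:widetildelambda}--\eqref{eqn:varphi} are tailored so that on the cylindrical ends the local analysis of spinor concentration reduces to the case already treated in \cite{e1,e4}. The genuinely new work lies on the compact part of $X$, where the strong cobordism-admissibility hypothesis (giving $J$ a slightly larger region of translation-invariance near $\partial X$ than the ends themselves) provides enough room to patch the cylindrical and compact pictures together smoothly. This transplant is standard in spirit but quite delicate in execution, and constitutes the principal technical content beyond the present section.
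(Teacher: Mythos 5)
Your outline for the energy inequality has a genuine gap. The direct Stokes/integration-by-parts argument you describe does not yield $\energy(\frak{c}_-)\le\energy(\frak{c}_+)+\delta$: the completion $\overline{X}$ is noncompact and the form $\widetilde{\lambda}$ in \eqref{eqn:widetildelambda} grows like $e^{2s}$ on the ends, so the slice integrals $i\int\widetilde{\lambda}\wedge F_A$ do not converge to the energies of $\frak{c}_\pm$, and the comparison one actually gets carries exponential weights (this is exactly the computation \eqref{eqn:EStokes}, which by itself only bounds weighted quantities). If one instead pairs with the fixed contact form $\lambda_Y$ on the ends, the Stokes term involves the full curvature $F_A$, and the Seiberg--Witten equation \eqref{eqn:tsw4} only controls $F_A^+$ pointwise; the anti-self-dual part is controlled merely in $L^2$, and only after one has an a priori bound of the form $A_{\frak{d}}\le \mc{K}r$ or $\frak{i}_{\frak{d}}\ge-\mc{K}r$. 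Your proposal never uses the index-$0$ (resp.\ index-$(-1)$) hypothesis or the abstract perturbations $\frak{p}$, but these are essential: in the paper's proof, Step 1 converts the index hypothesis into an action bound via the spectral-flow-corrected functional $\frak{a}^f$ in \eqref{eqn:af}, the index formula \eqref{eqn:if}, and Taubes' relation $\frak{a}^f(\frak{c})=-\tfrac{1}{2}r\energy(\frak{c})(1+o(1))$, and only then does the machinery of \S\ref{sec:estimates}--\S\ref{sec:ih2} (the bounds on $\underline{M}$, Propositions~\ref{prop:4.1x}, \ref{prop:4.5x}, \ref{prop:5.1x}, \ref{prop:9x}) apply. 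Moreover the logical order is the reverse of yours: the conclusion $\energy(\frak{c}_-)<2\pi L$ is deduced from Proposition~\ref{prop:9x} and the resulting broken holomorphic curve, not proved first by a soft energy identity and then fed into the curve construction.

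For part (a)(ii) your plan (extract a holomorphic current from a sequence $r_n\to\infty$) is in the right Taubes $SW\Rightarrow Gr$ spirit, but it does not address the actual quantifier structure: the statement concerns every index-$0$ solution of the \emph{perturbed} equations at a fixed large $r$. The paper handles this by sending the abstract perturbations $\frak{p}_k\to 0$ at fixed large $r$, invoking Kronheimer--Mrowka compactness (\cite[Prop.\ 24.6.4]{km}) to obtain a broken trajectory for the unperturbed equations, and then applying Proposition~\ref{prop:9x} to the cobordism level and \cite[Prop.\ 5.5]{e4} to the symplectization levels, with the Step 1 action bound \eqref{eqn:invoke3} guaranteeing the hypotheses; stability of the constants (Remark~\ref{rem:stable}) is what makes the argument uniform in $t$ for part (b). Without the action/index bound, the perturbation-removal step, and the uniform (stable) constants, your sketch does not close; also, the instantons here are those counted by the cobordism chain map \eqref{eqn:pcm}, not objects produced by admissible deformations as in Lemma~\ref{lem:deform}.
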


Proposition \ref{prop:hol} is proved in \S\ref{sec:holproof} below.
We can now define cobordism maps on filtered Seiberg-Witten Floer
cohomology:

\begin{corollary}
\label{cor:cobl}
Let $(X,\lambda)$ be an exact symplectic cobordism from
$(Y_+,\lambda_+)$ to $(Y_-,\lambda_-)$, where $\lambda_\pm$ is
$L$-nondegenerate.  Let $J_\pm$ be a symplectization-admissible almost
complex structure for $\lambda_\pm$.  Suppose $r$ is sufficiently large. Fix
$2$-forms $\mu_\pm$ with $\mc{P}$-norm less than $1$ and fix
sufficiently small abstract perturbations $\frak{p}_\pm$ as needed to
define the chain complexes
$\widehat{CM}^*(Y_\pm;\lambda_\pm,J_\pm,r)$.  Then there is a
well-defined map
\begin{equation}
\label{eqn:toolong}
\widehat{HM}^*_L(X,\lambda):
\widehat{HM}^*_L(Y_+;\lambda_+,J_+,r) \longrightarrow
\widehat{HM}^*_L(Y_-;\lambda_-,J_-,r),
\end{equation}
depending only on $X,\lambda,L,r,J_\pm,\mu_\pm,\frak{p}_\pm$, with the
following properties:
\begin{description}
\item{(a)}
If $L'<L$ and if $\lambda_\pm$ is also $L'$-nondegenerate,
then the diagram
\[
\begin{CD}
\widehat{HM}^*_{L'}(Y_+;\lambda_+,J_+,r)
@>{\widehat{HM}^*_{L'}(X,\lambda)}>>
 \widehat{HM}^*_{L'}(Y_-;\lambda_-,J_-,r)\\
@VVV @VVV\\
\widehat{HM}^*_{L}(Y_+;\lambda_+,J_+,r)
@>{\widehat{HM}^*_{L}(X,\lambda)}>>
 \widehat{HM}^*_{L}(Y_-;\lambda_-,J_-,r)
\end{CD}
\]
commutes, where the vertical arrows are induced by inclusions of chain complexes.
\item{(b)}
Likewise the diagram
\begin{equation}
\label{eqn:omfw}
\begin{CD}
\widehat{HM}^*_L(Y_+;\lambda_+,J_+,r)
@>{\widehat{HM}^*_L(X,\lambda)}>>
 \widehat{HM}^*_L(Y_-;\lambda_-,J_-,r)\\
@VVV @VVV\\
\widehat{HM}^*(Y_+;\lambda_+,J_+,r)
@>>> \widehat{HM}^*(Y_-;\lambda_-,J_-,r).
\end{CD}
\end{equation}
commutes, where the bottom arrow is the Seiberg-Witten Floer cobordism
map induced by \eqref{eqn:pcm}.
\item{(c)}
If $\{(X_t,\lambda_t)\mid t\in[0,1]\}$ is a strong homotopy of exact
symplectic cobordisms from $(Y_+,\lambda_+)$ to $(Y_-,\lambda_-)$, and
if $r$ is sufficiently large,
then
\[
\widehat{HM}_L^*(X,\lambda_0) = \widehat{HM}_L^*(X,\lambda_1).
\]
\end{description}
\end{corollary}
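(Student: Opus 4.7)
The plan is to leverage Proposition~\ref{prop:hol} twice: part (a) shows that the Seiberg--Witten chain map \eqref{eqn:pcm} preserves the energy filtration, hence restricts to a filtered chain map; part (b) shows this restriction is independent of auxiliary choices and invariant under strong homotopies.

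\emph{Step 1 (Defining the map).} Fix a strongly cobordism-admissible $J$ on $\overline{X}$ restricting to $J_\pm$ on the ends, a small extension $\mu$ of $\mu_\pm$, and a small generic extension $\frak{p}$ of $\frak{p}_\pm$. For $r$ sufficiently large, the argument at the start of the proof of Lemma~\ref{lem:diff} gives that every generator of $\widehat{CM}^*_L(Y_\pm)$ is irreducible. Given an irreducible generator $\frak{c}_+$ with $\energy(\frak{c}_+) < 2\pi L$, the coefficient of an irreducible $\frak{c}_-$ in its image under \eqref{eqn:pcm} counts index-$0$ instantons from $\frak{c}_+$ to $\frak{c}_-$; Proposition~\ref{prop:hol}(a)(i) forces $\energy(\frak{c}_-) < 2\pi L$ whenever this count is nonzero. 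Moreover any index-$0$ instanton from $\frak{c}_+$ to a reducible $\frak{c}_-$ would likewise force $\energy(\frak{c}_-) < 2\pi L$, which for $r$ large is incompatible with reducibility. By the fact recalled in \S\ref{sec:smoothcob}, the chain map \eqref{eqn:pcm} then restricts to a chain map $\widehat{CM}^*_L(Y_+) \to \widehat{CM}^*_L(Y_-)$, and I would define \eqref{eqn:toolong} to be the induced map on homology.

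\emph{Step 2 (Independence of extensions, and property (c)).} Given two sets of extensions $(J^0,\mu^0,\frak{p}^0)$ and $(J^1,\mu^1,\frak{p}^1)$ of the fixed boundary data, I would connect them by a smooth path $\{(J^t,\mu^t,\frak{p}^t)\mid t\in[0,1]\}$ that is constant on the ends and apply the standard family-instanton construction summarized in \S\ref{sec:smoothcob} to produce a chain homotopy between the two cobordism chain maps, built by counting index-$(-1)$ instantons from the family. Proposition~\ref{prop:hol}(b), applied to the trivial symplectic homotopy $\lambda_t \equiv \lambda$, ensures that any such index-$(-1)$ instanton with $\energy(\frak{c}_+) < 2\pi L$ also has $\energy(\frak{c}_-) < 2\pi L$; the reducibility argument of Step~1 again excludes reducible endpoints. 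Hence the chain homotopy restricts to the filtered subcomplexes and the induced maps on $\widehat{HM}^*_L$ agree. Property~(c) follows from exactly the same mechanism, now with a family $\{(J_t,\mu_t,\frak{p}_t)\}$ interpolating between admissible choices for $(X,\lambda_0)$ and $(X,\lambda_1)$ and using Proposition~\ref{prop:hol}(b) with nontrivial $\lambda_t$.

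\emph{Step 3 (Properties (a), (b)).} Both are immediate from the construction: for (a), the restricted chain map on $\widehat{CM}^*_{L'}$ is literally the restriction of the one on $\widehat{CM}^*_L$ via the chain-level inclusion $\widehat{CM}^*_{L'} \hookrightarrow \widehat{CM}^*_L$; for (b), the filtered chain map is just the restriction of the unfiltered chain map \eqref{eqn:pcm}, whose induced map on $\widehat{HM}^*$ is the bottom arrow of \eqref{eqn:omfw}, and the vertical arrows in \eqref{eqn:omfw} are also induced by the chain-level inclusions $\widehat{CM}^*_L \hookrightarrow \widehat{CM}^*$. The main obstacle is Step~2: one must choose $r$ large enough and perturbations small enough uniformly along each interpolating path so that Proposition~\ref{prop:hol}(b) applies and reducibles are globally absent from the filtration band $\energy < 2\pi L$. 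This is a standard but nontrivial compactness argument, analogous to what is already carried out in the proof of Lemma~\ref{lem:deform}, and will have to be redone for each of the two families of homotopies considered in Step~2.
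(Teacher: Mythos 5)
Your proposal follows essentially the same route as the paper's own proof: define the map by restricting the chain map \eqref{eqn:pcm} to the energy subcomplexes via Proposition~\ref{prop:hol}(a), prove independence of the extensions $(J,\mu,\frak{p})$ and property (c) by chain homotopies built from index $-1$ instantons controlled by Proposition~\ref{prop:hol}(b), and observe that (a) and (b) hold by construction. Your explicit treatment of reducible limits is a correct elaboration of what the paper handles implicitly (via the observation at the start of the proof of Lemma~\ref{lem:diff} and the discussion of when cobordism chain maps land in $\widehat{CM}^*_{irr}$ in \S\ref{sec:smoothcob}).
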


Note that for now the map $\widehat{HM}^*_L(X,\lambda)$ may depend on
$r,J_\pm,\mu_\pm,\frak{p}_\pm$, although these choices are not
indicated in the notation.  Proposition~\ref{prop:cobinv} below will
show that in fact $\widehat{HM}^*_L(X,\lambda)$ is independent of these
choices.

\begin{proof}
  Choose a strongly cobordism-admissible almost complex structure $J$
  on $\overline{X}$ extending $J_+$ and $J_-$, and choose small
  perturbations $\mu$ and $\frak{p}$ extending $\mu_\pm$ and
  $\frak{p}_\pm$, as needed to define the chain map \eqref{eqn:pcm}.
  Summing over spin-c structures, we then have a chain map
\begin{equation}
\label{eqn:hba}
\widehat{CM}^*(Y_+;\lambda_+,J_+,r) \longrightarrow
\widehat{CM}^*(Y_-;\lambda_-,J_-,r),
\end{equation}
whose induced map on homology is the bottom arrow in \eqref{eqn:omfw}.
 It follows from
Proposition~\ref{prop:hol}(a) that if $r$ is sufficiently large, and
if the perturbations are sufficiently small, then the chain map
\eqref{eqn:hba} restricts to a chain map
\begin{equation}
\label{eqn:hbal}
\widehat{CM}^*_L(Y_+;\lambda_+,J_+,r) \longrightarrow
\widehat{CM}^*_L(Y_-;\lambda_-,J_-,r).
\end{equation}
We define $\widehat{HM}^*_L(X,\lambda)$ to be the map on homology
induced by \eqref{eqn:hbal}.

We now show that $\widehat{HM}^*_L(X,\lambda)$ does not depend on the
choice of extensions $J,\mu,\frak{p}$ of $J_\pm,\mu_\pm,\frak{p}_\pm$
over $X$.  Given two choices of extensions, we can choose a homotopy
between them.  This homotopy induces a chain homotopy between the
corresponding maps \eqref{eqn:hba}, which counts index $-1$ instantons
that appear during the homotopy.  It follows from
Proposition~\ref{prop:hol}(b) that if $r$ is sufficiently large, then
this chain homotopy maps $\widehat{CM}^*_L$ to $\widehat{CM}^*_L$
(here we are ignoring the gradings as usual), and hence restricts to a chain
homotopy between the corresponding maps \eqref{eqn:hbal}.

Properties (a) and (b) above now hold by construction.  One proves
property (c) by using Proposition~\ref{prop:hol}(b) to define a chain
homotopy.
\end{proof}

The induced maps on $\widehat{HM}_L^*$ constructed above behave nicely
under composition of exact symplectic cobordisms:

\begin{proposition}
\label{prop:comp}
Suppose $(X,\lambda)$ is the composition of an exact symplectic
cobordism $(X^-,\lambda^-)$ from $(Y_0,\lambda_0)$ to
$(Y_-,\lambda_-)$ with an exact symplectic cobordism $(X^+,\lambda^+)$
from $(Y_+,\lambda_+)$ to $(Y_0,\lambda_0)$, where $\lambda_\pm$ and
$\lambda_0$ are $L$-nondegenerate.  Let $J_\pm$ and $J_0$ be
symplectization-admissible almost complex structures for $\lambda_\pm$
and $\lambda_0$. Let $\mu_\pm,\mu_0$ be $2$-forms from the $Y_\pm,Y_0$
versions of $\Omega$ with $\mc{P}$-norm less than $1$, fix $r$
sufficiently large, and let $\frak{p}_\pm,\frak{p}_0$ be sufficiently
small generic abstract perturbations as needed to define the chain
complexes $\widehat{CM}^*$.  Then the maps in Corollary~\ref{cor:cobl}
for these data satisfy
\[
\widehat{HM}^*_L(X,\lambda) =
\widehat{HM}_L^*(X^-,\lambda^-) \circ
\widehat{HM}_L^*(X^+,\lambda^+).
\]
\end{proposition}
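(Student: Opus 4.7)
My plan is to prove the proposition by a neck-stretching argument along $Y_0$, modeled on the standard proof of the unfiltered composition law \eqref{eqn:coco} but carefully tracking the energy filtration.

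The first step is to construct a one-parameter family of exact symplectic cobordisms $(X_T,\lambda_T)$ from $(Y_+,\lambda_+)$ to $(Y_-,\lambda_-)$, where $X_T$ is obtained from $X=X^-\circ X^+$ by inserting a cylinder $[-T,T]\times Y_0$ along $Y_0$ and extending the Liouville form by $e^s\lambda_0$ on the inserted piece. Since the insertion occurs away from $Y_\pm$, the collar identifications \eqref{eqn:N-} and \eqref{eqn:N+} can be kept independent of $T$, so this is a strong homotopy of exact symplectic cobordisms in the sense of Definition~\ref{def:strongHomotopy}. Corollary~\ref{cor:cobl}(c) then gives $\widehat{HM}_L^*(X_T,\lambda_T)=\widehat{HM}_L^*(X,\lambda)$ for all $T\ge 0$, so it suffices to identify this map with $\widehat{HM}_L^*(X^-,\lambda^-)\circ\widehat{HM}_L^*(X^+,\lambda^+)$ for some large $T$. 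Next I would choose all auxiliary data to be product-like on the neck: a strongly cobordism-admissible $J_T$ restricting to fixed $J^\pm$ on $X^\pm$ and to the $\R$-invariant extension of $J_0$ on the neck, and similarly $\mu_T$ and $\frak{p}_T$ restricting to fixed extensions on $X^\pm$ and to $s$-invariant data pulled back from $Y_0$ on the neck.

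The heart of the argument is a neck-stretching compactness analysis for the perturbed equations \eqref{eqn:tsw4}, following \cite[Ch.\ 26]{km}: for $T$ large, any index-$0$ solution on $\overline{X}_T$ with $\energy(\frak{c}_+)<2\pi L$ is close, modulo gauge, to the gluing of a solution $\frak{d}^+$ on $\overline{X}^+$ from $\frak{c}_+$ to some generator $\frak{c}_0$ of $\widehat{CM}^*(Y_0;\lambda_0,J_0,r)$, possibly a cascade of $Y_0$-instantons, and a solution $\frak{d}^-$ on $\overline{X}^-$ ending at $\frak{c}_-$; conversely, gluing produces all such combinations. Applying Proposition~\ref{prop:hol}(a)(i) to $\frak{d}^+$ gives the key filtration bound $\energy(\frak{c}_0)<2\pi L$, so $\frak{c}_0$ is a generator of $\widehat{CM}_L^*(Y_0;\lambda_0,J_0,r)$; applied to $\frak{d}^-$, together with the filtration preservation of $Y_0$-instantons used in Lemma~\ref{lem:diff}(a) (compare \cite[Lem.\ 4.6]{e1}), it gives $\energy(\frak{c}_-)<2\pi L$. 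The resulting chain-level factorization
\[
\widehat{CM}_L^*(X_T,\lambda_T)\;\simeq\;\widehat{CM}_L^*(X^-,\lambda^-)\circ\widehat{CM}_L^*(X^+,\lambda^+),
\]
valid up to chain homotopy for $T$ sufficiently large, then yields the proposition on homology.

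The main obstacle is the compactness and gluing analysis for \eqref{eqn:tsw4} carried out uniformly in $T$: one needs the usual a~priori $C^0$ bounds on the spinor and $L^1$ bounds on curvature that remain uniform as the neck lengthens, together with exponential convergence on the neck to a single $Y_0$-generator. For the contact-perturbed equations the necessary analytic input is essentially already present in \cite{e1,e4}, which handle the $T\to\infty$ behavior in the guise of the isomorphism \eqref{eqn:echswfJ}, so this step should reduce to combining that analysis with the general Seiberg-Witten neck-stretching framework of \cite[Ch.\ 26]{km}. The genuinely new ingredient beyond the unfiltered composition law is the energy-filtration bound of Proposition~\ref{prop:hol}(a)(i), which is precisely what allows the factorization to restrict from $\widehat{CM}^*$ to $\widehat{CM}_L^*$.
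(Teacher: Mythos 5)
Your overall strategy---stretch the neck along $Y_0$, apply the Kronheimer--Mrowka gluing theorem at large neck length, and use the energy bound of Proposition~\ref{prop:hol}(a)(i) to see that everything restricts to $\widehat{CM}^*_L$---is the right one and matches the paper's. But there is a genuine gap at the step where you compare different neck lengths: you claim that the family $(X_T,\lambda_T)$ obtained by inserting $([-T,T]\times Y_0,e^s\lambda_0)$ is a \emph{strong homotopy} in the sense of Definition~\ref{def:strongHomotopy}, so that Corollary~\ref{cor:cobl}(c) gives $\widehat{HM}^*_L(X_T,\lambda_T)=\widehat{HM}^*_L(X,\lambda)$ for all $T$. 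This fails: a strong homotopy requires every $(X,\lambda_t)$ to be an exact cobordism from $(Y_+,\lambda_+)$ to $(Y_-,\lambda_-)$ with the \emph{same} boundary contact forms, and inserting a symplectization neck necessarily rescales the ends. Indeed, to glue $e^s\lambda_0$ at $s=T$ to the $Y_0$-collar of $X^+$ you must multiply $\lambda^+$ by $e^{T}$ (otherwise the $1$-form is not even continuous), so the stretched cobordism runs from $(Y_+,e^{2R}\lambda_+)$ to $(Y_-,e^{-2R}\lambda_-)$, exactly as recorded after \eqref{eqn:lambdatilde}; there is no way to keep both ends fixed, since for an exact cobordism the restriction of the Liouville form to the boundary is pinned down. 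Hence the homotopy-invariance shortcut is unavailable, and your argument as written does not connect $T=0$ to the large-$T$ regime where gluing applies.

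What the paper does instead is: fix $r$ large, keep the metric and $\hat{\omega}$ on the ends $R$-independent (so all the maps $\Phi_R$ go between the same fixed chain complexes $\widehat{CM}^*_L(Y_\pm;\lambda_\pm,J_\pm,r)$), use one legitimate application of Corollary~\ref{cor:cobl}(c) to replace $\lambda$ by a form $\lambda'$ standardized near $Y_0$ (this modification does keep the ends fixed), identify $(\Phi_0)_*$ with $\widehat{HM}^*_L(X,\lambda')$ and $\Phi_{R_0}$ with $\Phi_-\circ\Phi_+$ via \cite[Prop.\ 26.1.6]{km}, and then prove directly that $\Phi_0$ and $\Phi_{R_0}$ are chain homotopic by counting index $-1$ instantons in the family $\{\overline{X_R}\mid R\in[0,R_0]\}$. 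The point you are missing is that this interpolating chain homotopy must itself preserve the energy filtration, which requires the a priori estimates of Proposition~\ref{prop:9x} to hold \emph{uniformly in $R$}---this is precisely why Case~2 of the geometric setup in \S\ref{sec:9xprelim} is formulated for arbitrary $R$. If you repair your argument along these lines (replacing the homotopy-invariance step by the $R$-family chain homotopy, with the uniform index $-1$ estimates), the rest of your outline goes through.
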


Proposition \ref{prop:comp} is proved in \S\ref{sec:holproof} using
a neck stretching argument.

\subsection{Invariance of cobordism maps}

The goal of this subsection is to prove the following proposition,
asserting that the map $\widehat{HM}_L^*(X,\lambda)$ defined in
Corollary~\ref{cor:cobl} depends only on $X,\lambda,L$, and not on the
additional choices made in its definition.

\begin{proposition}
\label{prop:cobinv} 
Let $(X,\lambda)$ be an exact symplectic cobordism from $(Y_+,\lambda_+)$ to
$(Y_-,\lambda_-)$ where $\lambda_\pm$ is $L$-nondegenerate.  Then the
map defined in Corollary~\ref{cor:cobl} induces a well-defined map
\[
\widehat{HM}^*_L(X,\lambda): \widehat{HM}^*_L(Y_+,\lambda_+)
\longrightarrow \widehat{HM}^*_L(Y_-,\lambda_-),
\]
where $\widehat{HM}^*_L(Y_\pm,\lambda_\pm)$ is defined as in
Corollary~\ref{cor:yls}.
\end{proposition}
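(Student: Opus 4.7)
The plan is to prove this by a parametrized continuation argument that extends the proof of Lemma~\ref{lem:deform}. Given two sets of auxiliary data $D_0 = (r_0, J_\pm^0, \mu_\pm^0, \frak{p}_\pm^0)$ and $D_1 = (r_1, J_\pm^1, \mu_\pm^1, \frak{p}_\pm^1)$ satisfying the hypotheses of Corollary~\ref{cor:cobl}, I would choose a smooth interpolating path $\{D_t\}_{t\in[0,1]}$ of valid data. On the ends $Y_\pm$ this determines admissible deformations $\rho_\pm = \{(\lambda_\pm, L, J_\pm^t, r_t)\}$, inducing the canonical isomorphisms $\Phi_{\rho_\pm}$ from Lemma~\ref{lem:deform}. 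The goal is to show that these isomorphisms conjugate $\widehat{HM}^*_L(X,\lambda;D_0)$ to $\widehat{HM}^*_L(X,\lambda;D_1)$, so that the map descends to an intrinsic map between the invariantly-defined groups $\widehat{HM}^*_L(Y_\pm,\lambda_\pm)$ of Corollary~\ref{cor:yls}.

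I would then extend $\{D_t\}$ to a smooth $1$-parameter family of strongly cobordism-admissible almost complex structures $J^t$, exact $2$-forms $\mu^t$, and abstract perturbations $\frak{p}^t$ on $\overline{X}$, each restricting to the corresponding $D_t$-datum on the ends. Viewing $t\in[0,1]$ as an additional parameter, the standard Seiberg--Witten Floer machinery of \cite[Ch.~25]{km} constructs from this family a chain homotopy $K$ at the SWF chain level between the two compositions $\hat{\phi}_- \circ \widehat{CM}^*(X;D_0)$ and $\widehat{CM}^*(X;D_1) \circ \hat{\phi}_+$, where $\hat{\phi}_\pm$ are the continuation chain maps on $Y_\pm$ for the paths $\{D_t|_{Y_\pm}\}$; on the filtered subcomplexes these $\hat{\phi}_\pm$ induce precisely $\Phi_{\rho_\pm}$ by the construction in Lemma~\ref{lem:deform}. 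The chain homotopy $K$ itself counts index $-1$ solutions to the parametrized version of \eqref{eqn:tsw4}.

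The essential technical step---and the main obstacle---is to verify that $K$ restricts to a map between the filtered subcomplexes $\widehat{CM}^*_L$. This is a direct generalization of Proposition~\ref{prop:hol}(b): the proof of that proposition (given in \S\ref{sec:holproof}) relies on energy estimates that are local and uniform in the parameters, and these estimates carry over when the boundary data also vary with $t$, since $[0,1]$ is compact and $\{D_t\}$ is smooth. Concretely, one repeats the argument of Proposition~\ref{prop:hol}(b) with the boundary data allowed to depend on $t$, concluding that $\energy(\frak{c}_+) < 2\pi L$ forces $\energy(\frak{c}_-) < 2\pi L$ for any index $-1$ parametrized solution. Granting this, passage to filtered homology turns the chain homotopy identity into $\Phi_{\rho_-} \circ \widehat{HM}^*_L(X,\lambda;D_0) = \widehat{HM}^*_L(X,\lambda;D_1) \circ \Phi_{\rho_+}$, and combined with the independence of the extension data $(J,\mu,\frak{p})$ over $X$ already established in Corollary~\ref{cor:cobl}, this yields the desired well-defined map.
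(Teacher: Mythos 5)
Your strategy is the standard ``parametrized continuation'' argument, but it is genuinely different from what the paper does, and it leans on an estimate that the paper never establishes. The paper's proof of this proposition (via Lemma~\ref{lem:cobinv}) deliberately avoids any parametrized moduli space in which the \emph{end} data vary: it first reduces to constant $r$ by observing that $d\lambda_\pm\in\Omega$, so a small change in $r$ can be absorbed into $\mu_\pm$; it then fixes the data on one end, splits off a collar $X^+=[-\varepsilon,0]\times Y_+$, uses Lemma~\ref{lem:technical} to identify the cobordism map of the product piece $(X^+,e^s\lambda_+)$ with the continuation isomorphism $\Phi_\rho$ composed with an inclusion, and finally invokes the composition law (Proposition~\ref{prop:comp}) to reassemble $\widehat{HM}^*_L(X,\lambda)$. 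In this way all the hard analysis stays inside Propositions~\ref{prop:hol} and~\ref{prop:comp}, both of which are stated and proved for \emph{fixed} end data $(J_\pm,\mu_\pm,\frak{p}_\pm,r)$.

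The gap in your version is precisely at the step you call ``a direct generalization of Proposition~\ref{prop:hol}(b).'' As stated, Proposition~\ref{prop:hol}(b) concerns a family $\{(J_t,\mu_t,\frak{p}_t)\}$ of choices ``as in part (a),'' i.e.\ all restricting to the \emph{same} $(J_\pm,\mu_\pm,\frak{p}_\pm)$ on the ends, with $r$ fixed. Its proof rests on Proposition~\ref{prop:9x}, whose notion of ``neighborhood of a data set'' (see the paragraph ``Variations in the data'' in \S\ref{sec:9xprelim}) explicitly constrains variations to agree with the given $(\lambda_\pm,J_\pm,\mu_\pm)$ near the boundary; the stability of the constants $\kappa,\kappa_\delta$ is only asserted for such variations. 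So the uniform energy estimates you need for index $-1$ solutions in a family where $J_\pm^t$, $\mu_\pm^t$, $\frak{p}_\pm^t$ and $r_t$ all move are not available ``off the shelf''; compactness of $[0,1]$ alone does not supply them, and one would have to rework \S\ref{sec:estimates}--\S\ref{sec:ih2} for such families (much as the footnote to Lemma~\ref{lem:deform}(d) must already invoke a stability condition to handle varying three-dimensional data). There is also a structural point you elide: when the end data vary with $t$, the asymptotic limits of a parametrized instanton do not live in a fixed pair of chain complexes, so the chain homotopy identity $\hat\phi_-\circ\widehat{CM}^*(X;D_0)\simeq\widehat{CM}^*(X;D_1)\circ\hat\phi_+$ must be set up by absorbing the variation of end data into cylinder pieces with fixed asymptotics --- at which point you are essentially reconstructing the paper's factorization through Proposition~\ref{prop:comp} and Lemma~\ref{lem:technical}. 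Your argument can likely be completed along these lines, but as written the filtration-preservation step is not justified by the results you cite.
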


To prepare for the proof of this proposition, we need the following
lemma, which relates the maps on $\widehat{HM}^*_L$ induced by
exact product symplectic cobordisms to the canonical isomorphisms
between different versions of $\widehat{HM}^*_L$.

\begin{lemma}
\label{lem:technical}
Let $\rho=\{(\lambda_t,L_t,J_t,r)\mid t\in[0,1]\}$ be an admissible
deformation as in Definition~\ref{def:ad}.  Assume further that:
\begin{itemize}
\item
$\lambda_t=f_t\lambda_0$, where $f:[0,1]\times Y\to \R^{>0}$ satisfies
$\partial f/\partial t<0$ everywhere.
\item
$dL_t/dt\le 0$.
\end{itemize}
Let $(X,\lambda)$ be the product exact cobordism $([-1,0]\times
Y,\lambda_{-s})$ from $(Y,\lambda_0)$ to $(Y,\lambda_1)$.  Suppose $r$
is sufficiently large.  Fix small perturbations $\mu_i,\frak{p}_i$ for
$i=0,1$ as needed to define the chain complexes $\widehat{CM}^*$ for
$t=0,1$.  Then the cobordism map $\widehat{HM}^*_{L_0}(X,\lambda)$ in
Corollary~\ref{cor:cobl} is the composition
\[
\widehat{HM}^*_{L_0}(Y;\lambda_0,J_0,r)
\stackrel{\Phi_\rho}{\longrightarrow} \widehat{HM}^*_{L_1}(Y;\lambda_1,J_1,r)
\longrightarrow
\widehat{HM}^*_{L_0}(Y;\lambda_1,J_1,r),
\]
where $\Phi_\rho$ is the isomorphism from 
\eqref{eqn:Phirho}, and the map on the right is induced by the
inclusion of chain complexes.
\end{lemma}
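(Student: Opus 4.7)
The plan is to reduce the lemma to a step-by-step comparison on sufficiently fine pieces of the product cobordism, where the cobordism map and the building blocks of $\Phi_\rho$ become literally the same chain map. To this end, I would choose a subdivision $0=t_0<t_1<\cdots<t_N=1$ fine enough that within each subinterval $[t_{i-1},t_i]$ the estimates in the proof of Lemma~\ref{lem:deform} apply; in particular, so that $\lambda_t$ has no orbit set with action in $[L_{t_i},L_{t_{i-1}}]$ for any $t\in[t_{i-1},t_i]$, and so that the corresponding chain maps $\widehat{I}_i$ of \eqref{eqn:Ihati} restrict to the filtered subcomplexes as in the proof of Lemma~\ref{lem:deform}. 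Setting $X_i \eqdef [-t_i,-t_{i-1}]\times Y$ with Liouville form $\lambda_i \eqdef \lambda_{-s}|_{X_i}$, the hypothesis $\partial f/\partial t<0$ guarantees that each $(X_i,\lambda_i)$ is an exact symplectic cobordism from $(Y,\lambda_{t_{i-1}})$ to $(Y,\lambda_{t_i})$, and $(X,\lambda)$ is their composition. Proposition~\ref{prop:comp} then yields
\[
\widehat{HM}^*_{L_0}(X,\lambda)=\widehat{HM}^*_{L_0}(X_N,\lambda_N)\circ\cdots\circ\widehat{HM}^*_{L_0}(X_1,\lambda_1).
\]

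Next I would identify, for each $i$, the filtered cobordism map for $X_i$ with the corresponding piece of $\Phi_\rho$. On $\overline{X_i}$ I would choose a strongly cobordism-admissible almost complex structure $J^i$ whose restriction to the product region comes, via $s\leftrightarrow -t$, from the family $\{J_t\}_{t\in[t_{i-1},t_i]}$, and I would choose extensions of $\mu_\pm$ and the abstract perturbations $\frak{p}_\pm$ that on the product region interpolate between the data at $t_{i-1}$ and $t_i$ exactly as in the construction of $\widehat{I}_i$. With these choices, the four-dimensional equations~\eqref{eqn:tsw4} on the product part of $\overline{X_i}$, after rescaling the spinor as in~\eqref{eqn:rescaling} and passing to temporal gauge, coincide with the $t$-dependent three-dimensional equations~\eqref{eqn:Tinstanton} for the family of data sets that define $\widehat{I}_i$. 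Consequently the chain map~\eqref{eqn:hba} underlying $\widehat{HM}^*_{L_0}(X_i,\lambda_i)$ agrees with $\widehat{I}_i$. Since this chain map takes $\widehat{CM}^*_{L_{t_{i-1}}}$ to itself and restricts to the chain map defining $\widehat{HM}^*_{L_{t_{i-1}}}(X_i,\lambda_i)$, the definition of $\Phi_{\rho|_{[t_{i-1},t_i]}}$ recalled in the proof of Lemma~\ref{lem:deform} gives
\[
\widehat{HM}^*_{L_{t_{i-1}}}(X_i,\lambda_i)=\iota_i\circ\Phi_{\rho|_{[t_{i-1},t_i]}},
\]
where $\iota_i\colon\widehat{HM}^*_{L_{t_i}}(\lambda_{t_i})\to\widehat{HM}^*_{L_{t_{i-1}}}(\lambda_{t_i})$ is the inclusion isomorphism from Lemma~\ref{lem:diff}(b).

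To assemble these identities I would repeatedly invoke Corollary~\ref{cor:cobl}(a) to commute the cobordism maps for $X_i$ with the inclusion maps between different filtration levels, and Corollary~\ref{cor:cobl}(c) to ensure that the result does not depend on the choice of subdivision or on the interpolating data chosen above. Chaining the equalities over $i$ and telescoping the intermediate inclusions yields $\widehat{HM}^*_{L_0}(X,\lambda)=\iota\circ\Phi_\rho$, where $\iota\colon\widehat{HM}^*_{L_1}(\lambda_1)\to\widehat{HM}^*_{L_0}(\lambda_1)$ is the inclusion; the hypothesis $dL_t/dt\le 0$ is what makes this telescoping make sense, since it guarantees that the filtration levels only decrease along the composition, so every chain map encountered restricts compatibly to $\widehat{CM}^*_{L_0}$.

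The hardest step is the chain-level identification in the middle paragraph: the four-dimensional equations~\eqref{eqn:tsw4} are built from $\widetilde{\lambda}$, the rescaled symplectic form $\hat{\omega}=\sigma^{-1}\widetilde{\omega}$, and the metric $g=\sigma^{-1}\widetilde{\omega}(\cdot,J\cdot)$ of Section~\ref{sec:pesc}, and these do not literally match the one-parameter family of three-dimensional data sets used to define $\widehat{I}_i$ in the proof of Lemma~\ref{lem:deform}. One must show that, on the product region of $X_i$, the 4-dimensional data can be arranged so that the reduction to three-dimensional equations produces exactly the family of perturbed instanton equations for $\{(\lambda_t,J_t,r,\mu_t,\frak{p}_t)\}_{t\in[t_{i-1},t_i]}$, and that any mismatch introduced by the smoothing~\eqref{eqn:widetildelambda} and the scaling factor $\sigma$ can be absorbed into a homotopy of extensions covered by Corollary~\ref{cor:cobl}(c). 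The hypotheses $\partial f/\partial t<0$ and the sign convention in~\eqref{eqn:widetildelambda} are tailored so that $\sigma$ remains positive and the reduced equations retain the filtration-preserving property required for the restriction to $\widehat{CM}^*_{L_{t_{i-1}}}$.
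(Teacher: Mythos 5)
Your proposal follows essentially the same route as the paper's proof: subdivide $[0,1]$ finely, use Proposition~\ref{prop:comp} to factor the product cobordism map through the pieces $[-t_i,-t_{i-1}]\times Y$, identify the filtered cobordism map on each piece with $\Phi_{\rho|_{[t_{i-1},t_i]}}$ followed by the inclusion isomorphism of Lemma~\ref{lem:diff}(b) (the paper disposes of this step by noting that the canonical isomorphism is itself defined via product cobordism maps, and homotopy-independence of the extension data covers the mismatch you flag), and then assemble by induction using Corollary~\ref{cor:cobl}(a). The argument is correct and your identification of the chain-level matching as the delicate point, together with the proposed fix via homotopy of extensions, is consistent with how the paper handles it.
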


\begin{proof}
The proof has two steps.

{\em Step 1.\/} We start by making choices as in the proof of
Lemma~\ref{lem:deform} to define $\Phi_\rho$.
Choose a path of data $\{(\mu_t,\frak{p}_t)\mid t\in[0,1]\}$ from
$(\mu_0,\frak{p}_0)$ to $(\mu_1,\frak{p}_1)$ where $\mu_t$ is a
$2$-form from $\Omega$ with $\mc{P}$-norm less than $1$, and
$\frak{p}_t$ is a small abstract perturbation such that the data
$D_t=(\lambda_t,J_t,r,\mu_t,\frak{p}_t)$ is suitable for defining the
chain complex $\widehat{CM}^*(Y;\lambda_t,J_t,r)$ for generic
$t\in[0,1]$.  Let $N$ be a large positive integer, and choose numbers
$0=t_0<t_1<\cdots<t_N=1$ such that $t_i-t_{i-1}<2/N$ and such that
$\widehat{CM}^*(Y,\frak{s};\lambda_{t_i},J_{t_i},r)$ is defined for
each $i=1,\ldots,N$.

To shorten the notation below, write $\widehat{HM}^*_L(t)$ to denote
$\widehat{HM}^*_L(Y;\lambda_t,J_t,r)$.  Also, for $t<t'$ let
$\widehat{HM}^*_L([t,t'])$ denote the cobordism map induced by the
portion of the cobordism parametrized by $[-t',-t]\times Y$, and let
$\rho_{[t,t']}$ denote the portion of the admissible deformation
parametrized by the interval $[t,t']$, reparametrized by the interval
$[0,1]$.

Choose $N$ sufficiently large that $\lambda_{t_i}$ has no orbit sets
of action in the interval $[L_{t_i},L_{t_{i-1}}]$ for each
$i=1,\ldots,N$.  Then for $r$ sufficiently large, the lemma holds for
the portion of the cobordism parametrized by $[-t_i,-t_{i-1}]\times
Y$.  That is, the cobordism map
\begin{equation}
\label{eqn:ICM}
\widehat{HM}^*_{L_{t_{i-1}}}([t_{i-1},t_{i}])
\end{equation}
equals the composition
\[
\begin{CD}
\widehat{HM}^*_{L_{t_{i-1}}}(t_{i-1})
@>{\Phi_{\rho_{[t_{i-1},t_i]}}}>>
\widehat{HM}^*_{L_{t_i}}(t_i) @>>>
\widehat{HM}^*_{L_{t_{i-1}}}(t_i),
\end{CD}
\]
where the map on the right is induced by the inclusion of chain
complexes.  The reason is that the map on the right is an isomorphism
on the chain level by Lemma~\ref{lem:diff}(b), so that the cobordism map
\eqref{eqn:ICM} actually maps to $\widehat{HM}^*_{L_{t_i}}(t_i)$.
Then the cobordism map \eqref{eqn:ICM}, regarded as a map to
$\widehat{HM}^*_{L_{t_i}}(t_i)$, agrees with $\Phi_{\rho_i}$ by the
definition of the latter.

{\em Step 2.\/} We now show by induction on $i$ that the lemma holds
for the portion of the cobordism parametrized by $[-t_i,0]\times Y$.
The case $i=1$ follows from Step 1.  Now let $i>1$ and suppose the
claim is true for $i-1$.  We need to show that the cobordism map
\[
\widehat{HM}^*_{L_0}([0,t_i])
\]
agrees with the composition
\[
\begin{CD}
\widehat{HM}^*_{L_0}(0)
@>{\Phi_{\rho_{[0,t_i]}}}>>
\widehat{HM}^*_{L_{t_i}}(t_i) @>>>
\widehat{HM}^*_{L_0}(t_i),
\end{CD}
\]
where the arrow on the right is induced by inclusion.

By Proposition~\ref{prop:comp} we have
\[
\widehat{HM}^*_{L_0}([0,t_i])
=
\widehat{HM}^*_{L_0}([t_{i-1},t_i]) \circ
\widehat{HM}^*_{L_0}([0,t_{i-1}]).
\]
And by Lemma~\ref{lem:deform}(b) we have
\[
\Phi_{\rho_{[0,t_i]}} = \Phi_{\rho_{[t_{i-1},t_i]}} \circ \Phi_{\rho_{[0,t_{i-1}]}}.
\]
So by the inductive hypothesis and Step 1, we just need to show that
the diagram
\[
\begin{CD}
\widehat{HM}^*_{L_0}(t_{i-1}) @>{\widehat{HM}^*_{L_0}([t_{i-1},t_i])}>>
\widehat{HM}^*_{L_0}(t_i) \\
@AAA @AAA \\
\widehat{HM}^*_{L_{t_{i-1}}}(t_{i-1}) @>{\widehat{HM}^*_{L_{t_{i-1}}}([t_{i-1},t_i])}>>
\widehat{HM}^*_{L_{t_{i-1}}}(t_i)
\end{CD}
\]
commutes, where the vertical arrows are induced by inclusion.  But
this holds by Corollary~\ref{cor:cobl}(a).
\end{proof}

We can now prove Proposition~\ref{prop:cobinv}.  The latter is an
immediate consequence of the following lemma:

\begin{lemma}
\label{lem:cobinv}
Let $(X,\lambda)$ be an exact symplectic cobordism from $(Y_+,\lambda_+)$ to
$(Y_-,\lambda_-)$ where $\lambda_\pm$ is $L$-nondegenerate.  Let
$\{J_\pm^t\mid t\in[0,1]\}$ be a one-parameter family of
symplectization-admissible almost complex structures for
$\lambda_\pm$.  Suppose that $\{r_t\mid t\in[0,1]\}$ is a sufficiently
large one-parameter family of real numbers.  Let
$(\mu_\pm^0,\frak{p}_\pm^0)$ and $(\mu_\pm^1,\frak{p}_\pm^1)$ be small
perturbations as needed to define the chain complexes $\widehat{CM}^*$
for $t=0,1$.  Then the versions of $\widehat{HM}^*_L(X,\lambda)$ for
$t=0,1$ fit into a commutative diagram
\begin{equation}
\label{eqn:cobinv}
\begin{CD}
\widehat{HM}^*_L(Y_+,\lambda_+,J_+^0,r_0) @>{\Phi_{\rho_+}}>{\simeq}>
\widehat{HM}^*_L(Y_+,\lambda_+,J_+^1,r_1)\\
@VV{\widehat{HM}^*_L(X,\lambda)_{t=0}}V
  @VV{\widehat{HM}^*_L(X,\lambda)_{t=1}}V \\
\widehat{HM}^*_L(Y_-,\lambda_-,J_-^0,r_0) @>{\Phi_{\rho_-}}>{\simeq}>
\widehat{HM}^*_L(Y_-,\lambda_-,J_-^1,r_1),
\end{CD}
\end{equation}
where $\Phi_{\rho_\pm}$ is the isomorphism from
Lemma~\ref{lem:deform}, and
\[
\rho_\pm=\{(\lambda_\pm,L,J_\pm^t,r_t)\mid t\in[0,1]\}.
\]
\end{lemma}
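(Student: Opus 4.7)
The plan is to verify commutativity of \eqref{eqn:cobinv} by realizing both compositions as two different factorizations of a single $\widehat{HM}^*_L$-cobordism map on a larger composite exact symplectic cobordism, using the composition property (Proposition~\ref{prop:comp}) together with Lemma~\ref{lem:technical} to identify the product-cobordism factors with $\Phi_{\rho_\pm}$.

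Fix a small $\epsilon>0$. On the positive end of $X$ I would prepend an exact symplectic product cobordism $X_+^{\op{prod}}$ from $(Y_+,(1+\epsilon)\lambda_+)$ to $(Y_+,\lambda_+)$ with Liouville form $(1+\epsilon(1-t))\lambda_+$ at $s=-t\in[-1,0]$, and with the auxiliary data $(J_+^t,r_t,\mu_+^t,\frak{p}_+^t)$ varying along the $s$-direction between the $t=0$ choices at the top and the $t=1$ choices at the bottom. This satisfies the hypotheses of Lemma~\ref{lem:technical}, which identifies its $\widehat{HM}^*_L$-map with $\Phi_{\rho_+}$ up to the canonical scaling isomorphisms of Corollary~\ref{cor:yls}(c) and inclusion-induced maps. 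Symmetrically, append an analogous $X_-^{\op{prod}}$ from $(Y_-,\lambda_-)$ to $(Y_-,(1-\epsilon)\lambda_-)$, whose $\widehat{HM}^*_L$-map is identified with $\Phi_{\rho_-}$.

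Let $\widetilde{X}=X_-^{\op{prod}}\circ X\circ X_+^{\op{prod}}$. By Proposition~\ref{prop:comp}, the map $\widehat{HM}^*_L(\widetilde{X})$ admits two decompositions,
$$\widehat{HM}^*_L(X_-^{\op{prod}}\circ X)\circ\widehat{HM}^*_L(X_+^{\op{prod}})=\widehat{HM}^*_L(\widetilde{X})=\widehat{HM}^*_L(X_-^{\op{prod}})\circ\widehat{HM}^*_L(X\circ X_+^{\op{prod}}).$$
Each of the two "middle" cobordisms $X_-^{\op{prod}}\circ X$ and $X\circ X_+^{\op{prod}}$ is strongly homotopic, through exact symplectic cobordisms, to $X$ equipped with the $t=1$ or $t=0$ data respectively, modulo an overall small pure-scaling product cobordism at one end. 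By Corollary~\ref{cor:cobl}(c), together with Lemma~\ref{lem:technical} applied to the residual pure-scaling factor, the corresponding $\widehat{HM}^*_L$-maps are then $\widehat{HM}^*_L(X,\lambda)_{t=0}$ and $\widehat{HM}^*_L(X,\lambda)_{t=1}$, each composed with appropriate scaling isomorphisms. Equating the two decompositions and canceling the scaling isomorphisms that appear on both sides yields exactly the commutativity of \eqref{eqn:cobinv}.

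The main obstacle is the careful bookkeeping of the small scalings of $\lambda_\pm$ that are forced on us by the hypothesis $\partial f/\partial t<0$ in Lemma~\ref{lem:technical}, and verifying that these scalings cancel compatibly to produce the clean unscaled statement. One must check, using Lemma~\ref{lem:deform}(a),(b), that the admissible deformations arising from the rescaled product cobordisms are homotopic to $\rho_\pm$ after composition with the scaling admissible deformations, so that the resulting $\Phi$'s match on homology. A secondary issue is that all of the chain-level constructions must remain within the $L$-filtration throughout; this rests on the assumption that $\{r_t\}$ is uniformly large and on the filtration-preservation provided by Proposition~\ref{prop:hol}.
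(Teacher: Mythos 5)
Your plan breaks down at the step where you try to extract the two sides of \eqref{eqn:cobinv} from the two associativity factorizations of $\widehat{HM}^*_L(\widetilde{X})$. Proposition~\ref{prop:comp} requires a single, fixed choice of auxiliary data $(J,r,\mu,\frak{p})$ at each interface; once those are fixed, the expressions $\widehat{HM}^*_L(X_-^{\op{prod}}\circ X)\circ\widehat{HM}^*_L(X_+^{\op{prod}})$ and $\widehat{HM}^*_L(X_-^{\op{prod}})\circ\widehat{HM}^*_L(X\circ X_+^{\op{prod}})$ both expand to the same triple composition and are tautologically equal, so no information is gained. The square \eqref{eqn:cobinv} compares the map for $X$ with the $t=0$ data at \emph{both} ends against the map for $X$ with the $t=1$ data at \emph{both} ends, and a single composite $\widetilde{X}$ contains $X$ with only one choice of end data (the interface data), so it cannot encode both. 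Relatedly, the claim that $X\circ X_+^{\op{prod}}$ is strongly homotopic to ``$X$ with the $t=0$ data'' is not available: a strong homotopy fixes the boundary contact forms and does not act on the auxiliary end data, and this composite has $(1+\epsilon)\lambda_+$ at its top and the fixed interface data at its $Y_-$ end no matter what. A further technical problem is letting $r_t$ vary along the $s$-direction of $X_+^{\op{prod}}$: $r$ enters \eqref{eqn:tsw4} as a single global constant, and Lemma~\ref{lem:technical} is stated only for constant $r$.

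The paper's proof avoids all of this with two preliminary reductions followed by a one-sided collar argument. First, since $d\lambda_\pm\in\Omega$, a small change in $r$ can be absorbed into $\mu_\pm$, so one may take $r_t\equiv r$. Second, it suffices to treat the case where the data at $Y_-$ is independent of $t$ (so $\Phi_{\rho_-}=\op{id}$), the other end being symmetric, and the two cases together giving the general statement. One then decomposes $X=X^0\circ X^+$ using the collar $X^+=[-\varepsilon,0]\times Y_+$ already inside $X$, fixes one choice of data at the interface $(Y_+,e^{-\varepsilon}\lambda_+)$, and applies Proposition~\ref{prop:comp} \emph{twice} --- once for each value of $t$ --- so that $\widehat{HM}^*_L(X,\lambda)_{t=0}$ and $\widehat{HM}^*_L(X,\lambda)_{t=1}$ factor through the same $\widehat{HM}^*_L(X^0)$ and differ only in the collar maps; Lemma~\ref{lem:technical} identifies those collar maps with $\imath\circ\Phi_{\rho_0}$ and $\imath\circ\Phi_{\rho_1}$, and Lemma~\ref{lem:deform}(a),(b) gives $\Phi_{\rho_1}\circ\Phi_{\rho_+}=\Phi_{\rho_0}$. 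If you want to keep an external-collar picture, the essential fix is the same: apply the composition property separately for $t=0$ and $t=1$ with a common interface choice, rather than once to a single composite.
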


\begin{proof}
  First note that because $d\lambda_\pm$ is in the space $\Omega$ for
  $Y_\pm$ (see \S\ref{sec:pcf}), a small change in $r$ can be effected
  by a change in $\mu_+$ and $\mu_-$.  Thus, by the homotopy
  properties in Lemma~\ref{lem:deform}(a),(b), it is enough to prove
  the lemma in the case when $r_t$ does not depend on $t$; let us
  write $r_t=r$.

To prove the lemma for constant $r$, it is enough to do so in the
special case when $(J_-^t,\mu_-^t,\frak{p}_-^t)$ do not depend on $t$;
let us denote these by $(J_-,\mu_-,\frak{p}_-)$.  (The case when
$(J_-^t,\mu_-^t,\frak{p}_-^t)$ do depend on $t$, but
$(J_+^t,\mu_+^t,\frak{p}_+^t)$ do not, is proved by a symmetrical
argument; and these two cases together imply the general case.)

Let $\varepsilon>0$ be as in \eqref{eqn:N-} and \eqref{eqn:N+}, so
that a neighborhood of $Y_+$ in $(X,\lambda)$ is identified with
$([-\varepsilon,0]\times Y_+,e^s\lambda_+)$.  Choose $\varepsilon$
sufficiently small so that $\lambda_+$ has no orbit sets with action
in the closed interval $[e^{-\varepsilon}L,L]$.  We can decompose the
exact cobordism $X=X^0\circ X^+$, where $X^+=[-\varepsilon,0]\times
Y_+$, and $X^0$ is the closure of $X\setminus X^+$.  Make choices
$(J_0,\mu_0,\frak{p}_0)$ as needed to define the chain complex
$\widehat{CM}^*(Y_+,e^{-\varepsilon}\lambda_+,J_0,r)$.  We now have a
commutative diagram
\[
\begin{CD}
\widehat{HM}^*_L(Y_+,\lambda_+,J_+^0,r) @>{\Phi_{\rho_+}}>>
\widehat{HM}^*_L(Y_+,\lambda_+,J_+^1,r)\\
@VV{\Phi_{\rho_0}}V @VV{\Phi_{\rho_1}}V \\
\widehat{HM}^*_{e^{-\varepsilon} L}(Y_+,e^{-\varepsilon}\lambda_+,J_0,r) @=
\widehat{HM}^*_{e^{-\varepsilon}
  L}(Y_+,e^{-\varepsilon}\lambda_+,J_0,r)\\
@VVV @VVV\\
\widehat{HM}^*_{L}(Y_+,e^{-\varepsilon}\lambda_+,J_0,r) @=
\widehat{HM}^*_{
  L}(Y_+,e^{-\varepsilon}\lambda_+,J_0,r)\\
@VV{\widehat{HM}^*_L(X^0,\lambda)}V
  @VV{\widehat{HM}^*_L(X^0,\lambda)}V \\
\widehat{HM}^*_L(Y_-,\lambda_-,J_-,r) @=
\widehat{HM}^*_L(Y_-,\lambda_-,J_-,r).
\end{CD}
\]
Here
\[
\rho_0=\{(e^{-\varepsilon t}\lambda_+,e^{-\varepsilon t}L,\hat{J}_t,r)\mid
t\in[0,1]\}
\]
where $\{\hat{J}_t\mid t\in[0,1]\}$ is a path of almost complex
structures from $J_+^0$ to $J_0$.  The admissible deformation $\rho_1$
is defined analogously.  The top square in the diagram commutes by
Lemma~\ref{lem:deform}(a),(b).  The vertical arrows in the middle of
the diagram are induced by the inclusion of chain complexes.  By
Lemma~\ref{lem:technical}, the composition of the two vertical arrows
on the upper left is the cobordism map
$\widehat{HM}^*_L(X^+,e^s\lambda_+)$ defined by
Corollary~\ref{cor:cobl} from the choices $(J_+^0,\mu_+,\frak{p}_+,r)$
and $(J_0,\mu_0,\frak{p}_0,r)$.  Then by Proposition~\ref{prop:comp},
the composition of the three vertical arrows on the left is
$\widehat{HM}^*_L(X,\lambda)_{t=0}$.  Likewise, the composition of the
three vertical arrows on the right is
$\widehat{HM}^*_L(X,\lambda,)_{t=1}$.  Thus the above diagram gives the
desired commutative diagram \eqref{eqn:cobinv}.
\end{proof}

This completes the proof of Proposition~\ref{prop:cobinv}.

We also note the following special case of Lemma~\ref{lem:cobinv},
which is needed in \S\ref{sec:ccm}:

\begin{corollary}
\label{cor:product}
Suppose $X$ is a product cobordism $([-\varepsilon,0]\times
Y,e^s\lambda)$ where $\varepsilon>0$, the variable $s$ denotes the
$[-\varepsilon,0]$ coordinate, and $\lambda$ is an $L$-nondegenerate
contact form on $Y$.  Then $\widehat{HM}^*_L(X,e^s\lambda)$ is the
composition
\[
\widehat{HM}^*_L(Y,\lambda) \stackrel{s}{\longrightarrow}
\widehat{HM}^*_{e^{-\varepsilon}L}(Y,e^{-\varepsilon}\lambda)
\longrightarrow \widehat{HM}^*_L(Y,e^{-\varepsilon}\lambda),
\]
where $s$ is the scaling isomorphism from Corollary~\ref{cor:yls}(c),
and the right arrow is the inclusion-induced map from
Corollary~\ref{cor:yls}(b).
\end{corollary}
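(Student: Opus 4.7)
The claim is a direct consequence of Lemma~\ref{lem:technical} once the map $\Phi_\rho$ appearing there is identified with the scaling isomorphism of Corollary~\ref{cor:yls}(c). My plan is to set up an admissible deformation whose associated product cobordism is precisely $(X, e^s\lambda)$, apply Lemma~\ref{lem:technical}, and then match $\Phi_\rho$ with the scaling isomorphism $s$ via a homotopy of admissible deformations.

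Specifically, I would fix a symplectization-admissible almost complex structure $J$, viewed as an almost complex structure on $\xi$ so that the same $J$ is symplectization-admissible for every positive rescaling of $\lambda$, and choose $r$ sufficiently large. I would then set $\lambda_t = e^{-\varepsilon t}\lambda$ and $L_t = e^{-\varepsilon t}L$ for $t \in [0,1]$, forming the admissible deformation $\rho = \{(\lambda_t, L_t, J, r)\}$. The hypotheses of Lemma~\ref{lem:technical} are immediate: $f_t = e^{-\varepsilon t}$ satisfies $\partial f_t/\partial t < 0$, and $dL_t/dt \le 0$ provided $L \ge 0$ (if $L<0$, both sides of the asserted equation are the zero map since the filtered groups are trivial). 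The product cobordism associated to $\rho$ via Lemma~\ref{lem:technical} is $([-1,0]\times Y,\, e^{\varepsilon s}\lambda)$, which is canonically equivalent as an exact symplectic cobordism to $(X, e^s\lambda) = ([-\varepsilon, 0]\times Y, e^s\lambda)$ via the reparametrization $s \mapsto s/\varepsilon$; Proposition~\ref{prop:cobinv} ensures that the induced cobordism maps agree. (Equivalently, one can reparametrize $\rho$ to be defined on $[0,\varepsilon]$ and apply Lemma~\ref{lem:technical} with that reparametrization, as is done implicitly in the proof of Lemma~\ref{lem:cobinv}.) Lemma~\ref{lem:technical} then yields that $\widehat{HM}^*_L(X, e^s\lambda)$ equals the composition of $\Phi_\rho$ with the inclusion-induced map $\widehat{HM}^*_{e^{-\varepsilon}L}(Y; e^{-\varepsilon}\lambda, J, r) \to \widehat{HM}^*_L(Y; e^{-\varepsilon}\lambda, J, r)$.

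To identify $\Phi_\rho$ with the scaling isomorphism $s$ of Corollary~\ref{cor:yls}(c), I would invoke the construction from the proof of that corollary: $s$ is defined via $\Phi_{\rho_c}$ with $\rho_c = \{((1-t+ct)\lambda, (1-t+ct)L, J, r)\}$ for $c = e^{-\varepsilon}$, using linear rather than exponential interpolation. The straight-line homotopy between the scaling functions $e^{-\varepsilon t}$ and $1 - t + e^{-\varepsilon} t$ remains strictly positive and strictly monotonically decreasing in $t$ at each stage (any convex combination of two such functions retains these properties), and the analogous straight-line interpolation of the corresponding $L_t$'s is likewise monotonically decreasing; hence $\rho$ and $\rho_c$ are homotopic through admissible deformations. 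By Lemma~\ref{lem:deform}(a), $\Phi_\rho = \Phi_{\rho_c} = s$, completing the identification. The main obstacle here is purely bookkeeping---verifying the reparametrization of the cobordism interval and the admissibility of the homotopy between exponential and linear scaling paths---with no new technical content beyond Lemma~\ref{lem:technical} and the already-established functorial properties of $\Phi_\rho$.
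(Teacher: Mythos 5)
Your proposal is correct and follows essentially the same route as the paper: apply Lemma~\ref{lem:technical} to the admissible deformation $\rho=\{(e^{-\varepsilon t}\lambda,e^{-\varepsilon t}L,J,r)\}$ and identify $\Phi_\rho$ with the scaling isomorphism of Corollary~\ref{cor:yls}(c). The paper asserts this identification ``by definition,'' whereas you spell out the homotopy (via Lemma~\ref{lem:deform}(a)) between the exponential path and the linear path $(1-t+ct)\lambda$ used in defining $s$, together with the interval reparametrization; these are exactly the details the paper leaves implicit, and your treatment of them is fine.
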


\begin{proof}
  Choose a symplectization-admissible almost complex structure $J$ for
  $\lambda$, and let $r$ be large.  The claim then follows by applying
  Lemma~\ref{lem:technical} to the admissible deformation $\rho =
  \{(e^{-\varepsilon t}\lambda,e^{-\varepsilon t}L,J,r)\mid
  t\in[0,1]\}$, because $\Phi_\rho$ agrees with the scaling
  isomorphism $s$ by the definition of the latter in the proof of
  Corollary~\ref{cor:yls}(c).
\end{proof}

\subsection{Construction of ECH cobordism maps}
\label{sec:ccm}

We now begin the proof of Theorem~\ref{thm:cob} by defining the map
\eqref{eqn:PhiL} on filtered ECH induced by an exact symplectic
cobordism.

Let $(X,\lambda)$ be an exact symplectic cobordism from
$(Y_+,\lambda_+)$ to $(Y_-,\lambda_-)$, and assume that $\lambda_+$
and $\lambda_-$ are nondegenerate.  Fix a real number $L$.
Without loss of generality, we can assume (by slightly decreasing $L$
if necessary) that $\lambda_+$ and $\lambda_-$ do not have any orbit
sets of action exactly $L$.  By Proposition~\ref{prop:cobinv}, we have a
well-defined map
\begin{equation}
\label{eqn:wdm}
\widehat{HM}^{*}_L(X,\lambda):
\widehat{HM}^{*}_L(Y_+,\lambda_+) \longrightarrow
\widehat{HM}^{*}_L(Y_-,\lambda_-).
\end{equation}
On the other hand, by Lemma~\ref{lem:FI} we have canonical isomorphisms
\begin{equation}
\label{eqn:FI1}
ECH_*^L(Y_\pm,\lambda_\pm) \stackrel{\simeq}{\longrightarrow}
\widehat{HM}^{-*}_L(Y_\pm,\lambda_\pm).
\end{equation}

\begin{definition}
Define a map of ungraded $\Z/2$-modules
\begin{equation}
\label{eqn:PhiL1}
\Phi^L(X,\lambda): ECH_*^L(Y_+,\lambda_+) \longrightarrow
ECH_*^L(Y_-,\lambda_-)
\end{equation}
to be the composition of the map \eqref{eqn:wdm} with the isomorphisms
\eqref{eqn:FI1}.
\end{definition}

We now prove all of Theorem~\ref{thm:cob} except for the Holomorphic Curves axiom:

\begin{proposition}
\label{prop:cob1}
The map \eqref{eqn:PhiL1} satisfies the Homotopy Invariance,
Inclusion, Direct Limit, Composition, and Scaling axioms in
Theorem~\ref{thm:cob}.
\end{proposition}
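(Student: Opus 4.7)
The plan is to prove each of the five axioms by translating it, via the definition $\Phi^L(X,\lambda)=(\Psi^L_-)^{-1}\circ\widehat{HM}^*_L(X,\lambda)\circ\Psi^L_+$, into a statement about the filtered Seiberg-Witten Floer cobordism map $\widehat{HM}^*_L(X,\lambda)$ together with a naturality property of the canonical isomorphism $\Psi^L$ from Lemma~\ref{lem:FI}. The properties of $\widehat{HM}^*_L(X,\lambda)$ collected in Corollary~\ref{cor:cobl} and Proposition~\ref{prop:comp} dispatch four of the five axioms with very little further work, leaving Scaling as the only piece that needs substantial new argument.

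For Homotopy Invariance, the key step is to upgrade a general homotopy of exact symplectic cobordisms $\{(X,\omega_t)\}$ to a strong homotopy in the sense of Definition~\ref{def:strongHomotopy}. I would choose Liouville forms $\lambda_t$ for $\omega_t$ smoothly in $t$ with $\lambda_t|_{Y_\pm}=\lambda_\pm$, and then, after subdividing $[0,1]$ if necessary, modify each $\lambda_t$ near $\partial X$ by adding an exact $1$-form $df_t$ vanishing on $\partial X$ so that the collar identifications \eqref{eqn:N-} and \eqref{eqn:N+} become $t$-independent. Corollary~\ref{cor:cobl}(c) then gives $\widehat{HM}^*_L(X,\lambda_0)=\widehat{HM}^*_L(X,\lambda_1)$, and the naturality of $\Psi^L$ in $(Y_\pm,\lambda_\pm,L)$ yields $\Phi^L(X,\omega_0)=\Phi^L(X,\omega_1)$. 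The Inclusion and Composition axioms are then immediate: Inclusion by pasting the naturality square of Corollary~\ref{cor:cobl}(a) with that of Lemma~\ref{lem:FI}(a), and Composition directly from Proposition~\ref{prop:comp}, since the middle isomorphism $\Psi^L$ for $(Y_0,\lambda_0)$ cancels.

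For Direct Limit, fix $\alpha\in ECH^L_*(Y_+,\lambda_+)$ and chase the definition $T=\lim_L T^L$ from~\eqref{eqn:T}, using Corollary~\ref{cor:cobl}(b) to push the inclusion $\widehat{HM}^*_L\to\widehat{HM}^*$ through the cobordism map. The resulting identity $\widehat{HM}^*(X)\circ T_+(\alpha)=T_-\circ \iota^L\circ \Phi^L(X,\lambda)(\alpha)$ yields $\Phi(X)\circ\iota^L=\iota^L\circ\Phi^L(X,\lambda)$, which is exactly the direct limit assertion once one takes sequences $L\to\infty$ avoiding action values of $\lambda_\pm$.

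The Scaling axiom is the main obstacle. By Lemma~\ref{lem:FI}(b), it is equivalent to commutativity of the analogous square on the Seiberg-Witten side in which the scaling isomorphism $s$ from Corollary~\ref{cor:yls}(c) intertwines $\widehat{HM}^*_L(X,\lambda)$ with $\widehat{HM}^*_{cL}(X,c\lambda)$. My plan is to model the argument on Lemma~\ref{lem:cobinv} and Corollary~\ref{cor:product}: realize $s$ as the cobordism map of a product exact cobordism interpolating the Liouville form between $\lambda_\pm$ and $c\lambda_\pm$ (composed with the appropriate inclusion-induced map and run in the direction determined by the sign of $\log c$), use Proposition~\ref{prop:comp} to rewrite both compositions in the square as cobordism maps of composite exact cobordisms, and exhibit a strong homotopy between the two composites so that Corollary~\ref{cor:cobl}(c) yields equality. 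The delicate technical point is producing this strong homotopy while correctly tracking the filtration thresholds $L$ and $cL$ on the intermediate contact boundaries.
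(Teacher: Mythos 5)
Your handling of Inclusion, Direct Limit, Composition, and (in outline) Scaling is essentially the paper's argument: Inclusion from Corollary~\ref{cor:cobl}(a) with Lemma~\ref{lem:FI}(a), Direct Limit from Corollary~\ref{cor:cobl}(b), Composition from Proposition~\ref{prop:comp}, and Scaling by the same skeleton the paper uses (reduce to small $\varepsilon$, identify the scaling map composed with inclusion as the map of the product cobordisms $([-\varepsilon,0]\times Y_\pm,e^s\lambda_\pm)$ via Corollary~\ref{cor:product} and Lemma~\ref{lem:FI}, then apply Composition and homotopy invariance to the two composite cobordisms).

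The genuine gap is in Homotopy Invariance. You propose to upgrade a homotopy $\{(X,\omega_t)\}$ to a strong homotopy by adding exact corrections $df_t$ to the Liouville forms near $\partial X$. This cannot work in general: the identification \eqref{eqn:N-} (resp.\ \eqref{eqn:N+}) is the time-$[0,\varepsilon]$ flow of the Liouville vector field $V_t$ determined by $\imath_{V_t}\omega_t=\lambda_t+df_t$, and requiring these identifications to be $t$-independent is the same as requiring a \emph{fixed} embedding $\iota\colon[0,\varepsilon]\times Y_-\to X$ with $\iota^*(\lambda_t+df_t)=e^s\lambda_-$ for all $t$; but then $\iota^*\omega_t=d(e^s\lambda_-)$ would be independent of $t$ on the fixed neighborhood $\iota([0,\varepsilon]\times Y_-)$, whereas a general homotopy lets $\omega_t$ vary arbitrarily near $\partial X$ (only $\lambda_t|_{Y_\pm}=\lambda_\pm$ is constrained). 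Subdividing $[0,1]$ does not remove this obstruction, since the defect is pointwise in $t$, not a matter of the homotopy being long. The missing idea is the paper's: choose the $\lambda_t$ as you do, but then choose a smooth family of diffeomorphisms $\varphi_t$ of $X$ with $\varphi_0=\op{id}_X$ and $\varphi_t|_{\partial X}=\op{id}_{\partial X}$ pulling the collar neighborhoods \eqref{eqn:N-}, \eqref{eqn:N+} for $\lambda_t$ back to those for $\lambda_0$, so that $\{(X,\varphi_t^*\lambda_t)\}$ \emph{is} a strong homotopy and Corollary~\ref{cor:cobl}(c) gives $\Phi^L(X,\lambda_0)=\Phi^L(X,\varphi_1^*\lambda_1)$. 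This creates a second step your proposal does not address and which is needed once diffeomorphisms enter: one must also check $\Phi^L(X,\varphi_1^*\lambda_1)=\Phi^L(X,\lambda_1)$, which the paper gets because $\varphi_1$ extends to a symplectomorphism between the completions \eqref{eqn:completion} of $(X,\varphi_1^*\lambda_1)$ and $(X,\lambda_1)$, and the construction of $\Phi^L$ is invariant under such an identification. With Homotopy Invariance repaired this way, your Scaling argument goes through as planned (the paper in fact invokes the just-proved Homotopy Invariance axiom for the two composite cobordisms rather than exhibiting a strong homotopy directly, which spares you the delicate collar bookkeeping you flag at the end).
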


\begin{proof}
  The Inclusion axiom follows from Lemma~\ref{lem:FI}(a) and
  Corollary~\ref{cor:cobl}(a).  The Direct Limit axiom follows from
  Corollary~\ref{cor:cobl}(b).  The Composition axiom folows from
  Proposition~\ref{prop:comp}.

  To prove the Homotopy Invariance axiom, let $\{(X,\omega_t)\mid
  t\in[0,1]\}$ be a homotopy of exact symplectic cobordisms, let
  $\lambda_0$ be a Liouville form for $\omega_0$, and let $\lambda_1$
  be a Liouville form for $\omega_1$.  We need to show that
  $\Phi^L(X,\lambda_0) = \Phi^L(X,\lambda_1)$.  Since the space of
  Liouville forms for a given exact symplectic cobordism $(X,\omega)$
  is affine linear, there is no obstruction to connecting $\lambda_0$
  and $\lambda_1$ by a smooth one-parameter family $\{\lambda_t\mid
  t\in[0,1]\}$ of $1$-forms on $X$ such that $\lambda_t$ is a
  Liouville form for $\omega_t$.  Next, fix $\varepsilon>0$ such that
  there are disjoint neighborhoods \eqref{eqn:N-} and \eqref{eqn:N+}
  for each $\lambda_t$.  We can then find a smooth one-parameter
  family $\{\varphi_t\mid t\in[0,1]\}$ of diffeomorphisms of $X$ with
  $\varphi_0=\op{id}_X$ and $\varphi_t |_{\partial
    X}=\op{id}_{\partial X}$ such that $\varphi_t$ pulls back the
  neighborhoods \eqref{eqn:N-} and \eqref{eqn:N+} for $\lambda_t$ to
  those for $\lambda_0$.  Then $\{(X,\varphi_t^*\lambda_t)\mid
  t\in[0,1]\}$ is a strong homotopy from $(X,\lambda_0)$ to
  $(X,\varphi_1^*\lambda_1)$ as in
  Definition~\ref{def:strongHomotopy}.  By
  Corollary~\ref{cor:cobl}(c),
  $\Phi^L(X,\lambda_0)=\Phi^L(X,\varphi_1^*\lambda_1)$.  Now the
  diffeomorphism $\varphi_1$ extends to a symplectomorphism between
  the completions \eqref{eqn:completion} of $(X,\lambda_1)$ and
  $(X,\varphi_1^*\lambda_1)$, and so by construction
  $\Phi^L(X,\varphi_1^*\lambda_1)=\Phi^L(X,\lambda_1)$.

  To prove the Scaling axiom, let
  $(X,\lambda)$ be an exact symplectic cobordism from
  $(Y_+,\lambda_+)$ to $(Y_-,\lambda_-)$, where the contact forms
  $\lambda_\pm$ are nondegenerate and have no orbit sets of action
  $L$.  Write $c=e^{-\varepsilon}$, and assume without loss of
  generality that $\varepsilon>0$.  We need to show that the upper
  square in the diagram
\[
\begin{CD}
ECH_*^L(Y_+,\lambda_+) @>{\Phi^L(X,\lambda)}>> ECH_*^L(Y_-,\lambda_-)\\
@V{s}VV @V{s}VV \\
ECH_*^{e^{-\varepsilon}L}(Y_+,e^{-\varepsilon}
\lambda_+) @>{\Phi^{e^{-\varepsilon}L}(X,e^{-\varepsilon}\lambda)}>>
ECH_*^{e^{-\varepsilon}L}(Y_-,e^{-\varepsilon}\lambda_-)\\
@V{\imath^{e^{-\varepsilon}L,L}}VV
@V{\imath^{e^{-\varepsilon}L,L}}VV\\
ECH_*^{L}(Y_+,e^{-\varepsilon}
\lambda_+) @>{\Phi^{L}(X,e^{-\varepsilon}\lambda)}>>
ECH_*^{L}(Y_-,e^{-\varepsilon}\lambda_-)
\end{CD}
\]
commutes, where $s$ denotes the scaling isomorphism \eqref{eqn:s} for
$c=e^{-\varepsilon}$.

Since the composition of two scaling isomorphisms is a scaling
isomorphism, we may assume without loss of generality that
$\varepsilon$ is sufficiently small so that $\lambda_\pm$ has no orbit
set of action in the interval $[L,e^{\varepsilon}L]$.  Then the lower
vertical arrows in the above diagram are isomorphisms by
Lemma~\ref{lem:diff}(b).  Also, by the Inclusion axiom, the lower
square commutes.  So to prove that the upper square commutes, it is
enough to show that the outer rectangle commutes.

For this purpose consider the product exact cobordisms
$(X^+=[-\varepsilon,0]\times Y_+,e^s\lambda_+)$ and
$(X^-=[-\varepsilon,0]\times Y_-,e^s\lambda_-)$.  By
Corollary~\ref{cor:product} and Lemma~\ref{lem:FI}(a),(b), the
compositions of the vertical arrows in the above diagram are given by
\begin{equation}
\label{eqn:technical}
\Phi^L(X^\pm,e^s\lambda_\pm)=\imath^{e^{-\varepsilon}L,L}\circ s:
ECH_*^L(Y_\pm,\lambda_\pm) \longrightarrow
ECH_*^L(Y_\pm,e^{-\varepsilon}\lambda_\pm).
\end{equation}
So to prove that the outer rectangle in the above diagram commutes, it
is enough to prove that the square
\[
\begin{CD}
ECH_*^L(Y_+,\lambda_+) @>{\Phi^L(X,\lambda)}>> ECH_*^L(Y_-,\lambda_-)\\
@V{\Phi^L(X^+,e^s\lambda_+)}VV @V{\Phi^L(X^-,e^s\lambda_-)}VV \\
ECH_*^{L}(Y_+,e^{-\varepsilon}
\lambda_+) @>{\Phi^{L}(X,e^{-\varepsilon}\lambda)}>>
ECH_*^{L}(Y_-,e^{-\varepsilon}\lambda_-)
\end{CD}
\]
commutes.  By the Composition axiom, this is equivalent to the
assertion that
\begin{equation}
\label{eqn:etat}
\Phi^L((X,e^{-\varepsilon}\lambda)\circ(X^+,e^s\lambda_+)) =
\Phi^L((X^-,e^s\lambda_-)\circ (X,\lambda)).
\end{equation}
But these two compositions of exact symplectic cobordisms are
homotopic through exact symplectic cobordisms from $(Y_+,\lambda_+)$
to $(Y_-,e^{-\varepsilon}\lambda_-)$ if $\varepsilon$ is sufficiently
small as in \eqref{eqn:N-} and \eqref{eqn:N+}.  Thus equation \eqref{eqn:etat}
follows from the Homotopy axiom.
\end{proof}

\section{Proof of the holomorphic curves axiom}
\label{sec:hca}

Let $(X,\lambda)$ be an exact symplectic cobordism as in the statement
of Theorem~\ref{thm:cob}.  To complete the proof of
Theorem~\ref{thm:cob}, we now prove that the maps $\Phi^L(X,\lambda)$
defined in \S\ref{sec:ccm} satisfy the Holomorphic Curves axiom.  For
this purpose fix a cobordism-admissible almost complex structure
$J$ on $\overline{X}$ as in the statement of the Holomorphic Curves
axiom.  Let $J_\pm$ denote the symplectization-admissible almost
complex structure that $J$ determines on $\R\times Y_\pm$, and assume
that this is $ECH^L$-admissible.

In the analysis in this section, we adopt the convention
that $c$ denotes a positive constant whose value may increase from one
appearance to the next.

\subsection{Reduction to the strongly cobordism-admissible case}
\label{sec:rsa}

The first step in the proof of the Holomorphic Curves axiom is to
reduce to the case where $J$ is strongly cobordism-admissible, see
Definition~\ref{def:sca}.  The latter condition ensures that the chain
map \eqref{eqn:hbal} is defined, and will also be convenient in
\S\ref{sec:hcg}.

\begin{lemma}
To prove the Holomorphic Curves axiom, it suffices to prove it in the
special case when $J$ is strongly cobordism-admissible.
\end{lemma}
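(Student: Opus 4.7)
The plan is to modify the given cobordism-admissible $J$ on $\overline{X}$ to a strongly cobordism-admissible $J'$ that agrees with $J$ outside a thin collar of $\partial X$ inside $X$, and then invoke the lemma's hypothesis applied to $J'$. Concretely, pick $\delta\in(0,\varepsilon)$, where $\varepsilon>0$ is the width appearing in the identifications \eqref{eqn:N-}, \eqref{eqn:N+}. On the subsets $[-\delta,0]\times Y_+\subset N_+$ and $[0,\delta]\times Y_-\subset N_-$ I modify $J$ to obtain $J'$ that equals $J_+$ on $[-\delta/2,0]\times Y_+$ and $J_-$ on $[0,\delta/2]\times Y_-$, with a smooth interpolation on the intermediate strips $[-\delta,-\delta/2]\times Y_+$ and $[\delta/2,\delta]\times Y_-$. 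The interpolation exists because the symplectic form on these collars equals $d(e^s\lambda_\pm)$, with respect to which both $J$ and $J_\pm$ are compatible, and the space of such compatible almost complex structures is contractible. By construction $J'$ is strongly cobordism-admissible.

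By the lemma's hypothesis, there is a chain map $\hat{\Phi}\colon ECC^L_*(Y_+,\lambda_+;J_+)\to ECC^L_*(Y_-,\lambda_-;J_-)$ inducing $\Phi^L(X,\lambda)$ and satisfying conditions (i), (ii) of the Holomorphic Curves axiom with respect to broken $J'$-holomorphic curves. Since $\Phi^L(X,\lambda)$ depends only on $(X,\lambda,L)$ (independently of the cobordism-admissible almost complex structure), this same $\hat{\Phi}$ serves as a candidate chain map for $J$. What remains is to verify that the hypotheses in (i) and (ii) formulated for broken $J$-curves imply the corresponding hypotheses for broken $J'$-curves. Since $ECC^L_*$ is finite-dimensional, only finitely many pairs $(\Theta^+,\Theta^-)$ of admissible orbit sets with action less than $L$ need be considered, and a single small $\delta$ will work for all of them.

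The transfer is a Gromov compactness argument. For (i): if there were no broken $J$-curves from $\Theta^+$ to $\Theta^-$ but broken $J'_{\delta_n}$-curves existed for a sequence $\delta_n\to 0$, then a uniform energy bound coming from the actions of $\Theta^\pm$ together with the fact that $J'_{\delta_n}\to J$ on compact subsets of $\overline{X}$ would yield, after passing to a subsequence, a limiting broken $J$-holomorphic curve from $\Theta^+$ to $\Theta^-$, a contradiction. For (ii): analogously, any sequence of broken $J'_{\delta_n}$-curves that are not unions of covers of product cylinders would limit to a broken $J$-curve from $\Theta^+$ to $\Theta^-$ that is not a union of covers of product cylinders, again contradicting the hypothesis on $J$.

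The main obstacle is ensuring that the product cylinders for $(X,\lambda,J)$ appearing in case (ii) persist as product cylinders for $(X,\lambda,J')$. In a product region the almost complex structure satisfies $J(\partial_s)=f(s)R_0$, and this equals $J_\pm(\partial_s)=R_\pm$ only when $f\equiv 1$. If $f\not\equiv 1$ near $s=0$, then naively setting $J'=J_\pm$ on the inner collar conflicts with the original product region structure. The fix is to first modify $J$ inside the product region so that $f\equiv 1$ near the boundary. This modification preserves the $J$-holomorphicity of the product cylinders because they have the explicit form $[s_-,s_+]\times\gamma$ and are complex submanifolds for any positive choice of $f$. After this preliminary modification, $J$ agrees with $J_\pm$ on the portion of the product region near the boundary, so setting $J'=J_\pm$ there is consistent, and $(X,\lambda,J')$ inherits a product region containing exactly the same cylinders. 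Once this is arranged the Gromov compactness argument above completes the reduction.
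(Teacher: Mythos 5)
Your overall strategy differs from the paper's. The paper does not modify $J$ on the interior of $X$ at all: it enlarges the cobordism to $X'=([-\varepsilon,0]\times Y_-)\circ X\circ([0,\varepsilon]\times Y_+)$, observes that $\overline{X'}=\overline{X}$, and obtains a strongly cobordism-admissible $J'=\phi^*J$ by pulling back along a diffeomorphism $\phi$ of $\overline{X}$ that only reparametrizes the $s$-coordinate on the cylindrical ends. Because $J'$ and $J$ then differ by a diffeomorphism, the broken $J'$-curves in $\overline{X'}$ are in exact bijection with the broken $J$-curves in $\overline{X}$, product regions correspond to product regions, and conditions (i) and (ii) transfer with no limiting argument whatsoever; the only work is checking that the composite of $\hat\Phi'$ with scaling and inclusion isomorphisms still induces $\Phi^L(X,\lambda)$, which follows from \eqref{eqn:technical} and the Composition axiom. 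Your route instead perturbs $J$ in a shrinking collar and tries to recover (i) and (ii) by a compactness argument, which is where the trouble lies.

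The gap is the Gromov compactness step. Your interpolation on a strip of width $\delta_n/2$ between $J$ and the $s$-invariant $J_\pm$ (which agree only at $s=0$, not to first order) produces a sequence $J'_{\delta_n}$ that converges to $J$ only in $C^0$, with merely a uniform $C^1$ bound; this is exactly the regime in which standard Gromov compactness fails, and the paper devotes all of \S\ref{sec:gc} (convergence of currents, the monotonicity Lemma~\ref{lem:monotonicity}, positive cohomology assignments, and the perturbation Lemma~\ref{lem:g}) to extracting a $J$-holomorphic limit in precisely this situation. Asserting that a subsequence "would yield a limiting broken $J$-holomorphic curve" therefore imports, unproved, the hardest analytic content of \S\ref{sec:hca} into a step the paper handles softly. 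Moreover, your argument for condition (ii) is stated backwards: a sequence of broken $J'_{\delta_n}$-curves that are not unions of covers of product cylinders can perfectly well converge to one that is, so no contradiction is reached; what one must actually show is that if the limit \emph{is} a union of product cylinders then so are the approximating curves for large $n$, which is the content of Lemma~\ref{lem:gc2}(b) and requires the Stokes'-theorem argument there together with the persistence of product regions. Finally, your preliminary modification of $f$ inside the product regions is yet another interior change of $J$ whose effect on the full moduli spaces (not just on the product cylinders themselves) would have to be absorbed into the same unproved compactness argument.
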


\begin{proof}
  Assume that the Holomorphic Curves axiom is true in the strongly
  cobordism-admissible case, and let $J$ be any (not necessarily
  strongly) cobordism-admissible almost complex structure.  Fix $L$
  such that $\lambda_+$ and $\lambda_-$ have no ECH generators of
  action exactly $L$.  Choose $\varepsilon>0$ sufficiently small that
  $\lambda_+$ has no ECH generators with action in the interval
  $[L,e^\varepsilon L]$, and $\lambda_-$ has no ECH generators with
  action in the interval $[e^{-\varepsilon} L,L]$.  Define an exact
  cobordism
\[
X'=([-\varepsilon,0]\times Y_-,e^s\lambda_-)\circ X \circ
([0,\varepsilon]\times Y_+, e^s\lambda_+)
\]
from $(Y_+,e^\varepsilon\lambda_+)$ to
$(Y_-,e^{-\varepsilon}\lambda_-)$.

We use the cobordism-admissible almost complex structure $J$ on
$\overline{X}$ to define a strongly cobordism-admissible almost
complex structure $J'$ on $\overline{X'}$ as follows.  Note that there
is a natural identification $\overline{X'}=\overline{X}$, sending
$(-\infty,0]\times Y_-$ and $[0,\infty)\times Y_+$ in $\overline{X'}$
to $(-\infty,-\varepsilon]\times Y_-$ and $[\varepsilon,\infty)\times
Y_+$ in $\overline{X}$ .  Under this identification, the almost
complex structure $J$ on $\overline{X}$ is not quite
cobordism-admissible for $\overline{X'}$, because on the ends
$J(\partial_s)$ is not the Reeb vector field, but rather the Reeb
vector field times $e^{\pm\varepsilon}$.  To repair this defect,
choose a diffeomorphism $\varphi_+:[0,\infty)\to [0,\infty)$ such that
$\varphi_+(s)=s$ for $s$ close to $0$, and
$d\varphi_+(s)/ds=e^{-\varepsilon}$ for $s\ge \varepsilon/2$. Likewise choose a diffeomorphism $\varphi_-:(-\infty,0]\to (-\infty,0]$ such that $\varphi_-(s)=s$ for $s$ close to $0$, and $d\varphi_-(s)/ds = e^\varepsilon$ for $s\le -\varepsilon/2$.
 Define a
diffeomorphism $\phi:\overline{X}\to\overline{X}$ by setting
$\phi|_X=\op{id}_X$ and defining $\phi(s,y)=(\varphi_+(s),y)$ on
$[0,\infty)\times Y_+$ and $\phi(s,y)=(\varphi_-(s),y))$ on
$(-\infty,0]\times Y_-$.  Now $J'\eqdef \phi^*J$ is a strongly
cobordism-admissible almost complex structure on $\overline{X'}$.
Furthermore, product regions for $X'$ with respect to $J'$ correspond
to product regions for $X$ with respect to $J$.

By hypothesis, the Holomorphic Curves axiom holds for $(X',J')$, so
there is a chain map
\[
\hat{\Phi}':ECC_*^L(Y_+,e^\varepsilon\lambda_+,J_+)
\longrightarrow ECC_*^
  L(Y_-,e^{-\varepsilon}\lambda_-,J_-)
\]
which induces $\Phi^L(X')$ and fulfills conditions (i) and (ii) in the
Holomorphic Curves axiom.  To deduce the Holomorphic Curves axiom for
$(X,J)$, define a chain map $\hat{\Phi}$ by composing $\hat{\Phi}'$
with the composition
\begin{equation}
\label{eqn:left}
ECC_*^L(Y_+,\lambda_+,J_+) \longrightarrow
ECC_*^{e^\varepsilon L}(Y_+,e^\varepsilon\lambda_+,J_+) \longrightarrow
ECC_*^L(Y_+,e^\varepsilon\lambda_+,J_+)
\end{equation}
on the left, and the composition
\begin{equation}
\label{eqn:right}
ECC_*^L(Y_-,e^{-\varepsilon}\lambda_-,J_-) \longrightarrow
  ECC_*^{e^\varepsilon L}(Y_-,\lambda_-,J_-) \longrightarrow ECC_*^L(Y_-,\lambda_-,J_-)
\end{equation}
on the right.  In each of \eqref{eqn:left} and \eqref{eqn:right}, the
left arrow is the scaling isomorphism, and the right arrow is the
inverse of the map induced by inclusion of chain complexes (which is
an isomorphism since we chose $\varepsilon$ sufficiently small).  Each
of the compositions \eqref{eqn:left} and \eqref{eqn:right} is the
obvious geometric identification of generators, and so since
$\hat{\Phi}'$ satisfies conditions (i) and (ii) in the Holomorphic
Curves axiom, it follows that $\hat{\Phi}$ satisfies these conditions
as well.  Finally, it follows from \eqref{eqn:technical} and the
Composition axiom that $\hat{\Phi}$ induces the map $\Phi^L(X)$, as
required.
\end{proof}

Assume henceforth that $J$ is strongly cobordism-admissible.

\subsection{The $L$-flat case}
\label{sec:hcl}

We now prove the Holomorphic Curves axiom in the special
case when $(\lambda_+,J_+)$ and $(\lambda_-,J_-)$ are $L$-flat.  In
this case, define a chain map
\begin{equation}
\label{eqn:flf}
\hat{\Phi}:ECC_*^L(Y_+,\lambda_+;J_+) \longrightarrow
ECC_*^L(Y_-,\lambda_-;J_-)
\end{equation}
by composing the chain map \eqref{eqn:hbal} for $r$ large (and some
choice of small $2$-form $\mu$ and small abstract perturbation $\frak{p}$)
with the isomorphisms of chain complexes \eqref{eqn:Liso} on both
sides.

\begin{proposition}
\label{prop:HCLF}
If $(\lambda_+,J_+)$ and $(\lambda_-,J_-)$ are $L$-flat, then there
exists an abstract perturbation $\frak{p}$ such that the chain map
$\hat{\Phi}$ in \eqref{eqn:flf} fulfills the Holomorphic Curves axiom.
\end{proposition}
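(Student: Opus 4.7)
The plan is to verify conditions (i) and (ii) of the Holomorphic Curves axiom by combining Proposition~\ref{prop:hol}(a) with an existence/uniqueness analysis for the Seiberg--Witten instanton associated to a cover of product cylinders in the $L$-flat setting. Under the bijection from Proposition~\ref{prop:Liso}(b), write $\frak{c}_\pm$ for the generator of $\widehat{CM}^{-*}_L(Y_\pm;\lambda_\pm,J_\pm,r)$ corresponding to the admissible orbit set $\Theta^\pm$; then under \eqref{eqn:Liso} the matrix coefficient $\langle\hat\Phi\,\Theta^+,\Theta^-\rangle$ is the mod-$2$ count of gauge-equivalence classes of index-$0$ solutions $\frak{d}$ to the abstractly perturbed version of \eqref{eqn:tsw4} with asymptotic limits $\frak{c}_\pm$. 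Part~(i) then follows immediately: were this count nonzero, at least one such $\frak{d}$ would exist, and Proposition~\ref{prop:hol}(a)(ii) would produce a broken $J$-holomorphic curve from $\Theta^+$ to $\Theta^-$, contradicting the hypothesis of~(i).

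For part~(ii), suppose the only broken $J$-holomorphic curve from $\Theta^+$ to $\Theta^-$ is a union $C$ of covers of product cylinders, living in a product region $[s_-,s_+]\times Z\subset X$. The first step is to construct a model instanton $\frak{d}_0$ of index $0$ with asymptotics $\frak{c}_\pm$. On the product region the equations \eqref{eqn:tsw4} reduce (after the rescaling intrinsic to the product structure) to the $s$-independent three-dimensional equations \eqref{eqn:tsw3} for $(Z,\lambda_0)$, so the $L$-flat model from \cite[\S4--5]{e1} for the SW solution attached to each embedded Reeb orbit in $\Theta^\pm$ glues canonically across this region to an approximate $s$-independent solution, and a contraction-mapping argument perturbs it to a genuine solution $\frak{d}_0$. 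The spectral-flow/ECH-index dictionary of \cite[Thm.~4.4]{e1} identifies the index of $\frak{d}_0$ with the ECH index of $C$, which vanishes because $C$ consists of covers of trivial cylinders.

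The main technical obstacle will be \emph{uniqueness}: for an appropriate choice of small generic abstract perturbation $\frak{p}$ and $r$ sufficiently large, every index-$0$ instanton between $\frak{c}_\pm$ must be gauge-equivalent to $\frak{d}_0$. I would argue by contradiction: if for a sequence $r_n\to\infty$ there were instantons $\frak{d}_n$ not gauge-equivalent to $\frak{d}_0$, Proposition~\ref{prop:hol}(a)(ii) applied to each $\frak{d}_n$ produces a broken $J$-holomorphic curve from $\Theta^+$ to $\Theta^-$; the hypothesis together with Gromov compactness force this curve to be $C$. The injectivity portion of Taubes' correspondence between holomorphic curves and large-$r$ SW solutions, specialized to the product region where the cross-sectional vortex equations admit a unique finite-energy gauge-equivalence class of each multiplicity, then forces $\frak{d}_n$ to be gauge-equivalent to $\frak{d}_0$ for $n$ large, yielding the contradiction. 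Choosing $\frak{p}$ so that this singleton moduli space is cut out transversely then gives $\langle\hat\Phi\,\Theta^+,\Theta^-\rangle=1$, completing (ii).
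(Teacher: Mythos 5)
Your overall route is the same as the paper's: condition (i) from Proposition~\ref{prop:hol}(a), and condition (ii) by building a model instanton out of Taubes-style vortex data over the product cylinders (where the relevant Cauchy--Riemann equation becomes linear in the $L$-flat product-region setting), proving uniqueness by localizing any other instanton near $C_\Theta$ via the compactness result (the paper uses Proposition~\ref{prop:9x}) and the uniqueness of the vortex solutions, and then counting. This is exactly the structure of the paper's Proposition~\ref{prop:dtheta} and the paragraph following it.

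There is, however, one step where your argument as written does not go through: the passage from the unperturbed analysis to the perturbed count. The chain map \eqref{eqn:flf} is defined with a nonzero abstract perturbation $\frak{p}$, and your model instanton $\frak{d}_0$ solves the \emph{unperturbed} equations \eqref{eqn:tsw4}; for a generic $\frak{p}$, $\frak{d}_0$ is not a solution at all, so ``choosing $\frak{p}$ so that this singleton moduli space is cut out transversely'' is not meaningful, and perturbed solutions cannot literally be gauge equivalent to $\frak{d}_0$. Worse, if $\frak{d}_0$ were degenerate, the perturbed solutions near it could occur in pairs and the mod $2$ count could be $0$. What is missing is the paper's Proposition~\ref{prop:dtheta}(b): the linearization $D_{\frak{d}_\Theta}$ has index $0$ \emph{and trivial cokernel} (proved as in the nondegeneracy argument of \cite[Thm.\ 1.2]{e3}); your appeal to an index identification with the ECH index of $C$ gives only the index, not the cokernel statement, and the cokernel statement is also what licenses your contraction-mapping step. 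Granted nondegeneracy, one then chooses $\frak{p}$ to vanish to second order along $\frak{d}_\Theta$, so that it persists as a transverse solution of the perturbed equations, and runs a limiting argument with $r$ fixed large and perturbations shrinking (rather than $r_n\to\infty$ as you propose) to show every perturbed index-$0$ solution with limits $\frak{c}_\pm$ is gauge equivalent to $\frak{d}_\Theta$; this is how the paper obtains $\langle\hat{\Phi}\Theta^+,\Theta^-\rangle=1$. With that nondegeneracy statement and the corrected choice of $\frak{p}$ inserted, your argument matches the paper's.
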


\begin{proof}
  We need to show that conditions (i) and (ii) in the Holomorphic
  curves axiom hold for this $\hat{\Phi}$.  It follows immediately
  from Proposition~\ref{prop:hol}(a) that condition (i) holds.

  To prove (ii), suppose that $\Theta^+$ is a generator of
  $ECC_*^L(Y_+,\lambda_+;J_+)$ in a product region, and let $\Theta^-$
  denote the corresponding generator of $ECC_*^L(Y_-,\lambda_-;J_-)$.  Let
  $C_\Theta\in\mc{M}^J(\Theta^+,\Theta^-)$ denote the corresponding
  union of product cylinders.  We need the following proposition:

\begin{proposition}
\label{prop:dtheta}
Suppose that $(\lambda_+,J_+)$ and $(\lambda_-,J_-)$ are $L$-flat, and
let $\Theta^\pm$, $C_\Theta$ be as above.  If $r$ is sufficiently
large then:
\begin{description}
\item{(a)} There exists a solution
  $\frak{d}_\Theta=(A_\Theta,\psi_\Theta)$ to the equations
  \eqref{eqn:tsw4} (without abstract perturbation) such that on the
  ends, $\lim_{s\to\pm\infty}\frak{d}_\Theta$ corresponds to
  $\Theta^\pm$ via \eqref{eqn:Liso}.
\item{(b)} The operator $D_{\frak{d}_\Theta}$ obtained from
  linearizing the equations \eqref{eqn:tsw4} at $\frak{d}_\Theta$
  (this is the $\frak{p}=0$ case of the operator in \cite[Eq.\
  (3.9)]{e1}) has index $0$ and trivial cokernel.
\item{(c)}
If $C_\Theta$ is the only broken $J$-holomorphic curve from
$\Theta^+$ to $\Theta^-$, then the instanton $\frak{d}_\Theta$ in (a)
is unique up to gauge equivalence.
\end{description}
\end{proposition}

\begin{proof} 
  (a) If $\Theta^\pm$ is the empty set, then this is proved similarly
  to \cite[Prop.\ 4.3]{tw2}, giving a solution
  $\frak{d}_\emptyset=(A_\emptyset,\psi_\emptyset)$ such that
  $\lim_{s\to\pm\infty}\frak{d}_\emptyset$ corresponds to the empty
  set via \eqref{eqn:Liso}, and $|\psi_0|\ge 1 - \kappa r^{-1}$ and
  $|F_{A_{0}}|\le \kappa$ everywhere for some $r$-independent constant
  $\kappa$.

  In the general case, this is proved by repeating the construction in
  \cite[\S4--7]{e2} with cosmetic changes.  We now briefly summarize
  this construction.

  One starts as in \cite[\S5a]{e2} by building a complex line bundle
  $E$ over $\overline{X}$ and a pair $(A^*,\psi^*)$ consisting of a
  connection on $E$ and a section of ${\mathbb S}_+=E\oplus K^{-1}E$
  (see \eqref{eqn:seke4}) that are close to solving \eqref{eqn:tsw4}.
  The bundle $E$ is such that there is a section of $E$ whose zero set
  with multiplicity is given by $C_\Theta$.  On the complement of a
  small radius neighborhood of $C_\Theta$, the bundle $E$ is
  identified with the trivial line bundle, and $(A^*,\psi^*)$ is close
  to the instanton $(A_\emptyset,\psi_\emptyset)$ constructed above.
  Near a product cylinder $\R\times\gamma$, where $(\gamma,m)$ is an
  element of the orbit set $\Theta^\pm$, the pair $(A^*,\psi^*)$ is
  determined by a map $\frak{v}:\R\times\gamma\to\frak{C}_m$.  Here
  $\frak{C}_m$ denotes the moduli space of degree $m$ vortices on
  $\C$.  The space $\frak{C}_m$ is naturally diffeomorphic to $\C^m$
  with coordinates $(\sigma_1,\ldots,\sigma_m)$; see \cite[\S2]{e2}
  for details.  The map $\frak{v}$ is required to be asymptotic to
  $0\in\C^m$ when the $\R$ coordinate $s$ goes to $\pm\infty$.  It is
  also required to satisfy a certain nonlinear Cauchy-Riemann
  equation.  For each collection of maps $\frak{v}$ satisfying these
  conditions, a gluing construction in \cite[\S5]{e2} then perturbs
  the corresponding pair $(A^*,\psi^*)$ to an instanton\footnote{The
    argument in \cite{e2} is complicated in order to handle
    non-$\R$-invariant holomorphic curves having multiple ends
    converging to (covers of) the same Reeb orbit, or ends converging
    to multiple covers of a Reeb orbit.  For the union of product
    cylinders $C_\Theta$, one can avoid these complications and
    instead use (with appropriate cosmetic changes) the simpler
    construction in \cite{grtosw}, which constructs a Seiberg-Witten
    solution from a holomorphic torus with self-intersection number
    zero in a closed symplectic manifold.}.

  When $m=1$, it turns out that the unique solution $\frak{v}$ for
  $(\gamma,m)$ is given by the constant map $\R\times\gamma\to
  0\in\C$.  If $m>1$, and thus $\gamma$ is elliptic, let $T$ denote the
  symplectic action of $\gamma$. Then the $L$-flatness and ``product
  region'' assumptions imply that a neighborhood of $\R\times\gamma$
  can be identified with $\R\times(\R/T\Z)\times \C$ with coordinates
  $s,t,z$, so that the Reeb vector field is given by $R=\partial_t$,
  and the Liouville form $\lambda$ is given by
\[
\lambda = e^s\left(\left(1-\frac{2\pi {\rm R}}{T}|z|^2\right)dt +
  \frac{i}{2}\left(zd\zbar - \zbar dz\right)\right).
\]
Here ${\rm R}$ is a constant, the ``rotation number'', which is
irrational by the nondegeneracy assumptions.  Meanwhile,
$J\partial_s=f(s)\partial_t$ and $J\partial_z=i\partial_z$, where the
function $f(s)$ is positive and equal to constants when $s>>0$ and
$s<<0$.  (This is only slightly different from the symplectization
context of \cite{e2}, where one would have $f\equiv 1$.) Now the key
point is that in this case, similarly to \cite{e2}, the nonlinear
Cauchy-Riemann equation that $\frak{v}=(\sigma_1,\ldots,\sigma_m)$ has
to satisfy is linear, namely the equation
\[
(f^{-1}\partial_s + i\partial_t)\sigma_q + \frac{2\pi Rq}{T}
\sigma_q=0.
\]
Thus this equation has a (unique) solution $\frak{v}$ with the
required asymptotics $\lim_{s\to\pm\infty}\frak{v}(s,\cdot)=0$, namely
$\frak{v}\equiv 0$.  In conclusion, we obtain a (unique) solution
$\frak{v}$ for each product cylinder $(\R\times\gamma,m)$ in
$C_\Theta$, and this gives rise to the desired instanton.

  (b) This follows similarly to the proof of nondegeneracy in
  \cite[Thm.\ 1.2]{e3}.

  (c) Suppose that $C_\Theta$ is the only broken $J$-holomorphic curve
  from $\Theta^+$ to $\Theta^-$.  We need to show that if $r$ is
  sufficiently large then $\frak{d}_\Theta$ is the unique (up to gauge
  equivalence) solution to \eqref{eqn:tsw4} such that
  $\lim_{s\to\pm\infty}\partial_\Theta$ corresponds to $\Theta^\pm$
  via \eqref{eqn:Liso}.

  Suppose $\frak{d}'=(A',\psi')$ is another such instanton.  First
  observe that for any $\delta>0$, if $r$ is sufficiently large then
  we must have $1-|\psi'|<\delta$ on the complement of the radius
  $\delta$ neighborhood of $C_\Theta$.  Otherwise
  Proposition~\ref{prop:9x} would imply that there is a ``generalized
  broken $J$-holomorphic curve'' (see \S\ref{sec:9xprelim}) from
  $\Theta^+$ to $\Theta^-$ containing a point not on $C_\Theta$,
  contradicting our hypothesis.

  Using the above observation, the arguments in \cite[\S6]{e4} carry
  over\footnote{As in the proof of part (a), the argument needed here
    can be simplified from that in \cite{e4} and differs only
    cosmetically from analogous arguments in \cite{grtosw}.} to show
  that $\frak{d}'$ is gauge equivalent to $\frak{d}_\Theta$.  The idea
  is that $\frak{d}'$ must be obtained from the gluing construction in
  (a), and uniqueness for the instanton then follows because the
  solutions $\frak{v}$ to their respective nonlinear Cauchy-Riemann
  equations are unique and cut out transversely.
\end{proof}

To complete the proof of Proposition~\ref{prop:HCLF},
we need to show that if $C_\Theta$ is the only broken $J$-holomorphic
curve from $\Theta^+$ to $\Theta^-$, then
\begin{equation}
\label{eqn:phiphiphi}
\langle\hat{\Phi}\Theta^+,\Theta^-\rangle=1\in\Z/2.
\end{equation}

Let $\frak{c}_\pm$ denote the Seiberg-Witten Floer generator
corresponding to $\Theta^\pm$ via \eqref{eqn:Liso}.  Recall that to
define the chain map \eqref{eqn:flf}, one fixes small abstract
perturbations $\frak{p}_\pm$ as necessary to define the Seiberg-Witten
chain complexes for $Y_\pm$, and extends these perturbations over
$\overline{X}$ as in \cite[Ch.\ 24]{km} to obtain a small perturbation
$\frak{p}$ as needed to perturb the equations \eqref{eqn:tsw4}.
Recall from \S\ref{sec:swfc} that we choose the perturbations
$\frak{p}_\pm$ so that $\frak{c}_\pm$ are still solutions to the
perturbed version of the Seiberg-Witten equations \eqref{eqn:sw3}.
Likewise the perturbation $\frak{p}$ can be chosen to vanish to second
order on the instantions $\frak{d}_\Theta$ given by
Proposition~\ref{prop:dtheta}, so that these are transverse solutions
to the perturbed version of the instanton equations \eqref{eqn:tsw4}.
A limiting argument similar to Step 2 of the proof of
Proposition~\ref{prop:hol} in \S\ref{sec:finalproofs} now shows that
if $r$ is sufficiently large and if $\frak{p}_\pm$ and $\frak{p}$ are
sufficiently small, then any other solution $\frak{d}'$ to the
corresponding perturbed version of \eqref{eqn:tsw4} with
$\lim_{s\to\pm}\frak{d}'=c_{\pm}$ is gauge equivalent to
$\frak{d}_{\Theta}$.  It follows that \eqref{eqn:phiphiphi} holds as
desired.
\end{proof}

\subsection{The non-$L$-flat case}
\label{sec:hcg}

To prove the Holomorphic Curves axiom in the non-$L$-flat case, we
reduce to the $L$-flat case by defining a sequence of modified exact
symplectic cobordisms $\{(X,\lambda_n)\}_{n=1,2,\ldots}$ between
$L$-flat pairs, equipped with strongly cobordism-admissible almost
complex structures $J_n$, such that $(\lambda_n,J_n)$ converges to
$(\lambda,J)$ in an appropriate sense.  Fix $\varepsilon>0$ as in
Definition~\ref{def:sca}.  We can then write $X=X^-\circ X^0 \circ
X^+$ where $X^-=[0,\varepsilon]\times Y_-$ and
$X^+=[-\varepsilon,0]\times Y_+$, and $\lambda|_{X^\pm} =
e^s\lambda_\pm$.  Here $s$ denotes the $[-\varepsilon,0]$ or
$[0,\varepsilon]$ coordinate as usual.  The idea of the construction
is to define $(\lambda_n,J_n)$ by suitably modifying $(\lambda,J)$ on
$X^\pm$, and in neighborhoods of product regions.  The construction
has four steps.

{\em Step 1.\/} To begin the construction, fix a positive integer $n$.
Let $U_{\pm}$ denote the set of points in $Y_\pm$ that are
within distance $1/n$ of a Reeb orbit with action less than $L$, as
measured using some arbitrary metrics on $Y_\pm$.  By
Lemma~\ref{lem:plfa}(c), there exists a preferred homotopy
$\{(\lambda_{\pm}^t,J_{\pm}^t)\mid t\in[0,1]\}$ on $Y_\pm$ where
$(\lambda_{\pm}^0,J_{\pm}^0)=(\lambda_\pm,J_\pm)$, the pair
$(\lambda_{\pm}^1,J_{\pm}^1)$ is $L$-flat, and
$(\lambda_{\pm}^t,J_{\pm}^t)$ agrees with $(\lambda_\pm,J_\pm)$ on
$Y_\pm\setminus U_{\pm}$.  To ensure smooth gluing below, let us
reparametrize the homotopy so that
$(\lambda_{\pm}^t,J_{\pm}^t)=(\lambda_\pm,J_\pm)$ for $t$ in a
neighborhood of $0$, and
$(\lambda_{\pm}^t,J_{\pm}^t)=(\lambda_\pm^1,J_\pm^1)$ for $t$ in a
neighborhood of $1$.  Also, we can assume that if a component of
$U_\pm$ is contained in $Z$ where $[s_-,s_+]\times Z$ is a product
region, then
\[
e^{-s_+}\lambda_{+}^t = e^{-s_-}\lambda_{-}^t,
\quad\quad
J_{+}^t|_{\Ker(\lambda_0)} =J_{-}^t|_{\Ker(\lambda_0)}
\]
on this component.

Keep in mind that $(\lambda_\pm^t,J_\pm^t)$ depends on $n$, although
we do not indicate this in the notation.  We will need the following
estimates on this $n$-dependence:

\begin{lemma}
\label{lem:gray}
There exists an $n$-independent constant $c>0$ such that the homotopy
$\{(\lambda_\pm^t,J_\pm^t)\}$ above can be chosen so that
\begin{align}
\label{eqn:gray1}
\left\|\frac{\partial\lambda_\pm^t}{\partial t}\right\|_{C^1},
\left\|\frac{\partial J_\pm^t}{\partial t}\right\|_{C^0} & \le
c n^{-1},\\
\label{eqn:gray2}
\left\|\frac{\partial\lambda_\pm^t}{\partial
    t}\right\|_{C^2}, \left\|\frac{\partial J_\pm^t}{\partial
    t}\right\|_{C^1} &\le c.
\end{align}
\end{lemma}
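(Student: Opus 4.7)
The plan is to take $(\lambda_\pm^t, J_\pm^t)$ to be, up to a smoothing of $t$ near the endpoints, an affine interpolation between $(\lambda_\pm, J_\pm)$ and an $L$-flat approximation $(\lambda_\pm^1, J_\pm^1)$ built as in \cite[Prop.~B.1]{e1}, and to extract the $n$-dependent estimates by applying the Leibniz rule to a cutoff that sits at transverse scale $1/n$ about each Reeb orbit of action less than $L$.

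First I would unpack the $L$-flat condition \cite[Eq.\ (4.1)]{e1}. In a tubular neighborhood of each such Reeb orbit $\gamma$, choose coordinates $(\tau, z) \in (\R/T\Z)\times \C$ centered on $\gamma$. The $L$-flat condition prescribes $\lambda_\pm^1$ and $J_\pm^1$ in terms of a model form determined only by the linearized Reeb flow along $\gamma$. This model can be arranged so that, setting $\alpha \eqdef \lambda_\pm^1 - \lambda_\pm$ and $\beta \eqdef J_\pm^1 - J_\pm$, one has $\alpha = O(|z|^2)$ and $\beta = O(|z|)$ with $C^k$-bounds on $\alpha, \beta$ independent of $n$. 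These transverse orders of vanishing provide precisely the $1/n$ factor in \eqref{eqn:gray1}.

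Next, arrange the preferred homotopy of Lemma~\ref{lem:plfa}(c) to take the form $(\lambda_\pm^t, J_\pm^t) = (\lambda_\pm, J_\pm) + \phi(t)\chi_n \cdot (\alpha, \beta)$, where $\chi_n$ is a standard cutoff supported in $U_\pm$ with $|\nabla^k \chi_n| \le c n^k$, and $\phi: [0,1] \to [0,1]$ is smooth, constant near the endpoints, with $\phi(0)=0$ and $\phi(1)=1$. For $J$ the interpolation is performed after identifying $\xi^0$ with $\xi^t$ via a smooth projection onto the space of symplectization-admissible almost complex structures, which only affects the constants. The family $\partial\lambda_\pm^t/\partial t = \phi'(t) \chi_n \alpha$ and its analogue for $J$ are then estimated by the Leibniz rule. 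In the $1/n$-tube one has $|\alpha|_{C^0} = O(n^{-2})$, $|\alpha|_{C^1} = O(n^{-1})$, $|\alpha|_{C^2} = O(1)$; combining with $|\chi_n|_{C^k} = O(n^k)$ yields $\|\chi_n\alpha\|_{C^1} = O(n^{-1})$ and $\|\chi_n\alpha\|_{C^2} = O(1)$. An analogous one-order-shifted count using $|\beta|_{C^0} = O(n^{-1})$ and $|\beta|_{C^1} = O(1)$ gives $\|\chi_n \beta\|_{C^0} = O(n^{-1})$ and $\|\chi_n\beta\|_{C^1} = O(1)$. This produces \eqref{eqn:gray1} and \eqref{eqn:gray2}.

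The main obstacle is confirming that the construction in \cite[Prop.~B.1]{e1} can be arranged so that $\alpha$ and $\beta$ vanish to the transverse orders claimed above, with bounds uniform in $n$. This requires checking both that the $L$-flat model there osculates the original pair to first transverse order in $\lambda$ and to zeroth order in $J$ at each orbit, and that the cutoff patching which produces $(\lambda_\pm^1, J_\pm^1)$ from that model can be carried out with a single transverse scale $1/n$ (rather than nested scales, which would break the order-of-vanishing count). Once these two structural features of \cite[Prop.~B.1]{e1} are verified by inspection, the Leibniz count above completes the proof.
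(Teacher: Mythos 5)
Your proposal is correct and follows essentially the same route as the paper: the paper likewise reads off from \cite[Eqs.\ (2-1), (2-11)]{e1} that $\lambda_\pm^1-\lambda_\pm$ vanishes to second transverse order and $J_\pm^1-J_\pm$ to first transverse order along each orbit, notes that the differences are supported where $|z|\le c/n$, and runs the same Leibniz-rule count (inheriting the bounds for $\partial_t$ from the cutoff construction of the preferred homotopy in \cite[App.\ A]{e1}). The verification you flag as the "main obstacle" is exactly what the paper discharges by citing those explicit formulas.
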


\begin{proof}
  Let $\gamma:\R/T\Z\to Y_\pm$ be a Reeb orbit of action less than
  $L$.  Recall from \cite[Eq.\ (2-1)]{e1} that there exists a disk
  $D\subset\C$ about the origin and an extension of $\gamma$ to an
  embedding $\varphi:(\R/T\Z)\times D\to Y_\pm$ such that:
\begin{itemize}
\item
If $t$
  denotes the $\R/T\Z$ coordinate and $z$ denotes the $\C$ coordinate,
  then
\begin{equation}
\label{eqn:2-1}
\varphi^*\lambda_\pm^0 = (1-2\nu|z|^2-\mu\zbar^2-\overline{\mu}z^2)dt +
\frac{i}{2}(zd\zbar - \zbar dz) + \cdots
\end{equation}
where $\nu$ and $\mu$ are respectively real and complex valued
functions on $\R/T\Z$, and the unwritten terms are $O(|z|^3)$.
\item At $z=0$, the restriction of $J_\pm$ to $\xi$ is the
  standard almost complex structure on $\C$.
\end{itemize}

By \cite[Eq.\ (2-11)]{e1}, $\lambda^1_\pm$ differs from
$\lambda_\pm^0$ only in the $\mu$ terms and higher order terms in
\eqref{eqn:2-1}, and these differences occur only where $|z|\le c/n$.
It follows that $\lambda_\pm^1-\lambda_\pm^0$ satisfies the $C^1$ and
$C^2$ bounds in \eqref{eqn:gray1} and \eqref{eqn:gray2}, and because
of the way a preferred homotopy is constructed in \cite[App.\ A]{e1},
$\partial\lambda_\pm^t/\partial t$ also satisfies these bounds.

It also follows from \cite[Eq.\ (2-11)]{e1} and the second bullet
point above that $J_\pm^1$ and $J_\pm^0$ agree along $\gamma$, and
therefore their difference is $O(|z|)$.  Since their difference is
supported where $|z|\le c/n$, it follows from this and the cutoff
construction of $J_\pm^1$ in \cite[App.\ A]{e1} that $J_\pm^1-J_\pm^0$
satisfies the $C^0$ and $C^1$ bounds in \eqref{eqn:gray1} and
\eqref{eqn:gray2}.  It then follows from the construction of a
preferred homotopy that $\partial J_\pm^t/\partial t$ also satisfies
these bounds.
\end{proof}

As a first step to defining $\lambda_n$, define a
$1$-form $\lambda_n'$ on $X$ by
\[
\lambda_n' \eqdef \left\{\begin{array}{cl}
    e^s\lambda_+^{1+\varepsilon^{-1}s} & \mbox{on
      $X^+=[-\varepsilon,0]\times Y_+$},\\
\lambda & \mbox{on $X^0$},\\
e^{s}\lambda_-^{1-\varepsilon^{-1}s} & \mbox{on
  $X^-=[0,\varepsilon]\times Y_-$}.
\end{array}
\right.
\]
It follows from \eqref{eqn:gray1} that if $n$ is sufficiently large
(which we assume that it is), then $(X,\lambda_n')$ is an exact
symplectic cobordism from $(Y_+,\lambda_{+}^1)$ to
$(Y_-,\lambda_-^1)$.

{\em Step 2.\/} We now relate the maps on ECH induced by $(X,\lambda)$
to those induced by $(X,\lambda_n')$.

\begin{lemma}
\label{lem:PhiXn}
The following diagram commutes:
\begin{equation}
\label{eqn:PhiXn}
\begin{CD}
ECH_*^L(Y_+,\lambda_+) @>{\simeq}>> ECH_*^L(Y_+,\lambda_+^1) \\
@VV{\Phi^L(X,\lambda)}V @VV{\Phi^L(X,\lambda_n')}V \\
ECH_*^L(Y_-,\lambda_-) @>{\simeq}>> ECH_*^L(Y_-,\lambda_-^1).
\end{CD}
\end{equation}
Here the horizontal arrows are induced by the canonical isomorphism of
chain complexes \eqref{eqn:Liso2}.
\end{lemma}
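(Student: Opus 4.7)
The plan is to reduce the lemma, via the Composition axiom of Proposition~\ref{prop:cob1}, to a boundary statement on the collars $X^\pm$, then translate this statement to filtered Seiberg--Witten Floer cohomology via $\Psi^L$, and finally verify it using a mild generalization of Lemma~\ref{lem:technical} together with the homotopy invariance of continuation maps.

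First, shrink $\varepsilon>0$ if necessary so that neither $\lambda_+$ nor $\lambda_-$ has an orbit set with action in $[e^{-\varepsilon}L,e^{\varepsilon}L]$, which guarantees $L$-nondegeneracy of all intermediate contact forms in what follows; by the Homotopy Invariance axiom established in Proposition~\ref{prop:cob1}, the relevant ECH cobordism maps are unchanged. Using the decomposition $X=X^-\circ X^0\circ X^+$ together with $\lambda_n'|_{X^0}=\lambda$, the Composition axiom reduces the commutativity of \eqref{eqn:PhiXn} to the two boundary identities
\begin{equation*}
\Phi^L(X^+,e^s\lambda_+^{1+\varepsilon^{-1}s})\circ\alpha_+ = \Phi^L(X^+,e^s\lambda_+)
\end{equation*}
and its analog on $X^-$. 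I treat the $X^+$ case; the $X^-$ case is identical by symmetry.

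Second, push both sides through $\Psi^L$ from Lemma~\ref{lem:FI}. I claim that under $\Psi^L$, the canonical ECH isomorphism $\alpha_+$ corresponds to the continuation map $\Phi_{\rho_+}$ for $\rho_+=\{(\lambda_+^t,L,J_+^t,r)\}$. Indeed, taking $(\lambda_+^1,J_+^1)$ itself as the preferred $L$-flat approximation used to define both $\Psi^L_+$ and $\Psi^L_{+,(1)}$, the definition of $\Psi^L$ gives $\Psi^L_+=\Phi_{\rho_1^+}\circ\Psi^L_{+,(1)}\circ\alpha_+$, where $\rho_1^+$ is the reversed preferred homotopy. Since $\rho_+\circ\rho_1^+$ is a loop that is null-homotopic through admissible deformations, Lemma~\ref{lem:deform}(a),(b) give $\Phi_{\rho_+}\circ\Phi_{\rho_1^+}=\op{Id}$; multiplying by $\Phi_{\rho_+}$ on the left yields the claimed correspondence. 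The boundary identity is thereby equivalent to the SWFC identity
\begin{equation*}
\widehat{HM}_L^*(X^+,e^s\lambda_+^{1+\varepsilon^{-1}s})\circ\Phi_{\rho_+} = \widehat{HM}_L^*(X^+,e^s\lambda_+).
\end{equation*}

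Third, establish this SWFC identity via a mild generalization of Lemma~\ref{lem:technical}. The right-hand side is computed by that lemma as $\imath\circ\Phi_{\rho_{\mathrm{sc}}^+}$, where $\rho_{\mathrm{sc}}^+=\{(e^{-\varepsilon t}\lambda_+,e^{-\varepsilon t}L,J_+,r)\}$ and $\imath$ denotes inclusion of chain complexes. For the left-hand side, the same discretization-plus-neck-stretching argument---whose inputs Lemma~\ref{lem:deform}(b) and Lemma~\ref{lem:diff}(b) apply to arbitrary admissible deformations---yields $\widehat{HM}_L^*(X^+,e^s\lambda_+^{1+\varepsilon^{-1}s})=\imath\circ\Phi_{\tilde\rho_n^+}$ for $\tilde\rho_n^+=\{(e^{-\varepsilon t}\lambda_+^{1-t},e^{-\varepsilon t}L,J_+^{1-t},r)\}$. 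Since $\tilde\rho_n^+$ is homotopic through admissible deformations to the concatenation of the reversed preferred homotopy with $\rho_{\mathrm{sc}}^+$, Lemma~\ref{lem:deform}(a),(b) give $\Phi_{\tilde\rho_n^+}\circ\Phi_{\rho_+}=\Phi_{\rho_{\mathrm{sc}}^+}$, which is exactly what is required.

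The main obstacle is the generalization of Lemma~\ref{lem:technical} to admissible deformations that change the contact form by a preferred homotopy (rather than merely by conformal rescaling). Its original proof's key inputs, Lemma~\ref{lem:deform}(b) and Lemma~\ref{lem:diff}(b), already apply in the required generality, so the extension should be essentially cosmetic; however, the constants now depend on $n$ via the Gray-stability bounds of Lemma~\ref{lem:gray}, and one must verify that $r$ can be chosen uniformly large as $n\to\infty$ for the discretization argument to produce a well-defined chain-level identification.
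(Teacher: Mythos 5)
Your proposal is correct and is essentially the paper's own argument: reduce via the Composition axiom to the two collar identities \eqref{eqn:xn+}--\eqref{eqn:xn-}, translate through Lemma~\ref{lem:FI} so that the canonical identifications become the continuation maps $\Phi_{\rho_\pm}$, and compute the product-cobordism maps by Lemma~\ref{lem:technical} combined with Lemma~\ref{lem:deform}(a),(b); your remark that Lemma~\ref{lem:technical} must be invoked for deformations that are not conformal rescalings is apt (the paper cites it in exactly this generality, and its proof only uses the product exact-cobordism structure together with Lemma~\ref{lem:diff}(b), Lemma~\ref{lem:deform}(b), Proposition~\ref{prop:comp} and Corollary~\ref{cor:cobl}(a), so the extension is indeed cosmetic), while the uniformity of $r$ as $n\to\infty$ that you worry about is not needed here, since $n$ is fixed in the lemma and the ECH-level maps are independent of $r$. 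The one point you gloss over is that the $X^-$ identity is not literally ``identical by symmetry'': there the flat approximation sits at the target, so one must additionally commute the continuation isomorphism past the inclusion $\widehat{HM}^*_{e^{-\varepsilon}L}\to\widehat{HM}^*_{L}$, i.e.\ use the filtered analogue of Lemma~\ref{lem:deform}(c), which follows from the construction of $\Phi_\rho$ just as in Corollary~\ref{cor:yls}(b).
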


\begin{proof}
  Let $\Psi_+$ and $\Psi_-$ denote the top and bottom arrows in
  \eqref{eqn:PhiXn}.  By the Composition axiom we have
\[
\begin{split}
  \Phi^L(X,\lambda_n') &
  =\Phi^L(X^-,\lambda_n')\circ\Phi^L(X^0,\lambda_n')\circ\Phi^L(X^+,\lambda_n'),\\
  \Phi^L(X,\lambda) &
  =\Phi^L(X^-,\lambda)\circ\Phi^L(X^0,\lambda)\circ\Phi^L(X^+,\lambda).
\end{split}
\]
Since $\lambda_n'$ agrees with $\lambda$ on $X^0$, it then suffices to
show that
\begin{gather}
\label{eqn:xn+}
\Phi^L(X^+,\lambda) = \Phi^L(X^+,\lambda_n')\circ\Psi_+  ,\\
\label{eqn:xn-}
  \Phi^L(X^-,\lambda_n') =\Psi_-\circ\Phi^L(X^-,\lambda).
\end{gather}

To prove \eqref{eqn:xn+}, observe that by Lemmas~\ref{lem:technical},
\ref{lem:deform}(a),(b) and \ref{lem:FI}, we have a commutative
diagram
\[
\begin{CD}
ECH_*^L(Y_+,\lambda_+) @>{\Psi_+}>> ECH^L(Y_+,\lambda_+^1)\\
@VV{s}V @VV{\Phi^L(X^+,\lambda_n')}V  \\
ECH_*^{e^{-\varepsilon}L}(Y_+,e^{-\varepsilon}\lambda_+)
 @>{\imath}>>
ECH_*^L(Y_+,e^{-\varepsilon}\lambda_+).\end{CD}
\]
By \eqref{eqn:technical}, the composition $\imath\circ s$ in the above
square is equal to $\Phi^L(X^+,\lambda)$.

To prove \eqref{eqn:xn-}, by Lemmas~\ref{lem:technical},
\ref{lem:deform}(a),(b) and \ref{lem:FI} again, we have a commutative
diagram
\[
\begin{CD}
ECH_*^{e^{-\varepsilon}L}(Y_-,\lambda_-) @>{s}>>
ECH_*^L(Y_-,e^{\varepsilon}\lambda_-)\\
@VV{\simeq}V @VV{\Phi^L(X^-,\lambda_n')}V \\
ECH_*^{e^{-\varepsilon}L}(Y_-,\lambda_-^1) @>{\imath}>> 
ECH_*^L(Y_-,\lambda_-^1).\end{CD}
\]
Here the left vertical arrow is induced by \eqref{eqn:Liso2}.
Similarly to Lemma~\ref{lem:deform}(c), the latter map fits into a
commutative diagram
\[
\begin{CD}
ECH_*^{e^{-\varepsilon}L}(Y_-,\lambda_-) @>{\imath}>> ECH_*^L(Y_-,\lambda_-)\\
 @VV{\simeq}V @VV{\Psi_-}V\\
 ECH_*^{e^{-\varepsilon}L}(Y_-,\lambda_-^1) @>{\imath}>> ECH_*^L(Y_-,\lambda_-^1).
\end{CD}
\]
Combining the above two diagrams gives a commutative diagram
\[
\begin{CD}
ECH_*^{e^{-\varepsilon}L}(Y_-,\lambda_-) @>{s}>>
ECH_*^L(Y_-,e^{\varepsilon}\lambda_-)\\
@VV{\imath}V @VV{\Phi^L(X^-,\lambda_n')}V \\
ECH_*^{L}(Y_-,\lambda_-) @>{\Psi_-}>> ECH_*^L(Y_-,\lambda_-^1).
\end{CD}
\]
By \eqref{eqn:technical} again, the composition $\imath\circ s^{-1}$
in the above square is equal to $\Phi^L(X^-,\lambda)$.
\end{proof}

{\em Step 3.\/} We now construct a strongly cobordism-admissible
almost complex structure $J_n'$ for $(X,\lambda_n')$. On $X^0$ we take
$J_n'=J$.  To define $J_n'$ on $X^\pm$, write
$t=1\pm\varepsilon^{-1}s$, and let $R_\pm^t$ denote the Reeb vector
field associated to $\lambda_\pm^t$.  As a step towards defining
$J_n'$, define an almost complex structure $J_n''$ on $X^\pm$ by
\begin{equation}
\label{eqn:Jnprovisional}
J_n''\frac{\partial}{\partial s} =
R_\pm^t,\quad\quad J_n''|_{\Ker(\lambda_\pm^t)} =
{J_\pm^t}|_{\Ker(\lambda_\pm^t)}.
\end{equation}
It follows from \eqref{eqn:gray1} and \eqref{eqn:gray2} that
\begin{equation}
\label{eqn:jn0j}
\|J_n''-J\|_{C^0} \le cn^{-1}, \quad\quad \|J_n''\|_{C^1} \le c.
\end{equation}
It also follows from \eqref{eqn:gray1} that if $n$ is sufficiently
large, then $J_n''$ is $d\lambda_n'$-tame.  However $J_n''$ is not
necessarily $d\lambda_n'$-compatible, except near $s=0,\mp\varepsilon$.
We can measure the failure of compatibility by a $2$-form $\Omega$ on
$X^\pm$ defined by
\[
\Omega(v_1,v_2) \eqdef d\lambda_n'(v_1,J_n''v_2) - d\lambda_n'(v_2,J_n''v_1).
\]
By \eqref{eqn:gray1} and \eqref{eqn:gray2}, we have
\[
\|\Omega\|_{C^1} \le c n^{-1}.
\]
Now $\Omega$, regarded as a bundle map from the space of almost
complex structures on $X^\pm$ to the space of real $(1,1)$-forms, is
transverse to $0$ at each fiber.  It then follows from the inverse
function theorem and \eqref{eqn:jn0j} that if $n$ is sufficiently
large, then we can find a $d\lambda_n'$-compatible almost complex
structure $J_n'$, which agrees with $J_n''$ near
$s=0,\mp\varepsilon$, and which satisfies
\begin{equation}
\label{eqn:Jnest}
\|J_n'-J\|_{C^0}\le c n^{-1}, \quad\quad \|J_n'\|_{C^1}\le c.
\end{equation}

{\em Step 4.\/}
The last step in the construction is to replace $(\lambda_n',J_n')$ by
a pair $(\lambda_n,J_n)$ which is better behaved with respect to
product regions.  Let us call an embedded Reeb orbit $\gamma$ in $Y_+$
a ``product Reeb orbit (with respect to $(X,\lambda,J)$)'' if
$\gamma\subset \{s_+\}\times Z$ where $[s_-,s_+]\times Z$ is a product
region in $X$ (with respect to $\lambda$ and $J$).  Fix $\delta>0$
such that if $\gamma$ is a product Reeb orbit with action less than
$L$, then:
\begin{description}
\item{(i)} If $Z$ denotes the radius $\delta$
  neighborhood of $\gamma$, then $[s_-,s_+]\times Z$ is a product
  region in $X$ for some $s_-,s_+$.
\item{(ii)} $\gamma$ has distance at least $2\delta$ from all other
 Reeb orbits in $Y_+$ with action less than $L$.
\end{description}

\begin{lemma}
\label{lem:ljn}
If $n>\delta^{-1}$, then there is a $1$-form $\lambda_n$ on $X$ such
that $(X,\lambda_n)$ is an exact symplectic cobordism from
$(Y_+,\lambda_+^1)$ to $(Y_-,\lambda_-^1)$, and a strongly
cobordism-admissible almost complex structure $J_n$ on $\overline{X}$
for $\lambda_n$, with the following properties:
\begin{description}
\item{(a)} The exact symplectic cobordisms $(X,\lambda_n)$ and
  $(X,\lambda_n')$ from $(Y_+,\lambda_+^1)$ to $(Y_-,\lambda_-^1)$ are
  homotopic in the sense of \S\ref{sec:SMT}.
\item{(b)} If $\gamma$ is a product Reeb orbit in $Y_+$ of action less
  than $L$ with respect to $(X,\lambda,J)$, then the radius $\delta$
  neighborhood of $\gamma$ is contained in a product region for
  $(X,\lambda_n,J_n)$.
\item{(c)} $(\lambda_n,J_n)$ agrees with $(\lambda_n',J_n')$ on
  $\overline{X}\setminus X$, and on the complement in $X$ of the
  product regions $[s_-,s_+]\times Z$ where $Z$ is the radius
  $1/n$ neighborhood of a product Reeb orbit in $Y_+$ of action less
  than $L$.
\item{(d)}
$\|J_n-J\|_{C^0}\le cn^{-1}$ and $\|J_n\|_{C^1}\le c$.
\end{description}
\end{lemma}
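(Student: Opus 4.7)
The plan is to locally modify $(\lambda_n',J_n')$ near each product Reeb orbit $\gamma$ of $\lambda_+$ of action less than $L$, exploiting the compatibility imposed in Step~1 to restore a genuine product-region structure. Let $[s_-,s_+]\times Z_\delta$ be the ambient product region of $(X,\lambda,J)$ containing $\gamma$, with base contact form $\lambda_0$ on $Z_\delta$ and base almost complex structure $J_0$ on $\Ker(\lambda_0)$. The compatibility condition from Step~1 ensures that on $Z_{1/n}\subset U_\pm$ the homotopies $(\lambda_\pm^t,J_\pm^t)$ arise from a single deformation $(\lambda_0^t,J_0^t)$ of $(\lambda_0,J_0)$, trivial outside $U_\pm$. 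A direct computation then shows that on $[s_-,s_+]\times Z_{1/n}$ one has $\lambda_n' = e^s\lambda_0^{\tau(s)}$, where $\tau\colon[s_-,s_+]\to[0,1]$ equals $1$ at the endpoints and vanishes on the middle interval $[s_-+\varepsilon,s_+-\varepsilon]$; the $s$-dependence of $\tau$ is exactly what prevents $\lambda_n'$ from defining a product region.

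On each sub-region $[s_-,s_+]\times Z_{1/n}$ I would define $\lambda_n \eqdef e^s\lambda_0^1$ and take $J_n$ to be the product-region almost complex structure determined by $\lambda_0^1$, $J_0^1$, and the same rescaling function $f(s)$ that already appears in $J$. Outside all such sub-regions I set $(\lambda_n,J_n)\eqdef(\lambda_n',J_n')$; boundary matching along the lateral walls $\partial Z_{1/n}$ is automatic because $(\lambda_0^t,J_0^t)=(\lambda_0,J_0)$ outside $U_\pm$. Condition (b) then holds because the enlarged region $[s_-,s_+]\times Z_\delta$ becomes a product region for $(X,\lambda_n,J_n)$ with base $\lambda_0^1$ (extended by $\lambda_0^1=\lambda_0$ on $Z_\delta\setminus Z_{1/n}$). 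Condition (c) is immediate from the construction, and condition (d) follows from \eqref{eqn:Jnest} outside the modification and from Lemma~\ref{lem:gray} applied to $(\lambda_0^1,J_0^1)$ inside.

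The most delicate point, which I expect to be the main obstacle, is condition (a). I propose the homotopy $\lambda_n^t \eqdef e^s\lambda_0^{(1-t)\tau(s)+t}$ on each $[s_-,s_+]\times Z_{1/n}$, equal to $\lambda_n'$ elsewhere; this interpolates between $\lambda_n'$ and $\lambda_n$ while preserving the boundary values $\lambda_\pm^1$ on $Y_\pm$ for every $t$. To check that $d\lambda_n^t$ is symplectic, set $\sigma(s) \eqdef (1-t)\tau(s)+t$ and compute
\[
(d\lambda_n^t)^2 = 2e^{2s}\,ds\wedge\lambda_0^\sigma\wedge d_Z\lambda_0^\sigma + 2e^{2s}\,\sigma'(s)\,ds\wedge\tfrac{\partial\lambda_0^\sigma}{\partial\sigma}\wedge d_Z\lambda_0^\sigma.
\]
The first summand is a nowhere-vanishing volume form because $\lambda_0^\sigma$ is a contact form on $Z_{1/n}$ for every $\sigma\in[0,1]$. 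By Lemma~\ref{lem:gray} (whose estimates descend from $\partial\lambda_\pm^t/\partial t$ to $\partial\lambda_0^t/\partial t$ via the identifications $\lambda_\pm^t = e^{s_\pm}\lambda_0^t$), the factor $\partial\lambda_0^\sigma/\partial\sigma$ has $C^0$-norm $O(n^{-1})$, while $\sigma'(s)$ is bounded independently of $n$. Hence for $n$ sufficiently large the second summand is a small perturbation of the first, $d\lambda_n^t$ remains nondegenerate for all $t\in[0,1]$, and condition (a) is verified.
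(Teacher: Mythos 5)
Your proposal is correct and follows essentially the same route as the paper: on each product region you replace $\lambda_n'$ by $e^s\lambda_0^1$ and take $J_n$ with $J_n(\partial/\partial s)=fR_0^1$ and $J_n|_{\Ker(\lambda_0^1)}=J_\pm^1|_{\Ker(\lambda_0^1)}$, with (b)--(c) by construction and (d) from \eqref{eqn:gray1}, \eqref{eqn:gray2}, \eqref{eqn:Jnest}. Your explicit interpolation $e^s\lambda_0^{(1-t)\tau(s)+t}$ with the nondegeneracy estimate just spells out the homotopy for (a) that the paper asserts holds ``by construction,'' which is a welcome extra detail rather than a divergence.
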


\begin{proof}
  Let $\gamma$ be a product Reeb orbit in $Y_+$ with action less than
  $L$, and let $[s_-,s_+]\times Z$ be the corresponding product region
  as in (i) above.
By the construction of $\lambda_\pm^t$, the $1$-forms $e^{-s_+}\lambda_+^t$ and
$e^{-s_-}\lambda_-^t$ agree on $Z$, so let us denote this $1$-form
simply by $\lambda_0^t$.  Now on $[s_-,s_+]\times Z$, replace
$\lambda_n'$ by
\[
\lambda_n \eqdef e^s\lambda_0^1.
\]

To construct $J_n$ on $[s_-,s_+]\times Z$, recall from the
construction of $J_\pm^1$ that the restrictions of $J_+^1$ and $J_-^1$
to $\Ker(\lambda_0^1)$ agree.  Let $R_0^t$ denote the
Reeb vector field associated to $\lambda_0^t$, and recall from the
definition of ``product region'' that on this region,
$J(\partial/\partial s)=fR_0^0$ where $f$ is some function of $s$
which, by the definition of ``strongly cobordism-admissible'', equals
$e^s$ near $s=s_\pm$.
Now define $J_n$ on this region by
\[
J_n\frac{\partial}{\partial s} =
fR_0^{1},\quad\quad {J_n}|_{\Ker(\lambda_0^1)} =
{J_\pm^1}|_{\Ker(\lambda_0^{1})}.
\]

Let $(\lambda_n,J_n)$ be obtained by modifying $(\lambda_n',J_n')$ as
above for each product Reeb orbit of action less than $L$.  These
satisfy properties (a), (b), and (c) by construction, and property (d)
follows from \eqref{eqn:gray1}, \eqref{eqn:gray2}, and
\eqref{eqn:Jnest}.
\end{proof}

We now state a lemma implying that if the hypothesis of (i) or (ii) in
the Holomorphic Curves axiom holds for $(X,\lambda,J)$, then it also
holds for $(X,\lambda_n,J_n)$ when $n$ is sufficiently large.
Consider pairs $(\Theta_+,\Theta_-)$ where $\Theta_\pm$ is an ECH
generator for $\lambda_\pm$ of action less than $L$.  Recall from
Definition~\ref{def:lfa} that $\Theta_\pm$ corresponds to an ECH
generator for $\lambda_\pm^1$ of action less than $L$, and we denote
this also by $\Theta_\pm$.  Let $A$ denote the set of pairs
$(\Theta_+,\Theta_-)$ for which there exists no broken $J$-holomorphic
curve from $\Theta_+$ to $\Theta_-$.  Let $A_n$ denote the set of
pairs $(\Theta_+,\Theta_-)$ for which there exists no broken
$J_n$-holomorphic curve from $\Theta_+$ to $\Theta_-$.
Let $B$ denote the set of pairs $(\Theta_+,\Theta_-)$ for which the
only broken $J$-holomorphic curve from $\Theta_+$ to $\Theta_-$ is a
union of covers of product cylinders.  Let $B_n$ denote the set of
pairs $(\Theta_+,\Theta_-)$ for which the only broken
$J_n$-holomorphic curve from $\Theta_+$ to $\Theta_-$ is a union of
covers of product cylinders.

\begin{lemma}
\label{lem:gc1}
If $n$ is sufficiently large, then $A\subset A_n$ and $B\subset B_n$.
\end{lemma}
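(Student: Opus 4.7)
The plan is to prove both inclusions simultaneously by a Gromov/SFT compactness argument. Since the set of pairs $(\Theta_+,\Theta_-)$ of ECH generators of action less than $L$ is finite, if the lemma fails then pigeonhole extracts a single pair $(\Theta_+,\Theta_-)$, a subsequence $n_k\to\infty$, and for each $k$ a broken $J_{n_k}$-holomorphic curve $C_{n_k}$ from $\Theta_+$ to $\Theta_-$ such that either (i) $(\Theta_+,\Theta_-)\in A$, or (ii) $(\Theta_+,\Theta_-)\in B$ and no $C_{n_k}$ is a union of covers of product cylinders.

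Next I would run a compactness argument. The actions $\mc{A}(\Theta_\pm)$ are bounded by $L$, so the energies of the $C_{n_k}$ are uniformly bounded. By Lemma~\ref{lem:ljn}(d), $\|J_{n_k}-J\|_{C^0}\le cn_k^{-1}$ with $\|J_{n_k}\|_{C^1}\le c$, and integrating Lemma~\ref{lem:gray} yields analogous convergence of the symplectization data $(\lambda_\pm^1,J_\pm^1)$ to $(\lambda_\pm,J_\pm)$. Arzel\`a--Ascoli promotes a subsequence of the $J_{n_k}$ to $C^\alpha$-convergence for $\alpha<1$, which is enough to invoke the standard SFT compactness theorem together with elliptic regularity. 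This produces a broken $J$-holomorphic curve $C_\infty$ in $\overline{X}$ from $\Theta_+$ to $\Theta_-$ as the SFT limit of a subsequence of $\{C_{n_k}\}$. In case (i) the mere existence of $C_\infty$ contradicts $(\Theta_+,\Theta_-)\in A$, which establishes $A\subset A_n$ for large $n$.

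In case (ii) the hypothesis $(\Theta_+,\Theta_-)\in B$ forces $C_\infty$ to be a union of covers of product cylinders $C_\Theta$ for $(X,\lambda,J)$; in particular each Reeb orbit $\gamma_i$ of $\Theta_+$ is a product Reeb orbit of action less than $L$ for $(X,\lambda,J)$. Fix $\delta>0$ as in \S\ref{sec:hcg}. By Lemma~\ref{lem:ljn}(b), for $n_k>\delta^{-1}$ the radius $\delta$ neighborhood of each $\gamma_i$ is contained in a product region $[s_-^i,s_+^i]\times Z^i$ for $(X,\lambda_{n_k},J_{n_k})$, and under the canonical identification of Reeb orbits (via the $L$-flat approximation) the subset $C_\Theta$ of $\overline{X}$ is itself a union of covers of product cylinders for $(X,\lambda_{n_k},J_{n_k})$. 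Gromov convergence $C_{n_k}\to C_\Theta$ then confines $C_{n_k}\cap X$ to the disjoint union of these product regions once $k$ is large. Inside each such region the contact form $\lambda_0^i$ from Definition~\ref{def:PR} satisfies $d\lambda_0^i\ge 0$ pointwise on any $J_{n_k}$-holomorphic curve (by $J_{n_k}|_{\Ker\lambda_0^i}$-compatibility), and a Stokes/action-balance computation, combined with the asymptotic analysis of the ends of $C_{n_k}$ at $\Theta_\pm$ in the cylindrical ends of $\overline{X}$, forces the integral of $d\lambda_0^i$ over the piece of $C_{n_k}$ in this region to vanish. Pointwise vanishing together with the standard characterization of the equality case in the symplectization monotonicity inequality makes $C_{n_k}$ a union of covers of product cylinders, contradicting the choice of $C_{n_k}$.

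The main obstacle I foresee is the compactness step: the $J_{n_k}$ converge to $J$ only in $C^0$ (with $C^1$ bounds), whereas the standard SFT compactness theorem is usually stated for $C^\infty$-convergent almost complex structures, so one must upgrade the convergence via Arzel\`a--Ascoli and elliptic bootstrapping, and must verify that no energy escapes along the cylindrical ends or onto ghost bubbles as $n_k\to\infty$. The framework of \cite[\S8(b)]{e4} used in the proof of Lemma~\ref{lem:deform}(d) supplies essentially the same analytic package, and the present holomorphic-to-holomorphic convergence is strictly easier than the Seiberg-Witten-to-holomorphic convergence treated there; the remaining work is mostly bookkeeping to identify the product region pieces for $(X,\lambda_{n_k},J_{n_k})$ with those for $(X,\lambda,J)$ uniformly in $k$.
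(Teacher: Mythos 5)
Your overall architecture --- pigeonhole over the finitely many pairs $(\Theta_+,\Theta_-)$, a compactness step producing a limiting broken $J$-holomorphic curve, and a Stokes/action-balance argument with $d\lambda_0\ge 0$ to force the product-cylinder conclusion --- is the same as the paper's (the last part is essentially the paper's proof of Lemma~\ref{lem:gc2}(b)). The gap is the compactness step, which you flag as the main obstacle and then dispatch with ``Arzel\`a--Ascoli gives $C^\alpha$-convergence of the $J_{n_k}$, which is enough for the standard SFT compactness theorem plus elliptic regularity.'' This is exactly the step the paper says cannot be done directly: Lemma~\ref{lem:ljn}(d) gives only $\|J_n-J\|_{C^0}\le cn^{-1}$ together with a uniform $C^1$ bound, and the set where $J_n\ne J$ is a radius-$1/n$ neighborhood of $\Sigma$, the union of the (half-)cylinders over precisely those Reeb orbits to which the ends of the curves are asymptotic. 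So the failure of $C^1$-convergence is concentrated exactly where the asymptotic analysis at the ends must be carried out and where pieces of the curves $C_n$ can accumulate; interpolating up to $C^{0,\alpha}$-convergence does not restore the hypotheses of the standard Gromov/SFT compactness theorems (which require at least $C^1$, and in the SFT formulation $C^\infty$, convergence of the almost complex structures), and the appeal to the Seiberg--Witten-to-holomorphic scheme of \cite[\S8(b)]{e4} is not a substitute, since that argument does not treat sequences of $J_n$-holomorphic curves with non-$C^1$-convergent $J_n$ either.

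What the paper does instead (\S\ref{sec:gc}) is abandon map-convergence and work with currents: uniform local area bounds from Stokes' theorem, Federer's compactness theorem, and a monotonicity estimate (Lemma~\ref{lem:monotonicity}, which needs only uniform tameness of $J_n$ by $d\lambda$, via \cite{ye}) produce a limit current $C$ together with pointwise convergence; holomorphicity of $C$ is then established through Taubes' positive cohomology assignment criterion (Definition~\ref{def:pca}), in which every axiom is straightforward except local intersection positivity near $\Sigma$; and that remaining point is exactly the content of Lemma~\ref{lem:g}, a fixed-point argument in Morrey-type spaces that perturbs families of $J$-holomorphic disks to $J_n$-holomorphic disks with uniform $C^0$ control using only the bounds in Lemma~\ref{lem:ljn}(d). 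Your proposal contains nothing playing the role of this lemma, so the convergence statement underlying both $A\subset A_n$ and the input to your case (ii) is not established; the rest of case (ii) (confinement to product regions via Lemma~\ref{lem:ljn}(b) and the vanishing of the $d\lambda_0$ integral) is correct and matches the paper.
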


Lemma~\ref{lem:gc1} is proved by a Gromov compactness argument in
\S\ref{sec:gc} below.  Assuming this, we can now give:

\begin{proof}[Proof of the Holomorphic Curves axiom (strongly
  cobordism-admissible case)]
  Choose $n$ sufficiently large as in Lemmas~\ref{lem:ljn} and
  \ref{lem:gc1}.  Define a chain map
\[
\hat{\Phi}: ECC_*^L(Y_+,\lambda_+;J_+) \longrightarrow
ECC_*^L(Y_-,\lambda_-;J_-)
\]
as the composition
\[
ECC_*^L(Y_+,\lambda_+;J_+) \to ECC_*^L(Y_+,\lambda_+^1;J_+^1) \to
ECC_*^L(Y_-,\lambda_-^1;J_-^1) \to ECC_*^L(Y_-,\lambda_-;J_-).
\]
Here the first map is the canonical isomorphism of chain complexes
\eqref{eqn:Liso2} for $Y_+$, the second map is the chain map
\eqref{eqn:flf} for the cobordism $(X,\lambda_n,J_n)$, and the third
map is the inverse of the canonical isomorphism of chain complexes
\eqref{eqn:Liso2} for $Y_-$.  By Lemmas~\ref{lem:PhiXn} and
\ref{lem:ljn}(a) and the Homotopy Invariance axiom, the chain map
$\hat{\Phi}$ induces the map $\Phi^L(X,\lambda)$ on homology.

To prove that $\hat{\Phi}$ fulfills conditions (i) and (ii) in the Holomorphic
Curves axiom, we must show that if $(\Theta_+,\Theta_-)\in A$ then
$\langle \hat{\Phi}\Theta_+,\Theta_-\rangle=0$, and if $(\Theta_+,\Theta_-)\in
B$ then $\langle \hat{\Phi}\Theta_+,\Theta_-\rangle = 1$.  If
$(\Theta_+,\Theta_-)\in A$ (resp.\ $B$), then by Lemma~\ref{lem:gc1} we
have $(\Theta_+,\Theta_-)\in A_n$ (resp.\ $B_n$), and by
Proposition~\ref{prop:HCLF} applied to $(X,\lambda_n,J_n)$ we have
$\langle \hat{\Phi}\Theta_+,\Theta_-\rangle =0$ (resp.\ $1$).
\end{proof}

\subsection{Gromov compactness}
\label{sec:gc}

We now prove Lemma~\ref{lem:gc1}.  Continuing with the setting of
\S\ref{sec:hcg}, it is enough to show the following:

\begin{lemma}
\label{lem:gc2}
Let $\Theta_\pm$ be ECH generators for $\lambda_\pm$ of action less
than $L$.  Suppose that $(n_1,n_2,\ldots)$ is an increasing infinite
sequence of positive integers such that for each
$n\in\{n_1,n_2,\ldots\}$ there exists a broken $J_n$-holomorphic curve
$u_n\in\overline{\mc{M}^{J_n}(\Theta_+,\Theta_-)}$. Then:
\begin{description}
\item{(a)} After passing to a subsequence, the broken
  $J_n$-holomorphic curves $u_n$ converge (in the sense of
  \cite[\S9]{pfh2}, using currents instead of maps) to a broken
  $J$-holomorphic curve $u\in\overline{\mc{M}^J(\Theta_+,\Theta_-)}$.
\item{(b)} If $u$ is a union of covers of product cylinders, then so is
  $u_n$ for all sufficiently large $n$.
\end{description}
\end{lemma}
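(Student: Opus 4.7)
The plan is to run a standard SFT-style Gromov compactness argument on the sequence $\{u_n\}$, taking advantage of the uniform bounds $\|J_n-J\|_{C^0}\le cn^{-1}$ and $\|J_n\|_{C^1}\le c$ provided by Lemma~\ref{lem:ljn}(d), together with the analogous bounds for the symplectization almost complex structures $J_\pm^{1,n}$ that follow from Lemma~\ref{lem:gray}. Each $u_n$ has Hofer energy bounded in terms of $\mc{A}(\Theta_\pm)<L$. The forms $d\lambda_n$ and $d\lambda_\pm^{1,n}$ are exact on each symplectic piece, so sphere bubbles are excluded. The asymptotic Reeb orbits of $\lambda_n$ of action less than $L$ coincide with those of $\lambda$ via the canonical identification from Definition~\ref{def:lfa}(ii), so the combinatorial asymptotic data $\Theta_\pm$ does not depend on $n$. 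Elliptic bootstrapping for $C^0$-convergent, $C^1$-bounded almost complex structures then yields, after passing to a subsequence, convergence of the $u_n$ as broken holomorphic currents in the sense of \cite[\S9]{pfh2} to a limit $u\in\overline{\mc{M}^J(\Theta_+,\Theta_-)}$; this establishes part (a).

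For part (b), suppose the limit $u$ is a union of covers of product cylinders. Then the support of $u$ in $\overline{X}$ is a disjoint union of vertical cylinders $\R\times\gamma_i$, where each $\gamma_i$ is an embedded product Reeb orbit of $(X,\lambda,J)$ appearing in $\Theta_+$ (equivalently in $\Theta_-$). Current convergence implies Hausdorff convergence of supports, so for $n$ sufficiently large along the subsequence, every level of $u_n$ has support inside the radius-$\delta/2$ neighborhood of $\bigcup_i(\R\times\gamma_i)$. Over the cobordism $X$ this neighborhood lies inside a product region for $(X,\lambda_n,J_n)$ by Lemma~\ref{lem:ljn}(b); over the symplectization ends it lies in a region where $J_n$ agrees with the symplectization-admissible $J_\pm^{1,n}$ for $\lambda_\pm^{1,n}$, and (for $n>2/\delta$) outside the radius-$1/n$ neighborhoods where the preferred approximation differs from $(\lambda_\pm,J_\pm)$. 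Since $\lambda_0^1$ (respectively $\lambda_\pm^{1,n}$) is nondegenerate and $\gamma_i$ is the only embedded Reeb orbit in its radius-$\delta$ neighborhood, a direct area argument — each component of $u_n$ in this neighborhood has positive $d\lambda_n$-area equal to $\int\lambda_n$ around its boundary, and the only Reeb orbit available for such boundary is $\gamma_i$ — forces each such component to be a cover of $\R\times\gamma_i$. Since covers of $\R\times\gamma_i$ in the symplectization ends are $\R$-invariant and thus excluded from the definition of a broken curve, $u_n$ consists of a single level, which is a union of covers of product cylinders in $X$ for $(\lambda_n,J_n)$.

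The main obstacle is the compactness step in part (a), where the almost complex structure varies with $n$; however, the uniform $C^1$ bound on $J_n$ (and on $J_\pm^{1,n}$) makes the necessary interior elliptic estimates $n$-uniform, so the argument reduces to the standard case treated in \cite{pfh2} and in the SFT compactness literature. A minor subtlety, addressed above, is controlling the symplectization levels in part (b): an a priori non-trivial level in $\R\times Y_\pm$ near $\R\times\gamma_i$ must by the same local area argument be a cover of the trivial cylinder, hence $\R$-invariant, hence not a legitimate level.
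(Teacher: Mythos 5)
Your part (b) is essentially the paper's argument (use the convergence from (a) plus Lemma~\ref{lem:ljn}(b) to confine the levels of $u_n$ to product regions and to the ends, then an exactness/Stokes argument to force symplectization levels to be $\R$-invariant, hence nonexistent, and the remaining level to be a union of covers of product cylinders). The problem is part (a), where you assert that the uniform bounds $\|J_n-J\|_{C^0}\le cn^{-1}$, $\|J_n\|_{C^1}\le c$ make ``elliptic bootstrapping'' $n$-uniform so that the compactness ``reduces to the standard case.'' This is exactly the point the paper flags as \emph{not} following from standard Gromov compactness. There is no genus or index bound on the curves $u_n$ (the hypothesis is bare existence of some broken $J_n$-holomorphic curve), so the convergence cannot be organized as maps from domains of bounded complexity; it must be extracted as convergence of currents (Federer compactness, after an area bound from Stokes). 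But then there is no map to bootstrap: one only knows the limit is a rectifiable current, and with $J_n\to J$ merely in $C^0$ (a $C^1$ bound cannot be upgraded to $C^1$ convergence) it is not automatic that a limit of $J_n$-holomorphic currents is $J$-holomorphic. Away from the cylinders $\Sigma$ over the relevant Reeb orbits this is harmless, since there $J_n=J$ for large $n$; the difficulty is precisely along $\Sigma$, where $J_n\neq J$ for every $n$ (on the shrinking $1/n$-neighborhoods) and where the ends of the limit curve necessarily live.

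The paper's proof fills this by showing the limit current admits a ``positive cohomology assignment'' in the sense of Taubes: $I(\sigma)$ is defined as the eventual intersection number of a test disk with $C_n$, and the delicate point is positivity (condition (e)) for $J$-holomorphic disks meeting $\Sigma$. That requires producing, for large $n$, families of embedded $J_n$-holomorphic disks $C^0$-close to a given family of $J$-holomorphic disks (Lemma~\ref{lem:g}), which is proved by a fixed-point argument in a Morrey-type Banach space, crucially exploiting that the inhomogeneous term $\frak{r}_0$ is supported in an arbitrarily small disk when $n$ is large; a monotonicity estimate (via Ye's theorem, using that $J_n$ is uniformly tamed by $d\lambda$) is also needed to promote current convergence to pointwise convergence of supports. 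None of this is present in, or replaceable by, the interior-elliptic-estimate argument you sketch, and the asymptotic bookkeeping needed to conclude $u\in\overline{\mc{M}^J(\Theta_+,\Theta_-)}$ (via the arguments of \cite[Lem.\ 9.8]{pfh2}) is also only gestured at. So part (a) as proposed has a genuine gap at its central step.
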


To clarify assertion (a), note that by construction, the Liouville
forms $\lambda_n$ and $\lambda$ on $X$ have the same Liouville vector
field near $\partial X$, and so there is a canonical diffeomorphism
between the completions \eqref{eqn:completion} of $(X,\lambda_n)$ and
$(X,\lambda)$, which is the identity on each of the three subsets in
\eqref{eqn:completion}.

To prove Lemma~\ref{lem:gc2}, note first that part (b) follows quickly
from part (a).  The reason is that if $u$ is a union of covers of
product cylinders, then by Lemma~\ref{lem:ljn}(b), if $n$ is
sufficiently large then each level of $u_n$ is either (i) a
$J_n$-holomorphic curve in $\overline{X}$ from $\Theta_+$ to
$\Theta_-$ contained in a product region for $(X,\lambda_n,J_n)$, or
(ii) a $J_\pm^1$-holomorphic curve in $\R\times Y_\pm$ from
$\Theta_\pm$ to itself.  In case (ii), since $d\lambda_\pm^1$ is
pointwise nonnegative on any $J_\pm^1$-holomorphic curve, and zero
only where the holomorphic curve is tangent to $\R$ cross the Reeb
flow, it follows by Stokes' theorem that any level of type (ii) maps
to a union of $\R$-invariant cylinders, and in particular does not
exist by the nontriviality condition in our definition of ``broken
holomorphic curve''.  So there is only a level of type (i), and the
same argument shows that this maps to a union of product cylinders.

To prove Lemma~\ref{lem:gc2}(a), first note that the arguments for
\cite[Lem.\ 9.8]{pfh2} can be used with only minor notational changes
to see that it is enough to prove the following assertion about
unbroken holomorphic curves:

\begin{lemma}
\label{lem:gc3}
Let $\Theta_\pm$ be ECH generators for $\lambda_\pm$ of action less
than $L$.  Suppose that $(n_1,n_2\ldots)$ is an increasing sequence of
positive integers such that for each $n\in\{n_1,n_2,\ldots\}$ there is
a $J_n$-holomorphic curve
$C_n\in\mc{M}^{J_n}(\Theta_+,\Theta_-)$. Then:
\begin{description}
\item{(a)} After passing to a subsequence, the $J_n$-holomorphic
  curves $C_n$ converge as currents on $\overline{X}$ to a
  $J$-holomorphic curve $C\in\mc{M}^J(\Theta_+',\Theta_-')$ for some
  orbit sets $\Theta_\pm'$ for $\lambda_\pm$.
\item{(b)} Let $s_n$ be a sequence of positive real numbers with
  $\lim_{n\to\infty}s_n=\infty$.  Let
  $C_n'\subset[-s_n,s_n]\times Y_+$ denote the translate by $-s_n$ of
  the intersection of $C_n$ with $[0,2s_n]\times
  Y_+\subset\overline{X}$.  Then after passing to a subsequence, the
  curves $C_n'$ converge as a current to a $J_+$-holomorphic curve in $\R\times
  Y_+$ between some orbit sets for $\lambda_+$.
\item{(c)} Likewise, let $s_n$ be a sequence of negative real numbers
  with $\lim_{n\to\infty}s_n=-\infty$.  Let
  $C_n'\subset[s_n,-s_n]\times Y_-$ denote the translate by $-s_n$ of
  the intersection of $C_n$ with $[2s_n,0]\times
  Y_-\subset\overline{X}$.  Then after passing to a subsequence, the
  curves $C_n'$ converge as a current to a $J_-$-holomorphic curve in
  $\R\times Y_-$ between some orbit sets for $\lambda_-$.
\end{description}
\end{lemma}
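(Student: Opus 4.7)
\medskip

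\noindent\textbf{Proof proposal for Lemma~\ref{lem:gc3}.}

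The plan is a standard Gromov compactness argument for currents, in the style of \cite[\S9]{pfh2}, adapted to the fact that the almost complex structure varies with $n$. The crucial inputs are a uniform area bound and a monotonicity inequality with constants independent of $n$; once these are in hand, extracting a subsequence and identifying the limit as $J$-holomorphic follows routinely.

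First I would establish a uniform bound on the symplectic area of $C_n$. Since each $\lambda_n$ agrees with $\lambda$ on $X^0$, with $e^s\lambda_\pm^1$ on $X^\pm$, and with the translation-invariant forms $e^s\lambda_\pm^1$ on the ends, the $1$-form $\widetilde{\lambda}_n$ analogous to \eqref{eqn:widetildelambda} is a primitive for a symplectic form $\widetilde{\omega}_n$ on $\overline{X}$ that tames $J_n$. By Stokes' theorem applied to the compactification of $C_n$ (whose boundary components are covers of Reeb orbits in $\Theta_\pm$), the $\widetilde{\omega}_n$-area of $C_n$ is bounded by a quantity of the form $c_1\mc{A}(\Theta_+) + c_2$, uniformly in $n$; by Lemma~\ref{lem:gray} these constants are uniform. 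Since $\|J_n - J\|_{C^0}\le cn^{-1}$ and $\|J_n\|_{C^1}\le c$ by Lemma~\ref{lem:ljn}(d), one also has uniform bounds on the $\widetilde{\omega}$-area of $C_n$, where $\widetilde{\omega}$ is the analogous form for $\lambda$.

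Next I would establish the monotonicity inequality for $J_n$-holomorphic curves with constants independent of $n$. The usual proof of monotonicity (cf.\ \cite[Lem.\ 9.4]{pfh2}) only uses the $\widetilde{\omega}$-tameness of $J_n$ and $C^1$ bounds on $J_n$, both of which hold uniformly in $n$ by Lemma~\ref{lem:ljn}(d) and the fact that $J_n$ is $\widetilde{\omega}_n$-compatible with $\widetilde{\omega}_n\to \widetilde{\omega}$ in $C^0$. Monotonicity combined with the global area bound gives, for each compact set $K\subset\overline{X}$, a uniform-in-$n$ bound on the number of points of $C_n\cap K$ (counted with multiplicity in the $C^1$ norm of the current). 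Using a diagonal/exhaustion argument over an increasing sequence of compact sets exhausting $\overline{X}$, I can extract a subsequence of $C_n$ that converges as a current to a closed current $C$ on $\overline{X}$ with locally finite mass. Because the $C_n$ are $J_n$-holomorphic and $J_n\to J$ in $C^0$, the limit $C$ is a $J$-holomorphic current, and standard regularity (as in \cite[\S9]{pfh2}) shows that its support is a $J$-holomorphic subvariety. To identify the ends of $C$ with orbit sets $\Theta_\pm'$ for $\lambda_\pm$, one uses the long cylinder lemma at the ends: the curves $C_n$ have uniformly bounded $d\widetilde{\lambda}$-area at large $|s|$, and $\widetilde{\lambda}_n=e^s\lambda_\pm^1$ on the ends with $\lambda_\pm^1$ converging uniformly to $\lambda_\pm$ (Lemma~\ref{lem:gray}), so each end of $C$ is asymptotic to a cover of a Reeb orbit for $\lambda_\pm$, giving part (a).

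For parts (b) and (c), one applies the same monotonicity/area argument to the translated curves $C_n'$. On $[0,\infty)\times Y_+$ the almost complex structure $J_n$ equals $J_+^{1,n}$ (the $L$-flat approximation at step $n$), which is $\R$-invariant, so $C_n'$ is $J_+^{1,n}$-holomorphic on $[-s_n,s_n]\times Y_+$. Since $\|J_+^{1,n}-J_+\|_{C^0}\le cn^{-1}$ and the domains exhaust $\R\times Y_+$, the same current-compactness extracts a subsequence converging to a $J_+$-holomorphic subvariety of $\R\times Y_+$, whose ends are asymptotic to Reeb orbit sets for $\lambda_+$ by the same long-cylinder argument; part (c) is symmetric. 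The main obstacle will be the monotonicity step: one must verify that the standard proof indeed yields constants depending only on the $C^1$ size of the almost complex structure and the taming form, so that the uniform bound $\|J_n\|_{C^1}\le c$ is enough. Once that is checked, the rest of the argument is a direct transcription of the proofs in \cite[\S9]{pfh2} and the appendix arguments in \cite[\S9.1]{pfh2}.
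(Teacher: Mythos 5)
Your area bound via Stokes' theorem and your uniform monotonicity inequality (via uniform taming of $J_n$ by $d\lambda$ and a $C^1$ bound, i.e.\ \cite[Thm.\ 2.1]{ye}) match the paper's Step 1, and extracting a weakly convergent subsequence of currents is the same. But there is a genuine gap at the sentence ``Because the $C_n$ are $J_n$-holomorphic and $J_n\to J$ in $C^0$, the limit $C$ is a $J$-holomorphic current, and standard regularity \ldots shows that its support is a $J$-holomorphic subvariety.'' This is exactly the step the paper flags as \emph{not} standard: since $\{J_n\}$ converges to $J$ only in $C^0$ (with merely a $C^1$ bound), one cannot invoke the usual Gromov compactness or regularity arguments, which are stated for a fixed $J$ or for $C^1$-convergent families. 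Away from the set $\Sigma$ of (half-)cylinders over short Reeb orbits the issue is vacuous because $J_n=J$ there for large $n$; the whole difficulty is concentrated on $\Sigma$, where the limit current could a priori fail to be a $J$-holomorphic subvariety. You have also misidentified the main obstacle: it is not monotonicity (which is a three-line consequence of uniform taming) but this regularity step.

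The paper resolves it by constructing a positive cohomology assignment in the sense of Taubes (Definition~\ref{def:pca}), defining $I(\sigma)$ as the stable intersection number of an admissible disk with $C_n$, and then verifying the positivity axiom (e) for $J$-holomorphic disks meeting $C$ on $\Sigma$. That verification is the technical heart of the argument: Lemma~\ref{lem:g} perturbs a family of $J$-holomorphic disks to nearby embedded disks with $J_n$-holomorphic image, by solving a perturbed Cauchy--Riemann equation whose coefficients are only $C^0$-small with bounded first derivatives, via a fixed-point argument in a Morrey-type Banach space. Without some substitute for this construction (or another argument establishing that the limit is a $J$-holomorphic subvariety near $\Sigma$, with the correct asymptotics so that $C\in\mc{M}^J(\Theta_+',\Theta_-')$), your proof of part (a) is incomplete; parts (b) and (c) inherit the same gap.
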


Note that this lemma does not directly follow from standard Gromov
compactness results, because the sequence $\{J_n\}$ does not converge
to $J$ in $C^1$; we just have $C^0$ convergence and a $C^1$ bound from
Lemma~\ref{lem:ljn}(d).

\begin{proof}[Proof of Lemma~\ref{lem:gc3}.]  We will just prove part
  (a), as the proofs of parts (b) and (c) are essentially the same.
  The arugment has three steps.
 
  {\em Step 1.\/}  We first obtain convergence to some current (which
  we will later show is $J$-holomorphic).

  Let $\Sigma\subset\overline{X}$ denote the union of the product
  cylinders $\R\times\gamma$ where $\gamma$ is a product Reeb orbit of
  length less than $L$, the half-cylinders
  $[-\varepsilon,\infty)\times\gamma_+$ where $\gamma_+$ is a Reeb
  orbit of $\lambda_+$ of length less than $L$, and the half-cylinders
  $(-\infty,\varepsilon]\times\gamma_-$ where $\gamma_-$ is a Reeb
  orbit of $\lambda_-$ of action less than $L$.  Let
  $\Sigma_{1/n}\subset\overline{X}$ denote the radius $1/n$
  neighborhood of $\Sigma$.  By construction, $(\lambda_n,J_n)$ agrees
  with $(\lambda,J)$ on $\overline{X}\setminus\Sigma_{1/n}$.

Observe that by Stokes' theorem,
\begin{gather*}
\int_{C_n\cap((-\infty,0]\times Y_-)}d\lambda_-^1 + \int_{C_n\cap
  X}d\lambda_n + \int_{C_n\cap([0,\infty)\times Y_+)}d\lambda_+^1
=\quad\quad\\
\quad\quad\quad\quad\quad\quad\quad\quad\quad\quad=
\int_{\Theta_+}\lambda_+^1 - \int_{\Theta_-}\lambda_-^1 \le L.
\end{gather*}
It follows from this that for any compact set $K\subset \overline{X}$,
the area of $C_n\cap K$ has an $n$-independent upper bound.
It now follows from the compactness theorem for currents, see
\cite[4.2.17]{federer} or \cite[Thm.\ 5.5]{morgan}, that we can pass
to a subsequence so that $\{C_n\}$ converges weakly as a current to an
integral rectifiable current $C$ with
locally finite $2$-dimensional Hausdorff measure. 

\begin{lemma}
The convergence to $C$ is pointwise in the sense that
\begin{equation}
\label{eqn:pointwise}
\lim_{n\to\infty}\left(\sup_{x\in C\cap K}\op{dist}(x,C_n) + \sup_{x\in
  C_n\cap K}\op{dist}(x,C)\right) = 0
\end{equation}
for every compact set $K\subset\overline{X}$.
\end{lemma}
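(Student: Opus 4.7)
The plan is to combine the weak convergence $C_n\to C$ as currents with a monotonicity formula for the $J_n$-holomorphic curves that is uniform in $n$. The uniformity of the monotonicity constant is made possible by the uniform bounds on $(\lambda_n,J_n)$ supplied by Lemma~\ref{lem:ljn}(d) together with the uniform bounds on $(\lambda_\pm^1,J_\pm^1)$ coming from Lemma~\ref{lem:gray}.

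First I would establish the following uniform monotonicity statement. There exist constants $r_0>0$ and $\kappa>0$, depending only on the compact set $K$, the metric on $\overline{X}$, and the bounds $\|J_n\|_{C^1}\le c$, $\|\widetilde{\lambda}_n\|_{C^1}\le c$ from Lemma~\ref{lem:ljn}(d), such that for all $n$, every point $x\in\op{supp}(C_n)\cap K$, and every $r\in(0,r_0)$,
\[
\op{area}(C_n\cap B_r(x)) \;\ge\; \kappa\, r^2,
\]
where $B_r(x)$ denotes the $r$-ball in the fixed background metric on $\overline{X}$ and the area is measured against a fixed $\widetilde\omega$-compatible reference metric. This is the standard monotonicity lemma for pseudoholomorphic curves (see e.g. Sikorav's argument), which only requires a $C^0$ taming form and a $C^1$ bound on the almost complex structure; both are uniform in $n$ by construction. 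Since each $J_n$ is tamed by $\widetilde\omega_n=d\widetilde\lambda_n$ and $\widetilde\omega_n\to\widetilde\omega$ in $C^0$, the area of $C_n$ in any fixed ball is comparable, up to an $n$-independent factor, to $\int_{C_n\cap B_r(x)}\widetilde\omega$.

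Given monotonicity, both halves of \eqref{eqn:pointwise} follow from weak convergence by a standard contradiction argument. For the assertion $\sup_{x\in C_n\cap K}\op{dist}(x,C)\to 0$, suppose otherwise: after passing to a subsequence there exist $x_n\in\op{supp}(C_n)\cap K$ with $x_n\to x_\infty\in K$ and $\op{dist}(x_n,C)\ge\delta>0$. Then $B_{\delta/2}(x_\infty)$ is disjoint from $\op{supp}(C)$. Choose a nonnegative cutoff function $\chi$ with $\chi\equiv1$ on $B_{\delta/3}(x_\infty)$ and $\op{supp}(\chi)\subset B_{\delta/2}(x_\infty)$. Pairing $C_n$ with the 2-form $\chi\,\widetilde\omega$, monotonicity gives
\[
\int_{C_n}\chi\,\widetilde\omega \;\ge\; c_0\op{area}(C_n\cap B_{\delta/3}(x_n)) \;\ge\; c_0\kappa(\delta/4)^2
\]
for all sufficiently large $n$, while weak convergence gives $\int_{C_n}\chi\,\widetilde\omega\to\int_C\chi\,\widetilde\omega=0$, a contradiction. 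For the converse assertion $\sup_{x\in C\cap K}\op{dist}(x,C_n)\to0$, suppose $x\in\op{supp}(C)\cap K$ and, along a subsequence, $\op{dist}(x,C_n)\ge\delta$. Then $C_n$ has zero mass in $B_\delta(x)$, so $\int_{C_n}\chi\,\widetilde\omega=0$ for any $\chi$ supported in $B_\delta(x)$, and hence $\int_C\chi\,\widetilde\omega=0$. Choosing $\chi\ge0$ with $\chi>0$ at $x$ contradicts the fact that $x$ lies in the support of the integral rectifiable current $C$ (which has positive mass in every neighborhood of each point of its support).

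The main obstacle in this argument is verifying that the monotonicity constant can be chosen uniformly in $n$. This requires knowing that, as $n\to\infty$, the almost complex structure $J_n$ does not degenerate in $C^1$ and the taming symplectic forms $\widetilde\omega_n$ do not degenerate in $C^0$. Both are exactly the content of Lemmas~\ref{lem:gray} and \ref{lem:ljn}(d): the perturbation from $(\lambda,J)$ to $(\lambda_n,J_n)$ is only $O(1/n)$ in $C^0$ and bounded in $C^1$, localized near the Reeb orbits of action less than $L$ and in the ends. Once this uniformity is in place, the rest of the proof is just the standard contradiction argument above.
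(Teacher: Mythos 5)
Your proposal is correct and follows essentially the same route as the paper: the paper establishes exactly this uniform monotonicity bound (its Lemma~\ref{lem:monotonicity}, proved via the uniform taming estimate from Lemma~\ref{lem:ljn}(d) and Ye's theorem) and then invokes the contradiction arguments of \cite[\S5c]{swtogr}, which are the same weak-convergence-plus-monotonicity arguments you spell out. The only cosmetic difference is that for the direction $\sup_{x\in C\cap K}\op{dist}(x,C_n)\to 0$ one can argue slightly more directly via lower semicontinuity of mass (so that $\|C\|(B_\delta(x))=0$), rather than via positivity of the pairing $\int_C\chi\,\widetilde{\omega}$ near points of $\op{supp}(C)$, though your version is also fine since $C$ is a limit of $\widetilde{\omega}$-positive currents.
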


\begin{proof}
This is proved by
copying the arguments in \cite[\S5c]{swtogr} and using
Lemma~\ref{lem:monotonicity} below.
\end{proof}

Given $\rho>0$ and $x\in C_n$, let
$a_n(x,\rho)$ denote the integral of $d\lambda$ over the subset of
$C_n$ with distance less than or equal to $\rho$ from $x$.

\begin{lemma}
\label{lem:monotonicity}
  There exists a constant $\kappa>1$ such that for all $n>\kappa$ and
  $x\in C_n$, if $\kappa^{-1}>\rho>\rho'>0$, then
\[
a_n(x,\rho) > \kappa^{-1}(\rho/\rho')^2 a_n(x,\rho').
\]
\end{lemma}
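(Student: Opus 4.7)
The plan is to apply the standard monotonicity formula for pseudoholomorphic curves to $C_n$ in $(\overline{X},J_n,d\lambda_n)$ with constants uniform in $n$, and then to transfer the resulting ratio bound from $d\lambda_n$-area to $d\lambda$-area. Since $J_n$ is $d\lambda_n$-compatible on $X$ by the construction in Step~3 of \S\ref{sec:hcg}, and symplectization-admissible for $\lambda_\pm^1$ on the ends of $\overline{X}$, the pair $(J_n,d\lambda_n)$ is tame on all of $\overline{X}$. Lemma~\ref{lem:ljn}(d) gives a uniform bound $\|J_n\|_{C^1}\le c$, while \eqref{eqn:gray2} together with the construction of $\lambda_n$ gives uniform $C^2$-bounds on $\lambda_n$. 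The standard monotonicity formula for pseudoholomorphic curves, as used in \cite[\S5c]{swtogr}, then produces constants $\rho_0>0$ and $\kappa_0>1$ independent of $n$ such that for every $x\in C_n$ and every $\rho_0>\rho>\rho'>0$,
\[
\int_{B_\rho(x)\cap C_n} d\lambda_n \;\ge\; \kappa_0^{-1}(\rho/\rho')^2\int_{B_{\rho'}(x)\cap C_n} d\lambda_n.
\]

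To convert this to a bound on $a_n$, the next step is to show that $\|d\lambda-d\lambda_n\|_{C^0}\to 0$ as $n\to\infty$. The difference $\lambda-\lambda_n$ is supported where the Step~1 interpolation in $X^\pm$ and the Step~4 product-region modification in $X^0$ alter $\lambda$; in each region $|\lambda-\lambda_n|_{C^1}$ is controlled by $|\lambda_\pm-\lambda_\pm^1|_{C^1}$ or $|\lambda_0-\lambda_0^1|_{C^1}$, each of which is $O(n^{-1})$ by \eqref{eqn:gray1} (the $e^s$ factor on the ends is harmless since the modification occurs on bounded subregions in $s$). Since $J_n$ is $d\lambda_n$-compatible along the tangent planes of $C_n$, the form $d\lambda_n$ restricts there to the positive area form, so the $C^0$-closeness of $d\lambda$ to $d\lambda_n$ gives, for $n$ sufficiently large, the pointwise sandwich $\tfrac12\int_{B_\rho(x)\cap C_n} d\lambda_n \le a_n(x,\rho) \le 2\int_{B_\rho(x)\cap C_n} d\lambda_n$. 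Combining with the monotonicity inequality above yields the desired bound with $\kappa=4\kappa_0$.

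The main obstacle is verifying that the constants $\rho_0$ and $\kappa_0$ are indeed uniform in $n$ and in $x\in C_n$. Uniformity in $x$ follows from the $\R$-invariance of $(J_n,\lambda_n)$ on the cylindrical ends outside a bounded region in $s$. Uniformity in $n$ requires that the standard proof of monotonicity use only the $C^1$-norm of $J_n$ and the $C^0$-norm of $d\lambda_n$; since these are uniformly bounded by Lemma~\ref{lem:ljn}(d) and the estimates on $\lambda_n$ from \eqref{eqn:gray2}, this should be safe, but the argument needs to be traced carefully to confirm that no hidden dependence on a higher derivative of $(J_n,\lambda_n)$ enters, particularly because the product-region modifications oscillate on scales as small as $1/n$.
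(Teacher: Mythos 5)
Your proposal is correct and rests on the same basic idea as the paper: a monotonicity inequality for the $C_n$ whose constants are uniform in $n$, extracted from the uniform bounds in Lemma~\ref{lem:ljn}(d). The paper's proof is shorter and avoids both of your extra steps by working throughout with the \emph{fixed} form $d\lambda$ rather than with $d\lambda_n$: since $\|J_n-J\|_{C^0}\le cn^{-1}$ and $J$ is $d\lambda$-compatible, there is a uniform taming bound $d\lambda(v,J_nv)\ge\delta|v|^2$ for $n$ large, and one then applies Ye's monotonicity theorem \cite[Thm.\ 2.1]{ye} directly. Because $a_n$ is by definition the $d\lambda$-area, no transfer from $d\lambda_n$-area to $d\lambda$-area is needed. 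This choice also disposes of the worry in your final paragraph: the constants in Ye's version of monotonicity depend only on the taming constant and $C^0$ data, not on derivatives of the almost complex structure, and in any case Lemma~\ref{lem:ljn}(d) supplies a uniform $C^1$ bound on $J_n$, so even a $C^1$-dependent monotonicity lemma would serve. Two small cautions about your route: the pointwise sandwich between $d\lambda|_{TC_n}$ and $d\lambda_n|_{TC_n}$ requires the uniform taming/nondegeneracy bound, not merely positivity of $d\lambda_n$ on the tangent planes (this bound is available for the same reason as above); and the claimed uniform $C^2$ bound on $\lambda_n$ does not follow from \eqref{eqn:gray2} (which controls only the first $t$-derivative of the homotopy), though only $C^0$ control of $d\lambda_n$ is actually used.
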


\begin{proof}
It follows from Lemma~\ref{lem:ljn}(d) that $J_n$ is tamed by
$d\lambda$ for all sufficiently large $n$.  Moreover, if $|\cdot|$
denotes the metric determined by $d\lambda$ and $J$, then there
exists a constant $\delta>0$ such that if $n$ is sufficiently large
then $d\lambda(v,J_nv)\ge \delta|v|^2$.  One can then apply
\cite[Thm.\ 2.1]{ye}.
\end{proof}

{\em Step 2.\/} We now recall a criterion for $C$ to be
$J$-holomorphic.

Let $D$ denote the closed unit disk.  Call a smooth map
$\sigma:D\to\overline{X}$ {\em admissible\/} if
$\sigma(\partial D)\subset\overline{X}\setminus C$.

\begin{definition}
\label{def:pca}
  (cf.\ \cite[\S6a]{swtogr}) A {\em positive cohomology assigment\/}
  is an assigment, to each admissible map $\sigma$, of an integer
  $I(\sigma)$, satisfying the following conditions:
\begin{description}
\item {(a)} $I(\sigma)=0$ if the image of $\sigma$ is disjoint from
  $C$.
\item
{(b)} If $\sigma_0$ and $\sigma_1$ are admissible maps that are homotopic
through admissible maps, then $I(\sigma_0)=I(\sigma_1)$.
\item
{(c)}
If $\sigma$ is admissible and if $\phi:D\to D$ is a smooth map so that
$\phi:\partial D\to \partial D$ is a degree $k$ covering, then
$I(\sigma\circ\phi)=k I(\sigma)$.
\item{(d)} Suppose that $\sigma$ is admissible and that
  $\sigma^{-1}(C)$ is contained in the interior of a finite disjoint
  union $\coprod_i D_i$ where each $D_i$ is the image of an
  orientation-preserving embedding $\theta_i:D\to D$.  Then
  $I(\sigma)=\sum_i I(\sigma\circ\theta_i)$.
\item{(e)}
If $\sigma$ is a $J$-holomorphic embedding whose image intersects $C$,
then $I(\sigma)>0$.
\end{description}
\end{definition}

If there exists a positive cohomology assignment, then it follows as
in \cite[Lem.\ 4.4]{e4} that $C$ is a $J$-holomorphic subvariety of
$\overline{X}$.  The arguments in \cite[Lem.\ 9.8]{pfh2} then show
that $C$ is an element of $\mc{M}^J(\Theta_+',\Theta_-')$ for some
$\Theta_\pm'$.

{\em Step 3.\/} To complete the proof of Lemma~\ref{lem:gc3}, we
define a positive cohomology assignment $I$ as follows.  If $\sigma:D\to
C$ is an admissible map, then it follows from the pointwise
convergence \eqref{eqn:pointwise} that $\sigma(\partial D)$ is
disjoint from $C_n$ whenever $n$ is sufficiently large.  It then
follows from the convergence of currents that the intersection number
of $D$ with $C_n$ is independent of $n$ when $n$ is sufficiently
large.  Define $I(\sigma)$ to be this intersection number.

Conditions (a)--(d) in Definition~\ref{def:pca} follow directly from
the definition of $I$, together with the fact that $C_n$ converges to
$C$ both as a current and pointwise in the sense of
\eqref{eqn:pointwise}.  Condition (e) is immediate in the special case
when $\sigma$ maps to $\overline{X}\setminus\Sigma$, because then
$C_n$ is $J$-holomorphic in a neighborhood of $\sigma(D)$ for all
sufficiently large $n$.  In particular, it follows from \cite[Lem.\
4.4]{e4} that $C\cap(\overline{X}\setminus\Sigma)$ is a
$J$-holomorphic submanifold on the complement of a discrete set.  This
last fact can also be deduced from standard Gromov compactness
theorems, see e.g.\ \cite{hummel,wolfson,ye}, since the intersection
of $C$ with any compact subset of $\overline{X}\setminus\Sigma$ is a
pointwise limit of $J$-holomorphic subvarieties.

It remains to prove condition (e) when $\sigma(D)$ is allowed to
intersect $\Sigma$.  By \cite[Lem.\ 5.5]{swtogr}, any holomorphic disk
(without boundary constraint) can be perturbed to a holomorphic disk
that is transverse to $\Sigma$.  So by conditions (a) and (d), we can
reduce to the case where $\sigma(D)$ has small radius and intersects
$\Sigma$ only at its center point, transversely, which is also in $C$.
To prove property (e) in this case, we use the following lemma, which
allows us to perturb a family of $J$-holomorphic disks to a family of
$J_n$-holomorphic disks.

\begin{lemma}
\label{lem:g}
Let $D_1,D_2$ be disks centered at the origin in $\C$, and let
$\phi:D_1\times D_2\to\overline{X}$ be a map such that
$\phi|_{D_1\times\{z_2\}}$ is a $J$-holomorphic embedding for each
$z_2\in D_2$, and $\phi^{-1}(\Sigma) = \{0\}\times D_2$.  After
replacing $D_1$ by a sufficiently small radius subdisk, given
$\epsilon>0$, if $n$ is sufficiently large, then there exists a smooth
map $\varphi_n:[0,1]\times D_1\times D_2\to\overline{X}$ with the
following properties:
\begin{itemize}
\item
$\varphi_n(0,\cdot,\cdot)=\phi$.
\item For each $z_2\in D_2$, the map $\varphi_n(1,\cdot,z_2)$ is an
  embedding with $J_n$-holomorphic image.
\item
$\sup_{t\in[0,1],z_1\in D_1,z_2\in
  D_2}\op{dist}(\phi(z_1,z_2),\varphi_n(t,z_1,z_2))<\epsilon$.
\end{itemize}
\end{lemma}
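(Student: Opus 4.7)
The plan is a quantitative parametric implicit function theorem. For each sufficiently large $n$ and each $z_2\in D_2$ we will perturb the $J$-holomorphic slice $\phi(\cdot,z_2):D_1\to\overline{X}$ to a nearby $J_n$-holomorphic embedding, depending smoothly on $z_2$, and then linearly interpolate via the exponential map. After shrinking $D_1$ to a disk of small radius $\delta$ (to be fixed later), fix $p>2$ and parametrize nearby maps as $u_v\eqdef\exp_{\phi(\cdot,z_2)}(v)$, where $v$ is a $W^{1,p}$ section of $\phi(\cdot,z_2)^*T\overline{X}$ vanishing on $\partial D_1$. Consider the smooth map $F_{n,z_2}(v)\eqdef\dbar_{J_n}u_v$ from a neighborhood of $0$ in $W^{1,p}_0$ to $L^p$.

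Two estimates drive the argument. First, since $\dbar_J\phi(\cdot,z_2)=0$, we have $F_{n,z_2}(0)=\tfrac12(J_n-J)\circ d\phi(\cdot,z_2)\circ j$, so Lemma~\ref{lem:ljn}(d) gives $\|F_{n,z_2}(0)\|_{L^p}\le cn^{-1}\delta^{2/p}$ uniformly in $z_2$. Second, the linearization $L_{n,z_2}\eqdef dF_{n,z_2}(0):W^{1,p}_0\to L^p$ admits a right inverse $Q_{n,z_2}$ with $\|Q_{n,z_2}\|\le c$ independent of $n$ and $z_2$. To see this, note that the principal part of $L_{n,z_2}$ is close to the standard $\dbar$ on a small disk (by the $C^0$ closeness of $J_n$ to $J$ and smallness of $\delta$), while the zeroth order part involves $J_n$ and $\nabla J_n$ and thus has $C^0$ norm bounded by $c\|J_n\|_{C^1}\le c$ by Lemma~\ref{lem:ljn}(d). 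The flat Dirichlet Green's operator $Q_0:L^p(D_\delta)\to W^{1,p}_0(D_\delta)$ has operator norm $O(\delta)$, so the zeroth order part composed with $Q_0$ has operator norm $O(\delta)$ on $W^{1,p}_0$; a Neumann series argument on a sufficiently small disk then produces $Q_{n,z_2}$.

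Given these inputs, the quantitative implicit function theorem yields, for each large $n$, a unique small $v_{n,z_2}\in W^{1,p}_0$ with $F_{n,z_2}(v_{n,z_2})=0$ and $\|v_{n,z_2}\|_{W^{1,p}}\le cn^{-1}$. Smooth dependence on $z_2$ follows from the smoothness of $F_{n,z_2}$ in $(v,z_2)$ (for each fixed $n$, $J_n$ is a smooth almost complex structure) together with uniqueness of the solution in a small ball. The Sobolev embedding $W^{1,p}\hookrightarrow C^0$ (since $p>2$) converts the $W^{1,p}$ bound into a $C^0$ bound and also ensures that $\psi_n(z_1,z_2)\eqdef\exp_{\phi(z_1,z_2)}(v_{n,z_2}(z_1))$ is an embedding (a small $C^0$ perturbation of $\phi(\cdot,z_2)$). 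Finally, define $\varphi_n(t,z_1,z_2)\eqdef\exp_{\phi(z_1,z_2)}\!\bigl(t\,v_{n,z_2}(z_1)\bigr)$; its slices at $t=0,1$ are $\phi$ and $\psi_n$ respectively, and the $C^0$ bound on $v_{n,z_2}$ forces the pointwise distance to be less than $\epsilon$ for $n$ large.

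The main obstacle is the uniform right invertibility of $L_{n,z_2}$, since the $J_n$ converge to $J$ only in $C^0$, not $C^1$. A direct perturbative comparison $L_{n,z_2}\approx L_{J,z_2}$ is therefore unavailable, because the zeroth order coefficients depend on $\nabla J_n$ and need not converge. The resolution is that one does not need convergence, only the uniform $C^0$ boundedness of these coefficients (which follows from the $C^1$ bound $\|J_n\|_{C^1}\le c$ in Lemma~\ref{lem:ljn}(d)); the smallness of the disk $D_1$ then absorbs any fixed-size zeroth-order perturbation via the Neumann series argument above. Everything else in the construction is routine quantitative IFT and parametric elliptic regularity.
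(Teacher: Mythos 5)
There is a genuine gap, in fact two. First, your functional-analytic setup is wrong: with the full Dirichlet condition built into the domain, the linearized operator $W^{1,p}_0(D_1)\to L^p(D_1)$ is a zeroth-order perturbation of $\dbar$, and $\dbar$ with the boundary condition $v|_{\partial D_1}=0$ is overdetermined rather than surjective: integrating by parts against any holomorphic $g\in L^{p'}(D_1)$ shows $\int_{D_1}(\dbar v)\,g=0$ for all $v\in W^{1,p}_0$, so the image has infinite codimension and no right inverse $Q_0:L^p\to W^{1,p}_0$ exists. (Also, no right inverse can have $W^{1,p}$-operator norm $O(\delta)$, since the estimate $\|\nabla Q_0f\|_{L^p}\le C\|f\|_{L^p}$ is scale-invariant; only the $L^p\to L^p$ or $L^p\to C^0$ norm shrinks with $\delta$, which is what your Neumann series actually needs.) This part is repairable: drop the boundary condition and use the Cauchy transform as a right inverse, which is in effect what the paper does, solving $\dbar\eta=(\mbox{error})$ on $\C$ with a cutoff via the Green's function for $\dbar$.

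The second gap is the essential one and is not repaired by your closing remark. The quantitative IFT needs a uniform continuity/quadratic estimate for $dF_{n,z_2}$ near $0$, and in the $\exp$-gauge this estimate involves comparing $DJ_n$ at nearby points, i.e.\ the modulus of continuity of $\nabla J_n$ (morally $\|J_n\|_{C^2}$). Lemma~\ref{lem:ljn}(d) gives only $\|J_n-J\|_{C^0}\le cn^{-1}$ and $\|J_n\|_{C^1}\le c$; since the modification of $J$ occurs at length scale $1/n$, $\|J_n\|_{C^2}$ grows like $n$ and $\|J_n-J\|_{C^1}$ is merely bounded. So the remainder constant is $O(n)$ while $\|F_{n,z_2}(0)\|=O(n^{-1})$, and the Newton--Picard smallness condition (a product of the form $C^2c_N\|F(0)\|\ll 1$) is not achieved by taking $n$ large; alternatively, the crude bound using only $\|J_n-J\|_{C^1}\le c$ makes $dF_{n,z_2}$ fail to be uniformly continuous at $0$. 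Your "resolution'' (uniform $C^0$ bounds on the zeroth-order coefficients plus a small disk) handles only the invertibility of the linearization, not this nonlinear term. Tellingly, your argument never uses the hypothesis $\phi^{-1}(\Sigma)=\{0\}\times D_2$, which is exactly what rescues the estimate: $J_n\neq J$ only within distance $\sim 1/n$ of $\Sigma$, so for any $\delta>0$ and $n$ large the terms involving $\nabla(J_n-J)$ are supported in the sub-disk $|z_1|<\delta$, and their contribution can be made small. The paper exploits this by passing to graph coordinates adapted to the $J$-holomorphic foliation, in which the equation's coefficients involve $J_n$ but not its derivatives (so only first derivatives of coefficients ever enter), and by running the fixed-point iteration in a Morrey-type space where the small support yields the key smallness $\|\nabla\frak{r}_0\|_\diamond\le c\,\delta^{1-\nu/2}$ as in \eqref{eqn:ad}. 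Some such use of the localization of $J_n-J$ near $\Sigma$ is indispensable; without it your contraction does not close, and with it the remaining steps are no longer "routine IFT'' but the substance of the proof.
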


Granted Lemma~\ref{lem:g}, the proof of property (e) is completed as
follows.  Let $\sigma:D_1\to\overline{X}$ be an admissible map which
intersects $\Sigma$ only at its center point, transversely, which is
also in $C$.  By \cite[Lem.\ 5.5]{swtogr}, we can then find
$\phi:D_1\times D_2\to\overline{X}$ as in Lemma~\ref{lem:g} such that
$\phi$ restricts to a diffeomorphism from a neighborhood of $(0,0)$ to
an open set $U$ in $\overline{X}$.  We can shrink $D_1$ as in
Lemma~\ref{lem:g}, and also shrink $D_2$, so that
$\phi|_{D_1\times\{z_2\}}$ is admissible for all $z_2\in D_2$.  By the
pointwise convergence \eqref{eqn:pointwise}, if $n$ is sufficiently
large, then $C_n$ intersects $U$.  It follows that if $\varepsilon$ in
Lemma~\ref{lem:g} is chosen sufficiently small, and if $n$ is
sufficiently large, then $\varphi_n(1,\cdot,z_2)$ intersects $C_n$ for
some $z_2\in D_2$.  Moreover, it follows from the pointwise
convergence \eqref{eqn:pointwise} that if $\varepsilon$ is
sufficiently small and $n$ is sufficiently large then
$\varphi_n(1,\cdot,z_2)$ is homotopic to $\sigma$ through disks whose
boundaries do not intersect $C_n$.  Therefore $I(\sigma_1)$ equals the
intersection number of $C_n$ with $\varphi_n(1,\cdot,z_2)$ when
$\varepsilon$ is sufficiently small and $n$ is sufficiently large.
Since the latter disk is $J_n$-holomorphic and intersects $C_n$, we
conclude that $I(\sigma_1)>0$ as desired.
\end{proof}

\begin{proof}[Proof of Lemma~\ref{lem:g}.]
  To simplify notation we will just prove the lemma in the case when
  $D_2$ is a point, and we will drop $z_2$ from the notation and write
  $D=D_1$.  The lemma in the general case then follows by noting that
  the estimates used to prove the lemma when $D_2$ is a point vary
  continuously with a smooth family of holomorphic disks.  So let
  $\phi:D\to\overline{X}$ be a holomorphic map such that
  $\phi^{-1}(\Sigma)=\{0\}$; we need to show that after replacing $D$
  by a smaller radius disk, given $\varepsilon>0$, if $n$ is
  sufficiently large then there exists $\varphi_n:[0,1]\times
  D\to\overline{X}$ such that $\varphi_n(0,\cdot)=\phi$, the map
  $\varphi_n(1,\cdot)$ is an embedding with $J_n$-holomorphic image,
  and $\sup_{t\in[0,1]}\sup_{z\in
    D}\op{dist}(\phi(z),\varphi_n(t,z))<\epsilon$.  We do so in five
  steps.

  {\em Step 1.\/} We first write down the equations we need to solve in
  a convenient coordinate system.

  We can choose complex coordinates $(z,w)$ for a neighborhood of
  $\phi(0)$ in $\overline{X}$ with the following properties: First,
  the intersection of $D$ with this neighborhood is given by $w=0$.
  Second, each constant $z$ slice is $J$-holomorphic.  Third, the $J$
  version of $T^{1,0}\overline{X}$ is spanned by
\begin{equation}
\label{eqn:t01}
dz + \sigma d\overline{z}, \quad\quad dw +
  \gamma d\overline{z},
\end{equation}
where $\sigma$ and $\gamma$ are smooth functions that obey
$|\sigma(\cdot,w)| + |\gamma(\cdot,w)| \le c|w|$.  Such coordinates
can be found in a neighborhood of any point on a $J$-holomorphic curve
in an almost complex $4$-manifold, as explained in
\cite[\S5d]{swtogr}.
Similarly to \eqref{eqn:t01}, the $J_n$ version of $T^{1,0}\overline{X}$ is spanned by
\begin{equation}
\label{eqn:t01n}
dz + \sigma_nd\zbar + \mu_n d\wbar, \quad\quad dw + \gamma_n d\zbar +
\nu_n d\wbar
\end{equation}
where $\sigma_n,\mu_n,\gamma_n,\nu_n$ are smooth functions.  By
Lemma~\ref{lem:ljn}(d), these satisfy $|\sigma_n - \sigma| + |\mu_n| +
|\gamma_n-\gamma| + |\vu_n| \le c n^{-1}$, and the first derivatives
of $\sigma_n-\sigma$, $\mu_n$, $\gamma_n-\gamma$ and $\nu_n$ are
bounded in absolute value by $c$.

Now fix $r>0$ such that the coordinates $z$ and $w$ are defined where
both have norm less than $2r$, and replace $D$ with the disk $(w=0,
|z|\le r)$.  Let $\eta:D\to\C$ be a smooth function with $|\eta|<r$.
It follows from \eqref{eqn:t01n} that the graph $w=\eta(z)$ is
$J_n$-holomorphic if and only if
\[
\frac{\partial\eta}{\partial\zbar} + \gamma_n -
\sigma_n\frac{\partial\eta}{\partial z} +
\nu_n\frac{\partial\overline{\eta}}{\partial\zbar} +
(\mu_n\gamma_n-\sigma_n\nu_n)\frac{\partial\overline{\eta}}{\partial
  z} + \mu_n\left(\frac{\partial\overline{\eta}}{\partial
    z}\frac{\partial\eta}{\partial\zbar} -
  \frac{\partial\overline{\eta}}{\partial\zbar}\frac{\partial\eta}{\partial
    z}\right) = 0.
\]
It proves useful to rewrite the above equation in the schematic form
\begin{equation}
\label{eqn:schematic}
\frac{\partial\eta}{\partial\zbar} + \gamma-
\sigma\frac{\partial\eta}{\partial z} + \frak{r}_0 +
\frak{r}_1(\neta,\nabla\eta) + \frak{r}_2(\eta,\nabla\eta).
\end{equation}
Here $\frak{r}_0=(\gamma_n-\gamma)|_{w=0}$ is a function of $z$ with
$|\frak{r}_0|\le c n^{-1}$ and with first derivatives that are bounded
in absolute value by $c$.  Meanwhile $\frak{r}_1(a,\cdot)$ for fixed
$a$ is a $z$-dependent affine linear function that obeys
$|\frak{r}_1(a,b)|\le cn^{-1}(|a|+|b|)$.  The first derivatives of
$\frak{r}_1(a,\cdot)$ are bounded in absolute value by $c$.  Finally,
$\frak{r}_2(a,\cdot)$ for fixed $a$ is a quadratic function of its
second entry with $|\frak{r}_2(a,b)|\le c n^{-1}|b|^2$.  The first
derivatives of $\frak{r}_2$ with respect to both $z$ and $a$ are
bounded in absolute value by $c$.  Also observe that since
$\phi^{-1}(\Sigma)=\{0\}$, it follows that for any $\delta>0$, if $n$
is sufficiently large then $\frak{r}_0=0$ where $|z|>\delta$.

To prove Lemma~\ref{lem:g}, it now suffices to show that for every
$\varepsilon>0$, if $n$ is sufficiently large then there exists a
solution $\eta_n$ to the equation \eqref{eqn:schematic} with
$|\eta_n|<\varepsilon$.  One can then define $\varphi_n(t,z)=(z,w=t\eta_n(z))$.

{\em Step 2.\/} We will solve \eqref{eqn:schematic} using a fixed
point construction in a certain Banach space $\mc{H}$ of $C^1$
functions.

To define the Banach space $\mc{H}$, fix once and for all a number
$\nu\in(0,1/16)$.  If ${\mathbb V}$ is any finite dimensional normed
vector space over $\C$, define a norm $\|\cdot\|_\diamond$ on the
space of bounded smooth functions $f:\C\to{\mathbb V}$ by
\[
\|f\|_\diamond^2 \eqdef
\sup_{z\in\C}\sup_{\rho\in[0,1]}\rho^{-\nu}\int_{|z'-z|<\rho}|f(z')|^2.
\]
Now let $\mc{C}$ denote the space of smooth functions $\eta:\C\to\C$
that are holomorphic on the complement of the unit disk and that
satisfy $\lim_{|z|\to\infty}\eta(z)=0$.  Define a norm $\|\cdot\|_{*}$
on $\mc{C}$ by
\[
\|\eta\|_{*} \eqdef \|\nabla\eta\|_2 + \|\nabla\eta\|_\diamond +
\|\nabla\nabla\eta\|_\diamond.
\]
Finally, define $\mc{H}$ to be the completion of $\mc{C}$ with respect
to the norm $\|\cdot\|_{*}$.  The following lemma about $\mc{H}$
will be needed below:

\begin{lemma}
\label{lem:morrey}
$\mc{H}$ is a subset of the H\"older space $C^{1,\nu/2}$, and the
inclusion $\mc{H}\to C^{1,\nu/2}$ is a bounded linear map of Banach spaces.
\end{lemma}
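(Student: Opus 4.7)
The plan is to prove that for every smooth $\eta\in\mc{C}$ the H\"older norm $\|\eta\|_{C^{1,\nu/2}}$ is bounded by $c\|\eta\|_{*}$, and then extend by continuity to the completion. The key analytic input is Morrey's Dirichlet growth theorem applied to $\nabla\eta$, while the additional ingredient needed to convert the resulting local statement into a global one is the holomorphy of $\eta$ on $|z|>1$ together with the decay $\eta(z)\to 0$ at infinity. The main obstacle is obtaining a global $L^{\infty}$ bound on $\eta$ itself, since $\|\cdot\|_{*}$ directly controls only the first and second derivatives.

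First I would apply Morrey's Dirichlet growth theorem in $\R^{2}$ to each real-valued component of $\nabla\eta$. The hypothesis $\|\nabla\nabla\eta\|_{\diamond}<\infty$ translates to the Morrey-type bound $\int_{B_{\rho}(z)}|\nabla\nabla\eta|^{2}\leq\rho^{\nu}\|\nabla\nabla\eta\|_{\diamond}^{2}$ for all $z\in\C$ and $\rho\in(0,1]$, and Morrey's theorem then yields
\[
|\nabla\eta(z_{1})-\nabla\eta(z_{2})|\leq c\|\nabla\nabla\eta\|_{\diamond}\,|z_{1}-z_{2}|^{\nu/2}
\]
whenever $|z_{1}-z_{2}|\leq 1/2$.

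Next I would upgrade this to a global $L^{\infty}$ bound on $\nabla\eta$. For $|z|\geq 2$ the function $\nabla\eta$ is (up to complex conjugation) holomorphic on $\{|w|>1\}$, so $|\nabla\eta|^{2}$ is subharmonic there and the mean-value inequality applied to the ball $B_{|z|/2}(z)$ gives $|\nabla\eta(z)|\leq c\|\nabla\eta\|_{2}/|z|$. For $|z|\leq 3$ I would use a standard contradiction argument: if $|\nabla\eta(z_{0})|=M$, then the H\"older bound from the first step forces $|\nabla\eta|\geq M/2$ on a ball of radius $\sim(M/H)^{2/\nu}$ about $z_{0}$, where $H=c\|\nabla\nabla\eta\|_{\diamond}$, and integrating $|\nabla\eta|^{2}$ over that ball and comparing with $\|\nabla\eta\|_{2}^{2}$ gives $M^{2+4/\nu}\leq c\|\nabla\eta\|_{2}^{2}H^{4/\nu}$, hence an upper bound on $M$ in terms of $\|\eta\|_{*}$. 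Combining the two regions extends the H\"older bound on $\nabla\eta$ to pairs at arbitrary distance.

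Finally I would bound $\|\eta\|_{L^{\infty}}$. On $|z|>1$ expand $\eta=\sum_{k\geq 1}a_{k}z^{-k}$. A direct calculation gives $\|\nabla\eta\|_{L^{2}(|z|\geq 1)}^{2}=\pi\sum k|a_{k}|^{2}$ and $\|\eta\|_{L^{2}(\partial B_{1})}^{2}=2\pi\sum|a_{k}|^{2}$, so $\|\eta\|_{L^{2}(\partial B_{1})}\leq c\|\nabla\eta\|_{2}$. Together with the Lipschitz bound on $\eta|_{\partial B_{1}}$ coming from the just-obtained $\|\nabla\eta\|_{\infty}$, the same pointwise-versus-integral argument as in the preceding step bounds $\|\eta\|_{L^{\infty}(\partial B_{1})}$ by a positive power of $\|\eta\|_{*}$. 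The maximum principle applied to $\eta$ on $\{|z|\geq 1\}$, viewed as a harmonic function vanishing at infinity, then propagates this bound to the entire exterior region, and integrating $\nabla\eta$ along a radial segment extends it to the unit disk. All told $\|\eta\|_{C^{1,\nu/2}}\leq c\|\eta\|_{*}$ on $\mc{C}$, and since $C^{1,\nu/2}$ is a Banach space and $\mc{C}$ is by definition dense in $\mc{H}$, the identity map extends uniquely to the bounded linear inclusion $\mc{H}\hookrightarrow C^{1,\nu/2}$ claimed by the lemma.
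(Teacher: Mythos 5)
Your proof is correct, but it takes a genuinely different route from the paper's for the sup bounds. Both arguments rest on Morrey's Dirichlet growth theorem, which converts the hypothesis $\int_{|z'-z|<\rho}|\nabla\nabla\eta|^2\le\rho^{\nu}\|\nabla\nabla\eta\|_\diamond^2$ into the local $\nu/2$-H\"older estimate for $\nabla\eta$; but the paper then obtains the pointwise bounds $|\eta|\le c\|\nabla\eta\|_\diamond$ and $|\nabla\eta|\le c\|\nabla\nabla\eta\|_\diamond$ by citing the same theorem of Morrey again, whereas you manufacture them from the special structure of $\mc{C}$: subharmonicity of $|\nabla\eta|^2$ outside the unit disk plus an interpolation between the H\"older seminorm and $\|\nabla\eta\|_2$ for $\sup|\nabla\eta|$, and the Laurent--Parseval inequality $\|\eta\|_{L^2(\{|z|=1\})}\le c\|\nabla\eta\|_{L^2}$ together with the maximum principle and radial integration for $\sup|\eta|$. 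The practical difference is which summands of $\|\cdot\|_{*}$ carry the load: your sup bounds lean on the global $L^2$ norm of $\nabla\eta$ (the first term of $\|\cdot\|_{*}$), which sidesteps the question of how a purely local Morrey estimate plus decay at infinity yields a global $L^\infty$ bound; and since each of your interpolations is homogeneous of total degree one in quantities dominated by $\|\eta\|_{*}$, the resulting estimate $\|\eta\|_{C^{1,\nu/2}}\le c\|\eta\|_{*}$ is linear, as the boundedness of the inclusion requires. Two points you leave implicit are routine: when the interpolation radius $(M/H)^{2/\nu}$ exceeds the range of validity of the local H\"older estimate one gets $M\le c\|\nabla\eta\|_2$ directly from the fixed-radius ball, and in passing to the completion one should note that a $\|\cdot\|_{*}$-Cauchy sequence has a well-defined $C^1$ limit, so that $\mc{H}$ is honestly identified with a subset of $C^{1,\nu/2}$ rather than merely mapping into it.
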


\begin{proof}
By \cite[Thm.\ 3.5.2]{morrey}, there exists a constant $c$ (depending
on $\nu$) such that
\begin{equation}
\label{eqn:morrey}
|\eta|\le c\|\nabla\eta\|_\diamond, \quad\quad |\nabla\eta|\le
c\|\nabla\nabla\eta\|_\diamond,
\end{equation}
and the exponent $\nu/2$ H\"older norm of $|\nabla\eta|$ is also
bounded by $c\|\nabla\nabla\|_\diamond$.
\end{proof}

{\em Step 3\/} (of the proof of Lemma~\ref{lem:g}). Fix a smooth
function $\chi:\C\to[0,1]$ that is equal to $1$ on the disk of radius
$r/4$ and equal to $0$ outside of the disk of radius $r/2$.  Given
$\eta\in\mc{C}$, a standard use of the Green's function for $\dbar$ on
$\C$ finds a unique solution $T=T(\eta)\in\mc{C}$ of the equation
\begin{equation}
\label{eqn:green}
\frac{\partial T}{\partial\zbar} = -\chi\left(\gamma - \sigma
  \frac{\partial\eta}{\partial z} + \frak{r}_0 +
  \frak{r}_1(\eta,\nabla\eta) + \frak{r}_2(\eta,\nabla\eta)\right).
\end{equation}
Here $\frak{r}_1$ and $\frak{r}_2$ should be extended arbitrarily for
$|w|>r$ so that they still satisfy the estimates from Step 1.  It
follows from \eqref{eqn:green}, using \eqref{eqn:morrey} and
\cite[Thms.\ 3.5.2 and 5.4.1]{morrey}, that
\begin{equation}
\label{eqn:moremorrey}
\|T\|_* \le c\left(\|\frak{r}_0\|_\infty +
  \|\nabla\frak{r}_0\|_\diamond + n^{-1}\|\eta\|_{*} +
  \|\eta\|_{*}^2\right).
\end{equation}

{\em Step 4.\/} Fix $\varepsilon>0$ and let
$\mc{H}_\varepsilon\subset\mc{H}$ denote the ball of radius
$\varepsilon$ centered at the origin.  We claim that if $n$ is
sufficiently large, then the map $\eta\mapsto T(\eta)$ maps
$\mc{H}_\varepsilon\cap\mc{C}$ to itself.  By \eqref{eqn:moremorrey},
it is enough to show that
\begin{equation}
\label{eqn:ad}
\|\nabla\frak{r}_0\|_\diamond < \frac{1}{2}c^{-1}\varepsilon
\end{equation}
if $n$ is sufficiently large, where $c$ here denotes the same constant
as in \eqref{eqn:moremorrey}.  To do so, recall that for any $\delta>0$,
if $n$ is large enough then $\frak{r}_0$ is supported in the disk of
radius $\delta$.  Then the bound $|\nabla\frak{r}_0|\le c$ implies
that for each $z$ we have
\[
\int_{|z'-z|<\rho}|\nabla\frak{r}_0(z')|^2 \le c\min(\rho^2,\delta^2).
\]
It follows that $\|\nabla\frak{r}_0\|_\diamond\le c\delta^{1-\nu/2}$.
By taking $\delta$ sufficiently small, we conclude that the desired
inequality \eqref{eqn:ad} holds if $n$ is sufficiently large.

{\em Step 5.\/} By Step 4, for any $\varepsilon>0$, if $n$ is
sufficiently large then $T^k(0)\in\mc{H}_\varepsilon$ for all $k\ge
0$.  By Lemma~\ref{lem:morrey} and the Arzela-Ascoli theorem, the
sequence $\{T^k(0)\}_{k=0,1,\ldots}$ then converges uniformly in the
$C^1$ topology to a $C^1$ function $\eta$.  Since the convergence is
in $C^1$, the limit function $\eta$ obeys \eqref{eqn:schematic}.
Also, elliptic bootstrapping shows that $\eta$ is in fact $C^\infty$.
Finally, by \eqref{eqn:morrey} we have $|\eta|< c\varepsilon$, where
$c$ does not depend on $\varepsilon$.  As explained at the end of Step
1, this completes the proof of Lemma~\ref{lem:g}.
\end{proof}

\section{Cobordism maps and holomorphic curves (proofs)}
\label{sec:holproof}

To complete the unfinished business, this section proves
Propositions~\ref{prop:hol} and \ref{prop:comp}, which were used in
\S\ref{sec:cobexact} to define the map on $\widehat{HM}^*_L$ induced
by an exact symplectic cobordism.

\subsection{Statement of Proposition~\ref{prop:9x}}
\label{sec:9xprelim}

Propositions~\ref{prop:hol} and \ref{prop:comp} will be deduced from
Proposition~\ref{prop:9x} below, which describes how Seiberg-Witten
solutions in a cobordism give rise to holomorphic curves.  The
statement of Proposition~\ref{prop:9x} requires the following
preliminaries.

\paragraph{The Seiberg-Witten action functional.}
Let $Y$ be a closed oriented 3-manifold with a contact form $\lambda$,
and let $J$ be a symplectization-admissible almost complex structure
on $\R\times Y$.  These determine a metric on $Y$ according to the
conventions in \S\ref{sec:pcf}.  Fix a spin-c structure and recall the
splitting \eqref{eqn:SEKE}.  

As noted in \S\ref{sec:pcf}, solutions to our perturbed Seiberg-Witten
equations \eqref{eqn:Tinstanton} on $\R\times Y$ correspond to
gradient flow lines of the functional \eqref{eqn:aeta}, under the
identifications \eqref{eqn:connections}, \eqref{eqn:perturbation}
and \eqref{eqn:rescaling}.  However it will be convenient below to
regard these solutions as gradient flow lines of a different
functional $\frak{a}$ on connections on $E$ and sections of $\Sp$
defined by
\begin{equation}
\label{eqn:a}
\frak{a}(A,\psi) \eqdef \frac{1}{2}\left(cs(A) - r\energy(A)\right) +
  \frak{e}_\mu(A) + r\int_Y\langle D_A\psi,\psi\rangle,
\end{equation}
where the terms in \eqref{eqn:a} are defined as follows.

Choose a reference (Hermitian) connection
$A_E$ on the line bundle $E$.  An arbitrary connection $A$ on $E$
differs from $A_E$ by an imaginary-valued $1$-form.  We define the
{\em Chern-Simons functional\/}
\[
cs(A) \eqdef -\int_Y (A-A_E)\wedge d(A-A_E) - 2\int_Y (A-A_E)\wedge
\left(F_{A_{E}}+\frac{1}{2}F_{A_{K^{-1}}}\right).
\]
Here $A_{K^{-1}}$ is the distinguished connection on $K^{-1}$ defined
in \S\ref{sec:pcf}.  Also, $\energy(A)$ in \eqref{eqn:a} is the energy
defined in \eqref{eqn:energy}, and
\[
\frak{e}_\mu(A) \eqdef i\int_Y(A-A_E)\wedge\mu.
\]

The functionals \eqref{eqn:aeta} and \eqref{eqn:a} differ by a
constant as follows:  If we make the identifications
\eqref{eqn:connections}, \eqref{eqn:perturbation} and
\eqref{eqn:rescaling}, and choose $\mathbb{A}_0=A_{K^{-1}}+2A_E$, then
\begin{equation}
\label{eqn:compareFunctionals}
\frak{a}_\eta(\mathbb{A},\Psi) = \frak{a}(A,\psi) + \frac{ir}{2}\int_Y
F_{A_E}\wedge\lambda.
\end{equation}

\paragraph{Geometric setup.}
Proposition~\ref{prop:9x} is applicable to two geometric setups:

{\em Case 1:\/} The first geometric setup, which is needed for
Proposition~\ref{prop:hol}, is where $(X,\lambda)$ is an exact
symplectic cobordism from $(Y_+,\lambda_+)$ to $(Y_-,\lambda_-)$.  In
this case let $\overline{X}$ denote the completion of $X$ as in
\eqref{eqn:completion}.  Let us denote the ends of $\overline{X}$ by
$\mc{E}_-\eqdef (-\infty,0]\times Y_-$ and $\mc{E}_+\eqdef
[0,\infty)\times Y_+$.  Also let $s_*:\overline{X}\to\R$ denote the
piecewise smooth function which agrees with the $(-\infty,0]$
coordinate on $\mc{E}_-$, which agrees with the $[0,\infty)$
coordinate on $\mc{E}_+$, and which equals $0$ on $X$.

Recall from \S\ref{sec:pesc} that to write down the Seiberg-Witten
equations \eqref{eqn:tsw4} on $\overline{X}$, we need to choose a
strongly cobordism-admissible almost complex structure $J$ on
$\overline{X}$, see Definition~\ref{def:sca}, which restricts to
symplectization-admissible almost complex structures $J_\pm$ for
$\lambda_\pm$ on $\mc{E}_\pm$.  Then $\lambda$ and $J$ determine a
metric $g$ on $\overline{X}$, as well as the $2$-form $\hat{\omega}$
that appears in \eqref{eqn:tsw4}.  We also need to choose small exact
$2$-forms $\mu_\pm$ on $Y_\pm$, and a small exact 2-form $\mu$ on
$\overline{X}$ which restricts to $\mu_\pm$ on
$\mc{E}_\pm$.

{\em Case 2:\/} The second geometric setup, which is needed for
Proposition~\ref{prop:comp}, considers the composition $(X,\lambda)$
of exact symplectic cobordisms $(X^+,\lambda^+)$ from
$(Y_+,\lambda_+)$ to $(Y_0,\lambda_0)$ and $(X^-,\lambda^-)$ from
$(Y_0,\lambda_0)$ to $(Y_-,\lambda_-)$.  For the purposes of ``neck
stretching'', given $R\ge 0$ consider the diffeomorphic manifold
\begin{equation}
\label{eqn:XR}
X_R = X^- \union_{\{-R\}\times Y_0} ([-R,R]\times Y_0)
\union_{\{R\}\times Y_0} X^+.
\end{equation}
Define the completion $\overline{X_R}$ as usual by attaching ends
$\mc{E}_-=(-\infty,0]\times Y_-$ and $\mc{E}_+=[0,\infty)\times Y_+$
to $X_R$.  We now specify how to write down a version of the
Seiberg-Witten equations \eqref{eqn:tsw4} on $\overline{X_R}$.

To start, define $s_*:\overline{X_R}\to\R$ as follows.  Let
$s_*^\pm:\overline{X^\pm}\to\R$ denote the function defined in Case 1
above.  Then define $s_*$ to agree with $s_*^--R$ on $\mc{E}_-\cup
X^-$, to agree with the $[-R,R]$ coordinate on $[-R,R]\times Y_0$, and
to agree with $s_*^++R$ on $X^+ \cup \mc{E}_+$.

Let $\widetilde{\lambda}^\pm$ denote the 1-form on the completion
$\overline{X^\pm}$ defined in \eqref{eqn:widetildelambda}.  Define a
1-form $\widetilde{\lambda}_R$ on $\overline{X_R}$ by
\begin{equation}
\label{eqn:lambdatilde}
\widetilde{\lambda}_R =
\left\{\begin{array}{cl}
e^{-2R}\widetilde{\lambda}^- & \mbox{on $\mc{E}_-\cup X^-$},\\
e^{2s_*}\lambda_0 & \mbox{on $[-R,R]\times Y_0$},\\
e^{2R}\widetilde{\lambda}^+ & \mbox{on $X^+\cup\mc{E}_+$}.
\end{array}\right.
\end{equation}
When $R$ is fixed, we usually denote $\widetilde{\lambda}_R$ simply by
$\widetilde{\lambda}$.  Define
$\widetilde{\omega}=d\widetilde{\lambda}$ as before.  Note that
$\left(X_R,\widetilde{\lambda}|_{X_R}\right)$ is an exact symplectic
cobordism from $(Y_+,e^{2R}\lambda_+)$ to $(Y_-,e^{-2R}\lambda_-)$.
However below, references to the ``length'' of Reeb orbits on $Y_\pm$
refer to the length as defined by $\lambda_\pm$, which does not depend
on $R$.  We denote this length as usual by $\mc{A}$.

Let $J_\pm$ and $J_0$ be symplectization-admissible almost complex
structures for $\lambda_\pm$ and $\lambda_0$ respectively.  Let
$J^\pm$ be strongly cobordism-admissible almost complex structures on
$X^\pm$ restricting to $J_\pm$ and $J_0$ on the ends.  These determine
a strongly cobordism-admissible almost complex structure $J$ on
$\overline{X_R}$ which agrees with $J^\pm$ on $\mc{E}_\pm\cup X^\pm$,
and which agrees with $J_0$ on $[-R,R]\times Y_0$.

Let $g^\pm$ be the metric on $\overline{X^\pm}$ determined by
$\lambda^\pm$ and $J^\pm$ as in \S\ref{sec:pesc}.  These extend to a
metric $g$ on $\overline{X_R}$ which agrees with $g^\pm$ on
$\mc{E}_\pm\cup X^\pm$, and which on $[-R,R]\times Y_0$ agrees with
the $\R$-invariant metric on $\R\times Y_0$ determined by $\lambda_0$
and $J_0$ according to the conventions in \S\ref{sec:pcf}.  Using the
metric $g$, define $\hat{\omega} \eqdef
\sqrt{2}\widetilde{\omega}/\left|\widetilde{\omega}\right|$ as before.

Finally, let $\mu_\pm$ and $\mu_0$ be small exact $2$-forms on $Y_\pm$
and $Y_0$.  Let $\mu^\pm$ be small exact $2$-forms on $X^\pm$ as in
Case 1 which restrict to $\mu_\pm$ and $\mu_0$ on the ends.  These
determine an exact $2$-form $\mu$ on on $\overline{X_R}$ which
restricts to $\mu^\pm$ on $\mc{E}_\pm\cup X^\pm$, and which restricts
to $\mu_0$ on $[-R,R]\times Y_0$.

Below, when we wish to consider both geometric setups simultaneously,
we let $X_*$ denote $X$ in Case 1 and $X_R$ in Case 2.  Likewise,
$\overline{X_*}$ denotes $\overline{X}$ or $\overline{X_R}$ as appropriate.

\paragraph{Variations in the data.} Proposition~\ref{prop:9x}
considers variations in the given data $(\lambda,J,\mu)$.  To clarify,
fix $\varepsilon>0$ for use in defining neighborhoods as in
\eqref{eqn:N-} and \eqref{eqn:N+} of the positive and negative
boundaries of $X$ in Case 1 or $X^\pm$ in Case 2, and for defining the
data on their completions as in \S\ref{sec:pesc}.  A ``variation''
then consists of data $(\lambda',J',\mu')$ which are constrained to be
usable above for given data $(\lambda_\pm,J_\pm,\mu_\pm)$ (and
$(\lambda_0,J_0,\mu_0)$ in Case 2), with the further requirement that
$\lambda'$ agree with $\lambda$ on the above boundary neighborhoods.
The proposition refers to a ``neighborhood'' of $(\lambda,J,\mu)$;
this consists of data $(\lambda',J',\mu')$ as above in a
$C^\infty$-Frechet neighborhood of $(\lambda,J,\mu)$.

\paragraph{Index and action difference.}
Let
$\frak{d}$ be a instanton solution to \eqref{eqn:tsw4} on
$\overline{X_*}$.  We now introduce two numbers associated to
$\frak{d}$ which will be needed below.

First, let $\frak{i}_{\frak{d}}$ denote the index of the instanton
$\frak{d}$.  This is the Fredholm index of the operator $D_{\frak{d}}$
obtained from linearizing the equations \eqref{eqn:tsw4} at
$\frak{d}$.

Second, recall that the solutions to the perturbed Seiberg-Witten
equations in \eqref{eqn:tsw3} are the critical points of the
``Seiberg-Witten action'' functional \eqref{eqn:a} on the space of
pairs $(A,\psi)$.  As in \S\ref{sec:smoothcob}, let $\frak{c}_\pm$
denote the $s_*\to\pm\infty$ limit of $\frak{d}$.  Let
$\frak{a}_\pm$ denote the $Y_\pm$ version of the action
functional.  We then define
\[
A_\frak{d} \eqdef \frak{a}_-(\frak{c}_-) - \frak{a}_+(\frak{c}_+).
\]
Note that while the functionals $\frak{a}_\pm$ are generally not gauge
invariant, the quantity $A_\frak{d}$ is still gauge invariant.

\paragraph{Spinor decomposition.} If $\psi$ is a section of $\Sp_+$, we write
$\psi=(\alpha,\beta)$, where $\alpha$ and $\beta$ respectively denote the $E$ and
$K^{-1}E$ components of $\psi$ in the decomposition \eqref{eqn:seke4}.

\paragraph{Generalized broken $J$-holomorphic curves.}  If
$\Theta_\pm$ are orbit sets in $Y_\pm$, we define a {\em generalized
  broken $J$-holomorphic curve\/} from $\Theta_+$ to $\Theta_-$ to be
a collection of holomorphic curves $\{C_k\}_{1\le k\le N}$ as in
Definition~\ref{def:broken}, but with one difference: Recall that in
Definition~\ref{def:broken} the curves $C_k$ for $k>k_0$ are in
$\R\times Y_+$, the curve $C_{k_0}$ is in $\overline{X_*}$, and the
curves $C_k$ for $k<k_0$ are in $\R\times Y_-$.  The difference is
that now we do {\em not\/} mod out by $\R$-translation of the curves
$C_k$ in $\R\times Y_\pm$ for $k\neq k_0$.  Note that if $k>k_0$ we
can then identify $C_k\cap([0,\infty)\times Y_+)$ with a subset of
$\overline{X_*}$, and if $k<k_0$ we can likewise identify
$C_k\cap((-\infty,0]\times Y_-)$ with a subset of $\overline{X_*}$.

\begin{proposition}
\label{prop:9x}
Fix a data set consisting of $(\lambda,J,\mu)$.
Let $\mc{K}\ge 1$ be given, and assume that all Reeb
orbits of $\lambda_\pm$ (and $\lambda_0$ in Case 2) of length less
than or equal to $(2\pi)^{-1}\mc{K}$ are nondegenerate.  Then there exist:
\begin{description}
\item{(i)}
$\kappa\ge 1$
\item{(ii)}
A neighborhood of the given data set,
\item{(iii)}
Given $\delta>0$, a number $\kappa_\delta\ge 1$,
\end{description}
such that the following holds: Take $r\ge \kappa_\delta$ and a data
set from the given neighborhood (and take any $R$ in Case 2) so as to
define \eqref{eqn:tsw4} on $\overline{X_*}$.  Let
$\frak{d}=(A,\psi=(\alpha,\beta))$ denote an instanton solution to
this version of \eqref{eqn:tsw4} with $A_{\frak{d}}\le \mc{K}r$ or
$\frak{i}_{\frak{d}}> -\mc{K}r$.  Assume also that $\energy(\frak{c}_+)\le
\mc{K}$.  Then:
\begin{itemize}
\item
$\energy(\frak{c}_-)\le \energy(\frak{c}_+)+\delta$.
\item
Each point in $\overline{X_*}$ where $|\alpha|\le 1-\delta$ has
distance less than $\kappa r^{-1/2}$ from $\alpha^{-1}(0)$.
\item
There exist
\begin{description}
\item{(a)}
a positive integer $N\le\kappa$ and a partition of $\R$ into intervals
$I_1<\cdots<I_N$, each of length at least $2\delta^{-1}$,  with
$[-1,1]\subset I_{k_0}$, and
\item{(b)} a generalized broken $J$-holomorphic curve $\{C_k\}_{1\le k
    \le N}$ in $\overline{X_*}$ from an orbit set $\Theta^+$ in $Y_+$
  to an orbit set $\Theta^-$ in $Y_-$
\end{description}
such that for each $k=1,\ldots,N$, with the above identifications of
subsets of $C_k$ with subsets of $\overline{X_*}$, we have
\[
\sup_{z\in C_k\cap s_*^{-1}(I_k)}\op{dist}(z,\alpha^{-1}(0)) +
\sup_{z\in\alpha^{-1}(0)\cap s_*^{-1}(I_k)}\op{dist}(C_k,z) < \delta.
\]
In particular, $\Theta_\pm$ is the orbit set determined by
$\frak{c}_\pm$ under the map in Proposition~\ref{prop:Liso}(a).
\end{itemize}
\end{proposition}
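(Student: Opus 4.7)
The plan is to prove Proposition~\ref{prop:9x} by a contradiction/compactness argument, relying on the a priori pointwise estimates and the Seiberg--Witten-to-Gromov conversion developed in the Taubes papers \cite{swtogr,tw1,e1,e2,e3,e4}. First I would establish uniform pointwise estimates on solutions $\frak{d}=(A,\psi=(\alpha,\beta))$, uniformly in $r\ge\kappa$, in the $C^\infty$-neighborhood of the data set, and (in Case 2) in the neck length $R$. Applying the Weitzenb\"ock formula and the maximum principle to \eqref{eqn:tsw4}, following the arguments in \cite[Lem.\ 2.3]{e4} and \cite[\S2]{swtogr}, gives
\[
|\alpha|^2 \le 1+\kappa r^{-1},\qquad |\beta|^2\le \kappa r^{-1}(1-|\alpha|^2) + \kappa r^{-2},
\]
together with pointwise bounds on $|\nabla_A\psi|$ and, crucially, the statement that $|\alpha|\ge 1-\delta$ outside a $\kappa r^{-1/2}$-neighborhood of $\alpha^{-1}(0)$. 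This is precisely the second bullet point. Pairing the curvature equation with $\widetilde\lambda$ and using $A_\frak{d}\le\mc{K}r$ (or converting the index assumption $\frak{i}_\frak{d}>-\mc{K}r$ to an action bound via the APS index formula as in \cite[\S4]{e1}) yields an $L^1$ bound on $F_A$ and hence a local area bound on $\alpha^{-1}(0)$, in terms of $\mc{K}$ alone.

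For the first bullet I would exploit the gradient-flow structure on the ends: $\frak{a}$ is monotone decreasing along $s$, and a calculation modelled on \cite[\S4b]{e1} shows $|\energy(\frak{c}_-)-\energy(\frak{c}_+)|=O(r^{-1/2})$, which is $<\delta$ once $r$ is sufficiently large. Now for the core of the argument: supposing the third bullet fails for some $\delta>0$, there is a sequence $r_n\to\infty$, data $(\lambda_n,J_n,\mu_n)\to(\lambda,J,\mu)$ in $C^\infty$ (and, in Case 2, possibly $R_n\to\infty$), and instantons $\frak{d}_n$ whose zero sets $Z_n=\alpha_n^{-1}(0)$ are not uniformly close to any broken curve of bounded complexity. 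The uniform area bounds together with monotonicity (analogous to Lemma~\ref{lem:monotonicity}) let me pass to a subsequence along which $Z_n$ converges both weakly as currents and pointwise, in the sense of \eqref{eqn:pointwise}, to an integer rectifiable current $C$ on $\overline{X_*}$. The positive-cohomology-assignment technique from \cite[\S6]{swtogr} then shows $C$ is a $J$-holomorphic subvariety, and the vortex analysis near Reeb orbits from \cite[\S6]{tw1} and \cite[\S4]{e2} identifies the ends of $C$ with the orbit sets $\Theta^\pm$ attached to $\frak{c}_\pm$ by Proposition~\ref{prop:Liso}(a).

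To extract the level structure I would partition $\R$ according to the intermediate action values attained by $\energy(\frak{c}_s)$; the number of such values is bounded by the total action drop $\le\mc{K}$ divided by the positive minimum gap among the (finitely many) action values of orbit sets of length at most $(2\pi)^{-1}\mc{K}$, which is positive by the nondegeneracy hypothesis. This gives $N\le\kappa$ uniformly. Inside each window $I_k$, translating to a standard frame and passing to a further subsequence produces $C_k$, which by construction lives in $\R\times Y_+$ for $k>k_0$, in $\overline{X_*}$ for $k=k_0$, and in $\R\times Y_-$ for $k<k_0$, with matching asymptotic orbit sets between consecutive levels. The length $2\delta^{-1}$ constraint on $I_k$ and the requirement $[-1,1]\subset I_{k_0}$ are achieved by choosing the window subdivision to occur only where the Seiberg--Witten action is demonstrably near an intermediate critical value, using the bound on action gaps.

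The principal obstacle will be upgrading the weak current limit $C$ to a genuine $J$-holomorphic curve with prescribed multiplicities at each Reeb orbit end, especially for multiply covered orbits where the transverse local model involves the degree-$m$ vortex moduli space $\frak{C}_m$ of \cite[\S2]{e2}. One must show that the Seiberg--Witten solutions $(A_n,\psi_n)$ do not exhibit vortex-bubbling along the necks except where it corresponds honestly to holomorphic breaking, and that they remain uniformly well modelled by a map into $\frak{C}_m$ on the entire length of each window. Adapting these arguments from the symplectization setting of \cite{e2,e3,e4} to the present cobordism-with-possibly-stretched-neck situation, uniformly in a $C^\infty$-neighborhood of the data set, is the main technical content of the proposition.
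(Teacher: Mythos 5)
Your overall strategy is the right one in spirit (a priori pointwise bounds, conversion of the zero set of $\alpha$ to a current, positive cohomology assignments, vortex models near Reeb orbits), but there is a genuine gap at the single most delicate step, which you dispose of in one sentence: the claim that pairing the curvature equation with $\widetilde{\lambda}$ and using $A_{\frak{d}}\le\mc{K}r$ (or the index bound) ``yields an $L^1$ bound on $F_A$ and hence a local area bound on $\alpha^{-1}(0)$, in terms of $\mc{K}$ alone.'' What these hypotheses give directly is a bound of the form $\int_{s_*^{-1}(I)}|F_A|^2\le \kappa r^2$ on unit slabs (Lemma~\ref{lem:3.3x}), hence $\underline{M}(s)=r\int_{s_*^{-1}[s-1,s+1]}(1-|\alpha|^2)=O(r)$ --- not the $r$-\emph{independent} bound $\underline{M}(s)\le c(\mc{K})$ that the entire Gromov-compactness step requires. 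The obstruction is that the energy $\energy(s)$ at intermediate slices is not monotonically controlled from the ends: because $\widetilde{\lambda}$ carries exponential weights, Stokes' theorem only yields $\energy(s_+)\ge e^{-2(s_+-s_-)}\energy(s_-)-O(1)$, which says nothing about $\energy(s)$ for fixed $s$ as one takes $s_+\to\infty$ with $\energy(\frak{c}_+)\le\mc{K}$. In the paper this uniform bound is the content of Proposition~\ref{prop:5.1x}, and its proof is the longest part of the argument: one must decompose the action into Chern--Simons, energy, and Dirac pieces, use the spectral-flow estimates of \cite[Prop.\ 4.10--4.11]{e1} to convert the index hypothesis into the gauge-invariant bound \eqref{eqn:Abound}, derive the self-improving inequalities \eqref{eqn:Ebound1}--\eqref{eqn:Ebound3} bounding $r\underline{\energy}(s)$ by action differences plus $r^{2/3}\sup|\underline{\energy}|^{4/3}$, choose gauge transformations to control the harmonic parts $\frak{p}_\pm,\frak{p}_0$, and then propagate the bound in stages from the $s\to+\infty$ end across $X^+$, the neck, and $X^-$. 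Without this, the currents in your compactness argument need not have locally bounded mass and the limit $C$ need not exist.

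A secondary error: the two-sided estimate $|\energy(\frak{c}_-)-\energy(\frak{c}_+)|=O(r^{-1/2})$ is false. Across a nontrivial cobordism level the energy genuinely drops by roughly $2\pi(\mc{A}(\Theta^+)-\mc{A}(\Theta^-))>0$, which is $r$-independent; only the one-sided inequality $\energy(\frak{c}_-)\le\energy(\frak{c}_+)+\delta$ holds, and in the paper it is again a consequence of the uniform $\underline{\energy}$ bound rather than an input to it. Your mechanism for bounding the number of levels $N$ by counting intermediate critical values of the action, and your identification of the multiply-covered-orbit vortex analysis as a difficulty, are both consistent with what the paper does (cf.\ \cite[\S 5e]{e4}), but both presuppose the missing mass bound.
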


\begin{remark}
\label{rem:stable}
The constants $\kappa$ and $\kappa_\delta$ in
Proposition~\ref{prop:9x} can be chosen to be {\em stable\/}, by which
we mean that they satisfy the conclusions of the proposition for data
in some neighborhood of the given data $(\lambda,J,\mu)$.  Various
lemmas in the proof of Proposition~\ref{prop:9x} below also refer to
constants which are stable in this sense.  In general, we omit proofs
of stability, as these follow from the proofs below with only cosmetic
changes.
\end{remark}

\subsection{Preliminaries to the proof of Proposition~\ref{prop:9x}}

An analogue of Proposition~\ref{prop:9x} for a symplectization
$\R\times Y$ with $\R$-invariant $(J,\mu)$ was proved in \cite[Prop.\
5.5]{e4}.  A slight difference is that \cite[Prop.\ 5.5]{e4} applies
only to a single data set $(\lambda,J,\mu)$, while
Proposition~\ref{prop:9x} applies to every suitable data set
$(\lambda',J',\mu')$ in some neighborhood of a given $(\lambda,J,\mu)$
and to every $R$ where applicable.  The proof of
Proposition~\ref{prop:9x} below mostly follows the proof of
\cite[Prop.\ 5.5]{e4}, indicating the necessary modifications for our
situation.  Before starting the proof, we need to make a few more
definitions.

\paragraph{The
  spectral flow function.}

Returning to the setting of the beginning of \S\ref{sec:9xprelim},
given $r\ge 1$, a pair $\frak{c}=(A,\psi)$ of connection on $E$ and
section of $\Sp$ determines a self-adjoint operator
$\mc{L}_{r,\frak{c}}$ defined in \cite[Eq.\ (3.8)]{e2}.  Roughly
speaking this operator is the Hessian of $\frak{a}$ at $\frak{c}$
(after modding out by gauge transformations).  Let us call a pair
$(r,\frak{c})$ {\em nondegenerate\/} if the corresponding operator
$\mc{L}_{r,\frak{c}}$ has trivial kernel.

Now fix a reference pair $\frak{c}_*$ such that the pair
$(1,\frak{c}_*)$ is nondegenerate, and fix $r\ge 1$. If $\frak{c}$ is
such that the pair $(r,\frak{c})$ is nondegenerate, then we define the
{\em spectral flow function\/} $f(\frak{c})$ to be the spectral flow
from $\mc{L}_{1,\frak{c}_*}$ to $\mc{L}_{r,\frak{c}}$.

If the spin-c structure has non-torsion first Chern class, i.e.\ if
$c_1(\det(\Sp))$ is not torsion in $H^2(Y;\Z)$, then the functional
$\frak{a}$ is not invariant under the action of the gauge group
$C^\infty(Y;S^1)$, and neither is the spectral flow function $f$.
However the combination
\begin{equation}
\label{eqn:af}
\frak{a}^f(\cdot)\eqdef \frak{a}(\cdot) - 2\pi^2 f(\cdot)
\end{equation}
is always gauge invariant.

\paragraph{Index and spectral flow.}

Returning to the setting of Proposition~\ref{prop:9x}, we now relate
the index of an instanton to the spectral flow functions on $Y_\pm$.
Fix a spin-c structure $\Sp$ on $\overline{X_*}$ and let $E$ be
defined by the splitting \eqref{eqn:seke4}.  Fix a reference pair
$\frak{d}_*=(A_*,\psi_*)$ of connection on $E$ and section of $\Sp_+$
with the following properties: First, the restriction to the $\pm s>1$
portion of $\overline{X_*}$ is pulled back from a configuration
$\frak{c}_{\pm *}$ on $Y_\pm$.  Second, require that the pair
$(r=1,\frak{c}_{\pm *})$ is nondegenerate in the sense described
above.  This guarantees that the operator $D_{\frak{d}_*}$ is
Fredholm.  (Note that this operator is defined regardless of whether
$\frak{d}_*$ solves the Seiberg-Witten equations \eqref{eqn:tsw4}.)
Let $\imath_*$ denote the index of $D_{\frak{d}_*}$.  Let $f_\pm$
denote the spectral flow function on $Y_\pm$ defined using
$\frak{c}_{\pm *}$ as the reference pair.

If $\frak{d}$ is an instanton solution to \eqref{eqn:tsw4} with
nondegenerate $s_*\to\pm\infty$ limits $\frak{c}_\pm$, then it follows
from \cite{aps1} that its index
is given by\footnote{In a symplectization with $\R$-invariant
  $(J,\mu)$, one can take $\frak{d}_*$ to be independent of the $\R$
  factor, so that $\frak{i}_*=0$.  In this case $\frak{i}_{\frak{d}}$
  agrees with the quantity $f_{\frak{d}}$ in \cite{e4}.
}
\begin{equation}
\label{eqn:if}
\frak{i}_{\frak{d}} = \frak{i}_* + f_+(\frak{c}_+) - f_-(\frak{c}_-).
\end{equation}

\subsection{Estimates on instantons}
\label{sec:estimates}

To begin the proof of Proposition~\ref{prop:9x}, we now establish
various estimates for instanton solutions to \eqref{eqn:tsw4} on
$\overline{X_*}$, parallel to \cite[\S3]{e4}, where analogous
estimates are derived for instantons on a symplectization.  Assume in
what follows that $(\lambda_\pm,J_\pm,\mu_\pm)$ (and
$(\lambda_0,J_0,\mu_0,R)$ in Case 2) are given.  Fix data
$(\lambda,J,\mu)$ as in \S\ref{sec:9xprelim}.  Below, $c_0$ denotes a
number that is greater than $1$, that is stable in the sense of
Remark~\ref{rem:stable}, and that does not depend on any given
solution to \eqref{eqn:tsw3} or \eqref{eqn:tsw4} or on the value of
$r$ used to define these equations.  The value of $c_0$ can increase
from one appearance to the next.

\begin{lemma}
\label{lem:3.1x}
  (cf.\ \cite[Lem.\ 3.1]{e4}) There exists a stable $\kappa\ge 1$ such
  that if $r\ge \kappa$ and if $(A,\psi=(\alpha,\beta))$ is an
  instanton solution to \eqref{eqn:tsw4} on $\overline{X_*}$, then
\[
\begin{split}
|\alpha| &\le 1+\kappa r^{-1},\\
|\beta|^2 &\le \kappa r^{-1}(1-|\alpha|^2) + \kappa^2r^{-2}.
\end{split}
\]
\end{lemma}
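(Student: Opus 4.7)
The plan is to prove both bounds by combining the standard Weitzenb\"ock formula for the Dirac operator with the maximum principle, in direct parallel with the three-dimensional version \cite[Lem.\ 2.3]{e4} that was already invoked in the proof of Lemma~\ref{lem:diff2}. The only new issue is arranging that the maximum principle can be applied on the noncompact manifold $\overline{X_*}$, which is where the asymptotic analysis of instantons enters.

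Substituting $D_A\psi=0$ into the Weitzenb\"ock identity
\[
D_A^*D_A\psi = \nabla_A^*\nabla_A\psi + \tfrac{s_g}{4}\psi + \tfrac12 cl(F_A^+)\psi
\]
and using the curvature equation in \eqref{eqn:tsw4} to replace $F_A^+$, I would obtain pointwise differential inequalities for the components of $\psi$ with respect to the splitting $\Sp_+=E\oplus K^{-1}E$. Since $cl(\hat\omega)$ acts as $-2i$ on $E$ and $+2i$ on $K^{-1}E$, and $cl(\rho(\psi))$ acts diagonally by a polynomial in $|\alpha|^2, |\beta|^2$, pairing with $\alpha$ and $\beta$ respectively yields schematic inequalities
\[
\tfrac12\, d^*d\,|\alpha|^2 + |\nabla_A\alpha|^2 + r\bigl(|\alpha|^2+|\beta|^2-1\bigr)|\alpha|^2 \le c_0\bigl(|\alpha|^2+|\beta|^2\bigr),
\]
\[
\tfrac12\, d^*d\,|\beta|^2 + |\nabla_A\beta|^2 + r\bigl(|\alpha|^2+|\beta|^2+1\bigr)|\beta|^2 \le c_0\bigl(|\alpha|^2+|\beta|^2\bigr),
\]
where $c_0$ absorbs $|s_g|$, $|F_{A_{K^{-1}}}^+|$, and $|\mu_*|$. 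These are the four-dimensional analogues of the formulas in \cite[Lem.\ 2.3]{e4}.

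Next I would apply the maximum principle as follows. By the asymptotic convergence of an instanton to three-dimensional Seiberg-Witten solutions on $Y_\pm$, the bounds of \cite[Lem.\ 2.3]{e4} already hold in the ends, so $|\psi|^2$ is bounded on $\overline{X_*}$ and it suffices to rule out positive interior maxima of $|\alpha|^2 - 1$ and of $|\beta|^2 - c_0 r^{-1}(1-|\alpha|^2) - c_0^2 r^{-2}$. At a putative interior maximum $x_0$ of the first function, the inequality for $|\alpha|^2$ collapses (after discarding the nonnegative $|\beta|^2$ contribution on the left) to $|\alpha|^2(x_0)\le 1+c_0 r^{-1}$, establishing the first bound. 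Feeding this into a suitable linear combination of the two inequalities above and applying the standard ``Taubes vortex trick'' to absorb the $\beta$-terms against the $|\nabla_A\alpha|^2$ contribution from the $\alpha$ inequality then rules out a positive interior maximum of the second function, yielding the second bound.

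The one point requiring care beyond the three-dimensional argument is the stability claim of Remark~\ref{rem:stable} and, in Case 2 of Proposition~\ref{prop:9x}, uniformity of $\kappa$ in the neck length $R$. Both reduce to uniform $C^0$ bounds on $s_g$, $F_{A_{K^{-1}}}^+$, and $\mu_*$. In Case 2 the neck $[-R,R]\times Y_0\subset X_R$ carries data pulled back from $\R\times Y_0$, so these quantities are $R$-independent there; elsewhere they are bounded on the compact pieces $X^\pm$ and by the fixed choices on the ends $\mc{E}_\pm$. The same quantities depend continuously on $(\lambda,J,\mu)$ in the $C^\infty$ topology, so $c_0$ and hence $\kappa$ can be chosen uniformly in a small neighborhood of the given data set. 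The main obstacle is thus essentially bookkeeping rather than conceptual.
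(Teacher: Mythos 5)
Your proposal is correct and follows essentially the same route as the paper, which simply invokes the maximum principle as in \cite[Lem.\ 3.1]{e4} (Weitzenb\"ock identity plus the vortex-type differential inequalities for $|\alpha|^2$ and $|\beta|^2$), using the three-dimensional a priori bounds on the ends to control the noncompactness of $\overline{X_*}$. Your additional remarks on stability and $R$-independence match the paper's (omitted) justification of Remark~\ref{rem:stable}.
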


\begin{proof}
  This follows from the maximum principle as in \cite[Lem.\ 3.1]{e4},
  using the corresponding inequalities in the 3-dimensional case
  \cite[Lemm.\ 2.2]{tw1} to obtain the necessary bounds as
  $s\to\pm\infty$.
\end{proof}

\begin{lemma}
\label{lem:3.2x}
(cf.\ \cite[Lem.\ 3.2]{e4})
There exists a stable $\kappa\ge 1$ with the following property:
Suppose that $r\ge \kappa$ and that $\frak{d}=(A,\psi)$ is an
instanton solution to \eqref{eqn:tsw4} on $\overline{X_*}$ with
$A_{\frak{d}}\le r^2$ or $\frak{i}_\frak{d}\ge -r^2$.  Then
$|F_A|\le \kappa r$.
\end{lemma}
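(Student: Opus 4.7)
The lemma is the cobordism analog of \cite[Lem.\ 3.2]{e4}, which proves the same bound for instantons on a symplectization, and my strategy is to follow that proof while isolating the features that are new in the cobordism setting. On the ends $\mc{E}_\pm$, and on the neck $[-R,R]\times Y_0$ in Case 2, the data $(\widetilde{\lambda},J,g,\hat{\omega},\mu)$ is translation-invariant, so after placing $A$ in temporal gauge \eqref{eqn:tsw4} reduces to the instanton equations \eqref{eqn:Tinstanton} and \cite[Lem.\ 3.2]{e4} applies, provided one first checks that the action or index hypothesis descends to the subsegment of $s_*$ one is examining. This descent is immediate from monotonicity of $\frak{a}$ along the flow for the action hypothesis, and from the additivity identity \eqref{eqn:if} together with the spectral-flow estimate of \cite[\S2]{e4} for the index hypothesis. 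The new work is thus confined to a compact region whose diameter is bounded independently of $r$ and, in Case 2, of $R$.

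On this compact region, the self-dual part of the curvature is controlled directly by the second line of \eqref{eqn:tsw4}: Lemma~\ref{lem:3.1x} gives $|\psi|\le 1+c_0 r^{-1}$, so $|\rho(\psi)|\le c_0$ and therefore
\[
|F_A^+| \le c_0 r.
\]
To handle $|F_A^-|$ I would first obtain an $L^2$ curvature bound from the hypothesis. Under $A_\frak{d}\le r^2$, integrating the gradient-flow identity $\frac{d}{ds_*}\frak{a}=-\|\nabla\frak{a}\|^2$ along $s_*$ (with corrections from the non-$\R$-invariance of $\sigma$, $\hat{\omega}$ and $\mu$, each controlled by Lemma~\ref{lem:3.1x}) produces the estimate
\[
\int_{\overline{X_*}}\bigl(|F_A|^2+r|D_A\psi|^2+r^2(1-|\psi|^2)^2\bigr)\le c_0 r^2
\]
on any fixed compact region; under $\frak{i}_\frak{d}\ge -r^2$, the identity \eqref{eqn:if} together with the $f_\pm$ bound of \cite[\S2]{e4} yields the same $L^2$ control.

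To turn the $L^2$ bound into the pointwise bound $|F_A|\le\kappa r$ I would invoke a Bochner argument: combining $D_A\psi=0$ with the Weitzenb\"ock formula $D_A^2=\nabla_A^*\nabla_A+s/4+\tfrac12 cl(F_A^+)$, and using the algebraic identity relating $\rho(\psi)$ to $\psi\otimes\psi^*$, one obtains an elliptic differential inequality of the form $\Delta w+c_0 r w \ge -c_0 r$ for a composite quantity $w$ built from $r^{-2}|F_A^-|^2$ and $|\nabla_A\psi|^2$ as in \cite[\S3]{e4}. Applying the standard mean value inequality on balls of radius comparable to $r^{-1/2}$ then upgrades the $L^2$ curvature bound to the desired $C^0$ bound. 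The main obstacle will be bookkeeping: producing constants that are stable in the sense of Remark~\ref{rem:stable} (uniform under $C^\infty$-small perturbations of $(\lambda,J,\mu)$) and, in Case 2, independent of $R$. Both issues reduce to checking that each estimate depends only on a finite $C^k$-norm of the data and uses cutoffs whose scale is $R$-independent; this is routine but requires care in the transition regions $|s_*|\approx R$ where the symplectization-end estimate is patched to the compact-piece estimate.
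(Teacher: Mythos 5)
Your overall architecture is the same as the paper's: the paper proves this lemma by copying the proof of \cite[Lem.\ 3.2]{e4} verbatim (pointwise bound on $F_A^+$ from the curvature equation and Lemma~\ref{lem:3.1x}, then an elliptic/mean-value upgrade of an $L^2$ curvature bound to control $F_A^-$), with the single substitution that the $L^2$ input \cite[Lem.\ 3.3]{e4} is replaced by its cobordism analogue, Lemma~\ref{lem:3.3x}. The entire new content therefore lives in that $L^2$ bound, and this is where your proposal has a genuine gap.

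First, on the compact piece $X_*$ the equations \eqref{eqn:tsw4} are not a gradient flow in any variable, so ``integrating $\frac{d}{ds_*}\frak{a}=-\|\nabla\frak{a}\|^2$ with corrections controlled by Lemma~\ref{lem:3.1x}'' is not available there: there is no flow identity to correct. What replaces it is Lemma~\ref{lem:3.4x}(b), a genuinely four-dimensional integration by parts (the Weitzenb\"ock formula weighted by the function $\sigma$, combined with the norm-square of the curvature equation and the boundary terms of $\frak{a}_\eta$) which bounds $\int_X(|F_A|^2+r|\nabla_A\psi|^2)$ by the action drop across $X$ plus $\kappa r$; the cross terms there are absorbed using the smallness of $\mu$, not just the spinor bound. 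Second, your treatment of the index hypothesis is too quick: the spectral-flow estimate (Lemma~\ref{lem:3.5x}, via \cite[Prop.\ 4.10]{e1}) converts $\frak{i}_{\frak{d}}\ge -r^2$ into an action bound only up to terms involving $|E(A_\pm)|^{4/3}$, and neither energy is assumed bounded in this lemma. Even granting the argument of \cite[Lem.\ 3.3]{e4} for windows far from $I_0$, the length-$2$ windows meeting the compact piece require a separate argument: the paper picks slices $s_\pm$ at distance $O((\ln r)^{c_0})$ from $I_0$, puts the connection in a gauge where $\hat a|_{s_*=s_\pm}$ has controlled harmonic and co-closed parts, pairs with the closed form $\nu_E$, and applies Stokes' theorem over $s_*^{-1}[s_-,s_+]$ to bound the action drop across $X_*$ by $c_0r^2$. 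Nothing in your proposal plays this role, and without it the $L^2$ bound on slabs containing $X_*$ --- exactly the region you identify as ``the new work'' --- is not established. (A smaller point: you cannot literally apply \cite[Lem.\ 3.2]{e4} on the ends, since the restriction of $\frak{d}$ to a half-infinite cylinder is not an instanton between critical points; one must rerun its proof, which is why the paper phrases the reduction as it does.)
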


\begin{proof}
  Copy the proof of \cite[Lem.\ 3.2]{e4}, replacing \cite[Lem.\
  3.3]{e4} in that argument with Lemma~\ref{lem:3.3x} below.
\end{proof}

To state the next lemma, let $\nabla_A$ denote the covariant
derivative on $\Sp_+=E\oplus K^{-1}E$ determined by the connection $A$
on $E$ together with the distinguished connection $A_{K^{-1}}$ on
$K^{-1}$ from \S\ref{sec:pesc}.  Note that under the identification
\eqref{eqn:connections}, the difference
$\nabla_A-\nabla_\A\in\Omega^1(\overline{X_*};\op{End}(\Sp_+))$ is
bounded in $C^0$ and does not depend on $A$.

\begin{lemma}
\label{lem:3.3x}
(cf.\ \cite[Lem.\ 3.3]{e4}) There exists a stable $\kappa\ge 1$ with
the following property: Suppose that $r\ge\kappa$ and that
$\frak{d}=(A,\psi)$ is an instanton solution to \eqref{eqn:tsw4} on
$\overline{X_*}$ with $A_{\frak{d}}\le r^2$ or $\frak{i}_\frak{d}\ge
-r^2$.  Let $I\subset\R$ denote an interval of length $2$.  Then
\[
\int_{s_*^{-1}(I)}(|F_A|^2 + r|\nabla_A\psi|^2) \le \kappa r^2.
\]
\end{lemma}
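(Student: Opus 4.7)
The plan is to adapt the argument of \cite[Lem.~3.3]{e4}, which proves an analogous slab estimate on a symplectization. Since Lemma~\ref{lem:3.2x} depends on Lemma~\ref{lem:3.3x}, the pointwise bound $|F_A|\le \kappa r$ is not available; however the curvature equation in \eqref{eqn:tsw4} together with Lemma~\ref{lem:3.1x} gives the pointwise bound $|F_A^+|\le c_0 r$ at once, and that is the only pointwise curvature input I will need. The two integrands $r|\nabla_A\psi|^2$ and $|F_A|^2$ will be handled separately: the first via the Weitzenböck identity combined with a cutoff in the $s_*$-direction, and the second via the pointwise decomposition $|F_A|^2\,d\mathrm{vol} = 2|F_A^+|^2\,d\mathrm{vol} - F_A\wedge F_A$ together with a Chern-Simons boundary computation.

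To estimate the spinor term, I would fix a smooth cutoff $\chi$ supported in a length-$4$ enlargement $I'\supset I$ with $\chi\equiv 1$ on $I$ and $|d\chi|\le c_0$, pulled back to $\overline{X_*}$ via $s_*$. The spin-c Weitzenböck identity has the schematic form $\nabla_A^*\nabla_A\psi = D_A^*D_A\psi - \frak{R}\,\psi$ with $|\frak{R}|\le c_0(|F_A^+|+1)$. Using $D_A\psi = 0$, pairing with $\chi^2\psi$, integrating by parts, and absorbing the $d\chi$ cross term by Cauchy-Schwarz yields
\[
\tfrac{1}{2}\int_{\overline{X_*}}\chi^2|\nabla_A\psi|^2 \le c_0\int\chi^2(|F_A^+|+1)|\psi|^2 + c_0\int|d\chi|^2|\psi|^2.
\]
The pointwise bounds $|F_A^+|\le c_0 r$ and $|\psi|\le 1+c_0 r^{-1}$ (Lemma~\ref{lem:3.1x}), together with the uniform volume bound on $s_*^{-1}(I')$, then give $\int\chi^2|\nabla_A\psi|^2\le c_0 r$, so that $r\int_{s_*^{-1}(I)}|\nabla_A\psi|^2\le c_0 r^2$.

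For the curvature term, the self-dual contribution $2\int_{s_*^{-1}(I)}|F_A^+|^2$ is immediately bounded by $c_0 r^2$ from the pointwise estimate. For the anti-self-dual contribution, write $F_A\wedge F_A = d\bigl((A-A_E)\wedge F_A + F_{A_E}\wedge(A-A_E)\bigr) + F_{A_E}\wedge F_{A_E}$ and apply Stokes on a slightly enlarged slab $s_*^{-1}(I')$. This reduces $\int F_A\wedge F_A$ to a bounded topological term plus two boundary integrals over $3$-dimensional level sets $\{s_* = t_\pm\}$ for some $t_\pm\in I'\setminus I$, each agreeing (up to bounded corrections involving $A-A_E$ and $F_{A_{K^{-1}}}$) with the Chern-Simons quantity $cs(A|_{\{s_*=t_\pm\}})$ of \eqref{eqn:a}.

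The principal obstacle is bounding these boundary Chern-Simons values by $c_0 r^2$ under either alternative hypothesis. I would choose $t_\pm$ by a pigeonhole over a unit-length window inside $I'\setminus I$, so that the chosen $cs$-value differs from the windowed average by a slab integral of $F_A\wedge F_A$ already controlled by the self-dual pointwise estimate. Under the hypothesis $A_{\frak{d}}\le r^2$, the formula \eqref{eqn:a} for $\frak{a}$ together with $D_A\psi = 0$ expresses the averaged $cs$-value in terms of $\frak{a}(\frak{c}_\pm)$, $r\energy(A)$ at the averaged slice, $\frak{e}_\mu(A)$, and a Dirac cross term coming from $\partial_s\psi$ on thin slabs; each term is bounded by $c_0 r^2$ by combining $A_{\frak{d}}\le r^2$, standard $r$-dependent bounds on $\frak{a}(\frak{c}_\pm)$, and the spinor estimate of the previous paragraph. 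Under the alternative hypothesis $\frak{i}_{\frak{d}}\ge -r^2$, the same bound follows by combining \eqref{eqn:if} with the comparison between the spectral flow function and the gauge-invariant action from \cite[\S2]{e2}. Stability in the sense of Remark~\ref{rem:stable} then follows since every constant along the way depends only on $C^0$-bounds on the data $(\lambda,J,\mu)$ and on fixed geometric quantities on $\overline{X_*}$.
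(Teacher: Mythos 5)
Your treatment of the spinor term (Weitzenb\"ock plus a cutoff in $s_*$, using $|F_A^+|\le c_0r$ and Lemma~\ref{lem:3.1x}) is fine, and the identity $|F_A|^2\,d\mathrm{vol}=2|F_A^+|^2\,d\mathrm{vol}-F_A\wedge F_A$ is a reasonable starting point for the curvature term. The gap is in the Chern--Simons boundary step. Under the hypotheses of the lemma you control only the \emph{difference} $A_{\frak{d}}=\frak{a}_-(\frak{c}_-)-\frak{a}_+(\frak{c}_+)$ (or the index), not the quantities your argument actually needs: there are no ``standard $r$-dependent bounds on $\frak{a}(\frak{c}_\pm)$'' individually (the functional \eqref{eqn:a} is not gauge invariant and its critical values are not a priori bounded in terms of $r$), and the slice energy $\energy(A|_{s_*=t})$ at an interior slice of an instanton is \emph{not} known to be $O(r)$ at this stage --- establishing slice-wise energy bounds is exactly the content of the function $\underline{M}$ and Proposition~\ref{prop:5.1x}, which is proved much later, uses Lemma~\ref{lem:3.3x} itself, and requires the extra hypothesis $\energy(\frak{c}_+)\le\mc{K}$. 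So the sentence asserting that each term in your expression for the boundary $cs$-values is bounded by $c_0r^2$ has no support. This is precisely why the paper's proof never estimates $cs$ or $\frak{a}$ at a single slice: it works only with differences of $\frak{a}$ between two slices, which are controlled by the gradient-flow monotonicity (Lemma~\ref{lem:3.4x}(a)) and the integration by parts across the compact piece (Lemma~\ref{lem:3.4x}(b)), so that the hypothesis on $A_{\frak{d}}$ can be used directly. Your pigeonhole step is also circular as stated: the discrepancy between the chosen slice value and the windowed average is an integral of $F_A\wedge F_A=(|F_A^+|^2-|F_A^-|^2)\,d\mathrm{vol}$ over part of the slab, whose $-|F_A^-|^2$ part is exactly the uncontrolled quantity; the paper breaks this circularity by pairing $\hat{a}$ against the bounded harmonic form $\nu_E$, so the unknown enters only through $(\int|F_A|^2)^{1/2}$ and can be absorbed, see \eqref{eqn:actionDifference}--\eqref{eqn:actionBound}.

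The second, independent gap is the index alternative, which is the genuinely hard case and which you dispatch in one sentence. When only $\frak{i}_{\frak{d}}\ge -r^2$ is assumed, the conversion of the index bound into action control goes through Lemma~\ref{lem:3.5x} (i.e.\ \cite[Prop.\ 4.10]{e1}); since the only automatic bound on the limit energies is $\energy(\frak{c}_\pm)\le c_0r$, that lemma yields $\frak{a}(\frak{c}_-)-\frak{a}(\frak{c}_+)\le c_0r^2(\ln r)^{\kappa}$, which overshoots $r^2$. This is why the paper, even with Lemmas~\ref{lem:3.4x} and \ref{lem:3.5x} in hand, must import the full argument of \cite[Lem.\ 3.3]{e4} to handle slabs at distance at least $c_0(\ln r)^{c_0}$ from $I_0$, and then add a further argument (the harmonic reference connection and the self-consistent absorption described above) for the slabs near the compact piece, which have no analogue in the symplectization setting. ``Combining \eqref{eqn:if} with the comparison between spectral flow and the gauge-invariant action'' does not bridge this. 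As written, your proposal at best addresses the case $A_{\frak{d}}\le r^2$ (which the paper notes follows immediately from Lemma~\ref{lem:3.4x}(a)--(c)), and even that only modulo the unjustified slice-wise bounds.
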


The proof of Lemma~\ref{lem:3.3x} requires two additional lemmas.  To
state these, let $I_0=\{0\}$ in Case 1 and let $I_0=\{-R,R\}$ in Case 2.
On $s_*^{-1}(\R\setminus I_0)$, define two $1$-forms by
\begin{align}
\label{eqn:mcb}
\mc{B}_{(A,\psi)} &\eqdef {*}F_A - r(\tau(\psi) - ia) - i{*}\mu +
\frac{1}{2}{*}F_{A_K^{-1}},\\
\nonumber
E_A &\eqdef F_A\left(\frac{\partial}{\partial s},\cdot\right).
\end{align}
Here $*$ denotes the three-dimensional Hodge star, $a$ denotes the
relevant contact form ($\lambda_\pm$ or $\lambda_0$), and $s$ denotes
the $\R$ coordinate on $\R\times Y_\pm$ or $\R\times Y_0$.  Also let
$\nabla_{A,s}$ denote the covariant derivative with respect to the
connection $\nabla_A$ on $\Sp_+$ in the direction $\partial/\partial
s$. We then have:

\begin{lemma}
\label{lem:3.4x}
(cf.\ \cite[Lem.\ 3.4]{e4}) There exists a stable $\kappa\ge 1$ such
that if $r\ge\kappa$ and if $\frak{d}=(A,\psi)$ is an instanton
solution to \eqref{eqn:tsw4}, then the following hold:
\begin{description}
\item{(a)}
Suppose that $s_+\ge s_-$ are in the same component of $\R\setminus
I_0$.  Then
\begin{equation}
\label{eqn:3.4}
\begin{split}
&\frak{a}(\frak{d}|_{s_*=s_-}) - \frak{a}(\frak{d}|_{s_*=s_+})
\\
&\quad
=\frac{1}{2}\int_{s_*\in[s_-,s_+]} (|E_A|^2 + |\mc{B}_{(A,\psi)}|^2 +
2r(|\nabla_{A,s}\psi|^2 + |D_{A(s_*)}\psi|^2)).
\end{split}
\end{equation}
Here $\frak{a}$ denotes the functional \eqref{eqn:a} on $Y_+$, $Y_-$,
or $Y_0$ as appropriate.
\item{(b)}
In Case 1,
\begin{equation}
\label{eqn:desiredb}
\kappa^{-1}\int_X\left(|F_A|^2 + r|\nabla_A\psi|^2\right) \le
-\frak{a}\left(\frak{d}|_{\partial X}\right) + \kappa r.
\end{equation}
where $\frak{a}\left(\frak{d}|_{\partial X}\right) \eqdef
\frak{a}\left(\frak{d}|_{\{0\}\times Y_+}\right) -
\frak{a}\left(\frak{d}|_{\{0\}\times Y_-}\right)$.  In Case 2,
analogous inequalities hold with $X$ replaced by $X^-$ or $X^+$.
\item{(c)} If $s_+,s_-\in\R\setminus I_0$ and $s_+>s_-$ then
\[
\kappa^{-1}\int_{s_*^{-1}[s_-,s_+]}(|F_A|^2 + 2r|\nabla_A\psi|^2) \le
\frak{a}(\frak{d}|_{s_*=s_-}) - \frak{a}(\frak{d}|_{s_*=s_+}) + \kappa
(s_+-s_-) r^2 + \kappa r.
\]
\item{(d)}
\[
\frac{1}{2}\int_{s_*^{-1}(\R\setminus I_0)} (|E_A|^2 +
|\mc{B}_{(A,\psi)}|^2 + 2r(|\nabla_{A,s}\psi|^2 + |D_A\psi|^2)) \le
\frak{a}(\frak{c}_-) - \frak{a}(\frak{c}_+) + \kappa r.
\]
\end{description}
\end{lemma}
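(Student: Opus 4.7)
The approach is to adapt the proof of \cite[Lem.\ 3.4]{e4}, which establishes an analogous identity and inequality for instantons on a pure symplectization, to the cobordism setting. Part (a) is a gradient-flow energy identity. On $s_*^{-1}(\R\setminus I_0)$, the data is translation-invariant and \eqref{eqn:tsw4} reduces to \eqref{eqn:Tinstanton}, which under the identifications \eqref{eqn:connections}, \eqref{eqn:perturbation}, and \eqref{eqn:rescaling} is the downward gradient flow of $\frak{a}$ with respect to the metric combining the Hermitian inner product on $\Sp$ with a multiple of the $L^2$ inner product on imaginary-valued $1$-forms. I would prove (a) by differentiating $\frak{a}(\frak{d}|_{s_*=s})$ as a function of $s$, substituting $\partial_s A$ and $\partial_s\psi$ via \eqref{eqn:Tinstanton} (where $\partial_s A = -\mc{B}_{(A,\psi)}$ in temporal gauge), and integrating from $s_-$ to $s_+$. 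The $|E_A|^2$ term records the failure of the chosen gauge representative to be in temporal gauge, and the pairing $|\nabla_{A,s}\psi|^2 + |D_{A(s_*)}\psi|^2$ reflects the fact that on a solution these are negatives of each other, so that the gradient-flow energy can be expressed symmetrically.

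Part (b) is the main new estimate. I would bound $\int_X|F_A|^2$ and $r\int_X|\nabla_A\psi|^2$ by combining the pointwise curvature bound $|F_A^+|\le cr$ (from the curvature equation in \eqref{eqn:tsw4} and Lemma~\ref{lem:3.1x}), the identity $|F_A|^2\, d\op{vol} = 2|F_A^+|^2\, d\op{vol} - F_A\wedge F_A$, and the Weitzenbock formula $D_A^*D_A = \nabla_A^*\nabla_A + R_g/4 + cl(F_A^+)/2$ applied to $D_A\psi = 0$. Integration by parts on $X$ produces boundary contributions which, by design, assemble into $\frak{a}(\frak{d}|_{\partial X})$ as given by \eqref{eqn:a}: the Chern--Simons portion of $cs$ arises from Stokes applied to $F_A\wedge F_A$ after writing $F_A = d(A-A_E) + F_{A_E}$, the energy term $r\energy(A)$ arises from pairing $F_A^+$ with $\hat\omega$ on $X$ (using the curvature equation) followed by Stokes, and the Dirac term $r\int_{Y_\pm}\langle D_A\psi,\psi\rangle$ arises from integration by parts on the Weitzenbock formula. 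The remaining bulk terms are independent of $A$ and $\psi$ to leading order---involving only $F_{A_E}$, $F_{A_{K^{-1}}}$, $R_g$, $\mu$, $\hat\omega$, and the pointwise-bounded $\psi$---and contribute at most $O(r)$, giving the desired $\kappa r$ remainder.

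Parts (c) and (d) follow by combining (a) and (b). For (c), split $s_*^{-1}[s_-,s_+]$ into its intersections with $\R\setminus I_0$ and with the cobordism interior, apply (a) on the former and (b) on the latter. On the symplectization portion, the integrand in the right-hand side of (a) dominates $|F_A|^2 + r|\nabla_A\psi|^2$ up to a pointwise error of size $O(r^2)$ from Lemma~\ref{lem:3.2x}, producing the $\kappa(s_+-s_-)r^2$ term. For (d), pass to the limit $s_\pm\to\pm\infty$ in a version of (c) written in terms of $|E_A|^2 + |\mc{B}_{(A,\psi)}|^2 + 2r(|\nabla_{A,s}\psi|^2 + |D_A\psi|^2)$, each term of which is globally integrable because the instanton converges to a three-dimensional Seiberg--Witten solution of \eqref{eqn:tsw3} (where $\mc{B} = 0$ and $D_A\psi = 0$); apply (b) for the compact cobordism interior. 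In Case 2 the neck $[-R,R]\times Y_0$ is a symplectization region and is handled by (a), while each cobordism piece $X^\pm$ admits the Case 1 estimate with the same proof. The main obstacle is making the integration-by-parts bookkeeping in (b) sufficiently precise that the $O(r^2)$ bulk contributions cancel exactly into $-\frak{a}(\frak{d}|_{\partial X})$, leaving only an $O(r)$ remainder; this requires exploiting the full algebraic compatibility between the definition \eqref{eqn:a} of $\frak{a}$ and the equations \eqref{eqn:tsw4}, together with the pointwise spinor bounds from Lemma~\ref{lem:3.1x}.
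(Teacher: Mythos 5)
Your overall route coincides with the paper's: (a) is the gradient-flow energy identity for $\frak{a}$ (the paper passes to temporal gauge and invokes the elementary identity $f(s_-)-f(s_+)=\frac12\int(\|\nabla f\|^2+\|\partial\gamma/\partial s\|^2)$), (b) is proved by combining the Weitzenb\"ock formula for $D_\A$, the norm-square of the curvature equation, and a Stokes computation that reassembles the boundary contributions into $\frak{a}(\frak{d}|_{\partial X})$, and (c), (d) follow by combining (a) and (b). Two points need correction, however. First, in (c) you invoke Lemma~\ref{lem:3.2x} for the pointwise $O(r^2)$ error, but that lemma is proved via Lemma~\ref{lem:3.3x}, which in turn rests on the present lemma, so the citation is circular. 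At this stage the only pointwise curvature information available is the bound on the \emph{self-dual} part, $|F_\A^+|\le c r$, read off directly from the curvature equation in \eqref{eqn:tsw4} together with Lemma~\ref{lem:3.1x}; the comparison of the gradient-flow integrand with $|F_A|^2+r|\nabla_A\psi|^2$ must instead be made by expanding $|\mc{B}_{(A,\psi)}|^2$ and applying the three-dimensional Weitzenb\"ock formula on constant-$s_*$ slices, absorbing the cross terms via Lemma~\ref{lem:3.1x}; the leftover is then $O(r^2)$ per unit length of $[s_-,s_+]$, which is how the paper argues. Second, $E_A=F_A(\partial_s,\cdot)$ is gauge invariant and in temporal gauge equals $\partial_s A$; it is one of the two ``kinetic'' terms of the flow, not a measure of the failure of the gauge representative to be temporal.

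For (b), the mechanism you leave implicit is the weight function $\sigma$ from \eqref{eqn:varphi}: the paper multiplies both the Weitzenb\"ock identity and the squared curvature equation by $\sigma$ before integrating by parts. Since $\sigma|_{\partial X}=2$, the boundary Dirac terms then cancel exactly against those in the Stokes expression for $\frak{a}_\eta(\frak{d}|_{\partial X})$, and since $\sigma\in[3/2,5/2]$ by \eqref{eqn:sigmabound} the surviving curvature terms $(\sigma-1)|F_\A^+|^2+|F_\A^-|^2$ stay uniformly comparable to $|F_\A|^2$ even after absorbing the $\mu$ cross terms. This weighted integration by parts is the one place the argument genuinely departs from a mechanical transcription of \cite[Lem.\ 3.4]{e4}, and it is the reason condition \eqref{eqn:sigmabound} was built into the setup; your ``bookkeeping'' caveat correctly locates the difficulty but does not supply this device.
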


\begin{proof}
  (a) We can apply a gauge transformation to put the connection $A$
  into temporal gauge \eqref{eqn:tg} on $s_*^{-1}[s_-,s_+]$.  Equation
  \eqref{eqn:3.4} then becomes
\begin{equation}
\label{eqn:gradient}
\begin{split}
  &\frak{a}(\frak{d}|_{s_*=s_-}) - \frak{a}(\frak{d}|_{s_*=s_+})
  \\
  &\quad =\frac{1}{2}\int_{s_*^{-1}[s_-,s_+]} \left(\left|\frac{\partial
        A}{\partial s_*}\right|^2 + |\mc{B}_{(A,\psi)}|^2 +
    2r\left(\left|\frac{\partial \psi}{\partial s_*}\right|^2 +
      |D_{A(s_*)}\psi|^2\right)\right) .
\end{split}
\end{equation}
This is equivalent to the first equation in \cite[Lem.\ 3.4]{e4}.  An
alternate way to understand this equation is to recall that
$(A(s_*),\psi(s_*))$ is a downward gradient flow line of the
functional $\frak{a}$ in \eqref{eqn:a}.  In particular, the $L^2$
gradient of $\frak{a}$ at $(A,\psi)$ is
$(\mc{B}_{(A,\psi)},\sqrt{2r}D_A\psi)$.  Equation \eqref{eqn:gradient}
then follows from the fact that if $\gamma(s)$ is a downward gradient flow
line of a function $f$ then
\begin{equation}
\label{eqn:gradient2}
f(s_-)-f(s_+) = \frac{1}{2}\int_{s\in[s_-,s_+]}\left(\|\nabla f\|^2 +
  \|\partial \gamma/\partial s\|^2\right).
\end{equation}

(b) We just consider Case 1 since the proof in Case 2 is the same.
Recall that our solution $(A,\psi)$ to \eqref{eqn:tsw4} corresponds to
a solution $(\A,\Psi)$ of \eqref{eqn:sw4} via \eqref{eqn:connections},
\eqref{eqn:rescaling}, and \eqref{eqn:perturbation4}. 
Identify $Y_\pm$ with $\{0\}\times Y_\pm$ in $\overline{X}$.  By
\eqref{eqn:compareFunctionals}, $\frak{a}(\frak{d}_{\partial X})$
differs by an $O(r)$ constant from $\frak{a}_\eta(\frak{d}_{\partial
  X}) \eqdef \frak{a}_{\eta_+}(\frak{d}_{Y_+}) -
\frak{a}_{\eta_-}(\frak{d}_{Y_-})$, so it is enough to prove the claim
with $\frak{a}(\frak{d}_{\partial X})$ replaced by
$\frak{a}_\eta(\frak{d}_{\partial X})$.

Recall from \S\ref{sec:pesc} that
$\hat{\omega}=\sigma^{-1}d\widetilde{\lambda}$ where $\sigma:X\to
[3/2,5/2]$ is a smooth function with $\sigma|_{\partial X}=2$.  Now
start with the Bochner-Weitzenb\"ock formula
\[
D_\A^*D_\A\Psi = \nabla_\A^*\nabla_\A\Psi +
\frac{1}{2}\op{cl}(F_\A^+)\Psi + \frac{s}{4}\Psi
\]
where $s$ denotes the scalar curvature of $X$.  Putting in the Dirac
equation $D_\A\Psi=0$ from \eqref{eqn:sw4}, multiplying the resulting
equation by $\sigma$, taking the inner product with $\Psi$ and
integrating by parts gives
\begin{align}
\nonumber
0 =& \int_X\sigma|\nabla_\A\Psi|^2 +
\frac{1}{2}\int_X\sigma\langle\op{cl}(F_\A^+)\Psi,\Psi\rangle +
\frac{1}{4}\int_X \sigma s |\Psi|^2 + \int_X\langle
d\sigma\tensor\Psi,\nabla_\A\Psi\rangle \\
\label{eqn:ibp1}
& + 2\int_{Y_+}\langle D_{\A|_{Y_+}}\Psi,\Psi\rangle -
2\int_{Y_-}\langle D_{\A|_{Y_-}}\Psi,\Psi\rangle.
\end{align}
Second, taking the norm square of the curvature equation in
\eqref{eqn:sw4}, multiplying by $\sigma$, and integrating over $X$
gives
\begin{align}
\label{eqn:ibp2}
0 = & \int_X\sigma|F_\A^+|^2 + \frac{1}{2}\int_X\sigma|\Psi|^4 +
\int_X\sigma|\eta|^2 \\
\nonumber
& - \int_X\sigma\langle\op{cl}(F_\A^+)\Psi,\Psi\rangle -
2\int_X\sigma\langle F_\A^+,i\eta\rangle +
\int_X\sigma\langle\op{cl}(i\eta)\Psi,\Psi\rangle.
\end{align}
Third, by \eqref{eqn:aeta} and Stokes' theorem we have
\begin{align}
\label{eqn:ibp3}
\frak{a}_\eta(\frak{d}|_{\partial X}) =&\;
\frac{1}{8}\int_{X} \left(|F_\A^+|^2
  - |F_\A^-|^2 + F_{\A_0}\wedge F_{\A_0}\right) \\
\nonumber &+\frac{1}{2}\int_{Y_+}\langle D_{\A|_{Y_+}}\Psi,\Psi\rangle -
\frac{1}{2}\int_{Y_-}\langle D_{\A|_{Y_-}}\Psi,\Psi\rangle\\
\nonumber
&+ \frac{1}{4}\int_{Y_+}(\A-\A_0)\wedge i\eta_+ -
\frac{1}{4}\int_{Y_-}(\A-\A_0)\wedge i\eta_-.
\end{align}
Here $\A_0$ is any reference connection on $\det(\Sp)$ over $X$ extending the
chosen reference connections over $Y_+$ and $Y_-$; and $\eta_\pm$
denotes the perturbation \eqref{eqn:perturbation} for $Y_\pm$.

Adding two times equation \eqref{eqn:ibp1} to equation
\eqref{eqn:ibp2} and subtracting eight times equation \eqref{eqn:ibp3}
gives
\begin{align}
\nonumber
-8 \frak{a}_\eta(\frak{d}|_{\partial X}) =&\;
2\int_X\sigma|\nabla_\A\Psi|^2 +
\int_X\left((\sigma-1)|F_\A^+|^2 + |F_\A^-|^2\right) +
\int_X\langle d\sigma\tensor
\Psi,\nabla_\A\Psi\rangle\\
\label{eqn:ibp4} &
+ \frac{1}{2}\int_X\sigma s |\Psi|^2 -\int_X
F_{\A_0}\wedge F_{\A_0} 
\\
\nonumber &+ \frac{1}{2}\int_X\sigma|\Psi|^4 +
\int_X\sigma\langle\op{cl}(i\eta)\Psi,\Psi\rangle
+ \int_X\sigma|\eta|^2  \\
\nonumber
&- 2\int_X\sigma\langle F_\A^+,i\eta\rangle -
2\int_{Y_+}(\A-\A_0)\wedge i\eta_+ + 2\int_{Y_-}(\A-\A_0)\wedge i\eta_-.
\end{align}
On the right side of \eqref{eqn:ibp4}, in the first term we have
$|\nabla_\A\Psi|^2=2r|\nabla_A\psi|^2 + O(r)$, since
$(2r)^{-1/2}|\Psi|=|\psi|=O(1)$ by Lemma~\ref{lem:3.1x}; in the
second term we have $F_\A=2F_A+O(1)$; and in the third term we have
\[
\langle d\sigma\tensor
\Psi,\nabla_\A\Psi\rangle \ge -\frac{1}{100}|\nabla_\A\Psi|^2 - c_0|\Psi|^2,
\]
where $|\Psi|^2$ is $O(r)$ by Lemma~\ref{lem:3.1x}.
The second line on the right side of \eqref{eqn:ibp4} is $O(r)$ by
Lemma~\ref{lem:3.1x} again.  Using $\Psi=\sqrt{2r}(\alpha,\beta)$ and
\eqref{eqn:perturbation4}, we can expand the sum of the integrands in
the third line of the right side of \eqref{eqn:ibp4} as $\sigma$ times
\[
\begin{split}
\frac{1}{2}|\Psi|^4 + \langle\op{cl}(i\eta)\Psi,\Psi\rangle + |\eta|^2
=& 2r^2((|\alpha|^2-1)^2 + 2|\alpha|^2|\beta|^2 + 2|\beta|^2 +
|\beta|^4) \\
& + \langle \op{cl}(2i\mu_*)\Psi,\Psi\rangle - 4r\langle
\hat{\omega},\mu_*\rangle + 4|\mu_*|^2,
\end{split}
\]
which is $O(r)$ by Lemma~\ref{lem:3.1x}.  Since the 2-forms $\eta_\pm$
on $Y_\pm$ extend over $X$ to the exact $2$-form
$-rd\widetilde{\lambda}+4\mu$, the fourth line on the right side of
\eqref{eqn:ibp4} can be rewritten using Stokes's theorem as
\begin{equation}
\label{eqn:ibp5}
2\int_X F_{\A_0}\wedge i(rd\widetilde{\lambda}-4\mu)
+ \int_X\left(\langle F_\A^+,4i\sigma\mu\rangle + \langle
  F_\A^-,8i\mu\rangle\right).
\end{equation}
The first term in \eqref{eqn:ibp5} is $O(r)$.  Since we assumed in
\S\ref{sec:pesc} that $|\mu|\le 1/100$, the second term in
\eqref{eqn:ibp5} is bounded from below by
$\frac{-1}{10}\int_X(|F_\A^+|^2+|F_\A^-|^2)$, so we can combine this
with the second term in the first line on the right hand side of
\eqref{eqn:ibp4} to obtain the desired inequality \eqref{eqn:desiredb}.

(c) By part (b), it is enough to show that the stated inequality holds
when $s_+$ and $s_-$ are in the same component of $\R\setminus I_0$.
We can further replace the functional $\frak{a}$ by
$\frak{a}_{\eta_0}$, where $\eta_0$ denotes the perturbation
\eqref{eqn:perturbation} for $Y_+$, $Y_-$, or $Y_0$ as appropriate.

As in \eqref{eqn:gradient2}, we have
\[
\begin{split}
  \frak{a}_{\eta_0}(\frak{d}_{s_*=s_-}) -
  \frak{a}_{\eta_0}(\frak{d}_{s_*=s_+}) = &
  \frac{1}{2}\int_{s_*^{-1}[s_-,s_+]}\left(
    \frac{1}{4}\left|-{*}F_{\A(s_*)} + \tau(\Psi(s_*)) +
      i{*}\eta_0\right|^2\right.\\
  & \left.\quad\quad + |D_{\A(s_*)}\Psi|^2 + \frac{1}{4}|\partial_s\A|^2 +
    |\partial_s\Psi|^2\right).
\end{split}
\]
Expanding the first term in the integrand, and using the
Bochner-Weitzenb\"ock formula for the three-dimensional Dirac operator
$D_{\A(s_*)}$ on constant $s_*$ slices to expand the second term in
the integrand, the right hand side becomes
\begin{gather*}
\frac{1}{2}\int_{s_*^{-1}[s_-,s_+]}\left(
  \frac{1}{4}|F_\A|^2
+ |\nabla_{\A}\Psi|^2
+ \frac{s}{4}|\Psi|^2
+ \frac{1}{4}|\tau(\Psi(s_*))|^2
\right.\\ \quad\quad\quad\left.
+ \frac{1}{4}|\eta_0|^2
+ \frac{1}{2}\langle \tau(\Psi(s_*)),i{*}\eta_0\rangle
- \frac{1}{2}\langle *F_{\A(s_*)},i{*}\eta_0\rangle
\right).
\end{gather*}
The sum of the first two terms in the integrand is $|F_A|^2 +
2r|\nabla_A\psi|^2 + O(r)$.  The third and fourth terms are $O(r)$ by
Lemma~\ref{lem:3.1x}, the fifth term is $O(r^2)$ by
\eqref{eqn:perturbation}, and likewise the sixth term is $O(r^{3/2})$.
The last term is $O(r^2)$ because $|{*}F_{\A(s_*)}| \le 2|F_\A^+|$,
which is $O(r)$ as noted in the proof of (b).

(d) This follows immediately from (a) and (b).
\end{proof}

Continuing with the proof of Lemma~\ref{lem:3.3x}, note that the case
$A_\frak{d}\le r^2$ follows immediately from Lemma~\ref{lem:3.4x}(a)--(c).
To deal with the remaining cases we need:

\begin{lemma}
\label{lem:3.5x}
  (cf.\ \cite[Lem. 3.5]{e2}) There exists a stable constant $\kappa\ge
  1$ such that if $\frak{d}=(A,\psi)$ is an instanton solution to
  \eqref{eqn:tsw4}, then
\[
\begin{split}
\frak{a}(\frak{c}_-) - \frak{a}(\frak{c}_+) \le & -2\pi^2
\frak{i}_\frak{d} + \frac{r}{2}(E(A_+)-E(A_-)) \\
&+ \kappa
r^{2/3}(\ln r)^\kappa\left(1 + |E(A_+)|^{4/3} + |E(A_-)|^{4/3}\right).
\end{split}
\]
\end{lemma}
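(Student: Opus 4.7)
My plan is to reduce the claimed inequality to a pair of three-dimensional Chern--Simons estimates, one on $Y_+$ and one on $Y_-$, of the type established by Taubes in \cite{tw1,e1}. Since $\frak{c}_\pm$ are solutions to the three-dimensional equations \eqref{eqn:tsw3}, the Dirac term in \eqref{eqn:a} vanishes, so
\[
\frak{a}(\frak{c}_\pm) = \tfrac{1}{2}cs(A_\pm) - \tfrac{r}{2}\energy(A_\pm) + \frak{e}_{\mu_\pm}(A_\pm).
\]
Subtracting, the $r\energy$ terms already produce the desired $\tfrac{r}{2}(E(A_+)-E(A_-))$ on the right-hand side, so the remaining task is to bound
\[
\tfrac{1}{2}\bigl(cs(A_-) - cs(A_+)\bigr) + \frak{e}_{\mu_-}(A_-) - \frak{e}_{\mu_+}(A_+) + 2\pi^2 \frak{i}_\frak{d}
\]
by $\kappa r^{2/3}(\ln r)^\kappa(1 + |E(A_+)|^{4/3} + |E(A_-)|^{4/3})$.

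The next step is to absorb the index term using the index/spectral-flow formula \eqref{eqn:if}, namely $2\pi^2 \frak{i}_\frak{d} = 2\pi^2 \frak{i}_* + 2\pi^2 f_+(\frak{c}_+) - 2\pi^2 f_-(\frak{c}_-)$. Rearranging, the quantity to be bounded splits into a sum of purely three-dimensional ``regularized Chern--Simons'' contributions at each end:
\[
\tfrac{1}{2}\bigl(cs(A_-) - 4\pi^2 f_-(\frak{c}_-)\bigr) + \frak{e}_{\mu_-}(A_-)
\;-\;\bigl[\tfrac{1}{2}\bigl(cs(A_+) - 4\pi^2 f_+(\frak{c}_+)\bigr) + \frak{e}_{\mu_+}(A_+)\bigr] + 2\pi^2\frak{i}_*.
\]
The constant $2\pi^2|\frak{i}_*|$ is independent of the instanton and is absorbed into the error term since $r\ge 1$. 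The combinations $\tfrac{1}{2}(cs - 4\pi^2 f) + \frak{e}_\mu$ are gauge invariant on pairs of a connection and a spinor, which is what makes the following step coherent.

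I would then invoke the three-dimensional estimate from \cite{tw1,e1} (the same estimate used in the symplectization analogue \cite[Lem.~3.5]{e2}): for any solution $\frak{c}=(A,\psi)$ to \eqref{eqn:tsw3} on one of the $Y_\pm$,
\[
\bigl|\tfrac{1}{2}\bigl(cs(A) - 4\pi^2 f(\frak{c})\bigr) + \frak{e}_\mu(A)\bigr|
\;\le\; \kappa\, r^{2/3}(\ln r)^\kappa \bigl(1 + |\energy(A)|^{4/3}\bigr).
\]
Applying this at $\frak{c}_-$ (taking the upper bound on the positive contribution) and at $\frak{c}_+$ (taking the upper bound on the negative contribution), and using $|\frak{e}_{\mu_\pm}(A_\pm)|\le c(1+|\energy(A_\pm)|)$ from the smallness of $\mu_\pm$ together with the gauge fixing implicit in the definition of $f_\pm$, yields the claimed inequality. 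Stability of $\kappa$ follows because all constants in the three-dimensional estimate depend continuously on the geometric data $(\lambda_\pm,J_\pm,\mu_\pm)$.

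The main obstacle is the three-dimensional Chern--Simons estimate itself, which is the deepest input. It is proved by combining a pointwise bound $|F_A|\le \kappa r$ (compare Lemma~\ref{lem:3.2x}) with a careful spectral-flow computation along an interpolating path, and it is precisely here that the characteristic Taubes exponent $r^{2/3}$ and the logarithmic factor $(\ln r)^\kappa$ enter; stability in the data set $(\lambda,J,\mu)$ is inherited from the uniform nature of those arguments in \cite{tw1,e1}. Everything else in the present lemma is bookkeeping that separates the four-dimensional action difference into two independent three-dimensional contributions linked by the index identity \eqref{eqn:if}.
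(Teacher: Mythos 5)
Your argument is correct and is essentially the paper's own proof: rewriting $\frak{a}(\frak{c}_-)-\frak{a}(\frak{c}_+)$ via the gauge-invariant combination \eqref{eqn:af} and the index/spectral-flow identity \eqref{eqn:if}, and then invoking the three-dimensional estimate on $\frac{1}{2}(cs-4\pi^2 f)+\frak{e}_\mu$ (equivalently on $\frak{a}^f+\frac{r}{2}\energy$), which is exactly the cited \cite[Prop.\ 4.10]{e1}. The paper compresses this into the same two steps, so no further comment is needed.
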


\begin{proof}
By equations \eqref{eqn:af} and \eqref{eqn:if} we have
\[
\frak{a}(\frak{c}_-)-\frak{a}(\frak{c}_+) = \frak{a}^f(\frak{c}_-) -
\frak{a}^f(\frak{c}_+) - 2\pi^2(\frak{i}_{\frak{d}} - \frak{i}_*).
\]
The lemma then follows from \cite[Prop.\ 4.10]{e1}.
\end{proof}

\begin{proof}[Proof of Lemma~\ref{lem:3.3x}.]
  Using Lemmas~\ref{lem:3.4x}(a),(d) and \ref{lem:3.5x}, the arguments
  in the proof of \cite[Lem.\ 3.3]{e4} establish the asertions of
  Lemma~\ref{lem:3.3x} if $\op{dist}(I,I_0)\ge T=c_0(\ln r)^{c_0}$.
  To deal with the remaining cases, we will restrict to Case 1, as the
  proof in Case 2 is very similar.  By what was just said, there
  exist points $s_-\in [-T-2,-T]$ and $s_+\in[T,T+2]$ such that
\begin{equation}
\label{eqn:sliceInequality}
\int_{s_*=s_\pm}\left(|F_A|^2 + r|\nabla_A\psi|^2\right) \le c_0 r^2.
\end{equation}

Now let $A_{E_\pm}$ denote the reference connection on $E|_{Y_\pm}$
used to define the functional $\frak{a}$ in \eqref{eqn:a} for $Y_\pm$.
It is convenient below to choose the reference connection $A_{E_\pm}$
so that $F_{A_{E_{\pm}}}+\frac{1}{2}F_{A_{K^{-1}}}$ is harmonic on
$Y_\pm$.  Choose identifications of ${E_-}|_{(-\infty,0]\times Y_-}$
and ${E_+}|_{[0,\infty)\times Y_+}$ with the pullbacks of $E_-$ and
$E_+$ respectively.  Extend $A_{E_\pm}$ to a reference connection
$A_E$ on $E$ over $\overline{X}$ such that on $(-\infty,0]\times Y_-$
and $[0,\infty)\times Y_+$, with respect to the above identifications,
we have
\begin{equation}
\label{eqn:AE}\nabla_{A_E}=\partial_s + \nabla_{A_{E_\pm}}.
\end{equation}
Let
\begin{equation}
\label{eqn:nuE}
\nu_E\eqdef \frac{i}{\pi}\left(F_{A_E} +
  \frac{1}{2}F_{A_{K^{-1}}}\right).
\end{equation}
This is a closed form which represents the cohomology class
$\frac{1}{2}c_1(\frak{s})$.  Write $A=A_E+\hat{a}$.  We claim that
\begin{equation}
\label{eqn:actionDifference}
\left| \frak{a}(\frak{d}|_{s_*=s_-}) -
  \frak{a}(\frak{d}|_{s_*=s_+})\right| \le c_0 \left| \int_{s_*=s_-}
  \hat{a}\wedge \nu_E - \int_{s_*=s_+}\hat{a}\wedge \nu_E\right| + c_0r^2.
\end{equation}

To prove \eqref{eqn:actionDifference}, note that the functional
$\frak{a}$ on $Y_\pm$ is invariant under nullhomotopic gauge
transformations.  Thus to evaluate $\frak{a}(\frak{d}_{s_*=s_+})$ or
$\frak{a}(\frak{d}_{s_*=s_-})$, we may assume that
$\hat{a}|_{s_*=s_\pm}=\sigma + \hat{a}'$ where $\sigma$ is harmonic,
$\hat{a}'$ is co-closed, and $\hat{a}'$ is orthogonal to the space of
harmonic forms on $Y_\pm$, so that $\int_{s_*=s_\pm}\nu_E\wedge
\hat{a} = \int_{s_*=s_\pm}\nu_E\wedge\sigma$, and
$\|\hat{a}'\|_{L^2_1}\le c_0\|d\hat{a}'\|_{L^2}$.  By
\eqref{eqn:sliceInequality}, the last inequality implies that
$\|\hat{a}'\|_{L^2_1}\le c_0r$, and \eqref{eqn:sliceInequality} also
implies that $\|\nabla_A\psi|_{s_*=s_\pm}\|_{L^2}\le c_0r^{1/2}$.
Putting these last two inequalities and Lemma~\ref{lem:3.1x} into
\eqref{eqn:a}, we obtain
\[
\frak{a}(\frak{d}|_{s_*=s_\pm}) =
i\pi\int_{s_*=s_\pm}\hat{a}\wedge\nu_E + O(r^2),
\]
from which \eqref{eqn:actionDifference} follows.

Applying Stokes's theorem to the right hand side of
\eqref{eqn:actionDifference}, and using the fact that $|\nu_E|$ and
$|F_{A_E}|$ enjoy $r$-independent upper bounds, we obtain
\[
\frak{a}(\frak{d}|_{s_*=s_-}) -
  \frak{a}(\frak{d}|_{s_*=s_+}) \le
  c_0T^{1/2}\left(\int_{s_*^{-1}[s_-,s_+]}|F_A|^2\right)^{1/2} + c_0T +
  c_0r^2.
\]
It follows from this and Lemma~\ref{lem:3.4x}(c) that if $r$ is larger than some
stable constant, then
\[
\int_{s_*^{-1}[s_-,s_+]}|F_A|^2 \le
c_0 T r^2.
\]
Putting this inequality back into the previous one, we obtain
\[
\frak{a}(\frak{d}|_{s_*=s_-}) -
\frak{a}(\frak{d}|_{s_*=s_+})
\le
c_0 r^2.
\]
It follows from this and Lemma~\ref{lem:3.4x}(a),(b) that
\begin{equation}
\label{eqn:actionBound}
\frak{a}(\frak{d}|_{s_*=s_-'}) -
\frak{a}(\frak{d}|_{s_*=s_+'})
\le
c_0 r^2
\end{equation}
whenever $s_-\le s_-' \le s_+'\le s_+$.  When $s_+'-s_-'=2$, using
Lemma~\ref{lem:3.4x}(c) with \eqref{eqn:actionBound} proves the
remaining cases of Lemma~\ref{lem:3.3x}.
\end{proof}

We will also need counterparts of the estimates in \cite[Lemmas
3.6--3.10, 4.3]{e4}.  Since these lemmas and their proofs carry over
almost verbatim to our setting, we will not repeat them here, but just
note the following modifications: First, the constants $\kappa$
provided by these lemmas are stable.  The constant $\kappa_q$ provided
by \cite[Lemma 3.6]{e4} is also stable, although the neighborhood of
stability may depend on $q$.  Second, $[x,y]\times M$ is to be
replaced by $s_*^{-1}[x,y]$, and $\R\times M$ is to be replaced by
$\overline{X_*}$.  Third, $f_\frak{d}$ is to be replaced by
$\frak{i}_\frak{d}$.  Finally, $\frac{\partial}{\partial s}A \pm B_A$
is to be replaced by $F_A^\pm$, the (anti-)self-dual part of the
curvature $F_A$.

\subsection{Instantons and holomorphic curves}
\label{sec:ih1}

We now establish counterparts of results from \cite[\S4]{e4}.  The
latter explains how instantons can be used to define parts of
holomorphic curves.

We need to introduce the following notation.  Continue with the
geometric setup from \S\ref{sec:9xprelim}.
If $(A,\psi=(\alpha,\beta))$ is an instanton solution to the perturbed
Seiberg-Witten equations \eqref{eqn:tsw4} on $\overline{X_*}$, define
a function $\underline{M}:\R\to \R$ by
\[
\underline{M}(s) \eqdef
r\int_{s_*^{-1}[s-1,s+1]}\left(1-|\alpha|^2\right).
\]
The idea of this function is that an $r$-independent upper bound on
$\underline{M}$ will allow us to find, for large $r$, a holomorphic
curve near the zero locus of $\alpha$ whose area in
$s_*^{-1}[s-1,s+1]$ is approximately $\frac{1}{2\pi}\underline{M}(s)$.
In particular, the propositions to follow assume certain upper bounds
on $\underline{M}$; we will establish upper bounds on $\underline{M}$
later in \S\ref{sec:ih2}.

Meanwhile, continuing with the notation, define a connection
$\widehat{A}$ on $E$ by
\[
\widehat{A} \eqdef A - \frac{1}{2}\left(\overline{\alpha}\nabla_A\alpha -
  \alpha\nabla_A\overline{\alpha}\right).
\]
Note that this has curvature
\[
F_{\widehat{A}} = (1-|\alpha|^2)F_A - \nabla_A\overline{\alpha} \wedge
\nabla_A\alpha.
\]
Below, on the subsets of $\overline{X_*}$ that are identified with
$(-\infty,0]\times Y_-$ or $[0,\infty)\times Y_+$, or $[-R,R]\times
Y_0$ in Case 2, let $\lambda_Y$ denote the pullback of the relevant
contact form $\lambda_\pm$ or $\lambda_0$ on $Y_\pm$ or $Y_0$.

\begin{proposition}
\label{prop:4.1x}
(cf.\ \cite[Prop.\ 4.1]{e4}) Given $\delta>0$ and $\mc{K}\ge 1$, there
exist a stable $\kappa\ge 1$ and a neighborhood of the given data set
$(\lambda,J,\mu)$ such that the following holds: Let $r\ge\kappa$, and
let $\frak{d}=(A,\psi=(\alpha,\beta))$ be an instanton solution to
\eqref{eqn:tsw4} as defined with a data set from this neighborhood of
$(\lambda,J,\mu)$ (and any $R$ in Case 2).  Assume that $A_\frak{d}\le
r^2$ or $\frak{i}_\frak{d}\ge -r^2$.  Let $\mathbb{I}$ be a connected
subset of $\R$ of length at least $2\delta^{-1}+16$ such that
$\underline{M}(\cdot)\le \mc{K}$ on $\mathbb{I}$.  Let
$I\subset\mathbb{I}$ be a connected set of points with distance at
least $7$ from the boundary of $\mathbb{I}$ and length $2\delta^{-1}$.
Then:
\begin{itemize}
\item Each point in $s_*^{-1}(I)$ where $|\alpha|\le 1-\delta$ has
  distance at most $\kappa r^{-1/2}$ from $\alpha^{-1}(0)$.
\item There exists a finite set $\mc{C}$ of pairs of the form $(C,m)$
  where $C$ is a closed, irreducible $J$-holomorphic subvariety in a
  neighborhood of the closure of $s^{-1}(I)$, $m$ is a positive
  integer, and the subvarieties $C$ for diferent elements of $\mc{C}$
  are distinct, such that:
\begin{description}
\item{(a)}
\end{description}
\[
\sup_{z\in \cup_{(C,m)\in\mc{C}}C \cap s_*^{-1}(I)}
\op{dist}(z,\alpha^{-1}(0)) + \sup_{z\in \alpha^{-1}(0)\cap
  s_*^{-1}(I)}\op{dist}\left(z,\cup_{(C,m)\in\mc{C}}C\right) < \delta.
\]
\begin{description}
\item{(b)} Let $\nu$ be a $2$-form on $\overline{X_*}$ with support in
  $s_*^{-1}(I)$ such that $|\nu|\le 1$ and
  $|\nabla\nu|\le \delta^{-1}$.  Then
\[
\left|
\frac{i}{2\pi}\int_{s_*\in I}\nu\wedge F_{\widehat{A}} -
\sum_{(C,m)\in\mc{C}} m \int_C\nu
\right|
\le \delta.
\]
\item{(c)} Let $I_Y\subset I$ denote a component of the subset of $I$
  where the distance to $I_0$ is at least $2$.  Then
\[
\sum_{(C,m)\in\mc{C}} m\int_{C\cap s_*^{-1}(I_Y)}d\lambda_Y\le \kappa.
\]
\end{description}
\end{itemize}
\end{proposition}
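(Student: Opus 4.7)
My plan is to adapt the proof of the symplectization version \cite[Prop.\ 4.1]{e4} to the present cobordism setting, using the estimates from Lemmas~\ref{lem:3.1x}--\ref{lem:3.5x} above and the counterparts of \cite[Lems.\ 3.6--3.10, 4.3]{e4} noted at the end of \S\ref{sec:estimates}. The overall idea is Taubesian: when $r$ is large, the curvature equation in \eqref{eqn:tsw4} forces $|\alpha|$ to be close to $1$ except in a small tubular neighborhood of its zero locus, and the $J$-holomorphic structure forces $\alpha^{-1}(0)$ to approximate a $J$-holomorphic subvariety with appropriate multiplicities.

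First, I would establish the first bullet (the exponential concentration of $\{|\alpha|\le 1-\delta\}$ near $\alpha^{-1}(0)$). This follows as in \cite[Lem.\ 4.3]{e4}: the curvature equation in \eqref{eqn:tsw4}, combined with the pointwise bounds from Lemma~\ref{lem:3.1x}, implies a differential inequality of the form $d^*d(1-|\alpha|^2) + cr(1-|\alpha|^2)\le \epsilon$-error terms, away from $\alpha^{-1}(0)$. A standard comparison with a radial barrier function on the scale $r^{-1/2}$ then yields the desired exponential decay, giving distance $\le \kappa r^{-1/2}$. The $L^2$ bounds of Lemma~\ref{lem:3.3x} are used here to handle the error terms.

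For the existence of the $J$-holomorphic subvarieties, I would argue by contradiction: if no such $\mc{C}$ exists for some sequence of instantons $\frak{d}_n$ with $r_n\to\infty$ (and data in the prescribed neighborhood and any $R_n$ in Case 2), then after translating so that the interval $I$ is fixed, pass to a subsequence. The zero loci $\alpha_n^{-1}(0)\cap s_*^{-1}(I)$ have uniformly bounded $d\lambda$-area by the bound $\underline{M}\le\mc{K}$ and the identity $\frac{i}{2\pi}F_{\hat{A}} \approx d\lambda_n^{\sharp}$ restricted to the zero set (up to small terms controlled by the counterparts of \cite[Lems.\ 3.7, 3.8]{e4}). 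Monotonicity (as in Lemma~\ref{lem:monotonicity}) applied to $J_n$, which converges in $C^0$ to $J$, produces a limit integral rectifiable current $C$. The positive cohomology assignment argument of \cite[\S4--5]{e4} (cf.\ the proof of Lemma~\ref{lem:gc3} above) then shows $C$ is $J$-holomorphic, which after decomposing into irreducible components with multiplicities yields $\mc{C}$. Part (b) then follows by combining the pairing formula $\frac{i}{2\pi}\int\nu\wedge F_{\hat{A}}\to \sum m\int_C\nu$ (which is the content of the counterpart of \cite[Lem.\ 3.10]{e4} together with the fact that $F_{\hat{A}}$ concentrates on $\alpha^{-1}(0)$ with integral multiplicity coming from the vortex degree), and part (c) follows from $\sum m\int_{C\cap s_*^{-1}(I_Y)}d\lambda_Y \le \frac{i}{2\pi}\int_{s_*\in I_Y}d\lambda_Y\wedge F_{\hat{A}}+O(\delta)$ combined with the energy bound coming from the hypothesis $\underline{M}\le\mc{K}$ and the curvature equation.

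The main obstacles I anticipate are two: first, obtaining \emph{stable} constants, i.e.\ making sure the $\kappa$'s and neighborhoods in Lemmas~\ref{lem:3.1x}--\ref{lem:3.5x} and the counterparts of \cite[Lems.\ 3.6--3.10, 4.3]{e4} can be chosen uniformly over an open neighborhood of the data $(\lambda,J,\mu)$; the Remark~\ref{rem:stable} assertion that such stability holds is essentially by inspection of the proofs, since all the estimates use only $C^k$ norms of the data for finite $k$. Second, and more seriously, in Case 2 one must handle the noncompactness as $R\to\infty$: the metric degenerates only in a benign way (becoming longer and longer symplectization-like neck), and the relevant estimates are already $\R$-translation-invariant on $[-R,R]\times Y_0$, so the constants remain uniform in $R$; nevertheless checking this carefully---especially that the gauge-invariant action differences and the index-spectral flow identity \eqref{eqn:if} behave uniformly---is where the bulk of the bookkeeping lies.
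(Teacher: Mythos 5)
Your overall strategy matches the paper's: the proof there consists precisely of running the argument of \cite[Prop.\ 4.1]{e4} with $\R\times M$ replaced by $\overline{X_*}$ (and $ds\wedge a+\frac{1}{2}{*}a$ replaced by $\hat\omega$), with the one genuinely new point being that the contradiction sequences must now be allowed to carry varying data $(\lambda_n,J_n,\mu_n)\to(\lambda,J,\mu)$ and arbitrary $R_n$, so that the \emph{stability} of the constants from the lemmas of \S\ref{sec:estimates} is what makes the limiting argument close. You identify exactly these two issues, and your compactness-of-currents/positive-cohomology-assignment sketch for the second bullet is the right skeleton. The one place you deviate is the first bullet: the paper proves it by the same contradiction-with-sequences scheme (rescale by $r_n^{1/2}$ about offending points and extract a vortex limit), whereas you propose a direct barrier argument from the inequality $d^*d(1-|\alpha|^2)+cr(1-|\alpha|^2)\le(\text{error})$. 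As stated this has a gap: the coercive term in the Weitzenb\"ock identity is $r|\alpha|^2(1-|\alpha|^2)$, which degenerates precisely near $\alpha^{-1}(0)$, so the comparison argument only shows that $\{|\alpha|\le 1-\delta\}$ lies within $O(r^{-1/2})$ of the region where $|\alpha|$ is small (say $\le 1/2$), not of the zero set itself; bridging that last step is exactly what the vortex-rescaling/contradiction argument is for, and it is also where the stability of the constants over the neighborhood of $(\lambda,J,\mu)$ and over all $R$ must be invoked.
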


\begin{proof}
  The proof of the first bullet differs only in notation from the
  proof of the first bullet of \cite[Prop.\ 4.1]{e4}, except for the
  following change: Violation of the first bullet requires sequences
  $\{(\lambda_n,J_n,\mu_n)\}_{n=1,2,\ldots}$ and
  $\{r_n,A_n,\psi_n)\}_{n=1,2\ldots}$, as well a sequence
  $\{R_n\}_{n=1,2,\ldots}$ in Case 2, where
  $\{(\lambda_n,J_n,\mu_n)\}_{n=1,2,\ldots}$ converges to
  $(\lambda,J,\mu)$, the pair $(A_n,\psi_n)$ solves the version of
  \eqref{eqn:tsw4} defined using $(\lambda_n,J_n,\mu_n,r_n,R_n)$, and
  the rest of the assumptions on the sequence are the same as in
  \cite{e4}.  Note that the {\em stability\/} of the constants
  $\kappa$ provided by the lemmas in \S\ref{sec:estimates} must be
  used to obtain the contradiction that proves the first bullet.

  The proof of the second bullet is obtained by similarly modifying
  the proof of the second bullet of \cite[Prop.\ 4.1]{e4}, using the
  stability of the constants $\kappa$, and making the usual notational
  changes to replace $\R\times M$ in \cite{e4} by $\overline{X_*}$
  here.  In particular, the form $ds\wedge a + \frac{1}{2} {*} a$ in
  \cite[Eq.\ (4.5)]{e4} is to be replaced by $\hat{\omega}$ here.
\end{proof}

The following proposition is similar to Proposition~\ref{prop:4.1x}, but with the interval $I$ expanded.

\begin{proposition}
\label{prop:4.5x}
(cf. \cite[Prop.\ 4.5]{e4}) Given $\mc{K}\ge 1$, suppose that each
Reeb orbit with length $\le \mc{K}/2\pi$ of $\lambda_\pm$, and of
$\lambda_0$ in Case 2, is nondegenerate.  Then there exists $\kappa\ge
1$, and given $\delta>0$ there exist $\kappa_\delta\ge 1$ and a
neighborhood of the given data set $(\lambda,J,\mu)$ such that the
following holds: Suppose that $r\ge \kappa_\delta$ and that
$\frak{d}=(A,\psi=(\alpha,\beta))$ is an instanton solution to
\eqref{eqn:tsw4} as defined with a data set from this neighborhood of
$(\lambda,J,\mu)$ (and any $R$ in Case 2).  Assume that $A_\frak{d}\le
r^2$ or $\frak{i}_\frak{d}\ge -r^2$.  Let $\mathbb{I}\subset\R$ be a
connected set of length at least $4\delta^{-1}+16$ such that
$\underline{M}(\cdot)\le \mc{K}$ on $\mathbb{I}$.  Assume in addition
that $I_0\cap\mathbb{I}$ has distance at least
$\frac{4}{3}\delta^{-1}$ from $\partial\mathbb{I}$.  Let $I\subset
\mathbb{I}$ denote the set of points with distance at least $7$ from
the boundary of $\mathbb{I}$.  Then:
\begin{itemize}
\item Each point in $s_*^{-1}(I)$ where $|\alpha|\le 1-\delta$ has
  distance less than $\kappa r^{-1/2}$ from $\alpha^{-1}(0)$.
\item
There exist:
\begin{description}
\item{(1)} A positive integer $N\le \kappa$ and a cover
  $I=\cup_{k=1}^NI_k$ where each $I_k$ is a connected open set of
  length at least $2\delta^{-1}$, such that $I_k\cap I_{k'}=\emptyset$
  when $|k-k'|>1$.  If $|k-k'|=1$, then $I_k\cap I_{k'}$ has length
  between $\frac{1}{128}\delta^{-1}$ and $\frac{1}{64}\delta^{-1}$.
  Finally, each boundary point of each $I_k$ has distance at least
  $\delta^{-1}$ from $I_0\cap\mathbb{I}$.  
\item{(2)} For each $k\in\{1,\ldots,N\}$, a finite set
  $\mc{C}_k$ of pairs $(C,m)$ where $m$ is a positive integer and $C$
  is a closed irreducible $J$-holomorphic subvariety in a neighborhood
  of $s_*^{-1}(I_k)$.  The subvarieties $C$ for
  different elements of $\mc{C}_k$ are distinct.
\end{description}
These are such that:
\begin{description}
\item{(a)}
\end{description}
\[
\sup_{z\in \cup_{(C,m)\in\mc{C}_k}C \cap s_*^{-1}(I_k)}
\op{dist}(z,\alpha^{-1}(0)) + \sup_{z\in \alpha^{-1}(0)\cap
  s_*^{-1}(I_k)}\op{dist}\left(z,\cup_{(C,m)\in\mc{C}_k}C\right) < \delta.
\]
\begin{description}
\item{(b)} Let $I'\subset I_k$ be an interval of length $1$ and let
  $\nu$ be a $2$-form on $s_*^{-1}(I')$ with $|\nu|\le 1$ and
  $|\nabla\nu|\le \delta^{-1}$.  Then
\[
\left|
\frac{i}{2\pi}\int_{s_*\in I'}\nu\wedge F_{\widehat{A}} -
\sum_{(C,m)\in\mc{C}_k} m \int_{C\cap s_*^{-1}(I')}\nu
\right|
\le \delta.
\]
\item{(c)}
\[
\sum_{(C,m)\in\mc{C}_k} m\int_{C\cap (\overline{X_*}-s_*^{-1}(I_0))} d\lambda_Y
< \kappa.
\]
\end{description}
\item Suppose that $\mathbb{I}$ is unbounded from above.  Fix
  $\energy_+\le \mc{K}$, and require nondegenerate Reeb orbits only
  for length at most $\frac{1}{2\pi}\energy_+$.  Assume also that
  $\energy(\frak{c}_+)\le \energy_+$.  Then the preceding conclusions
  hold with $\kappa$ depending on $\mc{K}$ and $\energy_+$, and with
  $\kappa_\delta$ depending only on $\mc{K}$, $\energy_+$, and
  $\delta$.  Moreover, if $\mathbb{I}=\R$ then $\energy(\frak{c}_-)\le
  \energy_++\delta$.
\end{itemize}
\end{proposition}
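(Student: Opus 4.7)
The plan is to bootstrap from Proposition~\ref{prop:4.1x} by a covering-and-matching argument, following the structure of \cite[Prop.\ 4.5]{e4} with the cosmetic modifications already used in the proof of Proposition~\ref{prop:4.1x}. The first bullet concerning the $C^0$ location of the zero set of $\alpha$ is essentially local: if it failed, we could extract a violating sequence $\{(\lambda_n,J_n,\mu_n,r_n,R_n,A_n,\psi_n)\}$ with $(\lambda_n,J_n,\mu_n) \to (\lambda,J,\mu)$ and derive a contradiction exactly as in the first bullet of Proposition~\ref{prop:4.1x}, so this bullet needs no new ideas beyond stability of the constants.

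For the second bullet, I would first cover $\mathbb{I}$ by finitely many overlapping open intervals $\{\mathbb{I}_k\}_{k=1}^N$, each of length roughly $2\delta^{-1}+16$, chosen so that consecutive intervals overlap in a sub-interval of length in $[\tfrac{1}{128}\delta^{-1},\tfrac{1}{64}\delta^{-1}]$, and so that the boundary points of each $I_k \subset \mathbb{I}_k$ (obtained by trimming $7$-neighborhoods of $\partial \mathbb{I}_k$) lie at distance at least $\delta^{-1}$ from $I_0\cap\mathbb{I}$. The upper bound on $N$ comes from the uniform area bound: integrating Lemma~\ref{lem:3.4x}(a)--(d) and using the hypothesis $\underline{M}(\cdot)\le\mc{K}$, the total $d\lambda_Y$-area over any component of $s_*^{-1}(\mathbb{I})\setminus s_*^{-1}(I_0)$ is bounded by an $r$-independent constant, so $N$ cannot exceed a stable $\kappa$ depending on $\mc{K}$ alone. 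On each $\mathbb{I}_k$, Proposition~\ref{prop:4.1x} supplies a finite set $\mc{C}_k$ of pairs $(C,m)$ with $C$ a closed irreducible $J$-holomorphic subvariety defined in a neighborhood of $s_*^{-1}(I_k)$ satisfying (a), (b), and (c) for its own interval.

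The core technical point, which I expect to be the main obstacle, is verifying consistency of the local families $\mc{C}_k$ on the overlaps $I_k\cap I_{k+1}$. This is handled by exploiting the Hausdorff-closeness of $\bigcup_{(C,m)\in\mc{C}_k}C$ to $\alpha^{-1}(0)$ from (a) together with the integral comparison (b): choosing test $2$-forms $\nu$ supported in the overlap and using that any closed irreducible $J$-holomorphic subvariety through the overlap is determined (in the $\delta\to 0$ limit) by its intersection with $\alpha^{-1}(0)$, one matches the elements of $\mc{C}_k$ with those of $\mc{C}_{k+1}$ with the same multiplicities. This is where the nondegeneracy of short Reeb orbits enters, because near $s_*^{-1}(I_0)$ one has to rule out bubbling phenomena and spurious extra components by the stability-of-constants arguments used in Section~\ref{sec:estimates}. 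Bound (c) for each $\mc{C}_k$ is then obtained by applying the $d\lambda_Y$-area bound from Proposition~\ref{prop:4.1x}(c) on each sub-interval of $I_k$ lying at distance at least $2$ from $I_0$ and summing; the boundedness of $N$ keeps the total bounded by a stable $\kappa$.

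For the third bullet with $\mathbb{I}$ unbounded from above and $\energy(\frak{c}_+)\le\energy_+$, the strategy is to use Lemma~\ref{lem:3.4x}(a),(d) to control the action drop $\frak{a}(\frak{c}_-)-\frak{a}(\frak{c}_+)$ by $\energy_++\kappa r$ (so the energy hypothesis transfers through the instanton), while the uniform $L^2$-control of $\nabla_{A,s}\psi$ on the ends forces exponential convergence of $\psi$ to $\psi_\pm$ as $s_*\to\pm\infty$. Combining this convergence with the interior existence of the collection $\{\mc{C}_k\}$ just constructed, one identifies the leading-order contribution to $\underline{M}$ near infinity with the multiplicities of the Reeb orbits in the orbit set associated to $\frak{c}_\pm$ via Proposition~\ref{prop:Liso}(a), which in turn forces $\energy(\frak{c}_-)\le\energy(\frak{c}_+)+\delta$ when $\mathbb{I}=\R$. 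The constants $\kappa$ and $\kappa_\delta$ in this case depend only on $\mc{K}$, $\energy_+$, and (for $\kappa_\delta$) on $\delta$, because all the estimates in Section~\ref{sec:estimates} depend on $\energy$ only through $\underline{M}$ and $\energy(\frak{c}_+)$, and are stable under small perturbations of the data set.
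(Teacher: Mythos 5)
Your overall plan (deduce everything from Proposition~\ref{prop:4.1x} by covering $\mathbb{I}$ with intervals and invoking stability of constants) misses the main point of this proposition, which is that the number $N$ of intervals $I_k$ is bounded by a constant $\kappa$ that is fixed \emph{before} $\delta$ is chosen and is independent of the length of $\mathbb{I}$ (which may be all of $\R$, as in the third bullet). Your cover by intervals of length roughly $2\delta^{-1}+16$ produces $N$ growing linearly in $\delta\cdot|\mathbb{I}|$, and the justification you offer --- that the total $d\lambda_Y$-area bound caps $N$ --- is a non sequitur for that construction: a bound on area does not bound the number of fixed-length intervals needed to cover a long set. The paper instead follows the five-part structure of \cite[Prop.\ 4.5]{e4}: after importing \cite[Lem.\ 4.6, Cor.\ 4.7, Lem.\ 4.8]{e4}, the crucial third part is the analogue stated here as Lemma~\ref{lem:4.9x}, which says that only boundedly many unit intervals carry $d\lambda_Y\wedge F_{\widehat{A}}$-mass at least $\varepsilon$, while on each complementary (possibly very long) component the mass is at least $-\varepsilon^2$. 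It is this dichotomy that lets one take the $I_k$ to be long intervals spanning the quiet stretches, so that $N$ is controlled by $\mc{K}$ (and $\energy_+$) alone; Proposition~\ref{prop:4.1x} is then applied in the fourth part with only notational changes. Incidentally, the overlap-matching of the families $\mc{C}_k$ that you single out as the main obstacle is not required by the statement: the collections $\mc{C}_k$ are allowed to differ on overlaps, so that effort is misdirected while the genuine difficulty (the $\delta$- and $|\mathbb{I}|$-independent bound on $N$) is left unaddressed.

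The third bullet has a second gap: your sketch (action-drop control via Lemma~\ref{lem:3.4x} plus exponential convergence on the ends and identification of the leading contribution to $\underline{M}$) never confronts the new feature of the cobordism setting, namely what happens as one crosses $s_*^{-1}(I_0)$, where the translation-invariant monotonicity arguments of \cite{e4} break down. The paper's fifth part isolates exactly this: the key step is the comparison inequality \eqref{eqn:compareIntervals} for unit intervals $(k_-,k_-+1)$ and $(k_+,k_++1)$ on opposite sides of $I_0$, proved by the integration by parts of \cite[\S4d, Part 5]{e4} together with the near-positivity of $\frac{i}{2\pi}\int_X d\lambda\wedge F_{\widehat{A}}$, which in turn rests on the a priori estimates of Lemma~\ref{lem:3.1x} and \cite[Lem.\ 3.8]{e4}; this is where exactness of the cobordism enters and is what ultimately yields $\energy(\frak{c}_-)\le \energy_+ +\delta$ when $\mathbb{I}=\R$. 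Without an argument of this kind, nothing in your proposal rules out the local energy increasing as the instanton passes through the compact piece $X$, so the final energy estimate is not established.
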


\begin{proof}
  The first bullet follows from the first bullet in
  Proposition~\ref{prop:4.1x}.  The proof of the rest of
  Proposition~\ref{prop:4.5x} is a slight modification of the proof of
  \cite[Prop.\ 4.5]{e4}.  The latter proof has five parts.  The first
  two parts establish \cite[Lem.\ 4.6, Cor.\ 4.7, Lem.\ 4.8]{e4},
  which are applicable here with the contact manifold $M$ in \cite{e4}
  replaced by $Y_\pm$ or $Y_0$ here.  The third part of the proof of
  \cite[Prop.\ 4.5]{e4} has the following analogue here:

\begin{lemma}
\label{lem:4.9x}
(cf.\ \cite[Lem.\ 4.9]{e4}) Given $\mc{K}\ge 1$, suppose that each
Reeb orbit with length $\le \mc{K}/2\pi$ of $\lambda_\pm$, and
$\lambda_0$ in Case 2, is nondegenerate.  Given also $\varepsilon>0$,
there exists $\kappa\ge 1$ and a neighborhood of the given data set
$(\lambda,J,\mu)$ such that the following holds: Suppose that $r\ge
\kappa$ and that $\frak{d}=(A,\psi=(\alpha,\beta))$ is an instanton
solution to \eqref{eqn:tsw4} as defined with a data set in this
neighborhood of $(\lambda,J,\mu)$ (and any $R$ in Case 2), with
$A_\frak{d}\le r^2$ or $\frak{i}_\frak{d}\ge -r^2$.  Let
$\mathbb{I}\subset\R\setminus I_0$ denote a connected subset of length
at least $16$ such that $\underline{M}(\cdot)\le \mc{K}$ on
$\mathbb{I}$.  Let $\mc{I}$ denote the set of integers $k$ such that
$[k,k+1]\in\mathbb{I}$ and
\[
\frac{i}{2\pi}\int_{s_*^{-1}[k,k+1]}d\lambda_Y\wedge F_{\widehat{A}}
\ge \varepsilon.
\]
Let $I'$ be a component of $\mathbb{I}\setminus
\cup_{k\in\mc{I}}[k,k+1]$.  Then
\[
\frac{i}{2\pi}\int_{s_*^{-1}(I')}d\lambda_Y\wedge F_{\widehat{A}} \ge
-\varepsilon^2.
\]
\end{lemma}

\begin{proof}
  Copy the proof of \cite[Lem.\ 4.9]{e4}.  Wherever the latter proof
  invokes lemmas from \cite[\S3]{e4}, replace these as indicated in
  \S\ref{sec:estimates} above.
\end{proof}

The fourth part of the proof of \cite[Prop.\ 4.5]{e4} carries over
with only notational changes to deduce the second bullet in
Proposition~\ref{prop:4.5x} from Proposition~\ref{prop:4.1x}.

The fifth part of the proof of \cite[Prop.\ 4.5]{e4} carries over to
prove the third bullet in Proposition~\ref{prop:4.5x}, with the
following modification: The key step is to show that given
$\varepsilon_0>0$, if $r$ is sufficiently large, then if $k_-<k_+$ are
integers in $\mathbb{I}$ with $k_+-k_-<\varepsilon_0^{-1}$, then
\begin{equation}
\label{eqn:compareIntervals}
\frac{i}{2\pi}\int_{s_*^{-1}(k_+,k_++1)}ds\wedge \lambda_Y \wedge F_{\widehat{A}} -
\frac{i}{2\pi}\int_{s_*^{-1}(k_-,k_-+1)}ds\wedge \lambda_Y\wedge F_{\widehat{A}} >
-\varepsilon_0.
\end{equation}
If the intervals $(k_-,k_-+1)$ and $(k_+,k_++1)$ are in the same
component of $\R\setminus I_0$, then the inequality
\eqref{eqn:compareIntervals} follows from an integration by parts in
\cite[\S4d, Part 5]{e4}.  So to complete the proof, we just need to
prove \eqref{eqn:compareIntervals} when $k_-+1=k_+\in I_0$.  To
simplify notation, restrict to Case 1, so that $k_-+1=k_+=0$.  The
aforementioned integration by parts can be used to show that the
integrals on the left hand side of \eqref{eqn:compareIntervals} satisfy
\[
\left|\frac{i}{2\pi}\int_{s_*^{-1}(k_\pm,k_\pm+1)}ds\wedge \lambda_Y \wedge
F_{\widehat{A}}
-
\frac{i}{2\pi}\int_{\{0\}\times Y_\pm}\lambda_+\wedge F_{\widehat{A}} \right|
< \frac{\varepsilon_0}{3}
\]
if $r$ is sufficiently large.  So to prove
\eqref{eqn:compareIntervals}, it is enough to show that
\[
\frac{i}{2\pi}\int_X d\lambda\wedge F_{\widehat{A}} >
-\frac{\varepsilon_0}{3}
\]
if $r$ is sufficiently large.  This last inequality follows from the a
priori estimates in Lemma~\ref{lem:3.1x} and \cite[Lem.\ 3.8]{e4}.
\end{proof}

\subsection{Proof of Proposition~\ref{prop:9x}}
\label{sec:ih2}

We now carry over material from \cite[\S5]{e4} to our setting and
prove Proposition~\ref{prop:9x}.  The proof of
Proposition~\ref{prop:9x} uses the following proposition, which is
similar to the $\mathbb{I}=\R$ case of Proposition~\ref{prop:4.5x},
but with the assumption on $\underline{M}$ replaced by an assumption
on $\energy(\frak{c}_+)$.

\begin{proposition}
\label{prop:5.1x}
(cf.\ \cite[Prop.\ 5.1]{e4}) Fix $\mc{K}\ge 1$ and $\energy_+\le
\mc{K}$.  Assume all Reeb orbits of $\lambda_\pm$, and $\lambda_0$ in
Case 2, of length $\le\frac{1}{2\pi}\energy_+$ are nondegenerate.
Then there exists $\kappa\ge 1$, and given $\delta>0$ there exist
$\kappa_\delta\ge 1$ and a neighborhood of the given data set
$(\lambda,J,\mu)$ such that the following holds: Suppose that $r\ge
\kappa_\delta$ and that $\frak{d}=(A,\psi=(\alpha,\beta))$ is an
instanton solution to \eqref{eqn:tsw4} as defined with a data set in
this neighborhood of $(\lambda,J,\mu)$ (and any $R$ in Case 2).
Assume that $A_\frak{d}\le \mc{K}r$ or $\frak{i}_\frak{d}\ge
-\mc{K}r$.  Assume also that $\energy(\frak{c}_+)\le E_+$.  Then:
\begin{itemize}
\item $\energy(\frak{c}_-) \le \energy_+ + \delta$.
\item The first two bullets of Proposition~\ref{prop:4.5x} hold
  with $I=\R$.
\end{itemize}
\end{proposition}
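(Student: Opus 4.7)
The plan is to reduce Proposition~\ref{prop:5.1x} to Proposition~\ref{prop:4.5x} by converting the energy hypothesis $\energy(\frak{c}_+)\le \energy_+$ into a uniform pointwise bound $\underline{M}(s)\le \mc{K}'$ on all of $\R$, with $\mc{K}'$ depending only on $\mc{K}$ and $\energy_+$. Once this is in place, Proposition~\ref{prop:4.5x} applied with $\mathbb{I}=\R$ (and with $\mc{K}$ replaced by $\mc{K}'$) gives the first two bullets of Proposition~\ref{prop:5.1x}, while the third bullet of Proposition~\ref{prop:4.5x}, whose hypothesis that $\mathbb{I}$ be unbounded from above is met, gives the remaining conclusion $\energy(\frak{c}_-)\le \energy_++\delta$.

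The core estimate is slice-wise: for $s\in\R\setminus I_0$ set $\energy_Y(s)\eqdef i\int_{s_*=s}\lambda_Y\wedge F_A$, and show $\energy_Y(s)\le \energy_++c_0$, where $c_0$ depends only on $\mc{K}$ and $\energy_+$. I would follow the symplectization argument of \cite[\S5]{e4}. On each component of $\R\setminus I_0$ one uses the gradient-flow identity from Lemma~\ref{lem:3.4x}(a) together with the sharper comparison of Lemma~\ref{lem:3.5x}; the dichotomy hypothesis $A_\frak{d}\le\mc{K}r$ or $\frak{i}_\frak{d}\ge -\mc{K}r$ is precisely what makes either the action or the index term the dominant one in Lemma~\ref{lem:3.5x}, after invoking the spectral-flow identity \eqref{eqn:if}--\eqref{eqn:af} in the index case to trade for an action bound on an intermediate slice. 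The transitions across $I_0$ are handled by Lemma~\ref{lem:3.4x}(b), which bounds the action change in the cobordism region $X$ (or the pieces $X^\pm$ in Case~2) by $O(r)$ and hence produces an $O(1)$ jump in $\energy_Y$ after dividing by $r$.

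Given the slice-wise energy bound, I would derive the pointwise bound on $\underline{M}$ by contracting the curvature equation of \eqref{eqn:tsw4} with $\lambda_Y\wedge ds$: for $[s-1,s+1]\subset \R\setminus I_0$, standard integration by parts together with the a priori pointwise bounds of Lemma~\ref{lem:3.1x} yields
\[
r\int_{s_*^{-1}[s-1,s+1]}(1-|\alpha|^2)\,\op{dvol} \le \energy_Y(s-1) + \energy_Y(s+1) + c_0,
\]
so $\underline{M}(s)\le c_0(\energy_++1)$. For slabs crossing $I_0$, the curvature bound $|F_A|\le \kappa r$ of Lemma~\ref{lem:3.2x}, the a priori bound $|\alpha|\le 1+\kappa r^{-1}$ of Lemma~\ref{lem:3.1x}, and the action bound of Lemma~\ref{lem:3.4x}(b) combine to give an $r$-independent bound on $\underline{M}(s)$ there as well.

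The main obstacle, I expect, is the uniformity of the energy propagation in $r$. The error term $\kappa r^{2/3}(\ln r)^\kappa(1+|\energy|^{4/3})$ in Lemma~\ref{lem:3.5x} is smaller than the main $(r/2)\energy$ term only as long as $\energy$ stays bounded independently of $r$; this forces a bootstrap in which the bound $\energy_Y(s)\le \energy_++c_0$ is proved slice by slice, propagating downward from $\energy(\frak{c}_+)\le \energy_+$. This bootstrap is carried out in the symplectization setting of \cite[\S5]{e4} and should adapt to the cobordism setting with only cosmetic change, with Lemma~\ref{lem:3.4x}(b) supplying the cobordism-specific input and the stability of constants in Remark~\ref{rem:stable} providing the required uniformity over the specified neighborhood of $(\lambda,J,\mu)$ and over $R$ in Case~2.
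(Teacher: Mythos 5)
Your overall strategy coincides with the paper's: both reduce to Proposition~\ref{prop:4.5x} by converting the hypotheses into an $r$-independent bound on $\underline{M}$, both use the index alternative together with spectral flow to get an a priori bound $A_{\frak{d}}\le c_0(\mc{K}+1)r$, and both propagate the energy bound downward from the $s_*\to+\infty$ end, deferring the bootstrap to \cite[\S5]{e4}. The real gap is at exactly the cobordism-specific step: the bound on $\underline{M}(s)$ for slabs $s_*^{-1}[s-1,s+1]$ meeting $I_0$. The ingredients you cite do not suffice there. The curvature bound $|F_A|\le\kappa r$ of Lemma~\ref{lem:3.2x}, integrated over the finite-volume region $s_*^{-1}(I_0)$, yields only $\underline{M}(s)=O(r)$ (and even the sharper consequence $\int_X|F_A|^2\le c_0(\mc{K}+1)r$ of Lemma~\ref{lem:3.4x}(a),(b) gives only $O(r^{1/2})$ via Cauchy--Schwarz); the bound $|\alpha|\le 1+\kappa r^{-1}$ of Lemma~\ref{lem:3.1x} controls $1-|\alpha|^2$ only from below; and Lemma~\ref{lem:3.4x}(b) bounds an action difference by $O(r)$, but $\frak{a}$ is not $-\frac{r}{2}\energy$ plus an $O(r)$ error at an arbitrary slice (the Chern--Simons and Dirac terms are controlled by \cite[Prop.\ 4.11]{e1} only at critical points), so ``dividing the action bound by $r$'' does not by itself produce an $O(1)$ jump of the slice energy across $X$.

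What the paper uses instead is the identity \eqref{eqn:EStokes}: since $\sigma\hat{\omega}=d\widetilde{\lambda}$ with $\widetilde{\lambda}$ defined on all of $\overline{X_*}$ (unlike $\lambda_Y$), Stokes' theorem over the entire sublevel set $s_*^{-1}((-\infty,s])$ gives $\energy(s)=re^{-2s}\int_{s_*^{-1}((-\infty,s])}\sigma\left(1-|\alpha|^2+|\beta|^2\right)+O(1)$. Because the integrand is bounded below by $-c_0r^{-1}$ times an integrable weight, the energy bound already established on a slice just above the cobordism region controls the contribution of every sub-slab below it, and in particular bounds $\underline{M}$ on $[t_*-100,t_*+2]$; the $e^{2(s-s')}$ degradation in this estimate is why the conclusion is confined to an interval of bounded length, after which a second step (a good slice $s_Y$, a gauge choice $u_Y$ giving \eqref{eqn:9.27} and \eqref{eqn:9.28}, and the propagation inequalities \eqref{eqn:Ebound2}, \eqref{eqn:Ebound3}) restarts the downward induction. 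Your slab-wise Stokes computation with $\lambda_Y\wedge ds$ is the right mechanism away from $I_0$; you need this global version of it, exploiting the sign of $1-|\alpha|^2+|\beta|^2$ rather than $L^2$ curvature bounds, to get across the cobordism piece.
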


\begin{proof}
  This follows from Proposition~\ref{prop:4.5x} if we can show that
  given an instanton solution $\frak{d}$ to \eqref{eqn:tsw4} with
  $A_\frak{d}\le \mc{K}r$ or $\frak{i}_\frak{d}\ge -\mc{K}r$ and
  $\energy(\frak{c}_+)<\energy_+$, there exists an $r$-independent
  upper bound on $\underline{M}(\cdot)$ when $r$ is sufficiently
  large.  We now explain how to obtain such a bound on $\underline{M}$
  by modifying the arguments in \cite[\S5]{e4}, which obtain an
  analogous bound on $\underline{M}$ when $\overline{X_*}=\R\times M$.

First note that our assumptions imply that
\begin{equation}
\label{eqn:Abound}
A_\frak{d} < c_0(\mc{K}+1)r.
\end{equation}
To prove \eqref{eqn:Abound}, we can assume that
$\frak{i}_\frak{d}>-\mc{K}r$, and this
implies that
\[
\frak{a}(\frak{c}_-) - \frak{a}(\frak{c}_+) <
\frak{a}^f(\frak{c}_-) - \frak{a}^f(\frak{c}_+) + c_0\mc{K}r.
\]
By \cite[Pop.\ 4.11]{e1}, the assumption $\energy(\frak{c}_+)<\mc{K}$
implies that $\frak{a}^f(\frak{c}_+) > - c_0\mc{K} r$, see
\eqref{eqn:invoke1} below.  Meanwhile, an almost verbatim version of
an argument from \cite[\S5d]{e4} proves that
$\frak{a}^f(\frak{c}_-)\le c_0$ when $\frak{i}_\frak{d} > -\mc{K}r$.
The inequality \eqref{eqn:Abound} follows.

We now explain how to obtain a bound on $\underline{M}(s)$ when
 $s\ge R+2$.  If the interval
 $[s,s+1]$ does not intersect $I_0$, define
\[
\underline{\energy}(s) \eqdef i\int_{s_*^{-1}[s,s+1]}ds\wedge\lambda_Y\wedge F_A.
\]
When $s\ge R+2$, we will first obtain a bound $\underline{\energy}(s)$, and
then use this to bound $\underline{M}(s)$.

To obtain bounds on $\underline{\energy}$, we need three inequalities.
To state these, recall the reference connection $A_E$ from
\eqref{eqn:AE} and the $2$-form $\nu_E$ defined from its curvature in
\eqref{eqn:nuE}.  Let $u_\pm:Y_\pm\to S^1$, and also $u_0:Y_0\to S^1$
in Case 2, be gauge transformations. If $s>R$, write the connection
component of $u_+\cdot\frak{d}$ as $A_{E}+\hat{a}_+$, and define
\[
\frak{p}_+(s) \eqdef -i\int_{s_*^{-1}(s)} \hat{a}_+\wedge \nu_E.
\]
Here and below, our convention is that $R=0$ in Case 1.  Define
$\frak{p}_-(s)$ analogously if $s<-R$, and define $\frak{p}_0$
analogously in Case 2 if $-R<s<R$.

The first inequality asserts that if $s>R+3$, then
\begin{equation}
\label{eqn:Ebound1}
\begin{split}
r \underline{\energy}(s-1) \le & -c_0\frak{a}(u_+\frak{c}_+) +
c_0(r+\frak{a}(\frak{d}|_{s_*=R+3}) - \frak{a}(\frak{c}_+))\\
& + c_0
r^{2/3} \sup_{x\ge s}|\underline{\energy}(x)|^{4/3} +
c_0\sup_{[s,s+1]}|\frak{p}_+|.
\end{split}
\end{equation}
The second inequality asserts that if $s<s'<-R-3$, then for suitable
$s_-\in[s',s'+1]$ we have
\begin{equation}
\label{eqn:Ebound2}
\begin{split}
r\underline{\energy}(s-1) \le & -c_0\frak{a}(u_-\frak{d}|_{s_*=s_-}) +
c_0(r+\frak{a}(\frak{c}_-) - \frak{a}(\frak{d}_{s_*=s_-}))\\
&+ c_0r^{2/3}\sup_{x\in[s,s_-]}|\underline{\energy}(x)|^{4/3} +
c_0\sup_{[s,s+1]}|\frak{p}_-|.
\end{split}
\end{equation}
Here $s_-$ is ``suitable'' if $O(s_-)\le \int_{s'}^{s'+1}O(s_*)ds_*$,
where $O(s)$ is defined in \eqref{eqn:O(s)} below.  The third
inequality asserts that in Case 2, if $-R+3<s<s'<R-3$, and if
$s_0\in[s',s'+1]$ satisfies $O(s_0)\le \int_{s'}^{s'+1}O(s_*)ds_*$,
then
\begin{equation}
\label{eqn:Ebound3}
\begin{split}
  r\underline{\energy}(s-1) \le & -c_0\frak{a}(u_0\frak{d}|_{s_*=s_0}) +
  c_0(r+\frak{a}(\frak{d}|_{s_*=-R+3}) -
  \frak{a}(\frak{d}|_{s_*=s_0}))\\
  & + c_0r^{2/3}\sup_{x\in[s,s_0]}\underline{\energy}(x)|^{4/3} +
  c_0\sup_{[s,s+1]}|\frak{p}_0|.
\end{split}
\end{equation}
The inequalities \eqref{eqn:Ebound1}--\eqref{eqn:Ebound3} are all
proved analogously to \cite[(5-18)]{e4}.

To exploit the inequalities \eqref{eqn:Ebound1}--\eqref{eqn:Ebound3},
we need appropriate bounds on the terms that do not involve
$\underline{\energy}(\cdot)$.  We first observe that the action differences
in \eqref{eqn:Ebound1}--\eqref{eqn:Ebound3} are bounded by
\begin{equation}
\label{eqn:9.18}
\begin{split}
\frak{a}(\frak{d}|_{s_*=R+3}) - \frak{a}(\frak{c}_+) & \le
c_0(\mc{K}+1)r,\\
\frak{a}(\frak{c}_-) -
\frak{a}(\frak{d}|_{s_*=s_-}) & \le c_0(\mc{K}+1)r,\\
\frak{a}(\frak{d}|_{s_*=-R+3}) -
\frak{a}(\frak{d}|_{s_*=s_0}) &\le c_0(\mc{K}+1)r.
\end{split}
\end{equation}
To prove \eqref{eqn:9.18}, first use Lemma~\ref{lem:3.4x}(a),(b) to see
that each action difference is bounded from above by $A_\frak{d}+c_0r$, and
then use \eqref{eqn:Abound}.

To bound the remaining terms in \eqref{eqn:Ebound1}, the discussion in
\cite[\S5d]{e4} finds a gauge transformation $u_+:Y_+\to S^1$ such
that $\frak{a}(u_+\frak{c}_+) > -c_0\energy_+r$ and $\lim_{s\to \infty}
|\frak{p}_+|\le c_0$.  The first of these conditions allows us to
replace \eqref{eqn:Ebound1} by
\begin{equation}
\label{eqn:Ebound4}
\underline{\energy}(s-1) \le c_0(1+\mc{K}) + c_0r^{-1/3}\sup_{x\ge
  s}|\underline{\energy}(x)|^{4/3} + c_0 r^{-1} \sup_{[s,s+1]}|\frak{p}_+|
\end{equation}
for $s\ge R+3$.  The arguments in \cite[\S5d]{e4} can be applied verbatim
using \eqref{eqn:Ebound4} to give the bound $\underline{\energy}(s) \le
c_0\mc{K}$ for $s\ge R+2$.  The arguments in \cite[\S5d]{e4} also
explain why this last bound implies that $\underline{M}(s)\le
c_0\mc{K}$ for $s\ge R+2$.

It remains to bound $\underline{M}(s)$ for $s\le R+2$.  Let
$t_*\in\{-R,R\}$ and suppose that $\underline{\energy}$ (where defined) and
$\underline{M}$ have been bounded above by $c_0$ on $[t_*+2,\infty)$.
Let $t_{**}=-\infty$ if $R<10$ or $t_*=-R$, and let $t_{**}=-R+2$ otherwise.
We now explain how to extend a bound of this sort on $\underline{M}$
and $\underline{\energy}$ to the interval $(t_{**},\infty)$, in two steps.
Applying this procedure once if $R<10$, and twice if $R\ge 10$, will
give the desired bound on $\underline{M}(s)$ for all $s\in\R$.

{\em Step 1.\/} This step bounds $\underline{\energy}$ (where defined) and
$\underline{M}$ on $[t_*-100,t_*+2]$.

For $s\in \R\setminus I_0$, define
\[
\energy(s) \eqdef i\int_{s_*^{-1}(s)}\lambda_Y\wedge F_A.
\]
Recall that $d\widetilde{\lambda}=\sigma\hat{\omega}$ where
$\sigma:\overline{X_R}\to\R$ agrees with $2e^{2s_*}$ on
$s_*^{-1}(\R\setminus I_0)$.  Now use Stokes' theorem and
\eqref{eqn:tsw4} to see that
\begin{equation}
\label{eqn:EStokes}
\begin{split}
\energy(s) &=
ie^{-2s}\int_{s_*^{-1}(s)}\widetilde{\lambda}\wedge F_A\\
&= ie^{-2s}\int_{s_*^{-1}(-\infty,s]}\sigma\hat{\omega}\wedge F_A\\
&= re^{-2s}\int_{s_*^{-1}(-\infty,s]}\sigma(1-|\alpha|^2+|\beta|^2) +
O(1).
\end{split}
\end{equation}
Integrating this equation over $s\in[t_*+2,t_*+3]$ (or a slight upward
translation of this interval as needed to avoid $I_0$) and using the a
priori bounds in Lemma~\ref{lem:3.1x} shows that the bound on
$\underline{\energy}(s)$ for $s\ge t_*+2$ gives rise to a bound on
$\underline{M}$ on $[t_*-100,t_*+2]$.

Similarly to \eqref{eqn:EStokes}, if $s_-<s_+$ are in $\R\setminus
I_0$ then
\[
\energy(s_+) - e^{-2(s_+-s_-)}\energy(s_-) =
re^{-2s_+}\int_{s_*^{-1}[s_-,s_+]}\sigma(1-|\alpha|^2+|\beta|^2) +
O(1).
\]
Using this equation and Lemma~\ref{lem:3.1x} shows that the bound on
$\underline{\energy}(s)$ for $s\ge t_*+2$ gives rise to a bound on
$\underline{\energy}$ (where defined) on $[t_*-100,t_*+2]$.

{\em Step 2.\/} We now extend the bounds on $\underline{\energy}$ and
$\underline{M}$ over $[t_{**},t_*-100]$.  We assume below that $t_{**}<
t_*-100$.

For $s\in\R\setminus I_0$ define
\begin{equation}
\label{eqn:O(s)}
O(s) \eqdef \int_{s_*^{-1}(s)}\left(|\mc{B}_{(A,\psi)}|^2 +
  r|D_{A(s)}\psi|^2\right),
\end{equation}
where $\mc{B}_{(A,\psi)}$ was defined in \eqref{eqn:mcb}.  Also,
define $\underline{O}(s)=\int_s^{s+1}O(s_*)ds_*$ when $[s,s+1]$ does
not intersect $I_0$.  Write $Y=Y_-$ when $t_{**}=-\infty$ and $Y=Y_0$
when $t_{**}$ is finite.  There exists $s_Y\in[t_*-100,t_*-99]$ such
that $O(s_Y)\le \underline{O}(t_*-100)$.  Then
\begin{equation}
\label{eqn:Obound}
O(s_Y)\le \underline{O}(t_*-100)\le 2A_\frak{d}+c_0r
\le c_0(\mc{K}+1)r
\end{equation}
by Lemma~\ref{lem:3.4x}(d) and the inequality \eqref{eqn:Abound}.

Next, note that there is a map $u_Y:Y\to S^1$ such that the connection
component of $u_Y\cdot \frak{d}|_{s_Y}$ can be written as
$A_E+\hat{a}_Y$ where $\hat{a}_Y$ is a co-closed, $i$-valued $1$-form
on $Y$ whose $L^2$ orthogonal projection to the space of harmonic
$1$-forms is bounded by $c_0$.  Combining this with
\eqref{eqn:Ebound2} or \eqref{eqn:Ebound3} as appropriate with
$s'=t_*-100$, and using the bound \eqref{eqn:Obound} on $O(s_Y)$ and
the bound on $\underline{\energy}(s)$ for $s\in[t_*-100,t_*-2]$, the
arguments leading to \cite[Eq.\ (5.14)]{e4} can be used to obtain a lower
bound
\begin{equation}
\label{eqn:9.27}
\frak{a}(A_E+\hat{a}_Y) \ge -c_0r.
\end{equation}

To continue, extend the map $u_Y$ to all of $[t_{**},t_*-99]$ to be
independent of $s_*$, and replace $\frak{d}$ on this portion of
$\overline{X_*}$ by $u_Y\cdot\frak{d}$.  It follows from
\eqref{eqn:9.18}, \eqref{eqn:9.27}, and the relevant inequality
\eqref{eqn:Ebound2} or \eqref{eqn:Ebound3} with $s_-$ or $s_0$ set
equal to $s_Y$, that for $s\in[t_{**},t_*-100]$ we have
\begin{equation}
\label{eqn:Ebound5}
\underline{\energy}(s-1) \le c_0(1+\mc{K}) +
c_0r^{-1/3}\sup_{x\in[s,s_Y]}\left|\underline{\energy}(x)\right|^{1/3} +
c_0r^{-1}\sup_{[s,s+1]}|\frak{p}_Y|.
\end{equation}
Here $\frak{p}_Y$ denotes $\frak{p}_-$ or $\frak{p}_0$ as appropriate.
Moreover, we have
\begin{equation}
\label{eqn:9.28}
|\frak{p}_Y|\le c_0\mc{K} \quad\mbox{on $[t_*-100,t_*-99]$}.
\end{equation}
To see why \eqref{eqn:9.28} is true, note that by our choice of $u_Y$
we have $|\frak{p}_Y(s_Y)|\le c_0$.  Meanwhile, \cite[Lem.\ 3.9]{e4}
bounds the derivative of the function $s\mapsto |\frak{p}_Y(s)|$ by
$c_0(1+|M(s)|)$.  Integrating this derivative bound and applying the
conclusions from Step 1 gives \eqref{eqn:9.28}.

Granted \eqref{eqn:Ebound5} and \eqref{eqn:9.28}, arguments from
\cite[\S5d]{e4} can be used in an almost verbatim fashion to bound
$\underline{\energy}$ on $[t_{**},t_*-100]$ by $c_0\mc{K}$.  One just needs
to replace all references to the $s\to\infty$ limit of $\frak{d}$ by
$\frak{d}_{s_*=t_*-100}$.  As noted previously, arguments from
\cite[\S5b]{e4} can be used with this bound on $\underline{\energy}$ to
bound $\underline{M}$ by $c_0\mc{K}$ on $[t_{**},t_*-100]$.
\end{proof}

\begin{proof}[Proof of Proposition~\ref{prop:9x}.]
  The first two bullets of Proposition~\ref{prop:9x} follow
  immediately from Proposition~\ref{prop:5.1x}.  The third bullet of
  Proposition~\ref{prop:9x} is deduced from
  Proposition~\ref{prop:5.1x} in the same way that the third bullet of
  \cite[Prop.\ 5.5]{e4} is deduced from \cite[Prop.\ 5.1]{e4} in
  \cite[\S5e]{e4}.
\end{proof}

\subsection{Proof of Propositions~\ref{prop:hol} and \ref{prop:comp}}
\label{sec:finalproofs}

\begin{proof}[Proof of Proposition~\ref{prop:hol}.]
  (a) We consider Case 1 of the geometric setup in
  \S\ref{sec:9xprelim}. If the perturbations $\frak{p}_\pm$ and
  $\frak{p}$ are zero, then assertion (a) follows immediately from
  Case 1 of Proposition~\ref{prop:9x}.  For the case when the
  perturbations $\frak{p}_\pm$ and $\frak{p}$ are not zero, the proof
  has two steps.

{\em Step 1.\/} We claim that if $r>c_0$ and if the $\mc{P}$-norm of
$\frak{p}$ is bounded by $c_0^{-1}$, then an instanton $\frak{d}$ as
in (a) satisfies
\begin{equation}
\label{eqn:invoke3}
\frak{a}(\frak{c}_-) - \frak{a}(\frak{c}_+) < (c_0 + 2\pi L)r.
\end{equation}
Here and below, $\frak{a}(\frak{c}_\pm)$ denotes the sum of the action
functional \eqref{eqn:a} for $Y_\pm$ and the abstract perturbation
$\frak{p}_\pm$.  To prove \eqref{eqn:invoke3}, first note that the
same integration by parts that proves Lemma~\ref{lem:3.4x}(a),(b)
implies that
\begin{equation}
\label{eqn:acor}
\frak{a}(\frak{c}_-) > \frak{a}(\frak{c}_+) - c_0r
\end{equation}
if the $\mc{P}$-norm of $\frak{p}$ is bounded by $c_0^{-1}$.  (See
also the remark after \cite[Prop.\ 24.6.4]{km}.)  Since $\frak{d}$ has
index $0$, it follows from \eqref{eqn:af}, \eqref{eqn:if} and
\eqref{eqn:acor} that
\begin{equation}
\label{eqn:afcor}
\frak{a}^f(\frak{c}_-) > \frak{a}^f(\frak{c}_+) - c_0r.
\end{equation}
Here $f$ denotes the spectral flow function on $Y_\pm$ defined using
$\frak{p}_\pm$.  Meanwhile, by \cite[Prop.\ 4.11]{e1} we have
\begin{equation}
\label{eqn:invoke1}
\frak{a}^f(\frak{c}_+) = -\frac{1}{2}r \energy(\frak{c}_+)(1+o(1)).
\end{equation}
By this and \eqref{eqn:afcor} we have
$\frak{a}^f(\frak{c}_-)>-(c_0+2\pi L)r$.  Consequently \cite[Prop.\ 4.11]{e1}
can be invoked a second time to give
\begin{equation}
\label{eqn:invoke2}
\frak{a}^f(\frak{c}_-) = -\frac{1}{2}r\energy(\frak{c}_-)(1+o(1)).
\end{equation}
On the other hand, \cite[Lem.\ 2.3]{e4} implies that
$\energy(\frak{c}_-)>-c_0$.  This together with \eqref{eqn:invoke2} implies
that $\frak{a}^f(\frak{c}_-) < c_0r$.  Since $\frak{d}$ has index $0$,
it follows from this last inequality and \eqref{eqn:invoke1} that
\eqref{eqn:invoke3} holds.

{\em Step 2.\/} Now let $r$ be large, and assume to get a
contradiction that the conclusion of Proposition~\ref{prop:hol}(a) is
false.  Then there exist data $(J,\mu)$ and a
sequence of perturbations $\{\frak{p}_{k}\}_{k=1,2,\ldots}$ with
$\lim_{k\to\infty}\frak{p}_{k}=0$, for which there is an instanton
$\frak{d}_{k}$ with index $0$ and $\energy(\frak{c}_{k+})<2\pi L$ such
that assertion (i) or (ii) in Proposition~\ref{prop:hol}(a) fails.
Here $\frak{c}_{k\pm}$ denotes the $s\to\pm\infty$ limit of
$\frak{c}_{k}$.  By passing to a subsequence we may assume that
$\frak{c}_{k+}$ does not depend on $k$, so that we can denote it
by $\frak{c}_+$.

Now \eqref{eqn:invoke3} applies to
each $\frak{d}_{k}$ to give
\begin{equation}
\label{eqn:invoke4}
\frak{a}(\frak{c}_{k-}) - \frak{a}(\frak{c}_+) < (c_0 + 2\pi L)r.
\end{equation}
Consequently, \cite[Prop.\ 24.6.4]{km} implies that the sequence of
instantons $\{\frak{d}_{k}\}_{k=1,2,\ldots}$ has a subsequence that
converges in the sense of \cite[\S26]{km} to a broken trajectory, from
$\frak{c}_+$ to some generator $\frak{c}_-$, that is defined using the
equations \eqref{eqn:tsw4} on $\overline{X}$ and
\eqref{eqn:Tinstanton} on $\R\times Y_\pm$, without abstract
perturbations.  In particular, we can pass to a further subsequence so
that $\frak{c}_{k-}=\frak{c}_-$ for all $k$.  Let
$\{\frak{d}^n\}_{n=1,\ldots,N}$ denote the ordered set of instantons
that comprise the limiting broken trajectory.  Let $\frak{c}^n_\pm$
denote the $s\to\pm\infty$ limit of $\frak{d}^n$.  These limits
satisfy $\frak{c}^1_-=\frak{c}_-$, $\frak{c}^N_+=\frak{c}_+$, and
$\frak{c}^n_+ = \frak{c}^{n+1}_-$ for $1\le n < N$.

There is a unique $n_0$ such that $\frak{d}^{n_0}$ is an instanton on
$\overline{X}$.  By Lemma~\ref{lem:3.4x}(a),(b), we have
$\frak{a}(\frak{c}^{n_0}_-) > \frak{a}(\frak{c}^{n_0}_+) - c_0r$; and
by \cite[Lem.\ 3.4]{e4} we have $\frak{a}(\frak{c}^n_-) >
\frak{a}(\frak{c}^n_+)$ for all $n\neq n_0$.  These inequalities
together with \eqref{eqn:invoke4} imply that $\frak{a}(\frak{c}^n_-) -
\frak{a}(\frak{c}^n_+) \le c_0r$ for each $n$.  Consequently, if $r$
is sufficiently large, then Proposition~\ref{prop:9x} applies to
$\frak{d}^{n_0}$, and \cite[Prop.\ 5.5]{e4} applies to $\frak{d}^n$
for each $n\neq n_0$, to produce a broken $J$-holomorphic curve.
These propositions (or the existence of these broken $J$-holomorphic
curves) also imply that $\energy(\frak{c}_-)<2\pi L$ if $r$ is
sufficiently large.  The concatenation of these $N$ broken
$J$-holomorphic curves is a broken $J$-holomorphic curve from
$\Theta_+$ to $\Theta_-$, where $\Theta_\pm$ is determined by
$\frak{c}_\pm$ via Proposition~\ref{prop:Liso}(a).  It follows that if
$r$ is sufficiently large, then assertions (i) and (ii) in
Proposition~\ref{prop:hol}(a) are true for each $\frak{d}_{k}$.  This
is the desired contradiction.

(b) This is essentially the same as the proof of (a), the only
difference being that in Step 2, one now considers a sequence
$\{\frak{d}_{k}\}$ of instantons which solves the perturbed equations
\eqref{eqn:tsw4} for the data corresponding to some $t_{k}\in[0,1]$.
One then passes to a subsequence such that $\lim_{k\to
  \infty}t_{k}=t_*$.  The arguments in \cite[\S24]{km} can be used to
show that the sequence of instantons $\frak{d}_{k}$ has a subsequence
which converges to a broken trajectory for the data corresponding to
$t=t_*$.  Now the constants in Lemma~\ref{lem:3.4x} and
Proposition~\ref{prop:9x}, because they are stable, can be chosen to
work for the data corresponding to all $t\in[0,1]$.  Thus if $r$ is
sufficiently large (independently of the value of $t_*)$, then the
rest of the proof of (a) can be repeated verbatim to prove part (b).
\end{proof}

\begin{proof}[Proof of Proposition~\ref{prop:comp}.]
  We now consider Case 2 of the geometric setup in
  \S\ref{sec:9xprelim}.  Recall the $1$-form $\widetilde{\lambda}_R$
  on $\overline{X_R}$ defined in \eqref{eqn:lambdatilde}.  Define a
  $1$-form $\lambda'$ on $X$ to agree with $\widetilde{\lambda}_0$ on
  $[-\varepsilon,\varepsilon]\times Y_0$, where $\varepsilon$ was
  fixed in \S\ref{sec:9xprelim}, and to agree with $\lambda$ on the
  rest of $X$.  Note that the exact symplectic cobordism
  $(X,\lambda')$ from $(Y_+,\lambda_+)$ to $(Y_-,\lambda_-)$ is
  strongly homotopic to $(X,\lambda)$.  So by
  Corollary~\ref{cor:cobl}(c), if $r$ is sufficiently large then
  $\widehat{HM}^*_L(X,\lambda)=\widehat{HM}^*_L(X,\lambda')$.  Thus to
  prove Proposition~\ref{prop:comp}, it is enough to show that if $r$
  is sufficiently large then
\begin{equation}
\label{eqn:lastets}
\widehat{HM}^*_L(X,\lambda') = \widehat{HM}^*_L(X^-,\lambda^-) \circ
\widehat{HM}^*_L(X^+,\lambda^+).
\end{equation}

To prove \eqref{eqn:lastets}, we fix $r$ large and vary $R$ in Case 2.
Let $\frak{p}_\pm,\frak{p}_0$ be abstract perturbations as needed to
define the respective Seiberg-Witten Floer chain complexes on
$Y_\pm,Y_0$.  Extend these to abstract perturbations $\frak{p}^\pm$ on
$\overline{X^\pm}$ as needed to define chain maps that induce
$\widehat{HM}^*_L(X^\pm,\lambda^\pm)$; denote these chain maps by
$\Phi_\pm$.  The perturbations $\frak{p}^\pm$, with suitable cutoff
functions, then determine an abstract perturbation $\frak{p}_R$ on
$\overline{X}_R$, as explained in \cite[\S11, \S24.1]{km} (see
\cite[Eq.\ (24.1)]{km}). Let
\[
\Phi_R:\widehat{CM}^*_L(Y_+;\lambda_+,J_+,r) \longrightarrow
\widehat{CM}^*_L(Y_-;\lambda_-,J_-,r)
\]
denote the chain map defined by counting index $0$ instantons on
$\overline{X}_R$.  (One may need to perturb $\frak{p}_R$ to obtain
transversality, in which case the chain map will depend on this
perturbation.)  It follows as in the proof of
Proposition~\ref{prop:hol}(a) that if $r$ is sufficiently large, then
for any $R$, if the abstract perturbations are sufficiently small then
$\Phi_R$ is defined.

When $R=0$, the induced map on homology is
\[
(\Phi_0)_*=\widehat{HM}^*_L(X,\lambda'),
\]
because by construction
$\left(\overline{X}_0,\widetilde{\lambda}_0\right) =
\left(\overline{X},\widetilde{\lambda'}\right)$.  On the other hand,
for $R>0$ the manifold
$\left(\overline{X}_R,\widetilde{\lambda}_R\right)$ is obtained by
gluing $\left(\overline{X^-},\widetilde{\lambda^-}\right)$ with the
$s\ge R$ part of the positive end removed to
$\left(\overline{X^+},\widetilde{\lambda^+}\right)$ with the $s\le -R$
part of the negative end removed.  It then follows from \cite[Prop.\
26.1.6]{km} that there exists $R_0$ such that if $R\ge R_0$, then
$\Phi_R$ is defined without any further perturbation of $\frak{p}_R$,
and
\[
\Phi_R=\Phi_-\circ\Phi_+.
\]
So to complete the proof of \eqref{eqn:lastets}, it is enough to show
that the chain maps
\[
\Phi_0,\Phi_{R_0}:\widehat{CM}^*_L(Y_+;\lambda_+,J_+,r) \longrightarrow
\widehat{CM}^*_L(Y_-;\lambda_-,J_-,r)
\]
are chain homotopic.  To construct the desired chain homotopy one
counts index $-1$ instantons in the family $\{\overline{X_R}\mid
R\in[0,R_0]\}$ with a generic small family of abstract perturbations.
If $r$ is sufficiently large, then this chain homotopy will be well
defined as in the proof of Proposition~\ref{prop:hol}(b).
\end{proof}


\begin{thebibliography}{99}

\bibitem{aps1} M. Atiyah, V. Patodi, and I. Singer, {\em Spectral
    asymmetry and Riemannian geometry. I.\/}, Math. Proc. Cambridge
  Philos. Soc. {\bf 77} (1975), 43--69.

\bibitem{federer} H. Federer, {\em Geometric measure theory},
  Springer, 1969.

\bibitem{hummel} C. Hummel, {\em Gromov's compactness theorem for
    pseudo-holomorphic curves\/}, Progress in Mathematics 151,
  Birkh\"auser, 1997.

\bibitem{pfh2} M. Hutchings, {\em An index inequality for embedded
    pseudoholomorphic curves in symplectizations\/},
  J. Eur. Math. Soc. {\bf 4\/} (2002), 313--361.

\bibitem{ir} M. Hutchings, {\em The embedded contact homology index
    revisited\/}, New perspectives and challenges in symplectic field
  theory, 263--297, CRM Proc. Lecture Notes 49, Amer. Math. Soc.,
  2009.

\bibitem{icm} M. Hutchings, {\em Embedded contact homology and its
    applications\/}, in Proceedings of the 2010 ICM, vol. II, 1022-1041.

\bibitem{qech} M. Hutchings, {\em Quantitative embedded contact
    homology\/}, J.\ Diff.\ Geom. {\bf 88\/} (2011), 231--266.

\bibitem{bn} M. Hutchings, {\em Lecture notes on embedded contact homology\/}, arXiv:1303.5789.

\bibitem{t3} M. Hutchings and M. Sullivan, {\em Rounding corners of
  polygons and the embedded contact homology of $T^3$\/}, Geometry and
  Topology {\bf 10\/} (2006), 169--266.

\bibitem{obg1} M. Hutchings and C. H. Taubes, {\em Gluing
  pseudoholomorphic curves along branched covered cylinders I\/},
  J. Symplectic Geom. {\bf 5\/} (2007), 43--137.

\bibitem{obg2} M. Hutchings and C. H. Taubes, {\em Gluing
    pseudoholomorphic curves along branched covered cylinders II\/},
  J. Symplectic Geom. {\bf 7\/} (2009), 29--133.

\bibitem{wh} M. Hutchings and C. H. Taubes, {\em The Weinstein
    conjecture for stable Hamiltonian structures\/}, Geometry and
  Topology {\bf 13\/} (2009), 901--941.

\bibitem{cc} M. Hutchings and C. H. Taubes, {\em Proof of the Arnold
    chord conjecture in three dimensions I\/}, Math. Res. Lett. {\bf
    18} (2011), 295--313.

\bibitem{km} P.B. Kronheimer and T.S. Mrowka, {\em Monopoles and
    three-manifolds\/}, Cambridge University Press, 2008.

\bibitem{algebraic} J. Latschev and C. Wendl, {\em Algebraic torsion
    in contact manifolds\/}, GAFA {\bf 21\/} (2011), 1144-1195.

\bibitem{morgan} F. Morgan, {\em Geometric Measure Theory, a
    Beginner's Guide\/}, fourth edition, Elsevier/Academic Press,
  Amsterdam, 2009.

\bibitem{morrey} C. Morrey, {\em Multiple integrals in the calculus of
    variations\/}, Springer-Verlag, 1966.

\bibitem{swtogr} C. H. Taubes, {\em $SW\Rightarrow Gr$: from the
    Seiberg-Witten equations to pseudo-holomorphic curves}, in {\em
    Seiberg-Witten and Gromov Invariants for Symplectic
    4-manifolds\/}, International Press, Somerville MA 2000.

\bibitem{grtosw} C. H. Taubes, {\em $Gr\Rightarrow SW$: from
    pseudo-holomorphic curves to Seiberg-Witten solutions}, loc. cit.

\bibitem{gr=sw} C. H. Taubes, {\em $Gr=SW$: counting curves and
    connections}. loc. cit.

\bibitem{tw1} C. H. Taubes, {\em The Seiberg-Witten equations and the
    Weinstein conjecture\/}, Geom. Topol. {\bf 11\/} (2007),
  2117-2202.

\bibitem{tw2} C. H. Taubes, {\em The Seiberg-Witten equations and the
    Weinstein conjecture. II. More closed integral curves of the Reeb
    vector field\/}, Geom. Topol. {\bf 13} (2009), 1337-1417.

\bibitem{e1} C. H. Taubes, {\em Embedded contact homology and
    Seiberg-Witten Floer homology I\/}, Geometry and Topology {\bf
    14\/} (2010), 2497--2581.

\bibitem{e2} C. H. Taubes, {\em Embedded contact
  homology and Seiberg-Witten Floer homology II\/}, Geometry and Topology {\bf
    14\/} (2010), 2583--2720.

\bibitem{e3} C. H. Taubes, {\em Embedded contact
  homology and Seiberg-Witten Floer homology III\/}, 
Geometry and Topology {\bf
    14\/} (2010), 2721--2817.

\bibitem{e4} C. H. Taubes, {\em Embedded contact
  homology and Seiberg-Witten Floer homology IV\/}, 
Geometry and Topology {\bf
    14\/} (2010), 2819--2960.

\bibitem{e5} C. H. Taubes, {\em Embedded contact homology and
    Seiberg-Witten Floer homology V\/}, 
Geometry and Topology {\bf
    14\/} (2010), 2961--3000.

\bibitem{wolfson} J. Wolfson, {\em Gromov's compactness of
    pseudoholomorphic curves and symplectic geometry\/},
  J. Diff. Geom. {\bf 28} (1988), 383--405.

\bibitem{ye} R. Ye, {\em Gromov's compactness theorem for pseudo holomorphic curves\/}, Trans. Amer. Math. Soc. {\bf 342} (1994), 671--694.

\end{thebibliography}
\end{document}